\documentclass[11pt]{amsart}
\usepackage{latexsym, mathrsfs, color, tikz, multirow, bbm, mathtools, amsmath, amssymb, enumitem, quiver, tikz-cd, circuitikz, dotlessi, thmtools, thm-restate}
\usepackage{graphics}
\usepackage{subcaption}
\input{xy}
\xyoption{all}

\usepackage[colorlinks=true, pdfstartview=FitV,
 linkcolor=black,citecolor=black,urlcolor=black]{hyperref}
 
\usepackage{tikz}
\usepackage{tikz-cd}
\usepackage{comment}
\usepackage{adjustbox}

\numberwithin{equation}{section}
\theoremstyle{plain}
\newtheorem{Thm}[equation]{Theorem}
\newtheorem{Prop}[equation]{Proposition}
\newtheorem{Cor}[equation]{Corollary}
\newtheorem{Lem}[equation]{Lemma}

\newtheorem{Ex}[equation]{Example}

\newtheorem*{T-homeo}{Theorem~\ref{N^N-homeo-thm}}
\newtheorem*{T-hered}{Theorem~\ref{hereditary-meta-thm}}
\newtheorem*{T-LF-inf-seq}{Theorem~\ref{LF-inf-mut-seq-thm}}
\newtheorem*{T-AF-inf-seq}{Theorem~\ref{AF-inf-mut-seq-thm}}
\newtheorem*{T-mut-class-top}{Theorem~\ref{mut-class-top-thm}}
\newtheorem*{T-Fraisse-singleton}{Theorem~\ref{Fraisse-singleton-thm}}
\newtheorem*{T-Fraisse-inf-seq}{Theorem~\ref{Fraisse-inf-seq-thm}}

\theoremstyle{definition}
\newtheorem{Def}[equation]{Definition}

\newtheorem{Rmk}[equation]{Remark}

\newenvironment{red}{\relax\color{red}}{\relax}
\newenvironment{blue}{\relax\color{blue}}{\hspace*{.5ex}\relax}

\newcommand{\NN}{\mathbb{N}}
\newcommand{\ZZ}{\mathbb{Z}}

\newcommand{\ber}{\begin{red}}
\newcommand{\er}{\end{red}}
\newcommand{\beb}{\begin{blue}}
\newcommand{\eb}{\end{blue}}

\newcommand{\AF}{\mathbf{AF}}
\newcommand{\LF}{\mathbf{LF}}

\begin{document}
	
\title[Topologizing infinite quivers and their mutations]{Topologizing infinite quivers and their mutations}

\author[Benjamin Grant]{Benjamin Grant}
\address{Department of Mathematics, University of Connecticut,
	Storrs, CT 06269, U.S.A.}
\email{benjamin.grant@uconn.edu}

\begin{abstract}
    We define several topological spaces whose points are quivers with a given infinite vertex set $X$. In the special case when $X$ is countably infinite, we show that two of the spaces of interest are homeomorphic to the Baire space $\NN^\NN$. We study properties of countably infinite quivers as subspaces of these topological spaces and prove a ``meta-theorem'' about hereditary properties of quivers. Furthermore, we approach the question of convergence for infinite mutation sequences in these spaces, providing a complete characterization of the (non-)density of the domains of convergence and divergence of infinite mutation sequences in one of these spaces and a partial characterization in the other. We then draw attention to a very special infinite quiver which we call the \emph{Fra\"iss\'e quiver} that draws a clear contrast between the behavior of finite and infinite mutation sequences. Finally, we reproduce (a very mild modification of) a previously-constructed topological space due to Ervin and Jackson as a subquotient of one of the spaces of interest. \\
    Keywords: topology, Polish space, infinite quiver, mutation-invariant, Fra\"iss\'e limit, hereditary, quiver properties, infinite mutation \\
    MSC Classification (2020): 05E18, 13F60, 22F05, 54H05
\end{abstract}

\maketitle

%% Section 1: Introduction %%
\section{Introduction}\label{sec-1}

\subsection{Scientific context}
Around a quarter-century ago, Fomin and Zelevinsky began a highly influential series of papers introducing \emph{cluster algebras,} which are a particular class of commutative rings equipped with special combinatorial structures \cite{fomin_cluster_2001, fomin_cluster_2003, berenstein_cluster_2005, fomin_cluster_2007}. The central constructs governing the combinatorics of cluster algebras are \emph{quivers} and \emph{quiver mutations.} A quiver is a directed multigraph. Throughout this article, we always assume our quivers have no loops or oriented 2-cycles. Quiver mutations are certain involutive transformations on quivers defined locally with reference to a choice of vertex. While cluster algebras technically require the broader algebraic context of \emph{seeds} and \emph{seed mutations,} quivers and their mutations can already be quite complex and are worth study in their own right.

This paper aims to offer a topological perspective on infinite quivers and their mutations. Specifically, we introduce several topological spaces of quivers with a given infinite vertex set $X$: the spaces $\AF^X_{weak}$ and $\AF^X_{str}$ of \emph{arrow finite} quivers on $X$ with \emph{weak} and \emph{strong} topologies, and their subspaces $\LF^X_{weak}$ and $\LF^X_{str}$ of \emph{locally finite} quivers. An \emph{arrow finite} quiver is a quiver in which there are only finitely many arrows between any two given vertices, while a \emph{locally finite} quiver is a quiver in which each vertex has finite degree. We show that the spaces $\LF^X_{str}$ and $\AF^X_{weak}$ for $X$ countable are both homeomorphic to the Baire space $\NN^\NN$, the product of countably infinitely many copies of the discrete space $\NN$. In addition, we prove that quiver mutations give rise to self-homeomorphisms on all of the spaces of interest.

Typically, one is content with studying \emph{finite} quivers without concern for their infinite counterparts. However, there have been a number of developments in the theory of infinite quivers and infinite-rank cluster algebras in the past decade or so; see \cite{holm_cluster_2012,grabowski_cluster_2014,gratz_cluster_2015,liu_cluster_2017,baur_transfinite_2018,canakci_infinite_2019,paquette_completions_2021,gratz_ind-cluster_2025}. We therefore find it appropriate, useful, and necessary to consider infinite quivers from a topological point of view, guided by the principle that infinite analogs of finite structures are easier to understand and manipulate when bound by appropriate topologies. This principle has already manifest itself quite centrally in the cluster algebras and representation theory literature under the study of \emph{quivers with potential,} for example, where infinite-dimensional path algebras are equipped with topologies and algebra homomorphisms between them are required to be continuous \cite{derksen_quivers_2008, derksen_quivers_2010}. That said, we note that our approach also allows us to consider finite quivers right alongside infinite quivers merely as special cases of them.

Our particular topological approach carries another interesting feature. The space $\NN^\NN$ is a \emph{Polish space,} meaning it is a separable space homeomorphic to a complete metric space. Polish spaces are the primary objects of study in \emph{descriptive set theory,} a branch of mathematical logic concerned with various complexity hierarchies of subsets of Polish spaces as well as relative consistency results about subsets of Polish spaces under various cousins of ZFC set theory. In fact, the motivation for considering the spaces in this paper came from descriptive set theory; spaces of first-order structures in countable languages are common examples of Polish spaces (when given the right topologies), and this provides a concrete way to understand the complexities of their properties. Because of the desirable properties that Polish spaces enjoy, we restrict to the special cases of $\LF^X_{str}$ and $\AF^X_{weak}$ for $X$ countable from Section~\ref{sec-4} onward, and simply refer to these spaces as $\LF$ and $\AF$. We hope that the Polish spaces explored here can provide a new entry point for researchers outside of cluster combinatorics to contribute to the study.

We also wish to emphasize the growing body of work on constructing invariants of (finite) quivers under mutation and/or restriction to full subquivers, see for example \cite{beineke_cluster-cyclic_2011,lee_positivity_2015,fomin_universal_2021,ervin_topology_2024,ervin_new_2024,neville_mutation-acyclic_2024,fomin_cyclically_2024,kaufman_mutation_2024,seven_congruence_2024}. Many of these invariants come in the form of \emph{properties}, i.e., answers to ``yes or no'' questions (e.g., ``does this quiver have a reddening sequence?'' and ``is this quiver mutation-equivalent to an acyclic quiver?''). The topological approach presented in this paper allows one to \emph{compare} properties of locally finite or arrow finite quivers by comparing them as subspaces of $\LF$ or $\AF$ (see Theorem~\ref{hereditary-meta-thm} below for an example). For example, some properties correspond to open sets, some to closed sets, and some to more complicated Borel sets. This adds an additional dimension to the studies of these properties, and while this dimension only truly exists for these spaces of infinite quivers, we hope it informs the theory of finite quivers and their mutations in a meaningful way.

During the preparation of this paper, the author became aware of a recent preprint \cite{bucher_reddening_2025} due to E. Bucher and E. Howard which takes a different approach to the study of infinite quivers and their mutations, defining them using sequences of finite quivers. In this way, their approach resembles ours, and it would be interesting to see a more detailed comparison. Using their analysis of reddening sequences for infinite quivers, for example, one could perhaps study the set of infinite quivers in $\LF$ or $\AF$ admitting a reddening sequence. There are many other questions one could ask about the relationships between these works; the author looks forward to seeing where the topological program of infinite quivers and their mutations is headed.

\subsection{Main results}
Our main results are of five flavors:

    \textbf{1. Comparisons to a well-known space.} We show that two of our spaces, the spaces $\LF^X_{str}$ and $\AF^X_{weak}$ for a countably infinite set $X$, are homeomorphic to a well-known space with many desiderata, the Baire space $\NN^\NN$. (While this does not change the homeomorphism type of $\NN^\NN$, the reader should note now that we take $\NN$ to be the set of \emph{positive} integers throughout.)

    \begin{Thm}\label{N^N-homeo-thm}
        Let $X$ be a countably infinite set. Then the spaces $\AF^X_{weak}$ and $\LF^X_{str}$ are both homeomorphic to $\NN^\NN$.
    \end{Thm}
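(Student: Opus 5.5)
The plan is to write each space as a countable product of countable discrete sets and then invoke the standard fact that such a product, when it has infinitely many factors each with at least two points and each countable, is homeomorphic to $\NN^\NN$. I am working from what I take the definitions to be, so the first step is to check them against the paper. I assume an arrow finite quiver on $X$ (no loops, no oriented $2$-cycles) is determined by the function $b_Q$ on ordered pairs of distinct vertices. Here $b_Q(x,y)\in\ZZ$ is the number of arrows $x\to y$ minus the number of arrows $y\to x$, and $b_Q$ is skew-symmetric. I assume the weak topology has as basic open sets the quivers agreeing with a given $Q$ on the arrows between finitely many pairs of vertices. I assume the strong topology on $\LF^X$ has as basic open sets the quivers agreeing with $Q$ on \emph{all} arrows incident to a finite set $F\subseteq X$; this data is finite because $Q$ is locally finite.

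\textbf{The space $\AF^X_{weak}$.} Fix an enumeration $X=\{x_1,x_2,\dots\}$ and let $P=\{(i,j): i<j\}$, a countably infinite set. The map $Q\mapsto (b_Q(x_i,x_j))_{(i,j)\in P}$ is a bijection $\AF^X\to \ZZ^P$, since skew-symmetry recovers the remaining values and every integer vector is realized. Under this map the weak basic open sets are exactly the cylinder sets of the product topology on $\ZZ^P$ with $\ZZ$ discrete. Choosing bijections $\ZZ\cong\NN$ and $P\cong\NN$ then gives $\AF^X_{weak}\cong\NN^\NN$.

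\textbf{The space $\LF^X_{str}$.} With the same enumeration, let $C_n$ be the set of finitely supported functions $\{m: m>n\}\to\ZZ$; each $C_n$ is countably infinite. Send a locally finite $Q$ to the sequence $(s_n)_{n\ge 1}$, where $s_n(m)=b_Q(x_n,x_m)$ for $m>n$. Local finiteness is exactly the condition that each $s_n$ has finite support. I then check that this is a bijection $\LF^X\to\prod_n C_n$. It is injective because every pair $\{x_n,x_m\}$ with $n<m$ is recorded in $s_n$. It is surjective because any sequence of finitely supported $s_n$ yields a locally finite quiver: the degree of $x_k$ is the finite sum of $|s_k|$ and of the values $|s_j(k)|$ for $j<k$.

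The key point, which I expect to be the only real obstacle, is that the strong topology matches the product topology with each $C_n$ discrete. It suffices to use basic sets with $F=\{x_1,\dots,x_n\}$, since every finite $F$ is contained in such an initial segment and enlarging $F$ shrinks the neighborhood. I claim that fixing all arrows incident to $x_1,\dots,x_n$ is equivalent to fixing $s_1,\dots,s_n$.
\begin{itemize}
\item An arrow at $x_k$ with $k\le n$ goes either to some $x_j$ with $j<k$, and is then recorded in $s_j$, or to some $x_m$ with $m>k$, and is then recorded in $s_k$. So fixing $s_1,\dots,s_n$ fixes the full neighborhoods of $x_1,\dots,x_n$.
\item Conversely, $s_k$ for $k\le n$ only records arrows incident to $x_k$, so fixing those neighborhoods fixes $s_1,\dots,s_n$.
\end{itemize}
Hence basic open sets correspond exactly to cylinder sets. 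Since each $C_n\cong\NN$ as discrete spaces, $\LF^X_{str}\cong\prod_n C_n\cong\NN^\NN$.
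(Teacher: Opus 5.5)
Your proposal is correct and is essentially the paper's proof: the paper identifies $\AF^\NN_{weak}$ with $\ZZ^{[\NN]^2}$ via $Q\mapsto(Q(i,j))_{\{i<j\}}$, and for $\LF^\NN_{str}$ it uses your decomposition into the finitely supported rows $s_i$. It only makes each $C_i\cong\NN$ explicit through the prime-power coding $L(Q)(i)=\prod_{k}p_k^{h(Q(i,i+k))}$ and checks that $W_{Q,\{1,\dots,n\}}\cap\LF$ maps onto the cylinder $[\![L(Q)\!\upharpoonright\! n]\!]$, which is exactly your key step. One correction: the ``standard fact'' in your opening sentence is false as phrased, since $\{0,1\}^\NN$ satisfies your hypotheses but is compact; the factors should be countably infinite, and that is all your argument actually uses.
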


    Due to this observation, we restrict from the conclusion of this theorem onward to the spaces $\LF=\LF^X_{str}$ and $\AF=\AF^X_{weak}$ for $X$ countable.

    \vspace{0.25cm}

    \textbf{2. Topological properties of subspaces.} We establish topological properties of subspaces of $\LF$ and $\AF$ consisting of quivers with certain properties. We show in Proposition~\ref{mutation-acyclicity-prop}, for example, that the subspace $\mathbf{MA}_\LF\subseteq\LF$ of mutation-acyclic locally finite quivers is neither open nor closed (but is $F_\sigma$), is not dense, but has dense complement. Similar statements may be made about other quiver properties; we curate a small but detailed catalog containing such propositions. More generally, we prove a meta-theorem showing the density properties mentioned here for mutation-acyclicity follow from the fact that mutation-acyclicity is a \emph{hereditary} property, i.e., mutation-acyclicity is preserved under restriction to finite full subquivers:

    \begin{Thm}\label{hereditary-meta-thm}
        Let $\mathcal P\subseteq\AF$ be a hereditary property of arrow finite quivers such that there exists at least one finite quiver \emph{not} carrying the property $\mathcal P$. Then it holds that $\mathcal P\cap\LF$ is not dense in $\LF$, nor is $\mathcal P$ is dense in $\AF$. On the other hand, $\LF\setminus\mathcal P$ is dense in $\LF$, and similarly $\AF\setminus\mathcal P$ is dense in $\AF$. Moreover, $\mathcal P=\mathcal P_\mathcal F$ is the property of avoiding $\mathcal F$ for some (possibly infinite) family $\mathcal F$ of finite unlabeled quivers (see Definition~\ref{F-avoiding-def} for the precise definition) if and only if $\mathcal P$ is closed in $\AF$.
    \end{Thm}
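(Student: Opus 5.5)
I will work directly with the basic open sets of $\AF$ and $\LF$. Fix an enumeration $X=\{x_1,x_2,\dots\}$. In $\AF=\AF^X_{weak}$ a basic open neighborhood of a quiver $Q$ is determined by a finite set $S\subseteq X$: it consists of all arrow finite quivers whose full subquiver on $S$ coincides with $Q|_S$ (same arrow counts and orientations). In $\LF=\LF^X_{str}$ the basic open sets are finer. For a finite $S$, a basic neighborhood of $Q$ consists of the locally finite quivers that agree with $Q$ on all arrows incident to $S$. Since $Q$ is locally finite, these arrows involve only finitely many vertices, namely $S$ together with its neighbors. I will take these descriptions as given by the construction of the topologies in the proof of Theorem~\ref{N^N-homeo-thm}.

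\textbf{Non-density of $\mathcal P$.} Let $R$ be a finite quiver not in $\mathcal P$, on $n$ vertices. Consider the basic open set $U$ of quivers whose full subquiver on $\{x_1,\dots,x_n\}$ is a fixed copy of $R$. In $\LF$, I instead take quivers that agree on all arrows incident to $\{x_1,\dots,x_n\}$ with the quiver $R\sqcup(\text{isolated vertices})$. In both cases $U$ is nonempty. Every quiver in $U$ contains $R$ as a finite full subquiver, so heredity forces $U\cap\mathcal P=\emptyset$. Hence neither $\mathcal P$ nor $\mathcal P\cap\LF$ is dense.

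\textbf{Density of the complement.} Let $U$ be a basic open set around some $Q$.
\begin{itemize}
\item In $\AF$: $U$ constrains only a finite set $S$. Build $Q'$ by keeping $Q|_S$, placing a copy of $R$ on fresh vertices outside $S$, and adding no other arrows.
\item In $\LF$: the constraint involves finitely many vertices $T\supseteq S$. Take $Q'$ to be $Q$ restricted to arrows incident to $S$, together with a copy of $R$ on vertices outside $T$, and nothing else. This $Q'$ is locally finite and lies in $U$.
\end{itemize}
In either case $Q'\in U$ and $Q'$ contains $R$ as a full subquiver. By heredity, $Q'\notin\mathcal P$, since restricting $Q'$ to the vertices of that copy of $R$ gives $R\notin\mathcal P$. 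Here I should double-check that the hereditary definition allows restriction of an infinite quiver to a finite full subquiver. The statement of the theorem phrases heredity exactly that way.

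\textbf{The closedness characterization.} By Definition~\ref{F-avoiding-def}, $Q\in\mathcal P_{\mathcal F}$ iff no finite full subquiver of $Q$ is isomorphic to a member of $\mathcal F$. For each finite $S$ and each $F\in\mathcal F$, the set of $Q$ with $Q|_S\cong F$ is open in $\AF$, being a union of basic sets. So the complement of $\mathcal P_{\mathcal F}$ is open, and $\mathcal P_{\mathcal F}$ is closed.

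Conversely, suppose $\mathcal P$ is closed (it is hereditary by hypothesis). Let $\mathcal F$ be the family of unlabeled finite quivers not in $\mathcal P$; such a quiver is identified with a copy on a finite subset of $X$ extended by isolated vertices. By heredity, $\mathcal P\subseteq\mathcal P_{\mathcal F}$. For the reverse, take $Q\in\mathcal P_{\mathcal F}$. Then each $Q|_{\{x_1,\dots,x_n\}}\in\mathcal P$, viewed as a quiver on $X$ with the remaining vertices isolated. These finite quivers converge to $Q$ in $\AF$, since they eventually agree with $Q$ on any finite set. Closedness then gives $Q\in\mathcal P$.

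\textbf{Main obstacle.} The point needing care is a consistent convention for how "finite quivers" sit inside $\AF$ as points, namely padding with isolated vertices and invariance under relabeling. Heredity must be applied both to these padded quivers and to their restrictions. I expect this bookkeeping to be the main subtlety; the rest is routine cylinder-set reasoning.
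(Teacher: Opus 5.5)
Your proof is correct and follows essentially the same route as the paper. A fixed finite quiver outside $\mathcal P$ pins down a basic open set missing $\mathcal P$; a copy of it on fresh vertices gives density of the complement; and closedness is matched with $\mathcal P=\mathcal P_\mathcal F$ via cylinder sets and the convergence $\rho_{\{1,\dots,n\}}(Q)\to Q$. Your $\mathcal F$ (the finite quivers not in $\mathcal P$) coincides with the paper's by heredity; you simply invoke heredity in the opposite inclusion.

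One point in your favour: you prove all four density claims directly in both $\LF$ and $\AF$, which is more careful than the paper. The paper reduces to non-density of $\mathcal P\cap\LF$ and density of $\AF\setminus\mathcal P$, but Lemma~\ref{LF-AF-complexity-comparison-lemma} only transfers density from $\LF$ to $\AF$. It therefore yields the implications from $\mathcal P$ to $\mathcal P\cap\LF$ and from $\LF\setminus\mathcal P$ to $\AF\setminus\mathcal P$, not the converse implications that reduction needs.
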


    \vspace{0.25cm}

    \textbf{3. Infinite mutation sequences.} We consider infinite sequences of mutations and study their domains of convergence and divergence. We say that an infinite mutation sequence $\mu_\mathbf i=\ldots\mu_{i_3}\mu_{i_2}\mu_{i_1}$ \emph{converges (or diverges)} on a quiver $Q\in\LF$ (resp. $Q\in\AF$) if the sequence $(\mu_{i_j}\circ\ldots\circ\mu_{i_2}\circ\mu_{i_1}(Q))_{j\in\NN}$ converges (or diverges) in the space $\LF$ (resp. $\AF$). We use $\mathcal C_\LF(\mu_\mathbf i)$ (resp. $\mathcal C_\AF(\mu_\mathbf i)$) to denote the subset of quivers in $\LF$ (resp. $\AF$) on which $\mu_\mathbf i$ converges in $\LF$ (resp. $\AF$). We also let $\mathcal D_\LF(\mu_\mathbf i):=\LF\setminus\mathcal C_\LF(\mu_\mathbf i)$ and $\mathcal D_\AF(\mu_\mathbf i):=\AF\setminus\mathcal C_\AF(\mu_\mathbf i)$. In $\AF$, we show that every infinite mutation sequence has a dense domain of divergence by means of an explicit construction. However, the domain of convergence $\mathcal C_\AF(\mu_\mathbf i)$ of an infinite mutation sequence $\mu_\mathbf i$ on $\AF$ is a bit more complicated; we only provide a partial characterization of its density. This is captured in the following theorem:
    
    \begin{Thm}\label{AF-inf-mut-seq-thm}
        Let $\mu_{\mathbf i}:=\ldots\mu_{i_3}\mu_{i_2}\mu_{i_1}$ be an infinite mutation sequence on $\AF$. Then the following hold of the sets $\mathcal D_\AF(\mu_\mathbf i)$ and $\mathcal C_\AF(\mu_\mathbf i)$:
        \begin{enumerate}[label=(\roman*)]
            \item $\mathcal D_\AF(\mu_\mathbf i)$ is dense in $\AF$.
            \item If there does not exist $i\in\NN$ appearing infinitely many times in $\mathbf i$, then $\mathcal C_\AF(\mu_\mathbf i)$ is dense in $\AF$.
            \item If there are only finitely many $i\in\NN$ appearing in $\mathbf i$, then $\mathcal C_\AF(\mu_\mathbf i)$ is not dense in $\AF$.
        \end{enumerate}
    \end{Thm}
    
    This indeed does \emph{not} provide a complete characterization of the (non-)density of the domains of convergence of infinite mutation sequences on $\AF$, since we do not make any claims about what occurs if there exist infinitely many $i\in\NN$ appearing in $\mathbf i$ \emph{and} at least one $i\in\NN$ which appears infinitely many times in $\mathbf i$.
    
    However, our study of convergence for infinite mutation sequences on $\LF$ does indeed lead us to a complete characterization of the (non-)density of both the domains of convergence and divergence for infinite mutation sequences on $\LF$. After careful consideration of the combinatorics of infinite mutation sequences and their reductions and after proving a result about certain c-vectors of some quivers mutation-equivalent to a finite abundant acyclic quiver, we prove the following theorem:
    
    \begin{Thm}\label{LF-inf-mut-seq-thm}
        Let $\mu_{\mathbf i}:=\ldots\mu_{i_3}\mu_{i_2}\mu_{i_1}$ be an infinite mutation sequence on $\LF$. Exactly one of the following possibilities occurs:
        \begin{enumerate}[label=(\roman*)]
            \item $\mathcal C_\LF(\mu_\mathbf i)=\LF$ (and therefore $\mathcal D_\LF(\mu_\mathbf i)=\varnothing$). This occurs precisely when every vertex appearing in $\mathbf i$ appears only finitely many times and the reduction $\overline\mu_\mathbf i$ (see Definition~\ref{reduction-def} for a precise definition) is finite.
            \item $\mathcal C_\LF(\mu_\mathbf i)$ and $\mathcal D_\LF(\mu_\mathbf i)$ are both dense in $\LF$. This occurs precisely when every vertex appearing in $\mathbf i$ appears only finitely many times and the reduction $\overline\mu_\mathbf i$ is infinite.
            \item $\mathcal C_\LF(\mu_\mathbf i)$ is not dense. This happens precisely when the set $S_\infty\subseteq\NN$ of vertices appearing infinitely many times in $\mathbf i$ is nonempty. In this case, $\mathcal D_\LF(\mu_\mathbf i)$ is dense in $\LF$ if and only if one (or both) of the following occurs:
            \begin{enumerate}
                \item $S_\infty\subseteq\NN$ is infinite.
                \item $\overline\mu_{\mathbf i_{\NN\setminus S_\infty}}$ is infinite, where $\mathbf i_{\NN\setminus S_\infty}$ is the subsequence of $\mathbf i$ consisting only of those $i\in\NN$ that do not appear infinitely many times in $\mathbf i$.
            \end{enumerate}
        \end{enumerate}
    \end{Thm}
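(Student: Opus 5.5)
We have to guess the definitions. The most likely topology on $\LF=\LF^X_{str}$ is this: basic open sets fix the full local data (all arrows) at finitely many vertices. Hence a sequence of quivers converges iff, for each vertex $v$, the star of $v$ (its arrows and neighbors) is eventually constant. The reduction $\overline\mu_\mathbf i$ presumably deletes mutations that commute past and cancel. Examples are $\mu_i\mu_i$ cancelling, or consecutive mutations at nonadjacent vertices commuting. In the locally finite setting the reduction depends on the quiver, but it is probably defined combinatorially, e.g. $\mu_i\mu_j\mu_i\dots$ patterns.

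The plan is to split by whether $S_\infty$ is empty and then prove the density claims by explicit constructions in basic open sets. In each case, a basic open set $U$ is determined by a finite quiver neighborhood: finitely many vertices $F$ with all their arrows specified. Any quiver in $U$ is free outside $F\cup N(F)$.

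Case (i)/(ii), $S_\infty=\varnothing$.
\begin{itemize}
\item Each vertex is mutated finitely often. So the star of a vertex $v$ changes only when $v$ or a current neighbor of $v$ is mutated.
\item If $\overline\mu_\mathbf i$ is finite, the steps beyond the finite reduction should act trivially in the limit. Each tail mutation is cancelled, so after finitely many steps every star stabilizes on any $Q$, giving $\mathcal C_\LF=\LF$.
\item If $\overline\mu_\mathbf i$ is infinite, density of $\mathcal C$ comes from taking $Q\in U$. We set all vertices outside a large finite set to be isolated, arranging that later mutations act on isolated vertices and so do nothing.
\item Density of $\mathcal D$ comes from attaching, outside $F\cup N(F)$, a chain of arrows linking infinitely many reduced-sequence vertices to a fixed vertex $w$. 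Then $w$'s star changes infinitely often. The point of infinitude of the reduction is exactly that these changes cannot cancel.
\end{itemize}

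Case (iii), $S_\infty\neq\varnothing$.
\begin{itemize}
\item For non-density of $\mathcal C$, take $i\in S_\infty$ and the open set where $i$ has an arrow to some $j$ that is never mutated, or is mutated finitely often. Mutation at $i$ reverses the arrow $i\to j$, and the arrow between $i$ and $j$ only changes sign at mutations of $i$ once $j$ stops being mutated, unless other vertices interfere.
\item The interference is the real issue. Since $i$ is mutated infinitely often, I would prove a lemma: on the open set where $i$ has a suitable fixed configuration with a vertex $j$ outside the sequence's eventual support, $|b_{ij}|$ is eventually constant while its sign alternates. Local finiteness is needed to control contributions $b_{ik}b_{kj}$. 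I expect this to use the paper's c-vector result for abundant acyclic quivers: making the neighborhood abundant acyclic forces the entries to stay nonzero, so the star of $i$ flips forever.
\item For the density of $\mathcal D$ when (a) or (b) holds, use a construction as before. Under (a), place arrows to fresh vertices in $S_\infty$ outside any basic set, which forces divergence. Under (b), use the Case (ii) chain construction on the subsequence.
\item Conversely, if $S_\infty$ is finite and $\overline\mu_{\mathbf i_{\NN\setminus S_\infty}}$ is finite, I would exhibit a basic open set on which every quiver diverges. Take the abundant acyclic configuration on $S_\infty$ together with the neighbors relevant to the finitely many essential other mutations, and apply the lemma to all such quivers.
\end{itemize}

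The main obstacle is the lemma showing persistent nonzero entries (a star that never stabilizes) under arbitrary interleavings of mutations on a finite set with outside mutations. This is where the c-vector sign-coherence and growth for abundant acyclic quivers would be invoked. I would first try to show that the $c$-vector entries are strictly increasing in absolute value along the sequence.
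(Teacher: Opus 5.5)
\textbf{The central construction is missing.} The core of the theorem is the density of $\mathcal D_\LF(\mu_\mathbf i)$ when every letter occurs finitely often and $\overline\mu_\mathbf i$ is infinite. Your sentence ``the point of infinitude of the reduction is exactly that these changes cannot cancel'' asserts this result rather than proving it.

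The reduction of Definition~\ref{reduction-def} is purely word-level. It only deletes even blocks and collapses odd blocks of equal adjacent letters, iterated to $R^\omega$. It never commutes letters, and it does not depend on the quiver. Whether a reduced word moves a given star infinitely often, by contrast, depends heavily on the quiver. On the single arrow $1\to2$, every even-length word in $\mu_1,\mu_2$ acts as the identity. Example~\ref{inf-mut-seq-ex1} shows the infinite reduced word $(1,2,3,\dots)$ converging on a chain through all of its letters.

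So the quiver has to be engineered, and the paper uses two ingredients you do not supply.
\begin{enumerate}
\item Lemma~\ref{inf-seq-normal-form-lemma}, which rests on the linking-family analysis of Proposition~\ref{linking-prop}, gives an infinite $S$ with the following property. After all vertices outside $S$ are kept isolated, the effective reduced sequence is a concatenation $\mathbf j_1\mathbf j_2\cdots$ of nontrivial finite reduced words with pairwise disjoint supports $S_k$.
\item Corollary~\ref{c-vec-off-diag-cor}, which comes from the c-vector stabilization Proposition~\ref{c-vector-stab-prop}, gives for each $\mathbf j_k$ an abundant acyclic quiver $A_k$ on $S_k\sqcup\{\ast_k\}$. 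In it, the unmutated vertex $\ast_k$ ends with a positive c-vector entry at some $p_k\in S_k$.
\end{enumerate}
Now glue $\ast_k=p_{k+1}$ and add one arrow $p_1\to N+1$. Then $N+1$ plays the role of the frozen copy of $p_k$ while $\mathbf j_k$ is applied; if there are $m$ arrows instead of one, everything scales by $m$. Hence after $\mathbf j_1\cdots\mathbf j_k$ the new vertex $p_{k+1}$ has become a neighbour of $N+1$, and the star of $N+1$ never stabilizes. This is where c-vectors are actually needed.

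\textbf{The obstacle you single out in (iii) is not one.} Mutation never changes whether a vertex is isolated, and $\mu_i$ negates every arrow at $i$. Suppose $i\in S_\infty$ is non-isolated; this is an open condition, for example on $W_{Q,\{i,i+1\}}\cap\LF$ with $Q$ the single arrow $i\to i+1$. Then the star of $i$ cannot be eventually constant, because a stable star $T$ would satisfy $T=-T$, i.e.\ $T=0$. No sign-coherence is needed. The lemma you propose (``$|b_{ij}|$ eventually constant with alternating sign'') is also false in general, since other vertices of $S_\infty$ can keep changing $b_{ij}$ forever.

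\textbf{Your converse in (iii) runs in the wrong direction.} An open set on which every quiver \emph{diverges} says nothing about $\mathcal D_\LF(\mu_\mathbf i)$ failing to be dense. You need a nonempty open set contained in $\mathcal C_\LF(\mu_\mathbf i)$. The paper takes, for $S_\infty$ finite, the clopen set of quivers in which every vertex of $S_\infty$ is isolated. By the remark above this set contains $\mathcal C_\LF(\mu_\mathbf i)$, and on it $\mu_\mathbf i$ acts as $\mu_{\mathbf i_{\NN\setminus S_\infty}}$. Cases (i) and (ii) then decide density there.

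\textbf{Case (i) needs more than ``each tail mutation is cancelled''.} Partial products between cancellations can be arbitrary, as in Example~\ref{reduction-ex2}. The paper proceeds in three steps:
\begin{enumerate}
\item It uses Lemma~\ref{reduction-lemma2} to strip off the finite reduction.
\item It uses Lemma~\ref{reduction-lemma3} to find infinitely many times at which the current quiver equals $Q$, and picks one after which no letter lies in the finite support $V$ of the star of $v$.
\item A mutation at $x\notin V$ cannot touch the star of $v$, so that star is frozen from then on.
\end{enumerate}
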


    We should remark that these theorems address a question about only one direction of the relationship between infinite quivers and infinite mutation sequences. That is, we answer a question which takes an infinite mutation sequence $\mu_\mathbf i$ as given and asks about the quivers $Q$ with a certain property in relation to $\mu_\mathbf i$ (in this case, the property of convergence/divergence under $\mu_\mathbf i$). It would be quite interesting to go ``the other way,'' meaning we fix some quiver $Q$ and ask about the mutation sequences $\mu_\mathbf i$ with a certain property in relation to $Q$. For example, one could naturally consider infinite mutation sequences as forming a space homeomorphic to $\NN^\NN$ in their own right and then ask about the subspace of mutation sequences which are convergent/divergent on a given quiver $Q$ (in either $\LF$ or $\AF$). With the exception of the case when $Q$ is finite, however, this seems to be a more difficult question than the one that Theorem~$\ref{LF-inf-mut-seq-thm}$ addresses, and could be the subject of a separate paper.

    \vspace{0.25cm}

    \textbf{4. The Fra\"iss\'e quiver.} We define and examine the mutation properties of a very special countably infinite quiver that we call the \emph{Fra\"iss\'e quiver} $Q_\mathcal Q$ (due to its construction as the \emph{Fra\"iss\'e limit} of the class $\mathcal Q$ of finite quivers). This quiver is uniquely characterized among countably infinite quivers by the properties of (i) having every finite quiver as a full subquiver and (ii) being homogeneous (a certain symmetry property; see Definition~\ref{homog-def}). We prove two seemingly contradictory theorems about this quiver, the first in more generality than is necessary for studying the Fra\"iss\'e quiver in isolation.

    \begin{Thm}\label{Fraisse-singleton-thm}
        Let $\mathcal K$ be a Fra\"iss\'e class of quivers (cf. Definition~\ref{fraisse-class-def}) which is additionally closed under mutation. Then, up to identifying isomorphic quivers, the (finite-)mutation-equivalence class of its \emph{Fra\"iss\'e limit} $Q_\mathcal K$ is a singleton.
    \end{Thm}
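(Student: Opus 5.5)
It suffices to show that for any finite mutation sequence $\mu_{\mathbf i}=\mu_{i_k}\cdots\mu_{i_1}$, the quiver $\mu_{\mathbf i}(Q_\mathcal K)$ is isomorphic to $Q_\mathcal K$. By induction on $k$, this reduces to a single mutation: we show $\mu_v(Q_\mathcal K)\cong Q_\mathcal K$ for every vertex $v$. For this I will use the uniqueness part of Fra\"iss\'e's theorem, as recorded in Definition~\ref{fraisse-class-def} and the surrounding discussion. A countable quiver is isomorphic to $Q_\mathcal K$ if and only if two conditions hold. First, its age (the class of finite quivers embedding as full subquivers) equals $\mathcal K$. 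Second, it is homogeneous, or equivalently it has the extension property: whenever $A\subseteq B$ are in $\mathcal K$ and $f\colon A\to Q$ is a full embedding, $f$ extends to a full embedding $B\to Q$. We also need $\mu_v(Q_\mathcal K)$ to be a legitimate quiver, i.e.\ arrow finite. This holds because mutation at $v$ only changes arrows between neighbors of $v$, and the new arrow counts are finite combinations of old ones.

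The basic tool is the locality of mutation: for a finite set $F$ of vertices containing $v$, the full subquiver $\mu_v(Q)|_F$ equals $\mu_v(Q|_F)$. If $v\notin F$, then $\mu_v(Q)|_F$ is $\mu_v(Q|_{F\cup\{v\}})$ restricted to $F$.

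\emph{Age.} Let $A$ be a finite full subquiver of $\mu_v(Q_\mathcal K)$ on vertex set $F$. Then $A$ is a full subquiver of $\mu_v(Q_\mathcal K|_{F\cup\{v\}})$. The quiver $Q_\mathcal K|_{F\cup\{v\}}$ lies in $\mathcal K$, so its mutation at $v$ lies in $\mathcal K$ because $\mathcal K$ is closed under mutation. Hence $A\in\mathcal K$ by the hereditary property of Fra\"iss\'e classes. Conversely, given $A\in\mathcal K$ on vertex set $F$, pick a fresh vertex $w\notin F$ and let $A'$ be the disjoint union of $A$ and the single vertex $w$. I need $A'\in\mathcal K$; I will get it from the joint embedding property applied to $A$ and the one-vertex quiver, or else handle this edge case by embedding $A$ directly. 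Alternatively, and more simply, use $\mu_w(A')=A'$, which holds because $w$ is isolated. Then embed $A'$ into $Q_\mathcal K$ with $w\mapsto$ some vertex. The cleanest route is to choose the embedding so that $w$ maps to $v$; this is possible by homogeneity of $Q_\mathcal K$, since all single vertices are in one orbit, provided the one-point quiver lies in $\mathcal K$. Now $\mu_v$ fixes $A'$, so $A$ embeds in $\mu_v(Q_\mathcal K)$.

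\emph{Extension property.} Let $A\subseteq B$ be in $\mathcal K$ and let $f\colon A\to\mu_v(Q_\mathcal K)$ be a full embedding with image $F$. There are two cases. If $v\in F$, set $u=f^{-1}(v)$. Then $\mu_u(A)$ embeds into $Q_\mathcal K$ via $f$, by locality and involutivity. Also $\mu_u(A)\subseteq\mu_u(B)\in\mathcal K$. By the extension property of $Q_\mathcal K$, we get an embedding $g\colon\mu_u(B)\to Q_\mathcal K$ extending $f$, and it sends $u$ to $v$. Mutating back, $g$ becomes an embedding $B\to\mu_v(Q_\mathcal K)$. If $v\notin F$, first enlarge $A$ to $A^+=\mu_v(Q_\mathcal K)|_{F\cup\{v\}}$, which is in $\mathcal K$, and extend $B$ to $B^+\supseteq A^+$ by amalgamating $B$ and $A^+$ over $A$ using the amalgamation property of $\mathcal K$. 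Then apply the first case to $A^+\subseteq B^+$.

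The main obstacle I expect is bookkeeping: ensuring that the amalgam in the second case, and the mutated structures in the first case, stay inside $\mathcal K$ with embeddings compatible with the identification $u\mapsto v$. Once the extension property and age are established, Fra\"iss\'e uniqueness gives $\mu_v(Q_\mathcal K)\cong Q_\mathcal K$, and induction finishes the proof.
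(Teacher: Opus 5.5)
Your route is sound and differs from the paper's mainly in packaging. The paper builds the isomorphism $K\to\mu_i(K)$ directly by back-and-forth, starting from $f_1\colon i\mapsto i$. Its ``back'' step uses $\mu_i(K)|_{B_n}=\mu_i(K|_{B_n})\in\mathcal K$ (since $i\in B_n$) together with the extension property of $K$. Its ``forth'' step is exactly the trick in your first case ($v\in F$): extend the mutated piece inside $K$, then mutate back. Every finite piece in that construction contains $i$, so the paper never needs your second case ($v\notin F$), never invokes amalgamation directly, and never computes the age of $\mu_i(K)$ as a separate step.

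You instead show that $\mu_v(Q_\mathcal K)$ has age $\mathcal K$ and the extension property, and let the uniqueness half of Fra\"iss\'e's theorem do the back-and-forth. This is conceptually tidy, since it exhibits $\mu_v(Q_\mathcal K)$ directly as a Fra\"iss\'e limit of $\mathcal K$. It costs you the amalgamation step for partial embeddings that miss $v$. It also relies on the converse of Remark~\ref{weak-homog-rmk}: for countable quivers, the extension property implies homogeneity. That converse is standard (itself a back-and-forth) but is not among the facts the paper records.

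One step does not go through as written. For $\mathcal K\subseteq\text{Age}(\mu_v(Q_\mathcal K))$ you need $A\sqcup\{w\}\in\mathcal K$, and the joint embedding property does not give this. It only produces some $C$ into which $A$ and a point both embed, and the point may land inside the copy of $A$. For example, the class of quivers with at most one vertex satisfies every axiom of Definition~\ref{fraisse-class-def} and is closed under mutation, yet it omits the two-vertex quiver with no arrows. Your remark $\mu_w(A')=A'$ does not address membership either.

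The repair is your own first-case trick. For nonempty $A$, choose $a\in A$ and embed $\mu_a(A)\in\mathcal K$ into $Q_\mathcal K$ with $a\mapsto v$, using the extension property from a single vertex. Locality then shows that $\mu_v(Q_\mathcal K)$ restricted to the image is a copy of $\mu_a\mu_a(A)=A$. Equivalently, this inclusion is just your extension property applied to a one-vertex full subquiver sent to $v$, so that paragraph can be dropped. With that change the argument is complete.
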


    On the other hand, our second theorem shows that the Fra\"iss\'e quiver $Q_\mathcal Q$ is much more flexible under \emph{infinite} mutation sequences, and can be made to approximate \emph{any} countably infinite quiver under an appropriate infinite mutation sequence:

    \begin{Thm}\label{Fraisse-inf-seq-thm}
        Let $F\in\AF$ be isomorphic to the Fra\"iss\'e quiver $Q_\mathcal Q$ and let $Q\in\AF$ be arbitrary. Then there exists an infinite mutation sequence $\mu_\mathbf i$ which converges on $F$ to $Q$, i.e., $\mu_\mathbf i(F)=Q$.
    \end{Thm}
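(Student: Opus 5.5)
We need to prove Theorem~\ref{Fraisse-inf-seq-thm}. Take $X=\NN$ and work in $\AF$ with the weak topology, which we understand as the product topology. In that topology, a sequence of quivers $Q_j$ converges to $Q$ exactly when, for every pair of vertices $a,b$, the arrow count between $a$ and $b$ in $Q_j$ eventually equals the corresponding count in $Q$. Enumerate the vertices as $X=\{x_1,x_2,\dots\}$. The plan is to build the mutation sequence in stages, with finite blocks $\mu^{(1)},\mu^{(2)},\dots$. After stage $n$, the quiver $F_n:=\mu^{(n)}\cdots\mu^{(1)}(F)$ should satisfy two conditions:
\begin{enumerate}[label=(\alph*)]
\item the full subquiver of $F_n$ on $\{x_1,\dots,x_n\}$ equals the full subquiver of $Q$ on these vertices, labels included;
\item $F_n$ is still isomorphic to $Q_\mathcal Q$.
\end{enumerate}
To make the limit exist, we also require that after stage $n$ we never mutate at $x_1,\dots,x_n$, or at any neighbour of them. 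Mutation at a vertex $k$ only changes arrows incident to $k$ or between two neighbours of $k$. So the arrows among $x_1,\dots,x_n$ then never change again.

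This last requirement causes a problem. In an arrow finite quiver, vertex $x_i$ may have infinitely many neighbours, so "never mutate at a neighbour" could rule out far too much. Instead we require each block $\mu^{(n)}$ to be a finite mutation sequence at vertices that, at the moment of mutation, are not adjacent to any of $x_1,\dots,x_{n-1}$ and are distinct from them.

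Here is how stage $n$ would go. By Theorem~\ref{Fraisse-singleton-thm}, applied to $\mathcal Q$ (which is closed under mutation), any finite mutation of a copy of $Q_\mathcal Q$ is again isomorphic to $Q_\mathcal Q$, so (b) is automatic. It remains to achieve (a) for the new vertex $x_n$ while keeping $x_1,\dots,x_{n-1}$ untouched. The arrows between $x_n$ and the earlier vertices must be set to the prescribed values from $Q$. For this we use the extension property of the Fra\"iss\'e limit $F_{n-1}$. Let $A=\{x_1,\dots,x_{n-1}\}$. Choose a finite quiver $R$ on $A\cup\{y,z\}$ with the following properties:
\begin{itemize}
\item its restriction to $A$ is $F_{n-1}|_A$;
\item $z$ is adjacent to neither $A$ nor $y$;
\item the vertex $y$ has arrows to $A$ and to $z$, arranged so that the single mutation $\mu_z$ produces, between $y$ and $A$, the arrows that $x_n$ currently has to $A$ in $F_{n-1}$.
\end{itemize}
Actually it is simpler to run this in reverse. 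We seek a vertex $z$, not adjacent to $A$, such that $\mu_z$ carries the current $x_n$-to-$A$ arrows to the target $Q$-arrows. Mutation at $z$ modifies an arrow $x_n\to a$ by $[b_{x_n z}]_+[b_{za}]_+-\dots$, and since $z$ is not adjacent to $a\in A$, that correction vanishes. So mutation at vertices outside $A\cup N(A)$ cannot change arrows between $x_n$ and $A$.

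Hence we proceed in two steps, applying extension twice via homogeneity:
\begin{enumerate}[label=(\arabic*)]
\item Mutate $x_n$ itself, if necessary, and mutate at fresh auxiliary vertices, which are allowed to be adjacent to $A$ only before they are frozen.
\item Alternatively, and more cleanly, first perform a relabelling step. By homogeneity and universality of $F_{n-1}$, there is a vertex $w\notin A$ such that the full subquiver on $A\cup\{w\}$ is the desired $Q|_{\{x_1,\dots,x_n\}}$. We then swap the roles of $w$ and $x_n$ using mutations: a vertex $u$ adjacent to both $x_n$ and $w$ but to nothing in $A$ lets us ``transport'' arrows.
\end{enumerate}

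The main obstacle is exactly this: mutations cannot freely relabel vertices, so we must realize a ``target neighbourhood of $x_n$ over $A$'' through genuine mutations at vertices avoiding $A$. My first attempt would be a single auxiliary vertex. Using the extension property, pick $z\notin A\cup\{x_n\}$ with the following adjacencies:
\begin{itemize}
\item for each $a\in A$, $z$ has exactly $c_a$ arrows $z\to a$, where $c_a$ is the required change in the $x_n$–$a$ count, with the sign handled by orienting $a\to z$ when needed;
\item there is exactly one arrow $x_n\to z$.
\end{itemize}
Mutating at $z$ then adds $c_a$ to the $x_n$–$a$ entries, as $[1]_+[c_a]_+$. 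The drawback is that it also alters arrows between the $a$'s that are adjacent to $z$ with opposite orientations. To fix this, I would use two auxiliary vertices, splitting the positive and negative corrections, or correct the $A$-internal damage by a second mutation at $z$ after adjusting. If this bookkeeping gets messy, the fallback is to treat $\{x_n\}\cup A$ as a fixed finite quiver. By universality there is a finite full subquiver containing it on which a finite mutation sequence, avoiding $A$, yields the target. This is the step I expect to need the most care.

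Convergence follows, because each entry $b_{x_ix_j}$ stabilises from stage $\max(i,j)$ onwards at its value in $Q$. If the stages used no mutations at all, we append trivial padding, for example $\mu_z\mu_z$ at a far vertex, to make the sequence infinite.
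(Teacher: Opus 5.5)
Your skeleton is the paper's. You fix, stage by stage, the arrows between the new vertex and the earlier ones, using auxiliary vertices supplied by the extension property. You invoke Theorem~\ref{Fraisse-singleton-thm} so that every $F_n$ is again a Fra\"iss\'e quiver, and you get convergence because corrected entries never move again. But the step that does the actual work is exactly the one you leave open, and what you write there fails.

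First, your invariant is self-defeating. You require $\mu^{(n)}$ to mutate only at vertices non-adjacent to $A=\{x_1,\dots,x_{n-1}\}$, and you then correctly observe that such mutations can never change an entry between $x_n$ and $A$. So under that rule, stage $n$ can never succeed when a correction is needed.

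Second, the one-vertex gadget does not do what you claim. Take a single arrow $x_n\to z$ and, for $c_a<0$, arrows $a\to z$. Then there is no path of length two through $z$ joining $a$ and $x_n$, so $\mu_z$ leaves that entry unchanged: the term $[1]_+[c_a]_+$ is $0$, not $c_a$. And whenever $z$ has both an in-neighbour and an out-neighbour in $A$, the paths $a\to z\to a'$ alter the subquiver on $A$ you were supposed to preserve. The ``swap'' through a vertex $u$ non-adjacent to $A$ is ruled out by your own observation, and the ``fallback'' just restates what must be proved.

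The missing ingredients are the right invariant and a gadget whose paths of length two are exactly the intended ones. A mutation at $z\notin A$ leaves the full subquiver on $A$ unchanged precisely when there are no $u,v\in A$ with arrows $u\to z\to v$. So auxiliary vertices must be allowed to be adjacent to $A$, as long as none of them has both an in-neighbour and an out-neighbour there.

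The paper's gadget works as follows. For each $a\in A$ with $c_a\neq0$, it takes a separate vertex $w_a$ with these properties:
\begin{itemize}
\item $w_a$ is joined to $a$ by a single arrow and to $x_n$ by $|c_a|$ arrows;
\item these arrows are oriented so that $a,w_a,x_n$ form a directed path of length two pointing in the direction of the required correction;
\item $w_a$ is adjacent to nothing else in $A$, and the $w_a$ are pairwise non-adjacent.
\end{itemize}
The extension property of $F_{n-1}$ produces these vertices. Then $\mu_{w_a}$ changes only the entry between $a$ and $x_n$, by exactly the required amount.

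Your ``two auxiliary vertices'' remark can also be made to work, but only if the arrow to $x_n$ is reversed on the negative side:
\begin{itemize}
\item one arrow $x_n\to z_+$, with $c_a$ arrows $z_+\to a$ for each $c_a>0$;
\item one arrow $z_-\to x_n$, with $|c_a|$ arrows $a\to z_-$ for each $c_a<0$;
\item $z_+$ and $z_-$ non-adjacent.
\end{itemize}
Then $x_n$ is the only in-neighbour of $z_+$ and the only out-neighbour of $z_-$ within $A\cup\{x_n\}$. So only the entries between $x_n$ and $A$ move.

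With either gadget, plus a padding mutation at a vertex isolated from $\{x_1,\dots,x_n\}$ when nothing needs correcting, the rest of your argument goes through.
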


    In tandem, these theorems show that infinite mutation sequences behave \emph{very} differently than finite mutation sequences in general.

    \vspace{0.25cm}

    \textbf{5. The mutation class topology.} Recently, T. Ervin and B. Jackson \cite{ervin_topology_2024} have constructed a topological space $\mathcal M$ whose points are mutation-equivalence classes of finite unlabeled quivers. The topology they construct, which they call the \emph{mutation class topology,} is the Alexandrov topology induced by the poset structure on the set of mutation classes given by embeddings of full subquivers. They establish several topological properties of the space $\mathcal M$ beyond what is true for Alexandrov topologies in general. We establish a connection between the subspace $\mathcal M'\subseteq\mathcal M$ of their mutation class space consisting of finite unlabeled quivers \emph{without isolated vertices} and our space $\AF$:

    \begin{Thm}\label{mut-class-top-thm}
        Let $\mathbf{Fin}\subseteq\AF$ be the subspace of $\AF$ consisting of finite quivers, and let $\sim$ denote the equivalence relation on $\mathbf{Fin}$ induced by (finite-)mutation-equivalence and isomorphism. Then $\mathbf{Fin}/\!\sim$ is homeomorphic to $\mathcal M'$ by the map sending $[Q]_\sim\in\mathbf{Fin}/\!\sim$ to the mutation-equivalence class of $Q'$, the quiver obtained from $Q\in\mathbf{Fin}$ by removing all of its isolated vertices \emph{if $Q$ is not the quiver with no arrows} or to the mutation-equivalence class of the one-vertex quiver if it is.
    \end{Thm}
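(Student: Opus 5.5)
The plan is to build the map explicitly, check that it is a well-defined bijection, and then show it is continuous and open by comparing basic neighbourhoods in $\AF$ with up-sets in $\mathcal M'$. Write $\pi\colon\mathbf{Fin}\to\mathbf{Fin}/\!\sim$ for the quotient map and $\Phi\colon\mathbf{Fin}/\!\sim\,\to\mathcal M'$ for the map in the statement.

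\textbf{Conventions assumed.} I take $\mathbf{Fin}$ to consist of the quivers on $X$ with finitely many arrows; every vertex outside the support of such a quiver is isolated. For the Ervin--Jackson order I assume $[A]\le[B]$ if and only if some quiver mutation-equivalent to $A$ is isomorphic to a full subquiver of some quiver mutation-equivalent to $B$. I assume the open sets of the Alexandrov topology on $\mathcal M'$ are the up-closed sets; if their convention is the opposite one, the argument is the same after dualizing. The key fact about $\AF=\AF^X_{weak}$ is the following. If $Q\in\mathbf{Fin}$ and $S\subseteq X$ is a finite set containing the support of $Q$, then fixing the arrow counts on all pairs from $S$ gives a basic neighbourhood $N_S(Q)$. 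Any $R\in N_S(Q)$ contains $Q|_S$ as the full subquiver on $S$. These sets form a neighbourhood base at $Q$ as $S$ increases.

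\textbf{Well-definedness and bijectivity.} Mutation at an isolated vertex does nothing. Mutation at any other vertex never creates arrows at an isolated vertex, and isomorphisms carry isolated vertices to isolated vertices. Hence $Q\mapsto Q'$ turns mutation-equivalence and isomorphism into the corresponding relations on quivers without isolated vertices. The only exception is the arrowless quiver, which the statement sends by convention to the one-vertex class. I also need that any quiver mutation-equivalent to a nonempty $Q'$ has no arrowless form, which holds because mutation preserves being non-arrowless. So $\Phi$ is well defined.

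Surjectivity holds because $X$ is infinite, so every finite quiver without isolated vertices can be placed on finitely many vertices of $X$. For injectivity, note that $Q$ and any relabelled copy of $Q'$ inside $X$ are isomorphic as quivers on $X$, since both have the same non-isolated part and all other vertices are isolated. Therefore $\Phi([Q])=\Phi([R])$ gives $Q\sim R$.

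\textbf{Continuity of $\Phi$.} It suffices to show that $\Phi\circ\pi$ is continuous. Let $U\subseteq\mathcal M'$ be up-closed, and let $Q\in(\Phi\circ\pi)^{-1}(U)$ with support contained in a finite set $S$. Every $R\in N_S(Q)\cap\mathbf{Fin}$ contains $Q'$ as a full subquiver, so $[Q']\le[R']$ and hence $[R']\in U$. The edge case is $Q$ arrowless: the one-vertex class lies below every class in $\mathcal M'$, so the same conclusion holds.

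\textbf{Openness of $\Phi$ (the main step).} Let $V$ be open in $\mathbf{Fin}/\!\sim$, so $W=\pi^{-1}(V)$ is open and $\sim$-saturated. I need to show that $\Phi(V)$ is up-closed. Suppose $[Q']\in\Phi(V)$ and $[Q']\le[B]$. Choose $\tilde Q$ mutation-equivalent to $Q'$ and $\tilde B$ mutation-equivalent to $B$ with $\tilde Q$ isomorphic to a full subquiver of $\tilde B$. Since $W$ is saturated, I may realise $\tilde Q$ on $X$ and assume $\tilde Q\in W$. Since $W$ is open, some basic set $N_S(\tilde Q)$ lies in $W$, with $S$ finite and containing the support of $\tilde Q$. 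Now place $\tilde B$ on $X$ as follows: put the copy of $\tilde Q$ exactly on its vertices in $X$, and put the remaining vertices of $\tilde B$ on distinct vertices of $X\setminus S$, which is possible since $X$ is infinite. The resulting quiver $R$ agrees with $\tilde Q$ on all pairs from $S$, because the vertices of $S$ outside the support of $\tilde Q$ stay isolated in $R$. Thus $R\in N_S(\tilde Q)\subseteq W$, so $[B]=\Phi(\pi(R))\in\Phi(V)$.

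I expect this last step to be the main obstacle. It needs care about exactly which pairs the basic open set fixes, and about vertices of $S$ that are isolated in $\tilde Q$. One must also check that the empty-quiver convention in the statement matches the bottom element of $\mathcal M'$.
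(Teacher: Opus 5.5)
Your proof is correct. Its two key moves match the paper's: continuity by fixing the support of a representative, and openness by re-embedding a larger class around a representative, with the extra vertices placed where the neighbourhood imposes no constraint.

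The organization differs. The paper first shows that $\pi$ is open, using that mutations and relabelings act on $\AF$ by homeomorphisms, and then works with the basis $\pi(U_{Q,V}\cap\mathbf{Fin})$ of the quotient. You instead check continuity on $\Phi\circ\pi$. For openness you take an arbitrary open $V$ and work with the saturated open set $\pi^{-1}(V)$. Saturation lets you choose the pair $\tilde Q\le\tilde B$ first and only then move $\tilde Q$ into $\pi^{-1}(V)$. As a result your argument does not use the continuity of mutation maps, and it never has to re-mutate $R$ to arrange $P\le R$. The paper's route, in exchange, gives an explicit basis of $\mathbf{Fin}/\!\sim$ as a by-product.

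Your treatment of the neighbourhood is also the more careful one. The paper asserts $U_{W,\mathrm{supp}(W)}\cap\mathbf{Fin}\subseteq U_{Q,V}\cap\mathbf{Fin}$ and places the new vertices above $\max\mathrm{supp}(W)$. That inclusion can fail when $V\not\subseteq\mathrm{supp}(W)$, and should use $\mathrm{supp}(W)\cup V$ instead. You take any finite $S\supseteq\mathrm{supp}(\tilde Q)$ with $N_S(\tilde Q)\cap\mathbf{Fin}\subseteq\pi^{-1}(V)$ and put the new vertices outside $S$. This handles exactly the isolated-vertex issue you flagged.
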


\subsection{Organization} The layout of the paper is as follows. We recall basic definitions and facts from the theory of quiver mutations and provide appropriate modifications of these definitions and facts for our purposes in Section~\ref{sec-2}. We provide definitions of the spaces $\LF^X_{str}$, $\LF^X_{weak}$, $\AF^X_{str}$, and $\AF^X_{weak}$ of interest in Section~\ref{sec-3}, prove that mutations give self-homeomorphisms on these spaces, and prove Theorem~\ref{N^N-homeo-thm}. We then define a handful of subspaces of $\LF=\LF^\NN_{str}$ and $\AF=\AF^\NN_{weak}$ of interest and examine some of their topological properties in Section~\ref{sec-4}, in which we also prove Theorem~\ref{hereditary-meta-thm}. From there, we turn our attention to infinite mutation sequences and prove Theorems~\ref{AF-inf-mut-seq-thm} and \ref{LF-inf-mut-seq-thm} in Section~\ref{sec-5} to partially classify the density of the domains of convergence and divergence of infinite mutation sequences on $\AF$ and $\LF$, providing a complete classification in the $\LF$ case. Along the way to Theorem~\ref{LF-inf-mut-seq-thm}, we first take a detailed look at the combinatorial properties of infinite mutation sequences and then work to establish a certain ``c-vector stabilization'' phenomenon for a large class of finite quivers and finite mutation sequences applied to them. In Section~\ref{sec-6}, we define and examine the Fra\"iss\'e quiver and demonstrate a stark difference between its finite and infinite mutation classes in $\AF$ with Theorems~\ref{Fraisse-singleton-thm} and \ref{Fraisse-inf-seq-thm}. Section~\ref{sec-7} recalls the mutation class space $\mathcal M$ of Ervin and Jackson and gives a proof of Theorem~\ref{mut-class-top-thm}. Finally, we discuss possible avenues for future work concerning this topological approach to the theory of infinite quivers and their mutations in Section~\ref{sec-8}.

\subsection*{Acknowledgments}
The author would like to thank Garett Cunningham, Chinmay Dharmendra, Tucker Ervin, Elsa Frankel, Yuxiao Fu, Dion Mann, Scott Neville, Khrystyna Serhiyenko, Reed Solomon, Samuel Stuart, Oscar Quester, and the Cluster Algebras Reading Group at the University of Connecticut for a number of interesting conversations and helpful feedback throughout various stages of the preparation of this paper which greatly influenced its content, direction, and form. The author would especially like to thank his colleague, Blake Jackson, and his advisor, Ralf Schiffler, for their continued support with this project. This work was partially supported by a U.S. Department of Education GAANN Fellowship as well as the NSF grant DMS-2348909.

%% Section 2: Preliminaries %%
\section{Preliminaries}\label{sec-2}

We recall some basic ingredients from the theory of quivers and their mutations in a form that we find most convenient for our purposes.

\begin{Def}\label{quiv-def}
    Let $X$ be a set. An \emph{arrow finite quiver $Q$ with vertex set $X$ without loops or oriented $2$-cycles (or just ``quiver on $X$'')} is a skew-symmetric function $Q:X\times X\to\ZZ$, i.e. $Q$ has the property that for all $x,y\in X$, it holds that $Q(x,y)=-Q(y,x)$. We view such a function as a quiver in the traditional sense by interpreting $Q(x,y)$ as the signed number of arrows from the vertex $x$ to the vertex $y$ (see Remark~\ref{quiv-diagram-rmk} below).

    We let $\AF^X$ denote the set of all arrow finite quivers on a given vertex set $X$.
\end{Def}

\begin{Rmk}\label{quiv-diagram-rmk}
    We often represent quivers $Q:X\times X\to\ZZ$ using diagrams, placing a node for each element of $X$ and arrows between those nodes according to $Q$. When $|Q(x,y)|>1$, we may either draw out exactly $|Q(x,y)|$ arrows from $x$ to $y$ (or vice versa depending on the sign of $Q(x,y)$) or place a single arrow from $x$ to $y$ (or vice versa) and label that arrow with the number $|Q(x,y)|$.
\end{Rmk}

\begin{Ex}\label{quiv-ex1}
    The quiver $Q:\NN\times\NN\to\ZZ$ given diagrammatically by
    % https://q.uiver.app/#q=WzAsNSxbMCwwLCIxIl0sWzEsMCwiMiJdLFsyLDAsIjMiXSxbMywwLCI0Il0sWzQsMCwiXFxkb3RzIl0sWzEsMl0sWzIsM10sWzMsNF0sWzAsMV1d
    \[\begin{tikzcd}[cramped]
    	1 & 2 & 3 & 4 & \dots
    	\arrow[from=1-1, to=1-2]
    	\arrow[from=1-2, to=1-3]
    	\arrow[from=1-3, to=1-4]
    	\arrow[from=1-4, to=1-5]
    \end{tikzcd}\]
    is an arrow finite quiver with vertex set $\NN$. This quiver's underlying undirected graph is an infinite ray, so we say this quiver is \emph{of type $\mathbb A_\infty$}.
\end{Ex}

\begin{Ex}\label{quiv-ex2}
    The quiver $Q:\ZZ^2\times\ZZ^2\to\ZZ$ given diagrammatically by
% https://q.uiver.app/#q=WzAsMjEsWzAsMiwiXFxkb3RzIl0sWzEsMiwiKC0xLDApIl0sWzIsMiwiKDAsMCkiXSxbMywyLCIoMSwwKSJdLFs0LDIsIlxcZG90cyJdLFsxLDEsIigtMSwxKSJdLFsyLDEsIigwLDEpIl0sWzMsMSwiKDEsMSkiXSxbMSwzLCIoLTEsLTEpIl0sWzIsMywiKDAsLTEpIl0sWzMsMywiKDEsLTEpIl0sWzEsMCwiXFx2ZG90cyJdLFsyLDAsIlxcdmRvdHMiXSxbMywwLCJcXHZkb3RzIl0sWzAsMSwiXFxkb3RzIl0sWzQsMSwiXFxkb3RzIl0sWzAsMywiXFxkb3RzIl0sWzQsMywiXFxkb3RzIl0sWzEsNCwiXFx2ZG90cyJdLFsyLDQsIlxcdmRvdHMiXSxbMyw0LCJcXHZkb3RzIl0sWzEsMl0sWzIsM10sWzMsNF0sWzAsMV0sWzEsNV0sWzIsNl0sWzMsN10sWzgsMV0sWzksMl0sWzEwLDNdLFs1LDExXSxbNiwxMl0sWzcsMTNdLFsxNCw1XSxbNywxNV0sWzUsNl0sWzYsN10sWzE2LDhdLFs5LDEwXSxbMTAsMTddLFsxOCw4XSxbMTksOV0sWzIwLDEwXSxbOCw5XV0=
\[\begin{tikzcd}[cramped]
	& \vdots & \vdots & \vdots & \\
	\dots & {(-1,1)} & {(0,1)} & {(1,1)} & \dots \\
	\dots & {(-1,0)} & {(0,0)} & {(1,0)} & \dots \\
	\dots & {(-1,-1)} & {(0,-1)} & {(1,-1)} & \dots \\
	& \vdots & \vdots & \vdots
	\arrow[from=2-1, to=2-2]
	\arrow[from=2-2, to=1-2]
	\arrow[from=2-2, to=2-3]
	\arrow[from=2-3, to=1-3]
	\arrow[from=2-3, to=2-4]
	\arrow[from=2-4, to=1-4]
	\arrow[from=2-4, to=2-5]
	\arrow[from=3-1, to=3-2]
	\arrow[from=3-2, to=2-2]
	\arrow[from=3-2, to=3-3]
	\arrow[from=3-3, to=2-3]
	\arrow[from=3-3, to=3-4]
	\arrow[from=3-4, to=2-4]
	\arrow[from=3-4, to=3-5]
	\arrow[from=4-1, to=4-2]
	\arrow[from=4-2, to=3-2]
	\arrow[from=4-2, to=4-3]
	\arrow[from=4-3, to=3-3]
	\arrow[from=4-3, to=4-4]
	\arrow[from=4-4, to=3-4]
	\arrow[from=4-4, to=4-5]
	\arrow[from=5-2, to=4-2]
	\arrow[from=5-3, to=4-3]
	\arrow[from=5-4, to=4-4]
\end{tikzcd}\]
is an arrow finite quiver with vertex set $\ZZ^2$.
\end{Ex}

\begin{Ex}\label{quiv-ex3}
    The quiver $Q:\mathbb R\times\mathbb R\to\ZZ$ given for all $r,s\in\mathbb R$ by
    $$Q(r,s):=\begin{cases}
        1 & \text{if }s>r\text{ and }r-s\in\mathbb Q\\
        -1 & \text{if }s<r\text{ and }r-s\in\mathbb Q\\
        0 & \text{otherwise}
    \end{cases}$$
    is an arrow finite quiver (since there is at most one arrow between any two vertices) with vertex set $\mathbb R$ in which every vertex has countably infinite degree.
\end{Ex}

In order to define mutations, we have required by Definition~\ref{quiv-def} that quivers are free from loops and oriented 2-cycles, and we have also required that quivers are \emph{arrow finite,} i.e., the number of arrows between any two vertices is finite. Typically, the latter requirement mentioned here is implicitly addressed by asking that quivers are finite, meaning their sets of vertices and arrows are both finite. Naturally, we explicitly choose not to impose this condition in this paper and define finiteness in a slightly different way (see Remark~\ref{fin-quiv-rmk} below).

We may now define quiver mutations in our setting.

\begin{Def}\label{quiv-mut-def}
    Let $Q:X\times X\to\ZZ$ be a quiver without loops or oriented $2$-cycles on a vertex set $X$ and let $x\in X$. We define the \emph{mutation of $Q$ in direction $x$} (or the \emph{mutation of $Q$ at $x$}) to be the quiver $\mu_x(Q)=Q':X\times X\to\ZZ$ on $X$ defined for any $v,w\in X$ as follows:
    $$Q'(v,w)=\begin{cases}
        -Q(v,w) & \text{if }v=x\text{ or }x=w\\
        Q(v,w)+Q(v,x)[Q(x,w)]_++[-Q(v,x)]_+Q(x,w) & \text{if }v\neq x\neq w,
    \end{cases}$$
    where $[a]_+:=\max\{a,0\}$.

    One can alternatively define quiver mutations diagrammatically. Given a quiver $Q:X\times X\to\ZZ$ and a vertex $x\in X$, one can mutate $Q$ at $x$ to obtain $\mu_x(Q)$ via the following three-step process:
    \begin{enumerate}
        \item For every path $v\to x\to w$ of length $2$ centered at $x$, add a new arrow $v\to w$.
        \item Reverse every arrow incident to $x$.
        \item Delete any oriented $2$-cycles that may have arisen in the first step.
    \end{enumerate}
\end{Def}

We note right away that mutation is an \emph{involution,} meaning that $\mu_x(\mu_x(Q))=Q$ for any $x\in X$. Importantly, though, iterated mutations at \emph{different} vertices of a quiver need not cancel one another out. We say that two quivers $Q$ and $Q'$ on a vertex set $X$ are \emph{mutation-equivalent} if there is a \emph{finite} sequence of mutations connecting one to the other. Since individual mutations are involutive it holds that finite mutation sequences are invertible, so this relation truly is an equivalence relation.

\begin{Ex}\label{quiv-mut-ex1}
    Suppose $Q:\{1,2,3\}\times\{1,2,3\}\to\ZZ$ is given diagrammatically by:
    % https://q.uiver.app/#q=WzAsMyxbMCwxLCIxIl0sWzEsMCwiMiJdLFsyLDEsIjMiXSxbMCwxXSxbMSwyXSxbMiwwXV0=
    \[\begin{tikzcd}[cramped]
    	& 2 & \\
    	1 && 3
    	\arrow[from=1-2, to=2-3]
    	\arrow[from=2-1, to=1-2]
    	\arrow[from=2-3, to=2-1]
    \end{tikzcd}\]
    Mutating this quiver at the vertex $2$ produces the following quiver $\mu_2(Q):\{1,2,3\}\times\{1,2,3\}\to\ZZ$ given by:
    % https://q.uiver.app/#q=WzAsMyxbMSwwLCIyIl0sWzAsMCwiMSJdLFsyLDAsIjMiXSxbMiwwXSxbMCwxXV0=
    \[\begin{tikzcd}[cramped]
    	1 & 2 & 3
    	\arrow[from=1-2, to=1-1]
    	\arrow[from=1-3, to=1-2]
    \end{tikzcd}\]
\end{Ex}

\begin{Ex}\label{quiv-mut-ex2}
    Let $Q:\NN\times\NN\to\ZZ$ be given diagrammatically by:
    % https://q.uiver.app/#q=WzAsNCxbMSwwLCIyIl0sWzAsMCwiMSJdLFsyLDAsIjMiXSxbMywwLCJcXGRvdHMiXSxbMSwwXSxbMSwyLCIiLDIseyJjdXJ2ZSI6LTF9XSxbMSwzLCIiLDIseyJjdXJ2ZSI6LTJ9XV0=
    \[\begin{tikzcd}[cramped]
    	1 & 2 & 3 & \dots
    	\arrow[from=1-1, to=1-2]
    	\arrow[curve={height=-12pt}, from=1-1, to=1-3]
    	\arrow[curve={height=-30pt}, from=1-1, to=1-4]
    \end{tikzcd}\]
    where we have a single arrow $1\to n$ for all $n\geq2$. Mutating this quiver at the vertex $1$ produces the following:
    % https://q.uiver.app/#q=WzAsNCxbMSwwLCIyIl0sWzAsMCwiMSJdLFsyLDAsIjMiXSxbMywwLCJcXGRvdHMiXSxbMCwxXSxbMiwxLCIiLDAseyJjdXJ2ZSI6MX1dLFszLDEsIiIsMCx7ImN1cnZlIjoyfV1d
    \[\begin{tikzcd}[cramped]
    	1 & 2 & 3 & \dots
    	\arrow[from=1-2, to=1-1]
    	\arrow[curve={height=12pt}, from=1-3, to=1-1]
    	\arrow[curve={height=30pt}, from=1-4, to=1-1]
    \end{tikzcd}\]
    Mutating again at the vertex $3$ subsequently produces:
    % https://q.uiver.app/#q=WzAsNCxbMSwwLCIyIl0sWzAsMCwiMSJdLFsyLDAsIjMiXSxbMywwLCJcXGRvdHMiXSxbMCwxXSxbMywxLCIiLDAseyJjdXJ2ZSI6Mn1dLFsxLDIsIiIsMCx7ImN1cnZlIjotMX1dXQ==
    \[\begin{tikzcd}[cramped]
    	1 & 2 & 3 & \dots
    	\arrow[curve={height=-12pt}, from=1-1, to=1-3]
    	\arrow[from=1-2, to=1-1]
    	\arrow[curve={height=30pt}, from=1-4, to=1-1]
    \end{tikzcd}\]
\end{Ex}

Another operation on quivers which will be relevant is \emph{restriction.}

\begin{Def}\label{restr-def}
    Let $Q:X\times X\to\ZZ$ be a quiver on a set $X$ and let $V\subseteq X$ be a subset of vertices. The \emph{restriction} of $Q$ to $V$ is the quiver $\rho_V(Q):=Q':X\times X\to\ZZ$ with vertex set $X$ given by
    $$Q'(x,y)=\begin{cases}
        Q(x,y) & \text{if }x\in V\text{ and }y\in V\\
        0 & \text{otherwise}
    \end{cases}$$
    for all $x,y\in X$. We say that any quiver $Q'$ obtainable from $Q$ in this way is a \emph{full subquiver} of $Q$ and that $Q'$ \emph{embeds} into $Q$, occasionally written $Q'\leq Q$.
\end{Def}

This definition varies slightly from what is traditional: typically one discards the vertices in $X\setminus V$ altogether. We keep them in $\rho_V(Q)$ as isolated vertices intentionally in order to keep the vertex sets of $Q$ and $\rho_V(Q)$ the same. This also allows us to view $\rho_V:\AF^X\to\AF^X$ as a map from $\AF^X$ to itself.

An important property of restriction is that it commutes with mutations at vertices in the corresponding full subquiver:

\begin{Lem}\label{mut-restr-commute-lemma}
    If $Q:X\times X\to\ZZ$ is an arrow finite quiver on a vertex set $X$ and $V\subseteq X$, then $\mu_x(\rho_V(Q))=\rho_V(\mu_x(Q))$ for all $x\in V$. Equivalently, we have that $\mu_x\circ\rho_V=\rho_V\circ\mu_x$ as maps $\AF^X\to\AF^X$.
\end{Lem}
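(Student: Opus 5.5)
We fix $x\in V$ and $v,w\in X$, and compare the values $\mu_x(\rho_V(Q))(v,w)$ and $\rho_V(\mu_x(Q))(v,w)$ directly from Definitions~\ref{quiv-mut-def} and \ref{restr-def}. Both sides are skew-symmetric functions $X\times X\to\ZZ$, so equality of these values for every pair $(v,w)$ proves the lemma. The second formulation, $\mu_x\circ\rho_V=\rho_V\circ\mu_x$ on $\AF^X$, is just a restatement of the first. We split into cases according to whether $v,w$ lie in $V$ and whether either equals $x$.

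\emph{Case 1: $v\notin V$ or $w\notin V$.} Then $\rho_V(\mu_x(Q))(v,w)=0$ by definition. Neither $v$ nor $w$ can equal $x$ in this case, so we use the second line of the mutation formula. Say $v\notin V$. Then $\rho_V(Q)(v,w)=0$ and $\rho_V(Q)(v,x)=0$, so every term of
$$\rho_V(Q)(v,w)+\rho_V(Q)(v,x)[\rho_V(Q)(x,w)]_++[-\rho_V(Q)(v,x)]_+\rho_V(Q)(x,w)$$
vanishes, using $[0]_+=0$. The case $w\notin V$ is symmetric: the factors $\rho_V(Q)(v,w)$ and $\rho_V(Q)(x,w)$ vanish.

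\emph{Case 2: $v,w\in V$ and $x\in\{v,w\}$.} Both sides equal $-Q(v,w)$. The left side is $-\rho_V(Q)(v,w)=-Q(v,w)$, and the right side is $\mu_x(Q)(v,w)=-Q(v,w)$.

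\emph{Case 3: $v,w\in V\setminus\{x\}$.} All of $v,w,x$ lie in $V$, so $\rho_V(Q)$ agrees with $Q$ on the three pairs $(v,w),(v,x),(x,w)$. The two instances of the mutation formula therefore coincide term by term, and both equal $\mu_x(Q)(v,w)=\rho_V(\mu_x(Q))(v,w)$.

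No step here is a real obstacle. The only point requiring care is Case 1: we must check that the correction terms vanish because the factor involving the vertex outside $V$ is zero. This is exactly where the hypothesis $x\in V$ matters, since for $x\notin V$ the term $Q(v,x)$ could survive on the right side while vanishing on the left.
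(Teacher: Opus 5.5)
Your proposal takes the same approach as the paper: a direct pointwise comparison of $\mu_x(\rho_V(Q))(v,w)$ and $\rho_V(\mu_x(Q))(v,w)$ using the two defining formulas, split by whether $v,w\in V$ and whether $x\in\{v,w\}$. Cases 2 and 3 are fine. Two small corrections are needed.

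\emph{Case 1.} The assertion that neither $v$ nor $w$ can equal $x$ is false. You can have $v\notin V$ and $w=x$, or the reverse. That subcase uses the first line of the mutation formula, which gives $-\rho_V(Q)(v,x)=0$, so the conclusion of Case 1 still holds.

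\emph{The closing remark.} It misplaces where $x\in V$ is used. Case 1 holds even when $x\notin V$, since both sides vanish. The hypothesis is really needed in Case 3, where it guarantees $\rho_V(Q)(v,x)=Q(v,x)$ and $\rho_V(Q)(x,w)=Q(x,w)$. Without it, the left side would collapse to $Q(v,w)$ while the right side keeps the correction terms.
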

    \begin{proof}
        Let $Q:X\times X\to\ZZ$ be an arrow finite quiver on $X$ and let $V\subseteq X$, $x\in V$ be fixed. For any $v,w\in X$, we have by Definition~\ref{quiv-mut-def} that
        $$\mu_x(Q)(v,w)=\begin{cases}
        -Q(v,w) & \text{if }v=x\text{ or }x=w\\
        Q(v,w)+Q(v,x)[Q(x,w)]_++[-Q(v,x)]_+Q(x,w) & \text{if }v\neq x\neq w.
        \end{cases}$$
        Applying $\rho_V$ then yields by Definition~\ref{restr-def}
        $$\rho_V\circ\mu_x(Q)(v,w)$$
        $$=\begin{cases}
        -Q(v,w) & \text{if }(v=x\text{ or }x=w)\text{ and }(v\in V\text{ and }w\in V)\\
        Q(v,w)+Q(v,x)[Q(x,w)]_++[-Q(v,x)]_+Q(x,w) & \text{if }v\neq x\neq w\text{ and }(v\in V\text{ and }w\in V)\\
        0 & \text{if }v\not\in V\text{ or }w\not\in V.
        \end{cases}$$
        On the other hand, observe that applying $\rho_V$ first gives us
        $$\rho_V(Q)(v,w)=\begin{cases}
        Q(v,w) & \text{if }v\in V\text{ and }w\in V\\
        0 & \text{if }v\not\in V\text{ or }w\not\in V,
        \end{cases}$$
        which, when followed by $\mu_x$, leaves us with
        $$\mu_x\circ\rho_V(Q)(v,w)$$
        $$=\begin{cases}
        -\rho_V(Q)(v,w) & \text{if }v=x\text{ or }x=w\\
        \rho_V(Q)(v,w)+\rho_V(Q)(v,x)[\rho_V(Q)(x,w)]_++[-\rho_V(Q)(v,x)]_+\rho_V(Q)(x,w) & \text{if }v\neq x\neq w
        \end{cases}$$
        $$=\begin{cases}
        -Q(v,w) & \text{if }(v\in V\text{ and }w\in V)\text{ and }(v=x\text{ or }x=w)\\
        Q(v,w)+Q(v,x)[Q(x,w)]_++[-Q(v,x)]_+Q(x,w) & \text{if }(v\in V\text{ and }w\in V)\text{ and }v\neq x\neq w\\
        0 & \text{if }v\not\in V\text{ or }w\not\in V
        \end{cases}$$
        $$=\rho_V\circ\mu_x(Q)(v,w).$$
        Thus, we see that $\rho_V\circ\mu_x(Q)=\mu_x\circ\rho_V(Q)$, proving the claim.
    \end{proof}

The following notion is a modification of the restriction operation that will be used in Section~\ref{sec-3} and beyond. To the best of our knowledge, this definition has not appeared previously in the literature.

\begin{Def}\label{overfull-def}
    Let $Q:X\times X\to\ZZ$ be a quiver on a vertex set $X$ and let $V\subseteq X$ be a subset of vertices of $Q$. The \emph{overfull subquiver of $Q$ induced by $V$} (or just the \emph{overfill of $Q$ on $V$}) is the quiver $\lambda_V(Q):=Q':X\times X\to\ZZ$ on the vertex set $X$ given by
    $$Q'(x,y)=\begin{cases}
        Q(x,y) & \text{if }x\in V\text{ or }y\in V\\
        0 & \text{otherwise}
    \end{cases}$$
    for all $x,y\in X$. We say that any quiver $Q'$ obtainable from a quiver $Q$ as the overfill on some subset $X'\subseteq X$ is an \emph{overfull subquiver} of $Q$.

    As with the restriction maps $\rho_V:\AF^X\to\AF^X$, we similarly view the operation of taking the overfill of a quiver on some set of vertices $V\subseteq X$ as a map $\lambda_V:\AF^X\to\AF^X$.
\end{Def}

We have a lemma similar to Lemma~\ref{mut-restr-commute-lemma} describing the relationship between taking the overfill of a quiver on a set of vertices $V$ and mutating within $V$:

\begin{Lem}\label{mut-overfill-near-commute-lemma}
    If $Q:X\times X\to\ZZ$ is an arrow finite quiver on a vertex set $X$ and $V\subseteq X$, then $\lambda_V(\mu_x(\lambda_V(Q)))=\lambda_V(\mu_x(Q))$ for all $x\in V$. Equivalently, we have that $\lambda_V\circ\mu_x\circ\lambda_V=\lambda_V\circ\mu_x$ as maps $\AF^X\to\AF^X$.
\end{Lem}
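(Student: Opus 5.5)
We prove the identity pointwise, in the same style as Lemma~\ref{mut-restr-commute-lemma}. Fix $Q$, $V\subseteq X$, $x\in V$, and write $P:=\lambda_V(Q)$. We must show that for all $v,w\in X$, the values $\lambda_V(\mu_x(P))(v,w)$ and $\lambda_V(\mu_x(Q))(v,w)$ agree. If neither $v$ nor $w$ lies in $V$, both sides are $0$ by Definition~\ref{overfull-def}. So we assume that $v\in V$ or $w\in V$. Then both sides reduce to comparing $\mu_x(P)(v,w)$ with $\mu_x(Q)(v,w)$, and we do this via the case split of Definition~\ref{quiv-mut-def}.

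\emph{Case 1: $v=x$ or $w=x$.} Then $\mu_x(P)(v,w)=-P(v,w)$ and $\mu_x(Q)(v,w)=-Q(v,w)$. Since at least one of $v,w$ lies in $V$, we have $P(v,w)=Q(v,w)$.

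\emph{Case 2: $v\neq x\neq w$.} Here
\[
\mu_x(P)(v,w)=P(v,w)+P(v,x)[P(x,w)]_++[-P(v,x)]_+P(x,w).
\]
The key observation is that $P(v,x)=Q(v,x)$ and $P(x,w)=Q(x,w)$ for all $v,w$, because $x\in V$ and every pair involving a vertex of $V$ retains its $Q$-value under $\lambda_V$. Also $P(v,w)=Q(v,w)$ by the standing assumption that $v\in V$ or $w\in V$. Hence every term matches the corresponding term of $\mu_x(Q)(v,w)$.

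Combining the cases gives equality everywhere, which is the identity $\lambda_V\circ\mu_x\circ\lambda_V=\lambda_V\circ\mu_x$. There is no real obstacle here. The only point requiring care is to note why the outer $\lambda_V$ is needed: when $v,w\notin V$, the value $\mu_x(P)(v,w)=Q(v,x)[Q(x,w)]_++[-Q(v,x)]_+Q(x,w)$ generally differs from $\mu_x(Q)(v,w)$, because $P(v,w)=0\neq Q(v,w)$ in general. The outer $\lambda_V$ kills exactly these entries, which is why the statement is a near-commutation rather than an honest commutation.
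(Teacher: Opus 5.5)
Your proof is correct and follows essentially the same route as the paper: a pointwise check along the case split of Definition~\ref{quiv-mut-def}, using that $\lambda_V$ preserves every entry involving $x\in V$ (or any vertex of $V$), and that the outer $\lambda_V$ zeroes out the entries with $v,w\notin V$, where the extra terms from $2$-paths through $x$ appear. The paper merely writes out $\lambda_V\circ\mu_x(Q)$ and $\lambda_V\circ\mu_x\circ\lambda_V(Q)$ in full case form before comparing them.
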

    \begin{proof}
        We proceed similarly to the proof of Lemma~\ref{mut-restr-commute-lemma}. Let $Q:X\times X\to\ZZ$ be an arrow finite quiver on a set $X$ and let $V\subseteq X$, $x\in V$ be fixed. For any $v,w\in X$, we see by Definition~\ref{quiv-mut-def} that
        $$\mu_x(Q)(v,w)=\begin{cases}
        -Q(v,w) & \text{if }v=x\text{ or }x=w\\
        Q(v,w)+Q(v,x)[Q(x,w)]_++[-Q(v,x)]_+Q(x,w) & \text{if }v\neq x\neq w.
        \end{cases}$$
        By Definition~\ref{overfull-def}, applying $\lambda_V$ then gives us
        $$\lambda_V\circ\mu_x(Q)(v,w)$$
        $$=\begin{cases}
        -Q(v,w) & \text{if }(v=x\text{ or }x=w)\text{ and }(v\in V\text{ or }w\in V)\\
        Q(v,w)+Q(v,x)[Q(x,w)]_++[-Q(v,x)]_+Q(x,w) & \text{if }v\neq x\neq w\text{ and }(v\in V\text{ or }w\in V)\\
        0 & \text{if }v\not\in V\text{ and }w\not\in V.
        \end{cases}$$
        On the other hand, if we begin with $\lambda_V$, we see that
        $$\lambda_V(Q)(v,w)=\begin{cases}
        Q(v,w) & \text{if }v\in V\text{ or }w\in V\\
        0 & \text{if }v\not\in V\text{ and }w\not\in V.
        \end{cases}$$
        Applying $\mu_x$ doesn't \emph{quite} get us back to $\lambda_V\circ\mu_x(Q)$, since there can be paths of length two in $\lambda_V(Q)$ of the form $v\to x\to w$ with $v,w\not\in V$. Indeed, we have
        $$\mu_x\circ\lambda_V(Q)(v,w)$$
        $$=\begin{cases}
        -\lambda_V(Q)(v,w) & \text{if }v=x\text{ or }x=w\\
        \lambda_V(Q)(v,w)+\lambda_V(Q)(v,x)[\lambda_V(Q)(x,w)]_++[-\lambda_V(Q)(v,x)]_+\lambda_V(Q)(x,w) & \text{if }v\neq x\neq w
        \end{cases}$$
        $$=\begin{cases}
        -Q(v,w) & \text{if }(v\in V\text{ or }w\in V)\text{ and }(v=x\text{ or }x=w)\\
        Q(v,w)+Q(v,x)[Q(x,w)]_++[-Q(v,x)]_+Q(x,w) & \text{if }(v\in V\text{ or }w\in V)\text{ and }v\neq x\neq w\\
        0+Q(v,x)[Q(x,w)]_++[-Q(v,x)]_+Q(x,w) & \text{if }v\not\in V\text{ and }w\not\in V.
        \end{cases}$$
        Now we apply $\lambda_V$ once more to zero out the final case:
        $$\lambda_V\circ\mu_x\circ\lambda_V(Q)(v,w)$$
        $$=\begin{cases}
        -Q(v,w) & \text{if }(v\in V\text{ or }w\in V)\text{ and }(v=x\text{ or }x=w)\\
        Q(v,w)+Q(v,x)[Q(x,w)]_++[-Q(v,x)]_+Q(x,w) & \text{if }(v\in V\text{ or }w\in V)\text{ and }v\neq x\neq w\\
        0 & \text{if }v\not\in V\text{ and }w\not\in V
        \end{cases}$$
        $$=\lambda_V\circ\mu_x(Q)(v,w).$$
        This proves the claim.
    \end{proof}

\begin{Ex}\label{restr-overfull-ex}
    Let $Q:\ZZ\times\ZZ\to\ZZ$ be the quiver with vertex set $\ZZ$ given diagrammatically by:
    % https://q.uiver.app/#q=WzAsNyxbMCwwLCJcXGRvdHMiXSxbMSwwLCItMiJdLFsyLDAsIi0xIl0sWzMsMCwiMCJdLFs0LDAsIjEiXSxbNSwwLCIyIl0sWzYsMCwiXFxkb3RzIl0sWzMsMl0sWzMsMSwiIiwyLHsiY3VydmUiOjJ9XSxbMywwLCIiLDIseyJjdXJ2ZSI6NH1dLFsyLDFdLFsxLDBdLFszLDRdLFs0LDVdLFs1LDZdLFszLDUsIiIsMCx7ImN1cnZlIjotMn1dLFszLDYsIiIsMCx7ImN1cnZlIjotNH1dXQ==
\[\begin{tikzcd}[cramped]
	\dots & {-2} & {-1} & 0 & 1 & 2 & \dots
	\arrow[from=1-2, to=1-1]
	\arrow[from=1-3, to=1-2]
	\arrow[curve={height=24pt}, from=1-4, to=1-1]
	\arrow[curve={height=12pt}, from=1-4, to=1-2]
	\arrow[from=1-4, to=1-3]
	\arrow[from=1-4, to=1-5]
	\arrow[curve={height=-12pt}, from=1-4, to=1-6]
	\arrow[curve={height=-24pt}, from=1-4, to=1-7]
	\arrow[from=1-5, to=1-6]
	\arrow[from=1-6, to=1-7]
\end{tikzcd}\]
    in which there is a single arrow $0\to n$ for all $n\neq0$, a single arrow $n\to n+1$ if $n>0$, and a single arrow $n\to n-1$ if $n<0$. The restriction of $Q$ to the subset $\{-1,0,1\}\subseteq\ZZ$ is the quiver $Q'$ given diagrammatically by:
    % https://q.uiver.app/#q=WzAsNyxbMCwwLCJcXGRvdHMiXSxbMSwwLCItMiJdLFsyLDAsIi0xIl0sWzMsMCwiMCJdLFs0LDAsIjEiXSxbNSwwLCIyIl0sWzYsMCwiXFxkb3RzIl0sWzMsMl0sWzMsNF1d
    \[\begin{tikzcd}[cramped]
    	\dots & {-2} & {-1} & 0 & 1 & 2 & \dots
    	\arrow[from=1-4, to=1-3]
    	\arrow[from=1-4, to=1-5]
    \end{tikzcd}\]
    while the overfill of $Q$ on the subset $\{-1,0,1\}\subseteq\ZZ$ is the quiver $Q''$ given diagrammatically by:
    % https://q.uiver.app/#q=WzAsNyxbMCwwLCJcXGRvdHMiXSxbMSwwLCItMiJdLFsyLDAsIi0xIl0sWzMsMCwiMCJdLFs0LDAsIjEiXSxbNSwwLCIyIl0sWzYsMCwiXFxkb3RzIl0sWzMsMl0sWzMsMSwiIiwyLHsiY3VydmUiOjJ9XSxbMywwLCIiLDIseyJjdXJ2ZSI6NH1dLFsyLDFdLFszLDRdLFs0LDVdLFszLDUsIiIsMCx7ImN1cnZlIjotMn1dLFszLDYsIiIsMCx7ImN1cnZlIjotNH1dXQ==
\[\begin{tikzcd}[cramped]
	\dots & {-2} & {-1} & 0 & 1 & 2 & \dots
	\arrow[from=1-3, to=1-2]
	\arrow[curve={height=24pt}, from=1-4, to=1-1]
	\arrow[curve={height=12pt}, from=1-4, to=1-2]
	\arrow[from=1-4, to=1-3]
	\arrow[from=1-4, to=1-5]
	\arrow[curve={height=-12pt}, from=1-4, to=1-6]
	\arrow[curve={height=-24pt}, from=1-4, to=1-7]
	\arrow[from=1-5, to=1-6]
\end{tikzcd}\]
\end{Ex}

Note that the restriction of a quiver $Q$ to a set $X'\subseteq X$ of vertices is a full subquiver of the overfill of $Q$ on $X'$, but overfull subquivers of $Q$ need not be full subquivers of $Q$. The differences between full and overfull subquivers will allow us to define the weak and strong topologies on our spaces of interest, respectively.

\begin{Rmk}\label{fin-quiv-rmk}
    Even in the case when $X$ is infinite, one may still be interested in ``finite'' quivers with vertex set $X$. For any set $X$, we say an arrow finite quiver $Q:X\times X\to\ZZ$ is \emph{finite} if and only if
    $$\sum_{x,y\in X}|Q(x,y)|<\infty,$$
    or in other words, $Q$ only has finitely many arrows. We may equivalently characterize finite quivers on $X$ as the $Q$ whose \emph{support} $\text{supp}(Q):=\{x\in X\mid\exists y\in X:Q(x,y)\neq0\}\subseteq X$ is finite.

    We let $\mathbf{Fin}^X$ denote the set of all finite quivers without loops or oriented $2$-cycles on a given vertex set $X$.
\end{Rmk}

\begin{Ex}\label{fin-quiv-ex}
    In Example~\ref{restr-overfull-ex} above, the quiver $Q'$ is finite (while the quivers $Q$ and $Q''$ are not finite).
\end{Ex}

In addition to the notions of finiteness and arrow finiteness, we will need an intermediate finiteness condition.

\begin{Def}\label{loc-fin-def}
    Let $Q:X\times X\to\ZZ$ be an arrow finite quiver on a vertex set $X$. We say that $Q$ is \emph{locally finite} if the degree of every vertex in $Q$ is finite, meaning
    $$\sum_{y\in X}|Q(x,y)|<\infty$$
    for all $x\in X$.
    
    We define $\LF^X$ as the set of all locally finite quivers without loops or oriented $2$-cycles on a given vertex set $X$.
\end{Def}

Note that every finite quiver on $X$ is also locally finite and that every locally finite quiver is arrow finite, giving a chain of inclusions $\mathbf{Fin}^X\subseteq\LF^X\subseteq\AF^X$. Furthermore, it is straightforward to check that these sets are all closed under mutation under our running hypothesis that our quivers do not have loops or oriented $2$-cycles. Thus, by specifying a vertex $x\in X$ to mutate at, we obtain an involution $\mu_x:\AF^X\to\AF^X$ which restricts to involutions on $\LF^X$ and $\mathbf{Fin}^X$ given by sending a quiver $Q$ to $\mu_x(Q)$. In the next section, we topologize $\AF^X$ with two related topologies, then show that these mutation maps are continuous.

%% Section 3: Topologizing AF^X and mutation maps %%
\section{Topologizing $\AF^X$ and mutation maps}\label{sec-3}

In this section, we provide two topologies on $\AF^X$ and show that mutation at any vertex $x\in X$ provides a nontrivial self-homeomorphism $\mu_x:\AF^X\to\AF^X$ under either topology. We first introduce some notation in order to do so.

\begin{Def}\label{U-W-def}
    Let $Q:X\times X\to\ZZ$ be a quiver on a set $X$, and let $V\subseteq X$ be a finite subset of vertices. We define $U_{Q,V}\subseteq\AF^X$ as the set of all $Q'\in\AF^X$ such that the restriction of $Q'$ to $V$ agrees with the restriction of $Q$ to $V$:
    $$U_{Q,V}:=\left\{Q'\in\AF^X\mid\rho_V(Q')=\rho_V(Q)\right\}.$$
    Similarly, we let $W_{Q,V}\subseteq\AF^X$ be the set of all $Q'\in\AF^X$ such that the overfill of $Q'$ on $V$ agrees with the overfill of $Q$ on $V$:
    $$W_{Q,V}:=\left\{Q'\in\AF^X\mid\lambda_V(Q')=\lambda_V(Q)\right\}.$$
    Note that for any $Q\in\AF^X$ and any nested finite subsets $V\subseteq V'\subseteq X$, it holds right away that $U_{Q,V'}\subseteq U_{Q,V}$ and $W_{Q,V'}\subseteq W_{Q,V}$.
\end{Def}

The following ``re-anchoring'' lemma will enable us to make useful reductions in our arguments.

\begin{Lem}\label{U-W-reanchor-lemma}
    Fix $Q\in\AF^X$ and $V\subseteq X$ finite. Then for any $Q'\in\AF^X$,
    \begin{enumerate}[label=(\roman*)]
        \item $U_{Q',V}=U_{Q,V}$ if and only if $Q'\in U_{Q,V}$.
        \item $W_{Q',V}=W_{Q,V}$ if and only if $Q'\in W_{Q,V}$.
    \end{enumerate}
\end{Lem}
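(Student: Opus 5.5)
The plan is to observe that both $U_{Q,V}$ and $W_{Q,V}$ are fibers of a map out of $\AF^X$. Namely, $U_{Q,V}=\rho_V^{-1}(\rho_V(Q))$ and $W_{Q,V}=\lambda_V^{-1}(\lambda_V(Q))$. The statement then reduces to the general fact that two fibers of a function either coincide or are disjoint. In particular, each fiber is determined by any one of its points. I will write the argument for (i) and note that (ii) is identical with $\lambda_V$ in place of $\rho_V$ and $W$ in place of $U$.

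For the forward direction of (i), suppose $U_{Q',V}=U_{Q,V}$. Trivially $\rho_V(Q')=\rho_V(Q')$, so $Q'\in U_{Q',V}$ by Definition~\ref{U-W-def}. Hence $Q'\in U_{Q,V}$.

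For the reverse direction, suppose $Q'\in U_{Q,V}$, so that $\rho_V(Q')=\rho_V(Q)$. Then for any $Q''\in\AF^X$ we have $\rho_V(Q'')=\rho_V(Q')$ if and only if $\rho_V(Q'')=\rho_V(Q)$, by transitivity and symmetry of equality. That is, $Q''\in U_{Q',V}$ if and only if $Q''\in U_{Q,V}$, giving $U_{Q',V}=U_{Q,V}$.

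There is no genuine obstacle here. The only point requiring care is to phrase membership correctly via Definition~\ref{U-W-def}, and to note that the finiteness of $V$ plays no role in this lemma; it matters only later, when these sets are used as basic open sets.
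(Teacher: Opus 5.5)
Your proof is correct and matches the paper's argument step for step. Both prove (i) via $Q'\in U_{Q',V}$ for the forward direction and transitivity of $\rho_V(Q'')=\rho_V(Q')=\rho_V(Q)$ for the converse, then note that (ii) is identical. Your remark that these sets are fibers of $\rho_V$ and $\lambda_V$ is a clean way to frame it but doesn't change the argument.
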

\begin{proof}
    We prove (i) and note that the proof of (ii) is entirely the same. Suppose first that $U_{Q',V}=U_{Q,V}$. Since $Q'\in U_{Q',V}$ (as follows immediately by Definition~\ref{U-W-def}), we have that $Q'\in U_{Q,V}$. Conversely, suppose that $Q'\in U_{Q,V}$. By Definition~\ref{U-W-def} above, this means that $\rho_V(Q')=\rho_V(Q)$. Now fix $Q''\in\AF^X$. One has now that $\rho_V(Q'')=\rho_V(Q')$ if and only if $\rho_V(Q'')=\rho_V(Q)$. In other words, $Q''\in U_{Q',V}$ if and only if $Q''\in U_{Q,V}$, yielding $U_{Q',V}=U_{Q,V}$.
\end{proof}

Another lemma which will be used in the proof of Lemma~\ref{weak-str-comp-lemma} below is the following.

\begin{Lem}\label{W-in-U-lemma}
    Let $Q\in\AF^X$ and let $V\subseteq X$ be finite. Then $W_{Q,V}\subseteq U_{Q,V}$.
\end{Lem}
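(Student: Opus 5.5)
The plan is to show that agreement of overfills on $V$ forces agreement of restrictions to $V$. The key observation is that restriction to $V$ factors through the overfill on $V$, namely
\[
\rho_V=\rho_V\circ\lambda_V \qquad\text{as maps }\AF^X\to\AF^X .
\]

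First I will verify this identity pointwise from Definitions~\ref{restr-def} and~\ref{overfull-def}. Fix $Q\in\AF^X$ and $x,y\in X$.

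If $x\in V$ and $y\in V$, then in particular $x\in V$ or $y\in V$. Hence $\lambda_V(Q)(x,y)=Q(x,y)$, and therefore
\[
\rho_V(\lambda_V(Q))(x,y)=\lambda_V(Q)(x,y)=Q(x,y)=\rho_V(Q)(x,y).
\]

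Otherwise $x\notin V$ or $y\notin V$. Then both $\rho_V(\lambda_V(Q))(x,y)$ and $\rho_V(Q)(x,y)$ are $0$ by the definition of $\rho_V$.

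This is exactly the remark made after Example~\ref{restr-overfull-ex}, that the restriction of $Q$ to $V$ is a full subquiver of the overfill of $Q$ on $V$.

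Second, I will apply the identity. Let $Q'\in W_{Q,V}$, so that $\lambda_V(Q')=\lambda_V(Q)$ by Definition~\ref{U-W-def}. Applying $\rho_V$ to both sides and using the identity gives
\[
\rho_V(Q')=\rho_V(\lambda_V(Q'))=\rho_V(\lambda_V(Q))=\rho_V(Q),
\]
so $Q'\in U_{Q,V}$. Hence $W_{Q,V}\subseteq U_{Q,V}$.

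I do not expect a real obstacle; the only care needed is the case split in the pointwise check. Finiteness of $V$ is not used, except that it is needed for $U_{Q,V}$ and $W_{Q,V}$ to be defined.
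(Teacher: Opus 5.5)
Your proposal is correct and follows the paper's proof exactly: the paper also derives the lemma from the identity $\rho_V=\rho_V\circ\lambda_V$ and the chain $\rho_V(Q')=\rho_V(\lambda_V(Q'))=\rho_V(\lambda_V(Q))=\rho_V(Q)$. The only difference is that you carry out the pointwise check of the identity, which the paper leaves to the reader.
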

\begin{proof}
    This essentially follows from the fact that the restriction $\rho_V(Q)$ of $Q$ to $V$ agrees with the restriction $\rho_V(\lambda_V(Q))$ on $V$ of the overfill of $Q$ on $V$ (which the reader is encouraged to check using Definitions~\ref{restr-def} and \ref{overfull-def}). Then if $Q'\in W_{Q,V}$, by Definition~\ref{U-W-def} we have $\lambda_V(Q')=\lambda_V(Q)$. Taking the restrictions to $V$ of these identical overfills then shows that
    $$\rho_V(Q')=\rho_V(\lambda_V(Q'))=\rho_V(\lambda_V(Q))=\rho_V(Q).$$
    By definition, this means that $Q'\in U_{Q,V}$, so we have $W_{Q,V}\subseteq U_{Q,V}$.
\end{proof}

We now observe the following key fact which gives us our topologies of interest.

\begin{Lem}\label{U-W-basis-lemma}
    Let $X$ be a set. The families $\mathcal U_X:=\{U_{Q,V}\mid Q\in\AF^X,V\subseteq X\text{ finite}\}$ and $\mathcal W_X:=\{W_{Q,V}\mid Q\in\AF^X,V\subseteq X\text{ finite}\}$ each form bases for topology on $\AF^X$.
\end{Lem}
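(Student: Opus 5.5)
We verify the two standard basis axioms for each family: the members cover $\AF^X$, and for any two basic sets and any point $Q''$ in their intersection, some basic set contains $Q''$ and lies inside that intersection.

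\emph{Covering.} For any $Q\in\AF^X$, Definition~\ref{U-W-def} gives $Q\in U_{Q,\varnothing}$ and $Q\in W_{Q,\varnothing}$, since $\varnothing$ is a finite subset of $X$. In fact $U_{Q,\varnothing}=W_{Q,\varnothing}=\AF^X$, because $\rho_\varnothing$ and $\lambda_\varnothing$ send every quiver to the quiver with no arrows. So each family covers $\AF^X$.

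\emph{Intersection property.} Let $U_{Q_1,V_1}$ and $U_{Q_2,V_2}$ be basic sets with $V_1,V_2$ finite, and let $Q''\in U_{Q_1,V_1}\cap U_{Q_2,V_2}$. By Lemma~\ref{U-W-reanchor-lemma}(i), $U_{Q_1,V_1}=U_{Q'',V_1}$ and $U_{Q_2,V_2}=U_{Q'',V_2}$. Set $V:=V_1\cup V_2$, which is finite. Then $Q''\in U_{Q'',V}$, and by the nesting remark after Definition~\ref{U-W-def},
\[
U_{Q'',V}\subseteq U_{Q'',V_1}\cap U_{Q'',V_2}=U_{Q_1,V_1}\cap U_{Q_2,V_2}.
\]
The argument for $\mathcal W_X$ is identical, using Lemma~\ref{U-W-reanchor-lemma}(ii) and the nesting $W_{Q,V'}\subseteq W_{Q,V}$ for $V\subseteq V'$.

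The only substantive ingredient is that nesting remark, which the paper states without proof. For completeness: if $V\subseteq V'$, then $\rho_V=\rho_V\circ\rho_{V'}$ and $\lambda_V=\lambda_V\circ\lambda_{V'}$. For $\rho$ this holds because $x,y\in V$ implies $x,y\in V'$. For $\lambda$ it holds because $x\in V$ or $y\in V$ implies $x\in V'$ or $y\in V'$. Hence agreement of restrictions (resp. overfills) on $V'$ implies agreement on $V$. None of these steps presents a real obstacle.
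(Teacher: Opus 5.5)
Your proposal is correct and follows essentially the same route as the paper's proof: covering, then re-anchoring at $Q''$ via Lemma~\ref{U-W-reanchor-lemma} and shrinking to $U_{Q'',V_1\cup V_2}$ (resp.\ $W_{Q'',V_1\cup V_2}$) using the nesting property. Your explicit check that $\rho_V=\rho_V\circ\rho_{V'}$ and $\lambda_V=\lambda_V\circ\lambda_{V'}$ for $V\subseteq V'$ is a welcome justification of the nesting remark, which the paper leaves unproved.
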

    \begin{proof}
        We prove the claim for the family $\mathcal U_X$ only, as the proof is essentially identical for $\mathcal W_X$. Since $\mathcal U_X$ is readily seen to cover $\AF^X$ (as for all $Q\in\AF^X$, one has $Q\in U_{Q,V}$ for any finite $V\subseteq X$), we only must show that for any two members $U_{Q,V},U_{Q',V'}$ of $\mathcal U_X$ and any $Q''\in U_{Q,V}\cap U_{Q',V'}$, there exists $V''\subseteq X$ finite such that $U_{Q'',V''}\subseteq U_{Q,V}\cap U_{Q',V'}$.
        
        Fix some $Q''\in U_{Q,V}\cap U_{Q',V'}$ and let $V''=V\cup V'$. Then it holds that $U_{Q'',V''}\subseteq U_{Q'',V}$ and $U_{Q'',V''}\subseteq U_{Q'',V'}$, so we have $U_{Q'',V''}\subseteq U_{Q'',V}\cap U_{Q'',V'}$. The claim now follows from the fact that $U_{Q'',V}=U_{Q,V}$ and $U_{Q'',V'}=U_{Q',V'}$ by Lemma~\ref{U-W-reanchor-lemma}(i).
    \end{proof}

\begin{Def}\label{weak-str-tops-def}
    Let $X$ be a set. The \emph{weak topology} on $\AF^X$ is the topology having $\mathcal U_X$ as a basis; we use the notation $\AF^X_{weak}$ to denote this space. Similarly, the \emph{strong topology} on $\AF^X$ is the topology having $\mathcal W_X$ as a basis; we use the notation $\AF^X_{str}$ to denote this space.
\end{Def}

The terms ``weak'' and ``strong'' are justified by the following useful comparison lemma:

\begin{Lem}\label{weak-str-comp-lemma}
    Let $X$ be a set. The strong topology on $\AF^X$ is finer than the weak topology on $\AF^X$. That is, if $U\subseteq\AF^X_{weak}$ is open, then $U$ is open in $\AF^X_{str}$ as well. Equivalently, the identity map $\AF^X_{str}\to\AF^X_{weak}$ is continuous.
\end{Lem}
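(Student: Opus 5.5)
It suffices to show that every basic open set of the weak topology is open in the strong topology. Equivalently, for each basic set $U_{Q,V}\in\mathcal U_X$ and each point $Q'\in U_{Q,V}$, we will find a basic strong-open set containing $Q'$ and contained in $U_{Q,V}$. Once this is done, an arbitrary weak-open set, being a union of members of $\mathcal U_X$, is a union of strong-open sets and hence strong-open. Continuity of the identity $\AF^X_{str}\to\AF^X_{weak}$ is then just a restatement, since preimages of weak-open sets are these same sets, now known to be strong-open.

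Fix $U_{Q,V}$ with $V\subseteq X$ finite, and fix $Q'\in U_{Q,V}$. By Lemma~\ref{U-W-reanchor-lemma}(i) we have $U_{Q',V}=U_{Q,V}$. By Lemma~\ref{W-in-U-lemma}, applied to $Q'$ and the same finite set $V$, we get $W_{Q',V}\subseteq U_{Q',V}=U_{Q,V}$. Moreover $Q'\in W_{Q',V}$ trivially, and $W_{Q',V}\in\mathcal W_X$ is basic open in $\AF^X_{str}$. Thus $U_{Q,V}$ is a union of strong-basic sets, namely $U_{Q,V}=\bigcup_{Q'\in U_{Q,V}}W_{Q',V}$, which completes the argument.

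There is no real obstacle here: the substantive content, that agreement of overfills on $V$ forces agreement of restrictions on $V$, is already recorded in Lemma~\ref{W-in-U-lemma}. The only point requiring care is re-anchoring at $Q'$ via Lemma~\ref{U-W-reanchor-lemma} rather than working at $Q$, so that the strong neighbourhood actually contains the chosen point.
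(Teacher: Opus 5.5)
Your proposal is correct and follows the same route as the paper: you reduce to a basic weak-open set $U_{Q,V}$, re-anchor at an arbitrary point $Q'\in U_{Q,V}$ via Lemma~\ref{U-W-reanchor-lemma}(i), and then use Lemma~\ref{W-in-U-lemma} to get the strong neighbourhood $W_{Q',V}\subseteq U_{Q',V}=U_{Q,V}$. The only addition is that you also state explicitly that arbitrary weak-open sets, as unions of basic ones, are strong-open, and that this is the same as continuity of the identity map.
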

    \begin{proof}
        Without loss of generality, we may take $U=U_{Q,V}$ to be a basic open set in the weak topology for some $Q\in\AF^X$ and $V\subseteq X$ finite. To show that $U_{Q,V}$ is open in the strong topology, fix $Q'\in U_{Q,V}$. By Lemma~\ref{U-W-reanchor-lemma}(i) above, we know that $U_{Q',V}=U_{Q,V}$. Then by Lemma~\ref{W-in-U-lemma} above, we also have $W_{Q',V}\subseteq U_{Q',V}$. Thus, we have obtained a strong-open neighborhood $W_{Q',V}$ of $Q'$ contained entirely in the weak-open neighborhood $U_{Q,V}$. Since $Q'\in U_{Q,V}$ was chosen arbitrarily, this shows that the strong topology refines the weak topology.
    \end{proof}

\begin{Rmk}\label{strict-refinement-rmk}
One may reasonably ask if the strong topology \emph{strictly} refines the weak topology. When $X$ is finite, this is not the case (when $X$ is finite, the reader may enjoy verifying that $\AF^X_{weak}$ is a discrete space using Definition~\ref{U-W-def} and therefore its topology cannot be strictly refined). On the other hand, when $X$ is infinite, this refinement is indeed strict. We verify this by finding an infinite sequence which converges in the weak topology but not the strong topology. Let $\{x_1,x_2,\dots\}\subseteq X$ be a countably infinite subset of $X$, and for all $n\geq1$, let $Q_n\in\AF^X$ be the quiver obtained by placing a single arrow from $x_1$ to $x_{n+1}$. Then two things are simultaneously true:
\begin{enumerate}
    \item for any $V\subseteq X$ finite, the full subquiver $\rho_V(Q_n)$ of $Q_n$ stabilizes to the quiver with no arrows for large enough $n$, so $(Q_n)_{n\geq1}$ converges to the quiver with no arrows in $\AF^X_{weak}$.
    \item for any $V\subseteq X$ finite containing $x_1$, the overfull subquiver $\lambda_V(Q_n)$ of $Q_n$ fails to stabilize, guaranteeing the divergence of $(Q_n)_{n\geq1}$ in $\AF^X_{str}$.
\end{enumerate}
This distinguishes the weak and strong topologies, leading us to conclude that the strong topology \emph{strictly} refines the weak topology in the case that $X$ is infinite.
\end{Rmk}

Another important property is the following:

\begin{Lem}\label{LF-dense-in-AF-lemma}
    Let $X$ be a set. Then $\mathbf{Fin}^X$ (and therefore also $\LF^X$) is dense in $\AF^X_{weak}$. If $X$ is infinite, however, then $\LF^X$ is \emph{not} dense in $\AF^X_{str}$ (and therefore neither is $\mathbf{Fin}^X$).
\end{Lem}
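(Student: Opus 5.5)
For the first claim, I will show that every nonempty basic open set of $\AF^X_{weak}$ meets $\mathbf{Fin}^X$. Since $\mathcal U_X$ is a basis (Lemma~\ref{U-W-basis-lemma}), it suffices to treat a set $U_{Q,V}$ with $Q\in\AF^X$ and $V\subseteq X$ finite. The natural candidate inside it is the restriction $\rho_V(Q)$.

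\begin{itemize}
\item First, $\rho_V$ is idempotent, directly from Definition~\ref{restr-def}. Hence $\rho_V(\rho_V(Q))=\rho_V(Q)$, so $\rho_V(Q)\in U_{Q,V}$.
\item Second, $\rho_V(Q)$ is supported in the finite set $V$. Arrow finiteness of $Q$ then gives $\sum_{x,y}|\rho_V(Q)(x,y)|=\sum_{x,y\in V}|Q(x,y)|<\infty$. So $\rho_V(Q)\in\mathbf{Fin}^X$ by Remark~\ref{fin-quiv-rmk}.
\end{itemize}

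Thus $\mathbf{Fin}^X$ is dense in $\AF^X_{weak}$. Since $\mathbf{Fin}^X\subseteq\LF^X$, the density of $\LF^X$ follows.

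For the second claim, assume $X$ is infinite. I will exhibit a nonempty open set of $\AF^X_{str}$ disjoint from $\LF^X$. Choose a vertex $x_0\in X$ and let $Q$ be the quiver with a single arrow $x_0\to y$ for every $y\neq x_0$, and no other arrows. This $Q$ is arrow finite, since each $|Q(x,y)|\le1$. Take the basic strong-open set $W_{Q,\{x_0\}}$, which contains $Q$.

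If $Q'\in W_{Q,\{x_0\}}$, then $\lambda_{\{x_0\}}(Q')=\lambda_{\{x_0\}}(Q)$. In particular $Q'(x_0,y)=Q(x_0,y)=1$ for all $y\neq x_0$. Because $X$ is infinite, the degree of $x_0$ in $Q'$ is infinite, so $Q'\notin\LF^X$. Hence $W_{Q,\{x_0\}}\cap\LF^X=\varnothing$, and $\LF^X$ is not dense in $\AF^X_{str}$. The same then holds for the subset $\mathbf{Fin}^X$.

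Neither step involves a real obstacle. The only points needing care are checking that $\lambda_V$ retains exactly the arrows incident to $V$, and that the witness quiver $Q$ lies in $\AF^X$ even though it is not locally finite.
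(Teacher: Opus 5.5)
Your proposal is correct and follows essentially the same route as the paper: the restriction $\rho_V(Q)$ is a finite quiver in $U_{Q,V}$, which gives density of $\mathbf{Fin}^X$ in $\AF^X_{weak}$, and a star quiver centered at $x_0$ together with the strong-basic set $W_{Q,\{x_0\}}$ shows that $\LF^X$ is not dense in $\AF^X_{str}$. The only cosmetic difference is that you attach an arrow from $x_0$ to every other vertex rather than to a chosen countable sequence $x_1,x_2,\dots$; this works equally well, since $Q$ stays arrow finite for any infinite $X$.
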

    \begin{proof}
        We show first that $\mathbf{Fin}^X$ is dense in $\AF^X_{weak}$. Let $Q\in\AF^X$ and take $V\subseteq X$ finite. We wish to show that $U_{Q,V}$ contains a finite quiver. Indeed, the restriction $Q'$ of $Q$ to $V$ is a finite quiver belonging to $U_{Q,V}$. Thus, $\mathbf{Fin}^X\cap U_{Q,V}\neq\varnothing$.

        On the other hand, suppose $X$ is infinite. We build a strong-basic open set $W_{Q,V}$ for some $Q\in\AF^X$ and some finite $V\subseteq X$ containing no locally finite quivers. Fix a sequence of distinct vertices $x_0,x_1,x_2,\dots\in X$, and let $Q$ be the quiver with one arrow from $x_0$ to $x_i$ for all $i\geq 1$ and no arrows elsewhere. Then the degree of $x_0$ in any quiver $Q'\in W_{Q,\{x_0\}}$ is infinite, so $W_{Q,\{x_0\}}\cap\LF^X=\varnothing$.
    \end{proof}

Next, we turn our attention to mutation maps.

\begin{Prop}\label{mut-cont-prop}
    Let $X$ be a set and $x\in X$. The map $\mu_x:\AF^X_{weak}\to\AF^X_{weak}$ given by sending $Q\in\AF^X$ to $\mu_x(Q)\in\AF^X$ is continuous. Since $\mu_x$ is an involution, it therefore also holds that $\mu_x$ is a homeomorphism. Moreover, both of these statements are true for mutation maps when replacing $\AF^X_{weak}$ with $\AF^X_{str}$.
\end{Prop}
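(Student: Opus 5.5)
To prove continuity of $\mu_x$ in the weak topology, it suffices to show that the preimage of every basic open set $U_{Q,V}$ is open. Fix $Q\in\AF^X$, a finite $V\subseteq X$, and a quiver $P$ with $\mu_x(P)\in U_{Q,V}$. We look for a finite $V'\supseteq V$ with $U_{P,V'}\subseteq\mu_x^{-1}(U_{Q,V})$. The natural choice is $V'=V\cup\{x\}$, which is still finite.

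Let $P'\in U_{P,V'}$, so that $\rho_{V'}(P')=\rho_{V'}(P)$. Since $x\in V'$, Lemma~\ref{mut-restr-commute-lemma} gives
$$\rho_{V'}(\mu_x(P'))=\mu_x(\rho_{V'}(P'))=\mu_x(\rho_{V'}(P))=\rho_{V'}(\mu_x(P)).$$
Restriction is transitive: $\rho_V\circ\rho_{V'}=\rho_V$ when $V\subseteq V'$, which is immediate from Definition~\ref{restr-def}. Applying $\rho_V$ to the display therefore yields $\rho_V(\mu_x(P'))=\rho_V(\mu_x(P))=\rho_V(Q)$, the last equality because $\mu_x(P)\in U_{Q,V}$. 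Hence $\mu_x(P')\in U_{Q,V}$, and the preimage is open. Because $\mu_x\circ\mu_x=\mathrm{id}$, the map $\mu_x$ is its own continuous inverse, so it is a homeomorphism.

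For the strong topology the argument is the same, with Lemma~\ref{mut-overfill-near-commute-lemma} in place of Lemma~\ref{mut-restr-commute-lemma}. Take $W_{Q,V}$, a quiver $P$ with $\mu_x(P)\in W_{Q,V}$, and again $V'=V\cup\{x\}$. For $P'\in W_{P,V'}$ we have $\lambda_{V'}(P')=\lambda_{V'}(P)$, and the lemma gives
$$\lambda_{V'}(\mu_x(P'))=\lambda_{V'}(\mu_x(\lambda_{V'}(P')))=\lambda_{V'}(\mu_x(\lambda_{V'}(P)))=\lambda_{V'}(\mu_x(P)).$$
Overfills are also transitive: $\lambda_V\circ\lambda_{V'}=\lambda_V$ for $V\subseteq V'$, since both keep exactly the entries with some endpoint in $V$. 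Applying $\lambda_V$ gives $\lambda_V(\mu_x(P'))=\lambda_V(Q)$, so $\mu_x(P')\in W_{Q,V}$. The involution argument then finishes the proof as before.

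The only point needing care is the requirement $x\in V'$ in the commutation lemmas. If $x\notin V$, mutation at $x$ can alter arrows between vertices of $V$ through paths $v\to x\to w$, so $V$ itself is not enough. This is exactly why we enlarge to $V\cup\{x\}$, and with that enlargement the rest is formal.
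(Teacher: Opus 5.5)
Your proof is correct and follows essentially the same route as the paper. Both arguments enlarge the finite set to contain $x$ and then apply Lemma~\ref{mut-restr-commute-lemma} (resp.\ Lemma~\ref{mut-overfill-near-commute-lemma}), and both finish with the same involution argument. The only difference is bookkeeping: the paper reduces to $x\in V$ by writing $U_{Q,V}$ as a union of sets $U_{P,V\cup\{x\}}$ and proves the exact identity $\mu_x^{-1}(U_{Q,V})=U_{Q',V}$, whereas you exhibit the neighborhood $U_{P,V\cup\{x\}}$ directly via $\rho_V\circ\rho_{V'}=\rho_V$ (resp.\ $\lambda_V\circ\lambda_{V'}=\lambda_V$).
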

    \begin{proof}
        Let $U_{Q,V}$ be a basic open set in the weak topology on $\AF^X$ and fix $x\in X$. We wish to show that $\mu_x^{-1}(U_{Q,V})$ is open in $\AF^X_{weak}$. By extending $V$ to $V\cup\{x\}$ and thus shrinking $U_{Q,V}$ to $U_{Q,V\cup\{x\}}\subseteq U_{Q,V}$ if necessary, we may assume without loss of generality that $x\in V$. Indeed, expressing $U_{Q,V}$ as the union over all such ways to extend the restriction of $Q$ on $V$ to a quiver on $V\cup\{x\}$ yields:
        $$U_{Q,V}=\bigcup_{P\in U_{Q,V}}U_{P,V\cup\{x\}}.$$
        From this, it follows that if $\mu_x^{-1}(U_{P,V\cup\{x\}})$ is open in $\AF$ for all $P\in U_{Q,V}$, then so is $\mu_x^{-1}(U_{Q,V})$. Hence, we may safely assume $x\in V$.
        
        To show that $\mu_x^{-1}(U_{Q,V})$ is open in $\AF^X_{weak}$, take an arbitrary $Q'\in\mu_x^{-1}(U_{Q,V})$. By definition, this means that the restriction of $\mu_x(Q')$ to $V$ agrees with the restriction of $Q$ to $V$. By Lemma~\ref{U-W-reanchor-lemma}(i), it is equivalent to say that $U_{Q,V}=U_{\mu_x(Q'),V}$. Therefore, we may assume without loss of generality that $Q=\mu_x(Q')$ (since we have not committed to $Q$ itself, just to the basic open set $U_{Q,V}$). Now, using the fact that mutation within $V$ and restriction to $V$ commute by Lemma~\ref{mut-restr-commute-lemma}, we claim that $U_{\mu_x(Q'),V}=\mu_x(U_{Q',V})$. Indeed, if $Q''\in U_{Q',V}$, then $\rho_V(Q'')=\rho_V(Q')$ by definition. Using the fact from Lemma~\ref{mut-restr-commute-lemma} that $\mu_x\circ\rho_V$ since $x\in V$, we have that
        $$\rho_V(\mu_x(Q''))=\mu_x(\rho_V(Q''))=\mu_x(\rho_V(Q'))=\rho_V(\mu_x(Q')),$$
        giving us $\mu_x(U_{Q',V})\subseteq U_{\mu_x(Q'),V}$. The reverse containment is identical, so indeed we have that $U_{Q,V}=U_{\mu_x(Q'),V}=\mu_x(U_{Q',V})$. Applying the bijection $\mu_x^{-1}$ to both sides yields $\mu_x^{-1}(U_{Q,V})=U_{Q',V}$, which is open in $\AF^X_{weak}$. This shows that $\mu_x:\AF^X_{weak}\to\AF^X_{weak}$ is continuous.

        The case of the strong topology is very similar. Let $W_{Q,V}$ be a basic open set in $\AF^X_{str}$ and fix $x\in X$. Our goal is to show that $\mu_x^{-1}(W_{Q,V})$ is open in $\AF^X_{str}$. As above, we may safely assume that $x\in V$. Taking an arbitrary $Q'\in\mu_x^{-1}(W_{Q,V})$ and proceeding like before, we have by Lemma~\ref{U-W-reanchor-lemma}(ii) that $W_{Q,V}=W_{\mu_x(Q'),V}$; we may again assume without loss of generality that $Q=\mu_x(Q')$. Suppose $Q''\in W_{Q',V}$. By definition, this means $\lambda_V(Q'')=\lambda_V(Q')$. From Lemma~\ref{mut-overfill-near-commute-lemma} and the fact that $x\in V$, we know that $\lambda_V\circ\mu_x\circ\lambda_V=\lambda_V\circ\mu_x$ as maps $\AF^X\to\AF^X$. Applying $\lambda_V\circ\mu_x$ to both sides of $\lambda_V(Q'')=\lambda_V(Q')$ and then using this identity, we see that $\lambda_V(\mu_x(Q''))=\lambda_V(\mu_x(Q'))$. In other words, $\mu_x(Q'')\in W_{\mu_x(Q'),V}$ for all $Q''\in W_{Q',V}$. This shows that $\mu_x(W_{Q',V})\subseteq W_{\mu_x(Q'),V}$. By applying the same argument the other way around, we obtain the reverse containment, so in fact $\mu_x(W_{Q',V})=W_{\mu_x(Q'),V}=W_{Q,V}$. Applying the bijection $\mu_x^{-1}$ to both sides gives $\mu_x^{-1}(W_{Q,V})=W_{Q',V}$, which is open in $\AF^X_{str}$. This completes the proof that $\mu_x:\AF^X_{str}\to\AF^X_{str}$ is continuous.

        In both cases, $\mu_x$ is a continuous involution. In particular, its inverse $\mu_x^{-1}=\mu_x$ is also a continuous map, so in fact we see that $\mu_x$ is a homeomorphism in both cases as well.
    \end{proof}

We end this section by proving a central theorem about the spaces $\LF^X_{str}$ and $\AF^X_{weak}$ when $X$ is countably infinite. To do so, we first make a few comments about the Baire space $\NN^\NN$.

\begin{Rmk}\label{N^N-Polish-rmk}
    $\NN^\NN$ is an uncountable \emph{Polish space,} which is a separable, completely metrizable topological space. That is, $\NN^\NN$ has a countable dense subset (separability), and there exists a complete metric $d$ on $\NN^\NN$ inducing its topology (complete metrizability). Polish spaces are some of the primary objects of interest in descriptive set theory, a branch of mathematical logic partially concerned with classification problems similar to the ones we are interested in. Many naturally arising topological spaces are Polish spaces: topological manifolds, separable Banach spaces, automorphism groups of countable first-order structures, and so on. We will discuss some tools from descriptive set theory in Section~\ref{sec-4} when we examine properties of infinite quivers from a topological point of view.
\end{Rmk}

\begin{Rmk}\label{N^N-bases-rmk}
    There are a couple good choices of basis for topology for $\NN^\NN$: the \emph{product basis} and the \emph{initial segment basis.} The product basis consists of the sets of the form $U_1\times U_2\times U_3\times\dots\subseteq\NN^\NN$, where $U_i\subseteq\NN$ for all $i\in\NN$ and all but finitely many $U_i$ are equal to $\NN$. Using the fact that $\NN$ is a discrete space, one can shrink this basis a fair amount by requiring any $U_i$ not equal to $\NN$ to be a singleton. On the other hand, the \emph{initial segment basis} is parametrized by the set $\NN^{<\NN}$ of finite strings of natural numbers $\sigma=(\sigma_1,\sigma_2,\dots,\sigma_n)$. The basic open set indexed by $\sigma$ is taken to be
    $$[\![\sigma]\!]:=\{\sigma_1\}\times\{\sigma_2\}\times\dots\times\{\sigma_n\}\times\NN\times\NN\times\dots.$$
    Of course, these sets are also members of the product basis. Conversely, if $U=U_1\times U_2\times\dots\subseteq\NN^\NN$ is a product-basic open set containing some $f\in\NN^\NN$, one may find an initial-segment-basic open set $[\![f\!\upharpoonright\!n]\!]\subseteq U$ containing $f$. Here, $n=\max\{i\in\NN\,:\,U_i\neq\NN\}$ or $n=0$ if no such $i\in\NN$ exists, and $f\!\upharpoonright\!n:=(f_1,f_2,\dots,f_n)\in\NN^{<\NN}$ is the restriction of $f\in\NN^\NN$ considered as a countably infinite string $f=(f_1,f_2,f_3,\dots)$ to its initial segment of length $n$, taking the convention that $f\!\upharpoonright\!0:=\varnothing$. This shows that the topologies on $\NN^\NN$ generated by either basis are the same.
\end{Rmk}

\begin{Rmk}
    Given $f,g\in\NN^\NN$, define
    $$d(f,g):=\begin{cases}
        0 & \text{if }f=g\\
        2^{-\min\{i\,:\,f_i\neq g_i\}} & \text{if }f\neq g.
    \end{cases}$$
    One can show without too much effort that this defines a complete metric on $\NN^\NN$ generating the same topology as the bases defined above. In fact, for $\varepsilon>0$ and $f\in\NN^\NN$, it holds that the $d$-ball $B_d(f,\varepsilon)\subseteq\NN^\NN$ of radius $\varepsilon$ around $f\in\NN^\NN$ is given by $B_d(f,\varepsilon)=[\![f\!\upharpoonright\!n]\!]$, where $n=\max\{0,\lceil-\log_2(\varepsilon)\rceil\}$. Thus, it holds that $\NN^\NN$ is completely metrizable. To see that $\NN^\NN$ is separable (and therefore a Polish space), note that there are only countably many sequences $f=(f_1,f_2,f_3,\dots)\in\NN^\NN$ which are eventually constant and that they form a dense subspace of $\NN^\NN$: any sequence $g\in\NN^\NN$ may be approximated arbitrarily well by eventually constant sequences.
\end{Rmk}

We are now ready to prove our first main result, Theorem~\ref{N^N-homeo-thm}.

\begin{T-homeo}
    Let $X$ be a countably infinite set. Then the spaces $\AF^X_{weak}$ and $\LF^X_{str}$ are both homeomorphic to $\NN^\NN$.
\end{T-homeo}
    \begin{proof}
        We take $X=\NN$ for simplicity and without loss of generality. We will first show that $\AF^\NN_{weak}$ is homeomorphic to $\ZZ^{[\NN]^2}$, where $[\NN]^2$ is the set of unordered pairs $\{i,j\}$ of distinct natural numbers. (Since these pairs are unordered, we may assume without loss of generality that $i<j$; we write $\{i<j\}$ instead of just $\{i,j\}$ to remind ourselves of this.) Of course, the space $\ZZ^{[\NN]^2}$ is a countable product of countably many copies of the countably infinite discrete space $\ZZ$, so $\AF^\NN_{weak}$ will then in turn be homeomorphic to $\NN^\NN$ as claimed.
        
        We claim that the map sending an arrow finite quiver $Q:\NN\times\NN\to\ZZ$ to the function sending an unordered pair $\{i<j\}$ to $Q(i,j)\in\ZZ$ is a homeomorphism $\AF^\NN_{weak}\to\ZZ^{[\NN]^2}$. One may readily check that this is a bijection, as any skew-symmetric function $Q:\NN\times\NN\to\ZZ$ gives rise to a unique function $f_Q:[\NN]^2\to\ZZ$ under this map and vice versa. To see that this map is continuous, let $U=\prod_{\{i<j\}\in[\NN]^2}U_{\{i<j\}}\subseteq\ZZ^{[\NN]^2}$ be a basic open set in the product topology. Let $Q:\NN\times\NN\to\ZZ$ be a quiver such that $Q(i,j)\in U_{\{i<j\}}$ for all $\{i<j\}\in[\NN]^2$. We wish to find a finite set $V$ such that all $Q'\in U_{Q,V}$ have this property as well, i.e., such that $Q'(i,j)\in U_{\{i<j\}}$ for all $\{i<j\}\in[\NN]^2$. Define $J=\{\{i<j\}\mid U_{\{i<j\}}\neq\ZZ\}\subseteq[\NN]^2$ and let
        $$I=\left\{i\in\NN\mid\exists j\in\NN:\{i<j\}\in J\text{ or }\{j<i\}\in J\right\}.$$
        Then we claim that $V=I$ has the desired property. Indeed, if $Q'\in U_{Q,I}$, then $Q'(x,y)=Q(x,y)$ for all $x,y\in I$ by definition. In particular, if $\{x<y\}\in J$, then $Q'(x,y)=Q(x,y)$. Thus, we have that for $\{i<j\}\in J$, it holds that $Q'(i,j)=Q(i,j)\in U_{\{i<j\}}$. On the other hand, if $\{i<j\}\not\in J$, we have by the definition of $J$ that $U_{\{i<j\}}=\ZZ$, and therefore $Q'(i,j)\in U_{\{i<j\}}$ automatically. This shows that our bijection $\AF^\NN_{weak}\to\ZZ^{[\NN]^2}$ is continuous. To argue that it is open (and therefore a homeomorphism) is even quicker. Indeed, the image of any basic open set $U_{Q,V}$ in $\AF^\NN_{weak}$ under our bijection is the set
        $$\prod_{\{i<j\}\in[V]^2}\{Q(i,j)\}\times\prod_{\{i<j\}\in[\NN]^2\setminus[V]^2}\ZZ.$$
        Since $V$ is finite, this set is open in the product topology on $\ZZ^{[\NN]^2}$. This completes the proof that $\AF^\NN_{weak}\simeq\NN^\NN$.

        We now focus on showing that $\LF^\NN_{str}\simeq\NN^\NN$. Unlike in the above case, we do this directly in terms of the space $\NN^\NN$. For $k\geq 1$, let $p_k$ be the $k^{th}$ prime number. Let $h:\ZZ\to\ZZ_{\geq0}$ be the bijection given by
        $$h(t):=\begin{cases}
            2t & \text{if }t\geq0 \\
            -2t-1 & \text{otherwise.}
        \end{cases}$$
        Importantly, we see that $h(0)=0$. We define a map $L:\LF^\NN_{str}\to\NN^\NN$ as follows for all $Q\in\LF^\NN$:
        $$L(Q)(i):=\prod_{k=1}^\infty p_k^{h(Q(i,i+k))}$$
        for all $i\in\NN$. Observe that since $Q$ is locally finite and $h(0)=0$, the above infinite product is in fact finite for each $i\in\NN$, so the map $L$ is well-defined. We claim that this map is a homeomorphism $\LF^\NN_{str}\to\NN^\NN$. One may see that this map is bijective using the fundamental theorem of arithmetic and the bijectivity of $h$; the exponents in the prime factorization of $L(Q)(i)$ encode the counts of arrows between vertices $i\in\NN$ and all vertices $j>i$. To see that this map is continuous, suppose $[\![\sigma]\!]\subseteq\NN^\NN$ is a basic open set in the initial segment basis on $\NN^\NN$ for some $\sigma=(\sigma_1,\dots,\sigma_{n})\in\NN^{<\NN}$ and let $Q\in L^{-1}([\![\sigma]\!])$. We claim that $W_{Q,\{1,\dots,n\}}\cap\LF^\NN\subseteq L^{-1}([\![\sigma]\!])$. Indeed, if $Q'\in W_{Q,\{1,\dots,n\}}$ is locally finite, one sees that $Q'(i,i+k)=Q(i,i+k)$ for all $i\leq n$ and all $k\in\NN$ by Definition~\ref{overfull-def}. Then one has that $L(Q')(i)=L(Q)(i)$ for all $i\leq n$. In particular, since $L(Q)\in[\![\sigma]\!]$ and $|\sigma|=n$, we have that $L(Q')\in[\![\sigma]\!]$ as well. This means that $L$ is continuous.
        
        To see that $L$ is an open map, suppose $W_{Q,V}\cap\LF^\NN$ is a basic open set in $\LF^\NN_{str}$. Without loss of generality, we may assume $V=\{1,\dots,n\}$ for some $n\in\NN$. Indeed, we have that
        $$W_{Q,V}\cap\LF^\NN=\bigcup_{Q'\in W_{Q,V}\cap\LF^\NN}W_{Q',\{1,\dots,\max V\}}\cap\LF^\NN,$$
        so showing the openness of $L$ on sets of the desired form is enough. It is then straightforward to check, using a similar argument to the above, that $L(W_{Q,\{1,\dots,n\}})=[\![L(Q)\upharpoonright n]\!]$, which is open in $\NN^\NN$. This completes the proof that $L$ is a homeomorphism and therefore also that $\LF^\NN_{str}\simeq\NN^\NN$.
    \end{proof}

\begin{Rmk}\label{LF=LF^N_str-AF=AF^N_weak-rmk}
    Because of the desirable properties of Polish spaces, we take particularly keen interest in the spaces $\LF^X_{str}$ and $\AF^X_{weak}$ for $X$ countably infinite for the remainder of the paper and simply refer to these spaces as $\LF$ and $\AF$ from this point onward, almost always assuming that $X=\NN$.
    
    The reader may be interested to know what happens when $X$ is finite or uncountable. When $X$ is finite, the space $\AF^X_{weak}$ is discrete (the reader is again encouraged to confirm this if they have not already done so), and therefore every other space of quivers on $X$ mentioned so far is as well; this is rather uninteresting from the point of view of topological complexity. When $X$ is uncountable, the spaces $\LF^X_{weak}$ and $\AF^X_{weak}$ are not separable (and therefore neither are their strong counterparts). This makes working with these spaces more difficult, and there seems to be not much additional return for doing so over working only with countably infinite $X$. Thus, we are comfortable making the running assumption that $X$ is countably infinite.
    
    One may also reasonably inquire about the two spaces $\LF^X_{weak}$ and $\AF^X_{str}$ for countably infinite $X$. It turns out that \emph{neither} of these spaces is a Polish space. We show now that $\AF^X_{str}$ is not separable (and therefore not Polish) and we show that $\LF^X_{weak}$ is not Polish in the following section (see Corollary~\ref{LF^N-weak-not-Polish-cor}).
\end{Rmk}

\begin{Prop}\label{AF^N-str-not-sep-prop}
    Let $X$ be a countably infinite set. Then $\AF^X_{str}$ is not separable and therefore not a Polish space.
\end{Prop}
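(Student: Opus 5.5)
The plan is to exhibit an uncountable family of pairwise disjoint nonempty open sets in $\AF^X_{str}$. Any dense subset must meet each of them, and since they are disjoint, it must have a distinct point in each one. Hence no countable dense subset can exist. Without loss of generality take $X=\NN$ and use the distinguished vertex $1$.

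The key step uses the strong basic open sets $W_{Q,\{1\}}$. For each subset $S\subseteq\NN\setminus\{1\}$, let $Q_S$ be the quiver having a single arrow $1\to j$ for every $j\in S$ and no other arrows. This is arrow finite, so $Q_S\in\AF^\NN$.

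By Definition~\ref{overfull-def}, the overfill $\lambda_{\{1\}}(Q)$ records exactly the values $Q(1,j)$ and $Q(j,1)$ for all $j$. So $Q'\in W_{Q_S,\{1\}}$ holds precisely when $Q'(1,j)=Q_S(1,j)$ for all $j$. In particular, every $Q'\in W_{Q_S,\{1\}}$ satisfies $\{j : Q'(1,j)=1\}=S$. Consequently, for $S\neq T$ the sets $W_{Q_S,\{1\}}$ and $W_{Q_T,\{1\}}$ are disjoint. Each set is nonempty since it contains $Q_S$. This gives $2^{\aleph_0}$ pairwise disjoint nonempty open sets.

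To finish, suppose $D$ is dense in $\AF^\NN_{str}$. Choosing a point of $D\cap W_{Q_S,\{1\}}$ for each $S$ defines an injection from $\mathcal P(\NN\setminus\{1\})$ into $D$, so $D$ is uncountable. Thus $\AF^X_{str}$ is not separable, and in particular it is not Polish, since Polish spaces are separable by definition (Remark~\ref{N^N-Polish-rmk}).

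There is no real obstacle here. The only point needing care is confirming that membership in $W_{Q,\{1\}}$ pins down the entire row $Q(1,\cdot)$, and this is immediate from the definition of the overfill.
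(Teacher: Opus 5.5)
Your proof is correct. It rests on the same observation as the paper's proof: membership in $W_{Q,\{1\}}$ pins down the entire row $Q(1,\cdot)$. The two arguments are packaged differently, though.

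The paper fixes an arbitrary countable family $\{Q_n\}_{n\geq 2}$ and diagonalizes against it. It builds $Q$ with $Q(1,j)=Q_j(1,j)+1$, so that the single nonempty open set $W_{Q,\{1\}}$ misses every $Q_n$. You instead exhibit $2^{\aleph_0}$ pairwise disjoint nonempty open sets $W_{Q_S,\{1\}}$ and note that a dense set must meet all of them.

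Your route proves slightly more. It shows that $\AF^\NN_{str}$ fails the countable chain condition, which is strictly stronger than non-separability. Equivalently, the map $Q\mapsto Q(1,\cdot)$ is continuous from $\AF^\NN_{str}$ onto $\ZZ^{\NN\setminus\{1\}}$ with the discrete topology, since each fiber is a set of the form $W_{P,\{1\}}$. A discrete space of size continuum is not a continuous image of a separable space.

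The paper's diagonal construction is more self-contained: it effectively reproves the uncountability of the row space rather than quoting $|\mathcal P(\NN)|=2^{\aleph_0}$. It also produces, for each countable candidate, one explicit open set that the candidate misses.

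A minor point: your choice of a point of $D$ in each $W_{Q_S,\{1\}}$ can be avoided. The map $Q'\mapsto\{j: Q'(1,j)=1\}$ sends $D\cap\bigcup_S W_{Q_S,\{1\}}$ onto $\mathcal P(\NN\setminus\{1\})$, which already shows $D$ is uncountable.
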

    \begin{proof}
        Suppose without loss of generality that $X=\NN$. Let $D=\{Q_n\}_{n\geq2}$ be a countable set in $\AF^\NN$ (the motivation for the indexing convention will be made clear shortly). We wish to find some nonempty open set $W$ in $\AF^\NN_{str}$ which fails to intersect $D$. To do so, we ``diagonally'' define a quiver $Q$ on $\NN$ by
        $$Q(i,j):=\begin{cases}
            Q_j(1,j)+1 & \text{if }i=1\text{ and }j\geq 2\\
            Q_i(i,1)-1 & \text{if }i\geq2\text{ and }j=1\\
            0 & \text{otherwise}
        \end{cases}$$
        for all $i,j\in\NN$. We have chosen to index $D=\{Q_n\}_{n\geq2}$ starting at $n=2$ so that the above cases are mutually exclusive and define a skew-symmetric function $Q:\NN\times\NN\to\ZZ$. Now we observe that for any $Q'\in W_{Q,\{1\}}\subseteq\AF^\NN_{str}$, it holds that $Q'(1,n)=Q_n(1,n)+1\neq Q_n(1,n)$ for all $n\geq 2$. In particular, $Q_n\not\in W_{Q,\{1\}}\neq\varnothing$ for all $n\geq2$, showing that $D$ is not dense in $\AF^\NN_{str}$. This completes the proof that $\AF^\NN_{str}$ is not separable and subsequently not a Polish space.
    \end{proof}

%% Section 4: Topological properties of subspaces %%
\section{Properties of quivers as subspaces and Theorem~\ref{hereditary-meta-thm}}\label{sec-4}

Having established several basic results about the spaces $\LF$ and $\AF$, we now consider how the spaces $\LF$ and $\AF$ may be used to study the complexity of various properties of quivers. The perspective we adopt is the following: properties of locally finite (resp. arrow finite) quivers on $\NN$ are in bijection with subspaces of $\LF$ (resp. $\AF$). Thus, we may understand how complicated different properties of quivers are by considering how complicated the corresponding subspaces of $\LF$ or $\AF$ are. In this section, we do just this (in both $\LF$ and $\AF$) for the following quiver properties:
\begin{itemize}
    \item finiteness (Proposition~\ref{finiteness-prop})
    \item connectedness (Proposition~\ref{connectedness-prop})
    \item acyclicity (Proposition~\ref{acyclicity-prop})
    \item mutation-acyclicity (Proposition~\ref{mutation-acyclicity-prop})
    \item existence of a pair of vertices with exactly $m$ arrows between them with $m\in S$ for an arbitrary nonempty subset $S\subseteq\ZZ_{\geq0}$ (Proposition~\ref{weights-prop})
    \item \emph{any} hereditary property (Theorem~\ref{hereditary-meta-thm})
\end{itemize}

\subsection{The Borel hierarchy}
We will need a precise way of measuring the complexity of subspaces. We do so using the \emph{Borel hierarchy,} which provides a tool for comparing the Borel subsets of a topological space. As we will see, many subspaces of $\LF$ and $\AF$ of interest are Borel, so this hierarchy will be sufficient for our purposes in this section.

\begin{Def}[\cite{kechris_classical_1995}, Section 11.B]\label{Borel-hierarchy-def}
    Let $X$ be a topological space. For each nonzero countable ordinal $\alpha$, we define three classes $\mathbf\Sigma^0_\alpha$, $\mathbf\Pi^0_\alpha$, and $\mathbf\Delta^0_\alpha$ of subsets of $X$ recursively as follows:
    \begin{itemize}
        \item The class $\mathbf\Sigma^0_1$ consists of all open subsets of $X$.
        \item The class $\mathbf\Pi^0_\alpha$ consists of the complements of members of the class $\mathbf\Sigma^0_\alpha$.
        \item A set $A\subseteq X$ is in the class $\mathbf\Sigma^0_\alpha$ if there exists a countable sequence $(\alpha_i)_{i\in\NN}$ of nonzero ordinals strictly less than $\alpha$ and a countable sequence $(A_i)_{i\in\NN}$ of subsets of $X$ such that $\bigcup_{i\in\NN}A_i=A$ and for all $i\in\NN$, it holds that $A_i\in\mathbf\Pi^0_{\alpha_i}$.
        \item A set $A\subseteq X$ is in the class $\mathbf\Delta^0_\alpha$ if and only if $A\in\mathbf\Sigma^0_\alpha\cap\mathbf\Pi^0_\alpha$.
    \end{itemize}
    Note that a subset of $X$ is Borel (i.e., belongs to the smallest family of subsets of $X$ containing open and closed sets, closed under complements, and closed under countable unions and intersections) if and only if it belongs to one of these classes. Because of this, one calls the above collection of classes the \emph{Borel hierarchy} (or \emph{boldface hierarchy}) in $X$.

    We also note briefly that $\mathbf\Pi^0_2$ subsets are precisely the $G_\delta$ subsets while $\mathbf\Sigma^0_2$ subsets are precisely the $F_\sigma$ subsets.
\end{Def}

The following proposition collects several important properties of the Borel hierarchy, which we present without proof. For proofs of the following and a general introduction to descriptive set theory, we refer the reader to A. Kechris's book \cite{kechris_classical_1995} and D. Marker's lecture notes \cite{marker_descriptive_2002}.
\begin{Prop}[\cite{marker_descriptive_2002}, Lemmas 2.5 \& 2.6]\label{Borel-hierarchy-shape-prop}
    Let $X$ be a topological space and $\alpha$ a nonzero countable ordinal. The Borel hierarchy in $X$ enjoys the following properties:
    \begin{enumerate}[label=(\roman*)]
        \item If $X$ is metrizable, then $\mathbf\Sigma^0_\alpha\cup\mathbf\Pi^0_\alpha\subseteq\mathbf\Delta^0_{\alpha+1}$.
        \item If $X$ is Polish, then the cardinality of the class of all Borel subsets of $X$ is the continuum $2^{\aleph_0}$. When $X$ is uncountable, this implies that there exist non-Borel subsets of $X$.
        \item $\mathbf\Sigma^0_\alpha$ is closed under countable unions and finite intersections. Dually, $\mathbf\Pi^0_\alpha$ is closed under countable intersections and finite unions. Jointly, $\mathbf\Delta^0_\alpha$ is closed under finite unions and finite intersections.
        \item Let $Y$ be a topological space and $f:X\to Y$ a continuous map. If $A\in\mathbf\Sigma^0_\alpha$ (resp. $A\in\mathbf\Pi^0_\alpha$, $A\in\mathbf\Delta^0_\alpha$) in $Y$, then $f^{-1}(A)\in\mathbf\Sigma^0_\alpha$ (resp. $f^{-1}(A)\in\mathbf\Pi^0_\alpha$, $f^{-1}(A)\in\mathbf\Delta^0_\alpha$) in $X$.
    \end{enumerate}
\end{Prop}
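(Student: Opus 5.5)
The statement just given is Proposition~\ref{Borel-hierarchy-shape-prop}, which collects standard facts. I would prove each item directly from Definition~\ref{Borel-hierarchy-def}, in the order (iii), (iv), (i), (ii), since the later items use the earlier ones.

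For (iii), I would use transfinite induction on $\alpha$. In the base case, open sets are closed under arbitrary unions and finite intersections. For $\alpha>1$, a countable union of $\mathbf\Sigma^0_\alpha$ sets is a countable union of countably many sets, each in some $\mathbf\Pi^0_{\beta}$ with $\beta<\alpha$. Re-indexing over $\NN\times\NN$ exhibits it as a single such union.

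For a finite intersection $\bigcup_i A_i\cap\bigcup_j B_j=\bigcup_{i,j}(A_i\cap B_j)$ with $A_i\in\mathbf\Pi^0_{\alpha_i}$ and $B_j\in\mathbf\Pi^0_{\beta_j}$, I need $A_i\cap B_j$ to lie in some $\mathbf\Pi^0_\gamma$ with $\gamma<\alpha$. I would take $\gamma=\max(\alpha_i,\beta_j)$. This requires monotonicity, $\mathbf\Pi^0_\beta\subseteq\mathbf\Pi^0_\gamma$ for $\beta\le\gamma$, which I would prove in the same induction. The $\mathbf\Pi$ statements then follow by De Morgan, and the $\mathbf\Delta$ statement follows by combining the two.

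For (iv), I would again induct on $\alpha$. Continuity handles open sets. Preimages commute with countable unions and with complements, so the inductive definition passes through $f^{-1}$ directly.

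Item (i) is where metrizability enters, and I expect it to be the main obstacle. The key base fact is that in a metric space every closed set $F$ is $G_\delta$, since
\[F=\bigcap_n\{x: d(x,F)<1/n\}.\]
Taking complements, every open set is $F_\sigma$. This gives $\mathbf\Sigma^0_1\cup\mathbf\Pi^0_1\subseteq\mathbf\Sigma^0_2\cap\mathbf\Pi^0_2$, using that a closed set is trivially a one-term union of closed sets. For general $\alpha$, a $\mathbf\Sigma^0_\alpha$ set is a one-term countable union of itself, hence a union of sets from a class below $\alpha+1$, so it lies in $\mathbf\Sigma^0_{\alpha+1}$. It also lies in $\mathbf\Pi^0_{\alpha+1}$: by induction each $\mathbf\Pi^0_{\alpha_i}$ piece belongs to $\mathbf\Sigma^0_{\alpha}$. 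This must be organized carefully, and I would prove monotonicity $\mathbf\Sigma^0_\beta\cup\mathbf\Pi^0_\beta\subseteq\mathbf\Delta^0_\gamma$ for $\beta<\gamma$ by simultaneous induction, with the base case supplied by the metric argument above.

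For (ii), the lower bound $2^{\aleph_0}$ comes from the singletons in an uncountable Polish space; for countable $X$ one uses arbitrary subsets instead. For the upper bound, I would fix a countable basis $\{B_n\}$. Each open set is determined by a subset of $\NN$, so there are at most $2^{\aleph_0}$ open sets. Induction over the $\aleph_1$ levels then shows each class has size at most $(2^{\aleph_0})^{\aleph_0}=2^{\aleph_0}$, and the union over $\aleph_1\le2^{\aleph_0}$ levels stays at $2^{\aleph_0}$. Finally, since $|\mathcal P(X)|=2^{2^{\aleph_0}}>2^{\aleph_0}$ when $X$ is uncountable Polish (so $|X|=2^{\aleph_0}$), non-Borel sets exist. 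I would cite \cite{kechris_classical_1995} for the cardinality of uncountable Polish spaces rather than reprove it.
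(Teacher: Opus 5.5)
The step that fails is the monotonicity you invoke in (iii). Item (iii) is asserted for an arbitrary topological space, and you say you would prove $\mathbf\Pi^0_\beta\subseteq\mathbf\Pi^0_\gamma$ for $\beta\le\gamma$ ``in the same induction.'' With Definition~\ref{Borel-hierarchy-def}, however, the case $\beta=1$, $\gamma=2$ says exactly that every closed set is $G_\delta$. That is the metric fact you later need in (i), so the dependence runs the other way: (iii) rests on the monotonicity from (i). Without metrizability the step cannot be carried out.

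It cannot be repaired either, because (iii) is false for general $X$. Take $X=\{a,b,c\}$ with open sets $\varnothing,\{c\},\{b,c\},X$. Here the $G_\delta$ sets are just the open sets, so $\mathbf\Sigma^0_3$ consists of unions of sets that are open or closed. Then $\{a,b\}$ (closed) and $\{b,c\}$ (open) both lie in $\mathbf\Sigma^0_3$, indeed in $\mathbf\Delta^0_3$. Their intersection $\{b\}$ is not a union of open and closed sets, so it is not in $\mathbf\Sigma^0_3$.

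The correct version of (iii) assumes $X$ is metrizable; it suffices that closed sets are $G_\delta$. This is the Polish setting of Marker's lemmas, which the paper cites without proof. The paper only applies (iii) inside $\AF$, so nothing downstream is affected. Under that hypothesis, prove the monotonicity of (i) first, and your intersection argument then goes through verbatim.

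There are two smaller points.

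In (i), your justifications for the two inclusions are swapped. A $\mathbf\Sigma^0_\alpha$ set is not ``a one-term union'' witnessing membership in $\mathbf\Sigma^0_{\alpha+1}$, since the definition only allows $\mathbf\Pi$-pieces. For $\alpha\ge2$ you reuse the set's own witnessing decomposition, and for $\alpha=1$ you need open sets to be $F_\sigma$. It is membership in $\mathbf\Pi^0_{\alpha+1}$ that is trivial, because the complement is a $\mathbf\Pi^0_\alpha$ set and hence a one-term union.

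In (ii), ``for countable $X$ one uses arbitrary subsets'' gives $2^{\aleph_0}$ Borel sets only when $X$ is countably infinite. A finite discrete space is Polish with exactly $2^{|X|}$ Borel sets, so (ii) needs $X$ infinite.

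Your arguments for (iv) and for the upper bound in (ii) are fine.
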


\begin{Rmk}[\cite{marker_descriptive_2002}, Corollary 2.38]\label{Borel-noncollapse-rmk}
    It is a classical result due to Lebesgue that when $X$ is an uncountable Polish space, the Borel hierarchy in $X$ does not ``collapse,'' in that for every nonzero countable ordinal $\alpha$, it holds that $\mathbf\Sigma^0_\alpha\neq\mathbf\Pi^0_\alpha$, and therefore $\mathbf\Sigma^0_\alpha\subsetneq\mathbf\Delta^0_{\alpha+1}$ by Proposition~\ref{Borel-hierarchy-shape-prop}(i). In particular, the Borel hierarchies in $\LF$ and $\AF$ do not collapse. This is in sharp contrast to any countable $T_1$ space, for example, where \emph{every} set is $\mathbf\Sigma^0_2$ (since every set is a countable union of singletons, which are closed in $T_1$ spaces), and therefore also $\mathbf\Pi^0_2$ and $\mathbf\Delta^0_2$.
\end{Rmk}

\begin{Rmk}[\cite{marker_descriptive_2002}, Definitions 4.1 \& 4.27, Theorem 4.13]\label{proj-hierarchy-rmk}
    In general, not every subset of a topological space is Borel. For Polish spaces, one may extend the Borel hierarchy to include more classes $\mathbf\Sigma^1_n$, $\mathbf\Pi^1_n$, and $\mathbf\Delta^1_n$ for all finite $n\geq 1$. The class $\mathbf\Sigma^1_1$ is the class of \emph{analytic} sets, which are defined to be continuous images of Polish spaces. A set is $\mathbf\Pi^1_n$ if its complement is $\mathbf\Sigma^1_n$. A set $A$ in a Polish space $X$ is $\mathbf\Sigma^1_{n+1}$ if there exists some other Polish space $Y$ and a $\mathbf\Pi^1_n$ subset $C\subseteq X\times Y$ such that the projection of $C$ onto the first coordinate is $A$. Lastly, a set is $\mathbf\Delta^1_n$ if it is both $\mathbf\Sigma^1_n$ and $\mathbf\Pi^1_n$. This forms a separate hierarchy called the \emph{projective hierarchy.}
    
    There is a remarkable theorem due to Lusin in 1927 stating that if $A,B\subseteq X$ are two disjoint analytic subsets of a Polish space $X$, there is a Borel set $C$ containing $A$ disjoint from $B$. An immediate consequence of this theorem is Souslin's theorem that every $\mathbf\Delta^1_1$ subset of a Polish space is Borel. The converse is true as well: all Borel subsets of a Polish space are $\mathbf\Delta^1_1$. Thus, one can (and ought to) think about the projective hierarchy as an extension of the Borel hierarchy.
\end{Rmk}

Two particularly useful notions for making comparisons between subsets are those of \emph{hardness} and \emph{completeness}. These notions allow us to provide upper and lower bounds on the complexities of subspaces.

\begin{Def}[\cite{kechris_classical_1995}, Definition 22.9]\label{hard-comp-def}
    Let $\Gamma$ be a class of sets in Polish spaces, i.e., an assignment to each Polish space $X$ a family $\Gamma(X)\subseteq\mathcal P(X)$ of subsets of $X$. If $Y$ is a Polish space, we say that a subset $B\subseteq Y$ is \emph{$\Gamma$-hard} if for any zero-dimensional Polish space $X$ and any $A\in\Gamma(X)$, there exists a continuous map $f:X\to Y$ such that $f^{-1}(B)=A$. If $B\subseteq Y$ is $\Gamma$-hard and is itself a member of the family $\Gamma(Y)$, we say that $B$ is \emph{$\Gamma$-complete}. Here, a \emph{zero-dimensional} Polish space is one with a basis consisting of clopen sets (such as $\NN^\NN$).
\end{Def}

\begin{Rmk}\label{Polish-comp-comp-rmk}
    By Proposition~\ref{Borel-hierarchy-shape-prop}(iv), one can show that a set $B\subseteq Y$ in a Polish space $Y$ is $\mathbf\Sigma^0_\alpha$-hard (resp. $\mathbf\Pi^0_\alpha$-hard) for a countable nonzero ordinal $\alpha$ if and only if there exists a Polish space $X$, a $\mathbf\Sigma^0_\alpha$-complete (resp. $\mathbf\Pi^0_\alpha$-complete) subset $A\subseteq X$, and a continuous map $f:X\to Y$ such that $f^{-1}(B)=A$. Furthermore, a set $A$ in a Polish space $X$ is $\mathbf\Sigma^0_\alpha$ (resp. $\mathbf\Pi^0_\alpha$) in $X$ if and only if there exists a Polish space $Y$, a $\mathbf\Sigma^0_\alpha$-complete (resp. $\mathbf\Pi^0_\alpha$-complete) subset $B\subseteq Y$, and a continuous map $f:X\to Y$ such that $f^{-1}(B)=A$.
\end{Rmk}

\begin{Prop}[\cite{kechris_classical_1995}, Theorem 22.10]\label{Borel-complete-prop}
    Let $X$ be a Polish space and $\alpha>0$ a countable ordinal. Then every member of $\mathbf\Sigma^0_\alpha\setminus\mathbf\Pi^0_\alpha$ is $\mathbf\Sigma^0_\alpha$-complete. Dually, every member of $\mathbf\Pi^0_\alpha\setminus\mathbf\Sigma^0_\alpha$ is $\mathbf\Pi^0_\alpha$-complete.
\end{Prop}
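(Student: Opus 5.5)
The plan is to follow the classical route through \emph{Wadge's lemma}, which reduces completeness to a determinacy statement. By duality (taking complements everywhere) it suffices to treat the $\mathbf\Sigma^0_\alpha$ case. Fix a Polish space $X$ and $A\in\mathbf\Sigma^0_\alpha(X)\setminus\mathbf\Pi^0_\alpha(X)$. To show $A$ is $\mathbf\Sigma^0_\alpha$-hard in the sense of Definition~\ref{hard-comp-def}, fix a zero-dimensional Polish space $Y$ and some $B\in\mathbf\Sigma^0_\alpha(Y)$. The goal is a continuous $f:Y\to X$ with $f^{-1}(A)=B$.

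\textbf{Step 1 (zero-dimensional target).} First treat the case where $X$ is also zero-dimensional. Every zero-dimensional Polish space is homeomorphic to a closed subset of $\NN^\NN$, so we may take $X,Y\subseteq\NN^\NN$ closed. Closed subsets of $\NN^\NN$ are retracts of $\NN^\NN$, so we may pull $A$ and $B$ back along continuous retractions and work inside $\NN^\NN$ itself. By Proposition~\ref{Borel-hierarchy-shape-prop}(iv), this pullback changes neither membership in nor hardness for the relevant classes.

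\textbf{Step 2 (Wadge game).} Consider the Wadge game $G(B,A)$. Player I plays $y_1,y_2,\dots$ and Player II plays $x_1,x_2,\dots$ (II may pass, but must play infinitely often). Player II wins iff $y\in B\Leftrightarrow x\in A$. The payoff set is Borel because $A$ and $B$ are, so Martin's Borel determinacy theorem makes $G(B,A)$ determined.
\begin{itemize}
\item A winning strategy for II yields a Lipschitz, hence continuous, $f$ with $f^{-1}(A)=B$.
\item A winning strategy for I yields a continuous $g:\NN^\NN\to\NN^\NN$ with $g^{-1}(B)=\NN^\NN\setminus A$, i.e.\ $A=g^{-1}(\NN^\NN\setminus B)$. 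Since $\NN^\NN\setminus B\in\mathbf\Pi^0_\alpha$, Proposition~\ref{Borel-hierarchy-shape-prop}(iv) gives $A\in\mathbf\Pi^0_\alpha$, contradicting the hypothesis.
\end{itemize}
Hence II wins, which gives the required reduction.

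\textbf{Step 3 (general Polish $X$).} For a general Polish $X$, the plan is to find a zero-dimensional Polish space $Z$, a continuous $\pi:Z\to X$, and a set $A'=\pi^{-1}(A)$ with $A'\in\mathbf\Sigma^0_\alpha(Z)\setminus\mathbf\Pi^0_\alpha(Z)$. Given such data, Step~1 reduces $B$ to $A'$ via some continuous $f$, and then $\pi\circ f$ reduces $B$ to $A$. The first thing to try is taking $\pi$ to be a continuous open surjection from a closed subset of $\NN^\NN$ (such surjections exist for every nonempty Polish space). The reason for wanting $\pi$ open and surjective is that this should allow $\mathbf\Pi^0_\alpha$-ness of $\pi^{-1}(A)$ to descend to $A$, by an induction on $\alpha$ that pushes the complements of basic sets forward through $\pi$. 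Membership of $A'$ in $\mathbf\Sigma^0_\alpha$ is automatic from continuity of $\pi$.

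\textbf{Main obstacle.} I expect Step~3 to be the delicate point, specifically the claim that $\pi^{-1}(A)\notin\mathbf\Pi^0_\alpha$, since a finer or zero-dimensional presentation could a priori lower the complexity of $A$. If the open-map induction fails, the fallback is to note that in this paper the result is only applied to $\LF$ and $\AF$. Both are homeomorphic to $\NN^\NN$ by Theorem~\ref{N^N-homeo-thm}, so Steps~1--2 alone suffice for every application here. Apart from Step~3, the substantial input is Borel determinacy, which I would cite rather than reprove.
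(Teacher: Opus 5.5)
Your Steps 1 and 2 are correct. They are the standard Wadge-lemma proof: closed embeddings into $\NN^\NN$, retractions, Borel determinacy of the Wadge game, and the observation that a winning strategy for I would exhibit $A$ as a continuous preimage of the $\mathbf\Pi^0_\alpha$ set $\NN^\NN\setminus B$. Together they prove the proposition for zero-dimensional $X$. The paper gives no argument of its own here and only cites Kechris.

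The gap is Step~3. It carries all the content beyond the zero-dimensional case, and you leave it unproved. The mechanism you sketch, pushing complements of basic sets forward through the open surjection $\pi\colon Z\to X$, works only for $\alpha=1$. From $\pi^{-1}(A)=\bigcap_n U_n$ with $U_n$ open you get merely $A\subseteq\bigcap_n\pi(U_n)$, and this inclusion can be strict, since a fiber may meet every $U_n$ while missing $\bigcap_n U_n$. Likewise, $\pi$-images of closed sets need not be closed. Your fallback does not repair this, because the proposition is asserted for arbitrary Polish $X$. It is also not true that the paper applies it only to $\LF$ and $\AF$: right after stating it, the paper uses it to get complete sets in an arbitrary uncountable Polish space.

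The missing idea is a Baire-category argument in the fibers. It uses that each fiber $\pi^{-1}(x)$ is a nonempty closed subset of $Z$, hence a Baire space, and that $\pi^{-1}(A)$ is saturated. Define the following sets:
\begin{itemize}
\item For $C\subseteq Z$ and open $V\subseteq Z$, let $C^{\Delta V}$ be the set of $x\in X$ such that $C\cap V\cap\pi^{-1}(x)$ is nonmeager in $\pi^{-1}(x)$.
\item Let $C^{*V}:=X\setminus(Z\setminus C)^{\Delta V}$.
\end{itemize}
The induction then runs as follows.
\begin{itemize}
\item If $C$ is open, then $C^{\Delta V}=\pi(C\cap V)$, which is open because $\pi$ is open.
\item If $C=\bigcup_n C_n$ with $C_n\in\mathbf\Pi^0_{\alpha_n}$ and $\alpha_n<\alpha$, the localization property of Baire-property sets in the Polish fiber gives $C^{\Delta V}=\bigcup_n\bigcup_W(\pi(W)\cap C_n^{*W})$. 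Here $W$ ranges over basic open subsets of $V$ from a fixed countable basis of $Z$.
\item By induction on $\alpha$, $C\in\mathbf\Sigma^0_\alpha$ implies $C^{\Delta V}\in\mathbf\Sigma^0_\alpha$, and $C\in\mathbf\Pi^0_\alpha$ implies $C^{*V}\in\mathbf\Pi^0_\alpha$.
\end{itemize}
Finally, $A=(\pi^{-1}(A))^{*Z}$. This holds because each fiber lies entirely inside or entirely outside $\pi^{-1}(A)$, and a nonempty Baire space is not meager in itself. Hence $\pi^{-1}(A)\in\mathbf\Pi^0_\alpha$ forces $A\in\mathbf\Pi^0_\alpha$. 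With this lemma in place, your Step~3, and with it the whole proof, goes through.
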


\begin{Rmk}[\cite{kechris_classical_1995}, Section 26.C]\label{proj-noncomplete-rmk}
    The above proposition about the \emph{Borel} hierarchy does not hold for the \emph{projective} hierarchy in ZFC alone; there exist models of ZFC in which there are non-$\mathbf\Sigma^1_1$-complete sets in $\mathbf\Sigma^1_1\setminus\mathbf\Pi^1_1$. That said, one may strengthen ZFC to obtain a set theory in all models of which such sets cannot exist (specifically, the system ZFC+$\mathbf\Sigma^1_1$-Determinacy). We do not dare to venture any closer to such considerations in this paper; we point out this particular aspect of the projective hierarchy only in order to caution the reader that it is not entirely analogous to the Borel hierarchy.
\end{Rmk}

By Remark~\ref{Borel-noncollapse-rmk} and Proposition~\ref{Borel-complete-prop}, one has that in an uncountable Polish space $X$, $\mathbf\Sigma^0_\alpha$-complete and $\mathbf\Pi^0_\alpha$-complete sets exist for every countable ordinal $\alpha>0$. We collect a few standard examples of interest here that will be referred to in the sequel.

\begin{Prop}[\cite{marker_descriptive_2002}, Examples 2.40 \& 2.43, Exercises 2.44 \& 2.45]\label{complete-ex-prop}
    The following completeness results hold:
    \begin{enumerate}[label=(\roman*)]
        \item The set $\{x\in\NN^\NN\mid\exists n:x_n=1\}$ is $\mathbf\Sigma^0_1$-complete in $\NN^\NN$.
        \item The set $\{x\in\ZZ^\NN\mid\exists n\forall m>n:x_m=0\}$ is $\mathbf\Sigma^0_2$-complete in $\ZZ^\NN$.
        \item The set $\{x\in\NN^\NN\mid x:\NN\to\NN\text{ is onto}\}$ is $\mathbf\Pi^0_2$-complete in $\NN^\NN$.
        \item The set $\displaystyle\{x\in\NN^\NN\mid\lim_{n\to\infty}x_n=\infty\}$ is $\mathbf\Pi^0_3$-complete in $\NN^\NN$.
    \end{enumerate}
\end{Prop}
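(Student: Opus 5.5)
We would prove each part in two steps, an upper bound and a lower bound. First we place the set in the stated class by writing it explicitly as a Boolean combination of basic clopen sets. Then we show it is not in the dual class, and conclude completeness by Proposition~\ref{Borel-complete-prop}. For parts (i)--(iii) the lower bound comes from elementary closure and Baire-category arguments. For part (iv) we reduce from a known $\mathbf\Pi^0_3$-complete set via Remark~\ref{Polish-comp-comp-rmk}.

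For (i), the set $A_1=\{x\mid\exists n:x_n=1\}$ is the union $\bigcup_n\{x\mid x_n=1\}$ of open sets, hence open. It is not closed. The constant sequence $\mathbf 2=(2,2,2,\dots)$ lies outside $A_1$. It is nevertheless the limit of the sequences $y^{(n)}$ that agree with $\mathbf 2$ except for the value $1$ in position $n$, and each $y^{(n)}$ lies in $A_1$. Hence $A_1\in\mathbf\Sigma^0_1\setminus\mathbf\Pi^0_1$.

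For (ii), the set is $\bigcup_n\bigcap_{m>n}\{x_m=0\}$, a countable union of closed sets, so it is $\mathbf\Sigma^0_2$. Both it and its complement are dense in $\ZZ^\NN$: any cylinder can be extended by all zeros, or by all ones. The complement $\bigcap_n\bigcup_{m>n}\{x_m\neq0\}$ is a dense $G_\delta$. If the set itself were $\mathbf\Pi^0_2$, it would also be a dense $G_\delta$. Two disjoint dense $G_\delta$ subsets of the Polish space $\ZZ^\NN$ contradict the Baire category theorem. Part (iii) is the dual argument. The set $\bigcap_k\bigcup_n\{x_n=k\}$ is a $G_\delta$, and it is dense because any finite string can be extended by an enumeration of $\NN$. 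Its complement is also dense, since any finite string can be extended by the constant sequence at a value not yet used. Were the set $F_\sigma$, its complement would be a dense $G_\delta$ disjoint from it, again contradicting Baire.

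For (iv), the set can be written as $\bigcap_k\bigcup_n\bigcap_{m>n}\{x_m>k\}$, which is visibly $\mathbf\Pi^0_3$. The main obstacle is hardness, since a Baire-category argument no longer suffices at this level. Here we would invoke the standard $\mathbf\Pi^0_3$-complete set $P_3=\{y\in2^{\NN\times\NN}\mid\forall i\ \exists n\ \forall m>n:\ y(i,m)=0\}$ from \cite{kechris_classical_1995}, Section~23.A, which lives in a zero-dimensional Polish space. We then define a map $f:2^{\NN\times\NN}\to\NN^\NN$ by
$$f(y)_n:=1+\min\left(\{i\le n\mid y(i,n)=1\}\cup\{n\}\right).$$
This map is continuous, since each coordinate $f(y)_n$ depends on only finitely many coordinates of $y$.

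It remains to check that $f^{-1}$ of the set in (iv) is exactly $P_3$. Suppose first that every row of $y$ is eventually zero. Fix $k$. Beyond some $n_0$ the rows $i<k$ contribute no ones, so $f(y)_n\ge k+1$ for all $n\ge\max(n_0,k)$, and hence $f(y)_n\to\infty$. Conversely, suppose row $i$ has infinitely many ones. Then $f(y)_n\le i+1$ for infinitely many $n$, so $f(y)_n\not\to\infty$. Thus $f^{-1}$ of the set is $P_3$, and Remark~\ref{Polish-comp-comp-rmk} gives $\mathbf\Pi^0_3$-hardness, hence completeness.
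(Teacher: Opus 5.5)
Your proof is correct. The paper gives no argument for this proposition; it quotes it from Marker's notes. So your write-up is an independent verification, and it rests on two other black boxes:
\begin{itemize}
\item Parts (i)--(iii) use Wadge's theorem in the form of Proposition~\ref{Borel-complete-prop}, together with an elementary separation from the dual class. You apply Proposition~\ref{Borel-complete-prop} only in the zero-dimensional spaces $\NN^\NN$ and $\ZZ^\NN$, where the theorem holds in its standard form.
\item Part (iv) uses the known $\mathbf\Pi^0_3$-completeness of $P_3$ plus your explicit reduction.
\end{itemize}

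The separation you use in (ii) and (iii) is the same Baire-category device the paper uses in Proposition~\ref{finiteness-prop}: two disjoint dense $G_\delta$ sets cannot coexist in a Polish space. You are right that this device says nothing at level $3$, which is why (iv) genuinely needs a reduction. Your map $f(y)_n=1+\min(\{i\le n\mid y(i,n)=1\}\cup\{n\})$ is continuous and takes values in the positive integers. As you check in both directions, it pulls the set in (iv) back exactly to $P_3$.

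Compared with simply citing Marker, your route is short and conceptual but inherits Wadge's theorem, whose proof goes through a determinacy argument. If you wanted everything elementary, (i)--(iii) can also be done by direct reductions, at the cost of more bookkeeping in (ii) and (iii). For (i), write an open $U$ in a zero-dimensional Polish $Y$ as $\bigcup_n C_n$ with each $C_n$ clopen. Then send $y$ to the sequence whose $n$-th entry is $1$ if $y\in C_n$ and $2$ otherwise.
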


One may also neatly characterize precisely which subspaces of a Polish space are themselves Polish:
\begin{Prop}[\cite{marker_descriptive_2002}, Theorem 1.33]\label{subsp-Gdelta-iff-Polish-prop}
    Let $X$ be a Polish space and $Y\subseteq X$ a subspace. Then $Y$ is Polish (under the subspace topology) if and only if it is a $\mathbf{\Pi}^0_2$ subset of $X$.
\end{Prop}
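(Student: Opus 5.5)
The plan is to prove the two implications separately, following the classical theorems of Alexandrov (a $G_\delta$ subspace of a Polish space is Polish) and Kuratowski (a Polish subspace must be $G_\delta$). Fix a complete metric $d$ on $X$ compatible with its topology. Separability is never an issue, since every subspace of a separable metrizable space is separable. So in both directions the only question is complete metrizability.

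For ``$\Leftarrow$'', I would first treat an open set $U\subsetneq X$. Put $F=X\setminus U$ and define
$$d'(x,y)=d(x,y)+\left|\frac{1}{d(x,F)}-\frac{1}{d(y,F)}\right|,\qquad x,y\in U.$$
This is a metric on $U$ compatible with the subspace topology, because $x\mapsto 1/d(x,F)$ is continuous on $U$. It is complete: a $d'$-Cauchy sequence is $d$-Cauchy, so it converges in $X$ to some $x$. The sequence $1/d(x_k,F)$ is bounded, so $d(x_k,F)$ is bounded away from $0$, hence $x\notin F$. The $d'$-convergence then follows by continuity.

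Now let $Y=\bigcap_n U_n$ with each $U_n$ open. Each $(U_n,d'_n)$ is Polish, and so is the countable product $\prod_n U_n$. The diagonal map $Y\to\prod_n U_n$, $y\mapsto(y,y,\dots)$, is a homeomorphism onto its image. I would check that this image is closed: if $(y^k,y^k,\dots)\to(z_1,z_2,\dots)$, then every coordinate is the limit in $X$ of the same sequence $y^k$, so all $z_n$ are equal to a single point $z$, and $z$ lies in every $U_n$, i.e. $z\in Y$. A closed subspace of a completely metrizable space is completely metrizable, so $Y$ is Polish.

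For ``$\Rightarrow$'', let $e$ be a complete metric on $Y$ compatible with its subspace topology. Define $Y^*$ to be the set of $x\in\overline Y$ such that for every $n$ there is an open $U\ni x$ in $X$ with $\operatorname{diam}_e(U\cap Y)<1/n$. For each fixed $n$, the set of such $x$ is open in $\overline Y$. So $Y^*$ is a countable intersection of relatively open subsets of the closed set $\overline Y$. Closed sets in metric spaces are $G_\delta$, so $Y^*$ is $G_\delta$ in $X$.

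I then show $Y\subseteq Y^*$. Take $y\in Y$ and $n$. The $e$-ball of radius $1/(3n)$ around $y$ is open in $Y$, so it equals $U\cap Y$ for some open $U\ni y$ in $X$, and that set has $e$-diameter less than $1/n$.

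The step I expect to need the most care is the reverse inclusion $Y^*\subseteq Y$. Given $x\in Y^*$, choose open sets $U_n\ni x$ witnessing the definition, and we may take them decreasing. Since $x\in\overline Y$, we can pick $y_n\in U_1\cap\cdots\cap U_n\cap Y$ with $d(y_n,x)<1/n$. Then $(y_n)$ is $e$-Cauchy, because $y_m,y_k\in U_n\cap Y$ for $m,k\ge n$. By completeness, $y_n\to y$ in $(Y,e)$ for some $y\in Y$. Because $e$ induces the subspace topology from $X$, we also get $y_n\to y$ in $X$. But $y_n\to x$ in $X$ as well, and $X$ is Hausdorff, so $x=y\in Y$.

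Hence $Y=Y^*$ is $G_\delta$, i.e. $\mathbf\Pi^0_2$ in $X$.
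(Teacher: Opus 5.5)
Your proof is correct and is the standard Alexandrov--Kuratowski argument: for $\Leftarrow$ you use the metric $d'$ on open sets followed by a closed diagonal embedding into $\prod_n U_n$, and for $\Rightarrow$ you use the set $Y^*\subseteq\overline{Y}$ of points at which the $e$-diameter of traces $U\cap Y$ can be made arbitrarily small. The paper gives no proof of its own and simply cites Marker's notes (Theorem 1.33), whose standard proof (like Kechris, Theorem 3.11) proceeds in essentially this way. The only variation is cosmetic: your diagonal embedding replaces a summed metric written directly on $Y$, which is just the product metric pulled back along that embedding.
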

We now use this proposition to show that $\LF^X_{weak}$ for $X$ countably infinite is not Polish by showing that it is not $\mathbf\Pi^0_2$ in $\AF^X_{weak}$.

\begin{Prop}\label{LF^N-Sigma-0-2-hard-prop}
    Let $X$ be a countable set. Then $\LF^X$ is $\mathbf\Sigma^0_2$-hard in $\AF^X_{weak}$ and thus is not $\mathbf\Pi^0_2$ in $\AF^X_{weak}$.
\end{Prop}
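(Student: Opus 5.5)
We reduce to the $\mathbf\Sigma^0_2$-complete set of Proposition~\ref{complete-ex-prop}(ii) via Remark~\ref{Polish-comp-comp-rmk}. (The statement says ``countable'' but clearly means countably infinite, since for finite $X$ every quiver is locally finite; so we take $X=\NN$.) Let $E=\{x\in\ZZ^\NN\mid\exists n\forall m>n:x_m=0\}$, which is $\mathbf\Sigma^0_2$-complete in the Polish space $\ZZ^\NN$. It then suffices to exhibit a continuous map $f:\ZZ^\NN\to\AF^\NN_{weak}$ with $f^{-1}(\LF^\NN)=E$.

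The map puts all the data on the edges at vertex $1$: for $x=(x_1,x_2,\dots)\in\ZZ^\NN$, let $f(x)$ be the quiver $Q_x$ with $Q_x(1,m+1)=x_m$ and $Q_x(m+1,1)=-x_m$ for $m\geq1$, and $Q_x(i,j)=0$ for all other pairs. This is skew-symmetric and arrow finite. Only vertex $1$ can have infinite degree, and its degree is $\sum_m|x_m|$. This sum is finite exactly when $x$ is eventually zero, so $Q_x\in\LF^\NN$ if and only if $x\in E$, i.e. $f^{-1}(\LF^\NN)=E$.

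For continuity, compose $f$ with the homeomorphism $\AF^\NN_{weak}\to\ZZ^{[\NN]^2}$ from the proof of Theorem~\ref{N^N-homeo-thm}. Each coordinate of the composite is either constantly $0$ or a coordinate projection $x\mapsto x_m$, so it is continuous into the product. Equivalently, working with basic sets directly, if $V\subseteq\NN$ is finite, then $f^{-1}(U_{Q,V})$ only constrains the finitely many coordinates $x_{j-1}$ with $j\in V\setminus\{1\}$, so it is open.

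Hence $\LF^\NN$ is $\mathbf\Sigma^0_2$-hard in $\AF^\NN_{weak}$. Now suppose $\LF^\NN$ were $\mathbf\Pi^0_2$. By Proposition~\ref{Borel-hierarchy-shape-prop}(iv), $E=f^{-1}(\LF^\NN)$ would then be $\mathbf\Pi^0_2$ in $\ZZ^\NN$. But $E$ is $\mathbf\Sigma^0_2$-complete, so every $\mathbf\Sigma^0_2$ subset of $\NN^\NN$ would be a continuous preimage of $E$ and hence $\mathbf\Pi^0_2$. This would give $\mathbf\Sigma^0_2\subseteq\mathbf\Pi^0_2$ in $\NN^\NN$, and taking complements $\mathbf\Sigma^0_2=\mathbf\Pi^0_2$, contradicting Remark~\ref{Borel-noncollapse-rmk}. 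The only step needing care is this last non-collapse deduction. The construction itself is routine; the real point is to concentrate the infinitely many potential arrows at the single vertex $1$, which the weak topology cannot see all at once.
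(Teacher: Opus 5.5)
Your proof is correct and essentially identical to the paper's: it uses the same map $f:\ZZ^\NN\to\AF^\NN_{weak}$ placing $x_m$ arrows between vertex $1$ and vertex $m+1$, the same reduction from the $\mathbf\Sigma^0_2$-complete set of eventually-zero sequences, and the same appeal to Lebesgue's non-collapse theorem for the ``not $\mathbf\Pi^0_2$'' part. Your remark that $X$ must be countably infinite is a correct clarification that the paper leaves implicit, since for finite $X$ one has $\LF^X=\AF^X$, which is trivially $\mathbf\Pi^0_2$.
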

    \begin{proof}
        Assume without loss of generality that $X=\NN$. We make use of the $\mathbf\Sigma^0_2$-complete set $A:=\{x\in\ZZ^\NN\mid\exists n\forall m>n:x_m=0\}\subseteq\ZZ^\NN$ of eventually-zero integer sequences given by Proposition~\ref{complete-ex-prop}(ii). Our goal is to build a continuous map $f:\ZZ^\NN\to\AF^\NN_{weak}$ such that $f^{-1}(\LF^\NN)=A$, which will show that $\LF^\NN$ is $\mathbf\Sigma^0_2$-hard in $\AF^\NN_{weak}$. For an integer sequence $a=(a_n)_{n\geq1}\in\ZZ^\NN$, let $f(a)$ be the quiver given by
        $$f(a)(x,y)=
        \begin{cases}
            a_{y-1} & \text{if }x=1\text{ and }y\geq2\\
            -a_{x-1} & \text{if }x\geq2\text{ and }y=1\\
            0 & \text{otherwise}
        \end{cases}$$
        for all $x,y\in\NN$. We claim that $f^{-1}(\LF^\NN)=A$. If $Q\in\AF^\NN$ is such that there exist $x,y\neq1$ with $Q(x,y)\neq0$, then $Q$ is not in the image of $f$. Thus, we only must consider quivers $Q\in\AF^\NN$ with no such pairs $x,y\neq1$. Any quiver $Q$ of this form is in $\LF^\NN$ if and only if the vertex $1$ has finite degree in $Q$. Equivalently, the sequence of integers $(Q(1,n+1))_{n\geq1}$ is eventually zero (and therefore a point in $A$). One may check that this sequence is in the preimage of $Q$ under $f$ (and is the \emph{only} sequence in the preimage of $Q$ under $f$), so indeed $f^{-1}(\LF^\NN)=A$.
        
        We next show that $f$ is continuous. Indeed, let $U_{Q,V}\subseteq\AF^\NN_{weak}$ be a basic open set for some $Q\in\AF^\NN$ and some finite $V\subseteq\NN$. We may assume without loss of generality that $V$ contains $1$ and at least one other vertex as we may express any basic open set $U_{Q,V'}$ as a union of smaller ones of this desired form. If $V$ contains two vertices $x,y\in\NN$ not equal to $1$ and $Q(x,y)\neq0$, then $f^{-1}(U_{Q,V})=\varnothing$. Otherwise, suppose $V=\{1,x_1,\dots,x_k\}$ and let $b_i=Q(1,x_i)\in\ZZ$ for $i=1,\dots,k$. Then $f^{-1}(U_{Q,V})$ consists of all integer sequences $(a_n)_{n\geq1}\in\ZZ^\NN$ such that $a_{x_i}=b_i$ for all $i=1,\dots,k$. This is precisely the basic open subset of $\ZZ^\NN$ with a factor of $\{b_i\}$ in the $x_i^{th}$ position for $i=1,\dots,k$ and a factor of $\ZZ$ elsewhere, meaning $f^{-1}(U_{Q,V})$ is open.

        Finally, to see that the $\mathbf\Sigma^0_2$-hardness of $\LF^X$ in $\AF^X_{weak}$ implies that it is not $\mathbf\Pi^0_2$ in $\AF^X_{weak}$, suppose for the sake of contradiction that $\LF^X$ were $\mathbf\Pi^0_2$ in $\AF^X_{weak}$. By Proposition~\ref{Borel-hierarchy-shape-prop}(iv), preimages of $\LF^X$ under continuous maps from Polish spaces into $\AF^X_{weak}$ are also $\mathbf\Pi^0_2$. Since $\LF^X$ is $\mathbf\Sigma^0_2$-hard in $\AF^X_{weak}$, we know by Definition~\ref{hard-comp-def} that every $\mathbf\Sigma^0_2$ subset of every zero-dimensional Polish space is such a preimage. But this contradicts Lebesgue's theorem in Remark~\ref{Borel-noncollapse-rmk} that in any uncountable Polish space there exist $\mathbf\Sigma^0_2$ subsets which are not $\mathbf\Pi^0_2$ subsets (and there do exist simultaneously uncountable and zero-dimensional Polish spaces; $\NN^\NN$ is a great example).
    \end{proof}

Combining Propositions \ref{subsp-Gdelta-iff-Polish-prop} and \ref{LF^N-Sigma-0-2-hard-prop} yields the desired corollary:
\begin{Cor}\label{LF^N-weak-not-Polish-cor}
    Let $X$ be a countable set. Then the space $\LF^X_{weak}$ is not a Polish space.
\end{Cor}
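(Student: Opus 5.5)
The corollary follows by combining Proposition~\ref{subsp-Gdelta-iff-Polish-prop} with Proposition~\ref{LF^N-Sigma-0-2-hard-prop}; the only care needed is the ambient space and the meaning of $\LF^X_{weak}$.

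First, we may assume $X$ is countably infinite. When $X$ is finite, $\LF^X=\AF^X$ and the weak topology is discrete by Remark~\ref{strict-refinement-rmk}; in that degenerate case the argument below does not apply, and the statement is really intended for infinite $X$, as in Remark~\ref{LF=LF^N_str-AF=AF^N_weak-rmk}. So take $X=\NN$.

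Next, identify the space. $\LF^X_{weak}$ is $\LF^X$ with the topology generated by the sets $U_{Q,V}\cap\LF^X$. This is exactly the subspace topology that $\LF^X$ inherits from $\AF^X_{weak}$. By Theorem~\ref{N^N-homeo-thm}, $\AF^X_{weak}\cong\NN^\NN$, so the ambient space is Polish.

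Now apply the two propositions:
\begin{enumerate}[label=(\roman*)]
    \item Proposition~\ref{subsp-Gdelta-iff-Polish-prop} says a subspace of a Polish space is Polish if and only if it is $\mathbf\Pi^0_2$ in that space. So if $\LF^X_{weak}$ were Polish, $\LF^X$ would be $\mathbf\Pi^0_2$ in $\AF^X_{weak}$.
    \item Proposition~\ref{LF^N-Sigma-0-2-hard-prop} says $\LF^X$ is not $\mathbf\Pi^0_2$ in $\AF^X_{weak}$.
\end{enumerate}
These conflict, so $\LF^X_{weak}$ is not Polish.

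There is no real obstacle. The only point to check is that "Polish" is intrinsic to the topology of $\LF^X_{weak}$, so applying Proposition~\ref{subsp-Gdelta-iff-Polish-prop} inside the particular Polish space $\AF^X_{weak}$ is legitimate.
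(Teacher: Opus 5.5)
Your proposal is correct and is the paper's own argument: the paper obtains the corollary in one line by combining Proposition~\ref{subsp-Gdelta-iff-Polish-prop}, applied in the Polish space $\AF^X_{weak}\cong\NN^\NN$, with Proposition~\ref{LF^N-Sigma-0-2-hard-prop}. Your remark about finite $X$ can be sharpened. There $\LF^X_{weak}=\AF^X_{weak}$ is countable and discrete, hence Polish, so the corollary is false in that case and genuinely requires $X$ to be countably infinite (as the paper tacitly assumes by taking $X=\NN$); it is not merely that the argument fails to apply.
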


Since $\LF^X_{weak}$ is a subspace of the separable metrizable space $\AF^X_{weak}\simeq\NN^\NN$ for $X$ countable, it holds that $\LF^X_{weak}$ is a separable metrizable space. Thus, Corollary~\ref{LF^N-weak-not-Polish-cor} shows that there exists no \emph{complete} metric on $\LF^X_{weak}$ compatible with its topology.

\subsection{Borel complexity of properties of interest}

We first define quiver properties and then provide a comparison lemma.

\begin{Def}\label{property-def}
    A \emph{property of quivers} $\mathcal P$ is a true-false question to which every quiver $Q\in\AF$ has an answer. Moreover, if two quivers $Q,Q'\in\AF$ are isomorphic, we require their answers to the question posed by $\mathcal P$ to be the same. In other words, one can view a property $\mathcal P$ as a subspace $\mathcal P\subseteq\AF$ which is invariant under the action of relabeling vertices.
\end{Def}

\begin{Lem}\label{LF-AF-complexity-comparison-lemma}
    Let $\mathcal P\subseteq\AF$ be a property of arrow finite quivers, and let $\mathcal P_\LF:=\mathcal P\cap\LF\subseteq\LF$ be its restriction to locally finite quivers in $\LF$. If $\mathcal P$ is $\mathbf\Sigma^0_\alpha$ (resp. $\mathbf\Pi^0_\alpha$, $\mathbf\Delta^0_\alpha$) in $\AF$, then the same is true of $\mathcal P_\LF$ in $\LF$. Moreover, if $\mathcal P_\LF$ is dense in $\LF$, then $\mathcal P$ is dense in $\AF$.
\end{Lem}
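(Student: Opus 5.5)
The key observation is that the inclusion map $\iota:\LF\to\AF$, where $\LF=\LF^\NN_{str}$ and $\AF=\AF^\NN_{weak}$, is continuous. Continuity follows by composing the continuous identity $\AF^\NN_{str}\to\AF^\NN_{weak}$ from Lemma~\ref{weak-str-comp-lemma} with the subspace inclusion $\LF^\NN_{str}\hookrightarrow\AF^\NN_{str}$, which is continuous by definition of the subspace topology. Concretely, for a basic open set $U_{Q,V}$ of $\AF$, the preimage $\iota^{-1}(U_{Q,V})=U_{Q,V}\cap\LF^\NN$ is open in $\LF$. Indeed, around any $Q'$ in it, Lemma~\ref{U-W-reanchor-lemma} and Lemma~\ref{W-in-U-lemma} give
\[
W_{Q',V}\cap\LF^\NN\subseteq U_{Q',V}\cap\LF^\NN=U_{Q,V}\cap\LF^\NN .
\]
Once continuity is in hand, $\mathcal P_\LF=\iota^{-1}(\mathcal P)$, so Proposition~\ref{Borel-hierarchy-shape-prop}(iv) immediately transfers membership in $\mathbf\Sigma^0_\alpha$, $\mathbf\Pi^0_\alpha$ and $\mathbf\Delta^0_\alpha$ from $\AF$ to $\LF$. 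This gives the first assertion.

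For the density assertion, the plan is to show that $\iota$ has dense image, since a continuous map with dense image carries dense sets to dense sets. First, $\LF^\NN$ is dense in $\AF^\NN_{weak}$ by Lemma~\ref{LF-dense-in-AF-lemma}. Now let $\mathcal P_\LF$ be dense in $\LF$, and take a nonempty basic open set $U_{Q,V}\subseteq\AF$. Because $\LF^\NN$ is dense, $U_{Q,V}\cap\LF^\NN$ is nonempty. It is also open in $\LF$ by the argument above. Density of $\mathcal P_\LF$ in $\LF$ therefore produces a quiver in $\mathcal P_\LF\cap U_{Q,V}\subseteq\mathcal P\cap U_{Q,V}$. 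Hence $\mathcal P$ meets every nonempty basic open set of $\AF$, so $\mathcal P$ is dense in $\AF$.

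I expect no real obstacle here. The only point needing care is remembering that $\LF$ carries the \emph{strong} topology while $\AF$ carries the \emph{weak} one. This mismatch is why the comparison Lemma~\ref{weak-str-comp-lemma} is essential, and why the implications run only in the stated directions: from complexity in $\AF$ down to $\LF$, and from density in $\LF$ up to $\AF$.
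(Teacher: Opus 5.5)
Your proposal is correct and follows essentially the same route as the paper. For the complexity claim, both use continuity of $\iota:\LF\to\AF$ (via Lemma~\ref{weak-str-comp-lemma}) together with Proposition~\ref{Borel-hierarchy-shape-prop}(iv). For the density claim, both use density of $\LF^\NN$ in $\AF$ (Lemma~\ref{LF-dense-in-AF-lemma}). Your version also spells out a point the paper leaves implicit: the density transfer needs continuity of $\iota$ as well, so that $U_{Q,V}\cap\LF^\NN$ is open in the strong topology.
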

    \begin{proof}
        Since the inclusion $\iota:\LF\to\AF$ is continuous by Lemma~\ref{weak-str-comp-lemma}, the first part of the claim follows from Proposition~\ref{Borel-hierarchy-shape-prop}(iv). The density part follows from the fact that $\LF^\NN$ is a dense subspace of $\AF$ (cf. Lemma~\ref{LF-dense-in-AF-lemma}).
    \end{proof}

Note that the converse of this lemma does \emph{not} hold: it is in general possible to find a property $\mathcal P\subseteq\AF$ which is \emph{strictly} higher in the Borel hierarchy than $\mathcal P_\LF\subseteq\LF$, and there are properties $\mathcal P\subseteq\AF$ which are dense in $\AF$ while $\mathcal P_\LF$ is not dense in $\LF$. An example simultaneously witnessing both of these claims is $\mathcal P=\AF\setminus\LF^\NN$.

We first look at the property of finiteness. We present the $\LF$ and $\AF$ variants simultaneously, distinguishing between cases as needed in the proof.

\begin{Prop}\label{finiteness-prop}
    Let $\mathbf{Fin}\subseteq\LF$ denote the subspace of $\LF$ (resp. of $\AF$) of finite quivers (see Remark~\ref{fin-quiv-rmk}). Then it holds that $\mathbf{Fin}$ is dense, has empty interior, and is $\mathbf\Sigma^0_2$-complete in $\LF$ (resp. $\AF$).
\end{Prop}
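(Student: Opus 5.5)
The proof has four parts: density, empty interior, membership in $\mathbf\Sigma^0_2$, and $\mathbf\Sigma^0_2$-hardness. Each is checked in $\LF$ (with the strong topology, basic opens $W_{Q,V}\cap\LF$) and in $\AF$ (weak topology, basic opens $U_{Q,V}$).

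\emph{Density.} In $\AF$ this is exactly Lemma~\ref{LF-dense-in-AF-lemma}. In $\LF$, fix a basic open $W_{Q,V}\cap\LF$ with $Q\in\LF$ and $V$ finite. The overfill $\lambda_V(Q)$ is finite, because $Q$ is locally finite and only the finitely many vertices of $V$ contribute arrows. Moreover $\lambda_V(\lambda_V(Q))=\lambda_V(Q)$, so $\lambda_V(Q)$ lies in this open set.

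\emph{Empty interior.} Given a basic open $W_{Q,V}\cap\LF$ or $U_{Q,V}$, choose $Q$ finite. Now modify it off $V$ by adding infinitely many disjoint arrows $x_{2k}\to x_{2k+1}$, where $x_0,x_1,\dots$ are distinct vertices outside $V$. The result is still locally finite and has the same overfill (hence also the same restriction) on $V$. It is not finite, so no basic open set is contained in $\mathbf{Fin}$.

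\emph{Membership in $\mathbf\Sigma^0_2$.} Write
$$\mathbf{Fin}=\bigcup_{n\in\NN}F_n,\qquad F_n:=\{Q\mid \operatorname{supp}(Q)\subseteq\{1,\dots,n\}\}.$$
Each $F_n$ is closed in $\AF$: its complement is the union of the sets $\{Q\mid Q(i,j)\neq0\}$ over pairs with $j>n$, and each of these is a union of weak-basic opens $U_{P,\{i,j\}}$. By Lemma~\ref{LF-AF-complexity-comparison-lemma}, the $\LF$ version follows from the $\AF$ version.

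\emph{Hardness.} Use the $\mathbf\Sigma^0_2$-complete set $A$ of eventually-zero sequences in $\ZZ^\NN$ from Proposition~\ref{complete-ex-prop}(ii). Define $g(a)$ to be the quiver with $a_n$ arrows from $2n$ to $2n+1$ (signed, so negative $a_n$ reverses them) and no other arrows. This quiver is always locally finite, since each vertex has at most one neighbour. It is finite if and only if $a\in A$, so $g^{-1}(\mathbf{Fin})=A$.

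Continuity into $\AF$ holds because each coordinate $Q(i,j)$ of $g(a)$ depends on at most one coordinate of $a$, and this is the product structure from Theorem~\ref{N^N-homeo-thm}. This step is the main point needing care, because the strong topology on $\LF$ is finer. For $\LF$, use that the overfill $\lambda_V(g(a))$ for $V$ finite depends only on the coordinates $a_n$ with $2n\in V$ or $2n+1\in V$, which are finitely many. So the preimage of $W_{Q,V}$ is a basic open set of $\ZZ^\NN$ (or empty), and $g$ is continuous into $\LF$.

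Then, as in Remark~\ref{Polish-comp-comp-rmk} and the proof of Proposition~\ref{LF^N-Sigma-0-2-hard-prop}, $\mathbf{Fin}$ is $\mathbf\Sigma^0_2$-hard. Together with membership, this gives $\mathbf\Sigma^0_2$-completeness in both spaces.
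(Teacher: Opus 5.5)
Your proof is correct. Density and empty interior match the paper; it adds an infinite path beyond $\max V$, you add a matching of disjoint arrows, which is the same idea. You reach $\mathbf\Sigma^0_2$-completeness by a different route.

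\emph{Membership.} The paper notes that $\mathbf{Fin}$ is countable, hence a countable union of closed singletons. You give the explicit decomposition by bounded support. Either works.

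\emph{Hardness.} The paper argues only in $\LF$ and transfers to $\AF$ through the continuous inclusion and Lemma~\ref{LF-AF-complexity-comparison-lemma}. It uses Baire category: if $\mathbf{Fin}$ were $\mathbf\Pi^0_2$, then $\mathbf{Fin}$ and $\LF\setminus\mathbf{Fin}$ would be two dense $G_\delta$ sets with empty intersection. Proposition~\ref{Borel-complete-prop}, a Wadge-type theorem from Kechris, then upgrades ``$\mathbf\Sigma^0_2$ but not $\mathbf\Pi^0_2$'' to completeness. That route gets hardness almost for free from the density facts just proved, at the cost of a substantial black box.

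Your explicit reduction $g$ from the eventually-zero sequences is more elementary and gives a concrete witness. Its essential feature, compared with the star at vertex $1$ in Proposition~\ref{LF^N-Sigma-0-2-hard-prop}, is that every $g(a)$ is locally finite. Each overfill $\lambda_V(g(a))$ on a finite $V$ depends on only finitely many coordinates of $a$, which is exactly what continuity into the strong topology requires. Your separate continuity check into $\AF$ is redundant: compose with the inclusion $\LF\hookrightarrow\AF$, which is continuous by Lemma~\ref{weak-str-comp-lemma}.
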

    \begin{proof}
        By Lemma~\ref{LF-AF-complexity-comparison-lemma}, it suffices to show that $\mathbf{Fin}$ and $\LF\setminus\mathbf{Fin}$ are both dense in $\LF$, that $\mathbf{Fin}$ is $\mathbf\Sigma^0_2$ in $\AF$, and that $\mathbf{Fin}$ is $\mathbf\Sigma^0_2$-hard in $\LF$.

        To see that $\mathbf{Fin}$ and its complement are both dense in $\LF$ is relatively straightforward. Let $W_{Q,V}$ be a basic open set in $\LF$ for some $Q\in\LF$ and some finite $V\subseteq\NN$. Since $Q$ is locally finite, its overfill induced by $V$ is finite, and moreover this finite overfill is itself a member of $W_{Q,V}$. Thus, $\mathbf{Fin}\cap W_{Q,V}\neq\varnothing$. On the other hand, suppose now without loss of generality that $Q$ is equal to its finite overfill induced by $V$ (so that $\text{supp}(Q)\subseteq V$). Let $m=\max V\in\NN$. Extend $Q$ to a quiver $Q'$ by adding infinitely many arrows $m+1\to m+2\to m+3\to\dots$. Clearly this quiver is both locally finite and infinite, so $\left(\LF\setminus\mathbf{Fin}\right)\cap W_{Q,V}\neq\varnothing$. Hence, we have shown that both $\mathbf{Fin}$ and its complement are dense in $\LF$.

        Next, we see rather quickly that $\mathbf{Fin}$ is $\mathbf\Sigma^0_2$ in $\AF$. In fact, this follows immediately from the observation that $\mathbf{Fin}$ is a countable set (and therefore a countable union of singletons, which are closed).

        Lastly, we show that $\mathbf{Fin}$ is $\mathbf\Sigma^0_2$-hard in $\LF$. To do so, it is equivalent to show by Proposition~\ref{Borel-complete-prop} that $\mathbf{Fin}$ is not $\mathbf\Pi^0_2$ in $\LF$. To see that $\mathbf{Fin}$ is not $\mathbf\Pi^0_2$ in $\LF$, we argue toward a contradiction; suppose it were. Then one would have by the Baire Category Theorem (which applies in $\LF\simeq\NN^\NN$) that $\varnothing=\mathbf{Fin}\cap\left(\LF\setminus\mathbf{Fin}\right)$, being an intersection of two dense $\mathbf\Pi^0_2$ sets, is itself $\mathbf\Pi^0_2$ and dense. While the former half is true, the latter half is absurd: the empty set is not dense in $\LF$. Thus, we see that $\mathbf{Fin}$ is not $\mathbf\Pi^0_2$ in $\LF$ and is therefore $\mathbf\Sigma^0_2$-hard.
    \end{proof}

We next turn our attention to the property of connectedness, which we define as follows:

\begin{Def}\label{con-N-quiv-def}
    A quiver $Q:X\times X\to\ZZ$ with vertex set $X$ is \emph{connected} if its underlying undirected graph (obtained by forgetting orientations of the arrows of $Q$) has at most one connected component which is not an isolated vertex. Equivalently, all arrows of $Q$ belong to the same connected component of $Q$.
\end{Def}

\begin{Prop}\label{connectedness-prop}
    Let $\mathbf{Con}_\LF\subseteq\LF$ (resp. $\mathbf{Con}_\AF\subseteq\AF$) be the subspace of $\LF$ (resp. $\AF$) of connected quivers. Then it holds that $\mathbf{Con}_\LF$ is not dense, has empty interior, and is $\mathbf\Pi^0_2$ in $\LF$. On the other hand, $\mathbf{Con}_\AF$ is dense, has empty interior, and is $\mathbf\Pi^0_2$ in $\AF$.
\end{Prop}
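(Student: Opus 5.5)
We split the proof into three parts: the complexity bound, the empty interiors, and the density claims.

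\textbf{Complexity.} By Lemma~\ref{LF-AF-complexity-comparison-lemma} it suffices to show that $\mathbf{Con}_\AF$ is $\mathbf\Pi^0_2$ in $\AF$. A quiver $Q$ is connected if and only if for every two pairs $(a,b),(c,d)$ with $Q(a,b)\neq0\neq Q(c,d)$ there is a finite path joining $a$ to $c$ in the underlying graph. We write
$$\mathbf{Con}_\AF=\bigcap_{a,b,c,d\in\NN}\Big(\{Q:Q(a,b)=0\}\cup\{Q:Q(c,d)=0\}\cup\bigcup_{\text{finite paths }\gamma\text{ from }a\text{ to }c}\{Q:\text{all edges of }\gamma\text{ nonzero in }Q\}\Big).$$
Each set $\{Q:Q(x,y)=0\}$ is clopen in $\AF$, since it is a union of basic sets $U_{P,\{x,y\}}$ and its complement is likewise. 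Each path set is a finite intersection of open sets, hence open. The inner union is therefore open, and a countable intersection of open sets is $\mathbf\Pi^0_2$.

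\textbf{Empty interior.} Given a basic open $U_{Q,V}$ in $\AF$, replace $Q$ by $\rho_V(Q)$, which has finite support inside $V$. Add a single arrow between two vertices $m+1,m+2$ outside $V$, where $m=\max V$. If $\rho_V(Q)$ has at least one arrow, the result is disconnected and still lies in $U_{Q,V}$. If $\rho_V(Q)$ has no arrows, add two disjoint arrows outside $V$ instead.

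In $\LF$, take a basic open $W_{Q,V}\cap\LF$ and replace $Q$ by its overfill $\lambda_V(Q)$. Since $Q$ is locally finite, $\lambda_V(Q)$ is finite, and it lies in $W_{Q,V}$. Adding one or two arrows among fresh vertices beyond $\max\operatorname{supp}(\lambda_V(Q))\cup V$ does not change $\lambda_V$, so the modified quiver stays in $W_{Q,V}\cap\LF$ and is disconnected. Hence neither $\mathbf{Con}_\LF$ nor $\mathbf{Con}_\AF$ contains a nonempty open set.

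\textbf{Density in $\AF$.} Given $U_{Q,V}$, we may again take $Q=\rho_V(Q)$, which may have several components. Pick a fresh vertex $z\notin V$ and add one arrow from $z$ to each vertex of $V$. None of these new arrows has both endpoints in $V$, so $\rho_V$ is unchanged, and the resulting quiver is connected. Thus $\mathbf{Con}_\AF$ is dense in $\AF$.

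\textbf{Non-density in $\LF$.} This is the one step needing a genuine idea: strong neighbourhoods control every arrow at $V$, so one cannot join components through $V$. Let $Q$ consist of the two single arrows $1\to2$ and $3\to4$, and let $V=\{1,2,3,4\}$. Any $Q'\in W_{Q,V}$ satisfies $\lambda_V(Q')=\lambda_V(Q)$, so $Q'$ has no arrows from $\{1,2\}$ or from $\{3,4\}$ to any other vertex. Hence $\{1,2\}$ and $\{3,4\}$ are distinct components of $Q'$, each containing an arrow, so $Q'$ is disconnected. Therefore $W_{Q,V}\cap\LF$ is a nonempty open set in $\LF$ missing $\mathbf{Con}_\LF$, and $\mathbf{Con}_\LF$ is not dense.
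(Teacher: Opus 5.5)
Your proof is correct and follows essentially the same route as the paper. Both use explicit finite extensions for the density and empty-interior claims, a two-component quiver whose strong neighbourhood misses $\mathbf{Con}_\LF$ for non-density, and a countable-intersection description of connectedness in $\AF$ transferred to $\LF$ via Lemma~\ref{LF-AF-complexity-comparison-lemma}.

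There are two small differences, both in your favour.

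\textbf{The $\mathbf\Pi^0_2$ bound.} You quantify over pairs of arrows rather than pairs of vertices, so each intersected set is already open. The paper instead uses the closed isolation sets $I_i$ and gets $\mathbf\Delta^0_2$ pieces.

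\textbf{Empty interior.} You add two disjoint fresh arrows when the anchor quiver has no arrows. This covers a case the paper's single-arrow extension $m+1\to m+2$ does not handle as written, since a quiver with exactly one arrow is connected.
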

    \begin{proof}
        By Lemma~\ref{LF-AF-complexity-comparison-lemma}, it suffices to show that that $\mathbf{Con}_\LF$ has empty interior and is not dense in $\LF$, and that $\mathbf{Con}_\AF$ is dense and $\mathbf\Pi^0_2$ in $\AF$.
    
        We begin with the three density and empty interior claims. First, consider $\mathbf{Con}_\LF\subseteq\LF$. To establish its non-density, we may simply provide an open subset of $\LF$ containing no connected quiver. Indeed, if $Q$ is a disconnected finite quiver, we let $V=\text{supp}(Q)\subseteq\NN$ be its support. Then all quivers $Q'\in W_{Q,V}$ contain $Q$ as an overfull subquiver. Suppose for the sake of contradiction that there exists a connected $Q'\in W_{Q,V}$. Then there exists a finite sequence of vertices $v_0,v_1,\dots,v_k\in\NN$ for any two $v_0,v_k\in V$ such that $Q'(v_i,v_{i+1})\neq0$ for all $i=0,\dots,k-1$. Since $Q'$ contains $Q$ as an \emph{overfull} subquiver and $v_0,v_k\in V$, one has that $Q(v_0,v_1)=Q'(v_0,v_1)\neq 0$ and $Q(v_{k-1},v_k)=Q'(v_{k-1},v_k)\neq0$. Therefore, we have that $v_1,v_{k-1}\in\text{supp}(Q)=V$. One may then inductively argue that $v_i\in V$ for all $i=0\dots, k$. However, this contradicts the disconnectedness of $Q$, so one has that $\mathbf{Con}_\LF\cap W_{Q,V}=\varnothing$. Thus, $\mathbf{Con}_\LF$ is not dense in $\LF$.

        We now give an argument that $\mathbf{Con}_\LF$ has empty interior. We show the equivalent statement that $\LF\setminus\mathbf{Con}_\LF$ is dense in $\LF$. Let $W_{Q,V}$ be a basic open set in $\LF$ for some $Q\in\LF$ and $V\subseteq\NN$ finite. Since $Q$ is locally finite, the overfull subquiver of $Q$ induced by $V$ is finite as well. Thus, we may assume without loss of generality that $Q$ is finite and equal to its overfull subquiver induced by $V$. Enlarging $V$ if necessary, we may assume that $\text{supp}(Q)\subseteq V$. Let $m=\max V\in\NN$. Extend $Q$ to a quiver $Q'\in W_{Q,V}$ by placing a single arrow from $m+1$ to $m+2$. Since $Q'$ is disconnected and $Q'\in W_{Q,V}$, we have shown that $W_{Q,V}\cap\left(\LF\setminus\mathbf{Con}_\LF\right)\neq\varnothing$, as desired.

        The last density claim to show is that $\mathbf{Con}_\AF$ is dense in $\AF$. Let $Q\in\AF$ be an arrow finite quiver and let $V\subseteq\NN$ be finite. We claim that $U_{Q,V}$ contains a connected quiver. Indeed, we may assume without loss of generality again that $Q$ is finite and $\text{supp}(Q)\subseteq V$. Let $m=\max V\in\NN$. Extend $Q$ to a quiver $Q'$ by adding an arrow $i\to m+1$ for all $i\leq m$. Then $Q'$ is connected and $Q'\in U_{Q,V}$, so $\mathbf{Con}_\AF$ is dense in $\AF$.

        Finally, it remains to show that $\mathbf{Con}_\AF$ is $\mathbf\Pi^0_2$ in $\AF$. We first observe that
        $$\mathbf{Con}_\AF=\bigcap_{i,j\in\NN}\left(P_{i,j}\cup I_i\cup I_j\right),$$
        where
        \begin{align*}
        P_{i,j}&:=\left\{Q\in\AF\mid\exists\ell\geq 0,\exists i_1,\dots,i_\ell\in\NN:Q(i,i_1),Q(i_1,i_2),\dots,Q(i_\ell,j)\neq0\right\}\subseteq\AF\\
        I_i&:=\left\{Q\in\AF\mid i\text{ is isolated in }Q\right\}\subseteq\AF.
        \end{align*}
        Indeed, our definition of connectedness only requires that every \emph{non-isolated} vertex is connected to every other \emph{non-isolated} vertex; for any pair $i,j\in\NN$, then, it may hold that either $i$ or $j$ is isolated in any given connected quiver. Thus, if we could show that $P_{i,j}$ and $I_i$ were open in $\AF$ for all $i,j\in\NN$, we would be done. However, while we will in fact be able to show that $P_{i,j}$ is open, we will not be able to show that $I_i$ is open. Not all hope is lost, though, since we will be able to show that $I_i$ is closed. Then it follows that $P_{i,j}\cup I_i\cup I_j$ is $\mathbf\Delta^0_2$ for all $i,j\in\NN$, and countable intersections of $\mathbf\Delta^0_2$ sets are $\mathbf\Pi^0_2$ by Proposition~\ref{Borel-hierarchy-shape-prop}(iii). This will be enough to show that $\mathbf{Con}_\AF$ is $\mathbf\Pi^0_2$ in $\AF$.

        To see that $P_{i,j}$ is open in $\AF$, we may write $P_{i,j}$ as the following countable union:
        \begin{align*}
        P_{i,j}&=\bigcup_{(i_1,\dots,i_\ell)\in\NN^{<\NN}}\left\{Q\in\AF\mid Q(i,i_1),Q(i_1,i_2),\dots,Q(i_\ell,j)\neq0\right\}\\
        &=\bigcup_{(i_1,\dots,i_\ell)\in\NN^{<\NN}}\left\{Q\in\AF\mid Q(i,i_1)\neq0\right\}\cap\dots\cap\left\{Q\in\AF\mid Q(i_\ell,j)\neq0\right\}.
        \end{align*}
        Note that for all $v,w\in\NN$, the set $\{Q\in\AF\mid Q(v,w)\neq0\}=\AF\setminus\{Q\in\AF\mid Q(v,w)=0\}$ is the complement of a basic clopen set $U_{Q,V}$ (with $V=\{v,w\}$ and $Q$ the quiver with no arrows), so is itself clopen. Thus, the above finite intersections in the above union are clopen, so the entire union $P_{i,j}$ is open.

        To see that $I_i$ is closed in $\AF$, we may simply observe that it can be written as the intersection
        $$I_i=\bigcap_{k\neq i}\left\{Q\in\AF\mid Q(i,k)=0\right\}$$
        of basic clopen sets of the form described in the previous paragraph.

        Altogether, we have that $\mathbf{Con}_\AF$ is a countable intersection of $\mathbf\Delta^0_2$ sets in $\AF$, so $\mathbf{Con}_\AF$ is $\mathbf\Pi^0_2$ in $\AF$.
        \end{proof}

We now examine acyclicity.

\begin{Def}\label{acyc-def}
    A quiver $Q:X\times X\to\ZZ$ on a vertex set $X$ is \emph{acyclic} if it contains no oriented cycles, i.e., finite sequences of vertices $(v_1,v_2,\dots,v_\ell)\in X^\ell$ such that $Q(v_i,v_{i+1})>0$ for all $i=1,\dots,\ell$ (with indices taken modulo $\ell$).
\end{Def}

\begin{Prop}\label{acyclicity-prop}
    Let $\mathbf{Acyc}_\LF\subseteq\LF$ (resp. $\mathbf{Acyc}_\AF\subseteq\AF$) be the subspace of $\LF$ (resp. $\AF$) of acyclic quivers. Then it holds that $\mathbf{Acyc}_\LF$ (resp. $\mathbf{Acyc}_\AF$) is not dense, has empty interior, and is closed (i.e. $\mathbf\Pi^0_1$) in $\LF$ (resp. $\AF$).
\end{Prop}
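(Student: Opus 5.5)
Following the pattern of Propositions~\ref{finiteness-prop} and \ref{connectedness-prop}, I use Lemma~\ref{LF-AF-complexity-comparison-lemma} to reduce the work. Closedness in $\AF$ implies closedness of $\mathbf{Acyc}_\LF=\mathbf{Acyc}_\AF\cap\LF$ in $\LF$. Non-density of $\mathbf{Acyc}_\LF$ in $\LF$ does not transfer upward, so I show non-density directly in $\AF$; an open set of $\AF$ missing $\mathbf{Acyc}_\AF$ is also open in $\LF$ by Lemma~\ref{weak-str-comp-lemma}. Empty interior transfers from $\LF$ to $\AF$: it is density of the complement, and if $\LF\setminus\mathbf{Acyc}_\LF$ is dense in $\LF$ then $\AF\setminus\mathbf{Acyc}_\AF$ is dense in $\AF$. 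So it suffices to show three things: $\mathbf{Acyc}_\AF$ is closed in $\AF$, it is not dense in $\AF$, and $\LF\setminus\mathbf{Acyc}_\LF$ is dense in $\LF$.

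For closedness, I write
$$\AF\setminus\mathbf{Acyc}_\AF=\bigcup_{\ell\geq 2}\ \bigcup_{(v_1,\dots,v_\ell)\in\NN^\ell}\ \bigcap_{i=1}^{\ell}\{Q\in\AF\mid Q(v_i,v_{i+1})>0\},$$
with indices taken modulo $\ell$. Each set $\{Q\mid Q(v,w)>0\}$ is a union of basic clopen sets $U_{P,\{v,w\}}$, so it is open. Finite intersections of these sets are open, and so is the union. Hence the set of acyclic quivers is closed.

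For non-density, take $P$ to be the oriented $3$-cycle $1\to2\to3\to1$. Every $Q'\in U_{P,\{1,2,3\}}$ contains this oriented cycle, so $U_{P,\{1,2,3\}}\cap\mathbf{Acyc}_\AF=\varnothing$. Therefore $\mathbf{Acyc}_\AF$ is not dense in $\AF$, and since $U_{P,\{1,2,3\}}\cap\LF$ is a nonempty open set of $\LF$, $\mathbf{Acyc}_\LF$ is not dense in $\LF$ either.

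For density of the complement in $\LF$, fix a basic open set $W_{Q,V}\cap\LF$. As in the proof of Proposition~\ref{connectedness-prop}, I replace $Q$ by its overfill on $V$, which is finite because $Q$ is locally finite. I then enlarge $V$ to contain $\text{supp}(Q)$ and set $m=\max V$. Adding a disjoint oriented $3$-cycle $m+1\to m+2\to m+3\to m+1$ leaves the overfill on $V$ unchanged. The result is locally finite, non-acyclic, and lies in $W_{Q,V}$.

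The only delicate point is the direction of the transfers in Lemma~\ref{LF-AF-complexity-comparison-lemma}: non-density must be witnessed by a weak-open set (which is then also strong-open), and empty interior must be witnessed in the strong topology. The argument above is set up so that each claim is proved in the space from which it passes to the other.
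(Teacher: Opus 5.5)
Your proposal is correct and follows essentially the same route as the paper: you reduce via Lemma~\ref{LF-AF-complexity-comparison-lemma} to closedness of $\mathbf{Acyc}_\AF$ in $\AF$, a nonempty open set of $\AF$ avoiding $\mathbf{Acyc}_\AF$, and density of non-acyclic quivers in $\LF$ obtained by adjoining a disjoint $3$-cycle beyond $\max V$. The differences are cosmetic. You write the complement as an explicit union of open sets, whereas the paper exhibits a neighborhood $U_{Q,\{v_1,\dots,v_\ell\}}$ of each cyclic quiver. You also give the $3$-cycle neighborhood directly, whereas the paper deduces that the complement has nonempty interior from closedness.
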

    \begin{proof}
        With Lemma~\ref{LF-AF-complexity-comparison-lemma} in mind, it suffices to show that $\mathbf{Acyc}_\AF\subseteq\AF$ is closed, that $\AF\setminus\mathbf{Acyc}_\AF$ has nonempty interior in $\AF$, and that $\mathbf{Acyc}_\LF\subseteq\LF$ has empty interior. Further, we note that if we can show $\mathbf{Acyc}_\AF\subseteq\AF$ is closed, then it automatically follows that $\AF\setminus\mathbf{Acyc}_\AF$ has nonempty interior, as it is a nonempty open set (since there exist non-acyclic quivers). Thus, we are only required to show that $\mathbf{Acyc}_\AF$ is closed in $\AF$ and that $\mathbf{Acyc}_\LF$ has empty interior in $\LF$.

        We show that $\AF\setminus\mathbf{Acyc}_\AF$ is open in $\AF$. Let $Q\in\AF\setminus\mathbf{Acyc}_\AF$. By Definition~\ref{acyc-def}, it holds that there exist vertices $v_1,\dots,v_\ell\in\NN$ such that $Q(v_i,v_{i+1})>0$ for all $i=1,\dots,\ell$ (modulo $\ell$). Then it follows that every $Q'\in U_{Q,\{v_1,\dots,v_\ell\}}$ has $Q'(v_i,v_{i+1})=Q(v_i,v_{i+1})>0$ for all $i=1,\dots,\ell$ (modulo $\ell$). This shows that $U_{Q,\{v_1,\dots,v_\ell\}}\subseteq\AF\setminus\mathbf{Acyc}_\AF$ is an open neighborhood of $Q$ in $\AF\setminus\mathbf{Acyc}_\AF$, so $\AF\setminus\mathbf{Acyc}_\AF$ is open.

        Now we show that $\mathbf{Acyc}_\LF$ has empty interior in $\LF$. Indeed, suppose $Q\in\mathbf{Acyc}_\LF$, and let $W_{Q,V}$ be a basic open neighborhood of $Q$ in $\LF$ for some finite $V\subseteq\NN$. Since $Q$ is locally finite and $V$ is finite, we may assume without loss of generality that $Q$ is finite. Extend $V$ if needed to obtain $\text{supp}(Q)\subseteq V$. Let $N=\max V$. Extend $Q$ to another locally finite quiver $Q'$ by creating a $3$-cycle $N+1\to N+2\to N+3\to N+1$. Then $Q'\in W_{Q,V}\cap\left(\AF\setminus\mathbf{Acyc}_\LF\right)$. Hence, we see that $W_{Q,V}\not\subseteq\mathbf{Acyc}_\LF$, so $\mathbf{Acyc}_\LF$ has empty interior in $\LF$.
    \end{proof}

The next property is the first examined in our list to involve mutations.

\begin{Def}\label{mut-acyc-def}
    A quiver $Q$ is \emph{mutation-acyclic} if there exists a \emph{finite} sequence of mutations one may apply to $Q$ to obtain an acyclic quiver.
\end{Def}

\begin{Rmk}\label{mut-acyc-hered-rmk}
    We quickly note that mutation-acyclicity is a \emph{hereditary} property for finite quivers, meaning that if $Q$ is a finite mutation-acyclic quiver and $Q'$ is a full subquiver of $Q$, then $Q'$ is mutation-acyclic. This is a classical result in the theory of cluster algebras, but no combinatorial proof of this fact is currently known.
\end{Rmk}

\begin{Prop}\label{mutation-acyclicity-prop}
    Let $\mathbf{MA}_\LF\subseteq\LF$ (resp. $\mathbf{MA}_\AF\subseteq\AF$) be the subspace of $\LF$ (resp. $\AF$) consisting of mutation-acyclic quivers. Then $\mathbf{MA}_\LF$ (resp. $\mathbf{MA}_\AF$) is not dense, has empty interior, and is $\mathbf\Sigma^0_2$ in $\LF$ (resp. $\AF$).
\end{Prop}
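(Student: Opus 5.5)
By Lemma~\ref{LF-AF-complexity-comparison-lemma}, it suffices to prove three things: that $\mathbf{MA}_\AF$ is $\mathbf\Sigma^0_2$ in $\AF$, that $\mathbf{MA}_\AF$ is not dense in $\AF$, and that $\LF\setminus\mathbf{MA}_\LF$ is dense in $\LF$. The second gives non-density of $\mathbf{MA}_\LF$ in $\LF$ by the contrapositive of the density clause of that lemma. For the third, the density of a complement also transfers from $\LF$ to $\AF$ by the same lemma, since $\AF\setminus\mathbf{MA}_\AF$ restricted to $\LF$ is $\LF\setminus\mathbf{MA}_\LF$. So the third claim gives empty interior in both spaces.

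For the complexity bound, I would write
$$\mathbf{MA}_\AF=\bigcup_{(i_1,\dots,i_k)\in\NN^{<\NN}}(\mu_{i_k}\circ\dots\circ\mu_{i_1})^{-1}(\mathbf{Acyc}_\AF).$$
Each finite composite of mutations is a homeomorphism of $\AF$ by Proposition~\ref{mut-cont-prop}, and $\mathbf{Acyc}_\AF$ is closed by Proposition~\ref{acyclicity-prop}. Hence each term is closed, and the union is countable, so $\mathbf{MA}_\AF$ is $\mathbf\Sigma^0_2$.

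The key step, and the main obstacle, is to transfer heredity of mutation-acyclicity (Remark~\ref{mut-acyc-hered-rmk}), which is only known for finite quivers, to arbitrary $Q\in\AF$. The claim is that if $Q$ is mutation-acyclic and $F\subseteq\NN$ is finite, then $\rho_F(Q)$ is mutation-acyclic. Suppose $\mu_s=\mu_{i_k}\circ\dots\circ\mu_{i_1}$ makes $Q$ acyclic, and let $W=F\cup\{i_1,\dots,i_k\}$. Iterating Lemma~\ref{mut-restr-commute-lemma} gives $\mu_s(\rho_W(Q))=\rho_W(\mu_s(Q))$. The right-hand side is a restriction of an acyclic quiver, hence acyclic. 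So the finite quiver $\rho_W(Q)$ is mutation-acyclic, and by the finite heredity result so is its full subquiver $\rho_F(Q)$.

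Next, fix the Markov quiver $P$ on $\{1,2,3\}$ (an oriented $3$-cycle with double arrows). It is classically not mutation-acyclic, since every mutation yields an isomorphic quiver.
\begin{itemize}
\item Every $Q\in U_{P,\{1,2,3\}}$ has $\rho_{\{1,2,3\}}(Q)=P$, so by the previous paragraph $Q$ is not mutation-acyclic. Thus $U_{P,\{1,2,3\}}$ is a nonempty open set missing $\mathbf{MA}_\AF$, which proves non-density.
\item For density of the complement in $\LF$, take a basic open set $W_{Q,V}$. As in earlier proofs, I may assume $Q$ is finite with $\operatorname{supp}(Q)\subseteq V$. Let $m=\max V$, and add a copy of $P$ on the vertices $m+1,m+2,m+3$. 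The resulting quiver is locally finite and lies in $W_{Q,V}$. It contains $P$ as a full subquiver, so by the heredity argument it is not mutation-acyclic.
\end{itemize}
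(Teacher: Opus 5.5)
Your proof is correct. For the $\mathbf\Sigma^0_2$ bound and for the non-density of $\mathbf{MA}_\AF$ it is the paper's argument. The paper also plants the Markov quiver and pushes a hypothetical acyclifying sequence down to the finite restriction on $V\cup\{i_1,\dots,i_\ell\}$ before invoking finite heredity. You package that step as a standalone heredity-transfer lemma, and you also justify why the Markov quiver is not mutation-acyclic, which the paper only cites as a famous example.

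The genuine difference is in the empty-interior claim. The paper does not use heredity there. It writes $\mathbf{MA}_\LF=\bigcup_{\mathbf i}\mu_{\mathbf i}(\mathbf{Acyc}_\LF)$ and notes that each term is a homeomorphic image of the closed, empty-interior set $\mathbf{Acyc}_\LF$ (Proposition~\ref{acyclicity-prop}). It then concludes by the Baire Category Theorem in $\LF\simeq\NN^\NN$. You instead show directly that $\LF\setminus\mathbf{MA}_\LF$ is dense by planting a Markov quiver on the fresh vertices $m+1,m+2,m+3$. This is the planting argument of Theorem~\ref{hereditary-meta-thm} specialized to this property.

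The paper's route needs only that acyclicity is closed and nowhere dense. It therefore applies verbatim to $\bigcup_{\mathbf i}\mu_{\mathbf i}(X)$ for any closed $X\subseteq\LF$ with empty interior, whether or not the resulting property is hereditary. Your route avoids Baire category and is fully explicit, but it leans on the deep, non-combinatorial finite heredity result of Remark~\ref{mut-acyc-hered-rmk}. Since that result is already needed for the non-density half, it costs nothing extra here.
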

    \begin{proof}
        Using Lemma~\ref{LF-AF-complexity-comparison-lemma}, we may reduce the claim to showing that $\AF\setminus\mathbf{MA}_\AF$ has nonempty interior in $\AF$, that $\mathbf{MA}_\LF$ has empty interior in $\LF$, and that $\mathbf{MA}_\AF$ is $\mathbf\Sigma^0_2$ in $\AF$.

        To show that $\AF\setminus\mathbf{MA}_\AF$ has nonempty interior in $\AF$, we first argue that it suffices to provide a single example of a finite quiver which is not mutation-acyclic. Indeed, for such a $Q$, we let $V=\text{supp}(Q)$ and argue that no quiver in $U_{Q,V}$ is mutation-acyclic. Suppose for the sake of contradiction that there is some $Q'\in U_{Q,V}$ and some finite sequence $\mathbf i=(i_1,i_2,\dots,i_\ell)$ such that $Q'':=\mu_\mathbf i(Q')=\mu_{i_\ell}\circ\dots\circ\mu_{i_1}(Q')$ is acyclic. Now extend $V$ to $V':=V\cup\{i_1,\dots,i_\ell\}$. Since full subquivers of acyclic quivers are acyclic, it holds that $Q''$ restricted to $V'$ is acyclic. Moreover, since all mutations in $\mathbf i$ occurred within $V'$, we have (by the fact that restriction to $V'$ and mutation within $V'$ commute) that $Q'$ restricted to $V'$ is mutation-acyclic. Moreover, $Q'$ restricted to $V'$ is a finite mutation-acyclic quiver with $Q\leq Q'$. From this and the property that full subquivers of finite mutation-acyclic quivers are mutation-acyclic (see Remark~\ref{mut-acyc-hered-rmk}), we see that $Q$ is mutation-acyclic, contradicting our original assumption that it isn't. To finish this argument, we need to provide an example of a finite quiver which is not mutation-acyclic. The following famous example of a finite non-mutation-acyclic quiver, the \emph{Markov quiver}, has this property:
        \\
        \[\begin{tikzcd}[cramped]
        	& 2 \\
        	1 && 3
        	\arrow[from=1-2, to=2-3]
        	\arrow[shift left=3, from=1-2, to=2-3]
        	\arrow[from=2-1, to=1-2]
        	\arrow[shift left=3, from=2-1, to=1-2]
        	\arrow[from=2-3, to=2-1]
        	\arrow[shift left=3, from=2-3, to=2-1]
        \end{tikzcd}\]
        \\

        We next show that $\mathbf{MA}_\LF$ has empty interior in $\LF$. We argue this by way of the Baire Category Theorem, using the fact that \emph{the} Baire space $\LF\simeq\NN^\NN$ is also \emph{a} Baire space, i.e., one in which the Baire Category Theorem applies. We first observe that
        $$\mathbf{MA}_\LF=\bigcup_{\mathbf i=(i_1,\dots,i_\ell)\in\NN^{<\NN}}\mu_\mathbf i(\mathbf{Acyc}_\LF).$$
        Since $\mu_\mathbf i:\LF\to\LF$ is a homeomorphism for all $\mathbf i\in\NN^{<\NN}$, it follows by Proposition~\ref{acyclicity-prop} above that all of the sets $\mu_\mathbf i(\mathbf{Acyc}_\LF)$ are closed with empty interior. By the Baire Category Theorem applied to $\LF$, it then follows that $\mathbf{MA}_\LF$ has empty interior in $\LF$.

        Finally, we argue that $\mathbf{MA}_\AF$ is $\mathbf\Sigma^0_2$ in $\AF$. Indeed, we may write
        $$\mathbf{MA}_\AF=\bigcup_{\mathbf i=(i_1,\dots,i_\ell)\in\NN^{<\NN}}\mu_\mathbf i(\mathbf{Acyc}_\AF)$$
        nearly identically to the above. Since $\mu_\mathbf i:\AF\to\AF$ is a homeomorphism for all $\mathbf i\in\NN^{<\NN}$, $\mathbf{Acyc}_\AF$ is closed in $\AF$, and there are only countably many $\mathbf i\in\NN^{<\NN}$, it follows that $\mathbf{MA}_\AF$ is $\mathbf\Sigma^0_2$ in $\AF$.
    \end{proof}

We now look at properties concerning the multiplicities of arrows in a quiver.

\begin{Def}\label{weights-def}
    Let $Q:X\times X\to\ZZ$ be a quiver on a vertex set $X$ and $m\in\ZZ_{\geq0}$. We say that $Q$ \emph{has a weight equal to $m$} if there exists a pair of vertices $i,j\in X$ such that $Q(i,j)=m$.
\end{Def}

\begin{Prop}\label{weights-prop}
    Fix $S\subseteq\ZZ_{\geq0}$ nonempty. Let $\mathbf \Omega_{S,\LF}\subseteq\LF$ (resp. $\mathbf \Omega_{S,\AF}\subseteq\AF$) be the subspace of $\LF$ (resp. $\AF$) containing quivers with a weight equal to some $m\in S$. Then $\mathbf \Omega_{S,\LF}$ (resp. $\mathbf \Omega_{S,\AF}$) is dense and open (i.e. $\mathbf\Sigma^0_1$) in $\LF$ (resp. $\AF$).
\end{Prop}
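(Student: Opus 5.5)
By Lemma~\ref{LF-AF-complexity-comparison-lemma}, it suffices to prove two things: that $\mathbf\Omega_{S,\AF}$ is open in $\AF$, and that $\mathbf\Omega_{S,\LF}$ is dense in $\LF$. Openness in $\AF$ transfers to openness of $\mathbf\Omega_{S,\LF}=\mathbf\Omega_{S,\AF}\cap\LF$ in $\LF$, since the inclusion $\LF\to\AF$ is continuous. Density in $\LF$ transfers to density in $\AF$.

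\emph{Openness in $\AF$.} Let $Q\in\mathbf\Omega_{S,\AF}$, so there are $i,j\in\NN$ and $m\in S$ with $Q(i,j)=m$. I take the basic weak-open set $U_{Q,\{i,j\}}$, which contains $Q$. Every $Q'$ in it satisfies $\rho_{\{i,j\}}(Q')=\rho_{\{i,j\}}(Q)$, and in particular $Q'(i,j)=Q(i,j)=m$. Hence $U_{Q,\{i,j\}}\subseteq\mathbf\Omega_{S,\AF}$. This covers the degenerate case $i=j$ (which forces $m=0$) verbatim.

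\emph{Density in $\LF$.} Fix $m\in S$, and let $W_{Q,V}\cap\LF$ be a basic open set with $Q\in\LF$ and $V$ finite. As in the proofs of Propositions~\ref{finiteness-prop} and \ref{acyclicity-prop}, local finiteness makes $\lambda_V(Q)$ a finite quiver lying in $W_{Q,V}$. So I may assume $Q$ is finite with $\text{supp}(Q)\subseteq V$, enlarging $V$ if necessary. Set $N=\max V$ and let $Q'$ be $Q$ together with exactly $m$ arrows $N+1\to N+2$, so that $Q'(N+1,N+2)=m$. Then $Q'$ is finite, hence locally finite. Since neither $N+1$ nor $N+2$ lies in $V$, the overfill is unchanged: $\lambda_V(Q')=\lambda_V(Q)$. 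Thus $Q'\in W_{Q,V}\cap\mathbf\Omega_{S,\LF}$. When $m=0$ no arrows need to be added, since $Q(N+1,N+2)=0$ already.

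The only point requiring care is this overfill check. The new arrows must avoid $V$ entirely, not merely avoid having both endpoints in $V$, because $W$-neighborhoods fix every arrow with at least one endpoint in $V$. Choosing fresh vertices beyond $\max V$ handles this.
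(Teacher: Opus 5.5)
Your proof is correct and follows the paper's argument essentially step for step. You reduce via Lemma~\ref{LF-AF-complexity-comparison-lemma}, get openness in $\AF$ from the basic set $U_{Q,\{i,j\}}$, and get density in $\LF$ by adding $m$ arrows between the fresh vertices $N+1$ and $N+2$ beyond $\max V$. (Your version also quietly corrects a typo in the paper, which writes $i,j\in S$ where $i,j\in\NN$ is meant.)
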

    \begin{proof}
        Let $S\subseteq\ZZ_{\geq0}$. It suffices by Lemma~\ref{LF-AF-complexity-comparison-lemma} to show that $\mathbf\Omega_{S,\AF}$ is open in $\AF$ and that $\mathbf\Omega_{S,\LF}$ is dense in $\LF$.

        We first show that $\mathbf\Omega_{S,\AF}$ is open in $\AF$. Let $Q\in\mathbf\Omega_{S,\AF}$. By definition, this means that there exist $i,j\in S$ so that $Q(i,j)\in S$. Then it follows that any $Q'\in U_{Q,\{i,j\}}$ has $Q'(i,j)=Q(i,j)\in S$ as well, so $U_{Q,\{i,j\}}\subseteq\mathbf\Omega_{S,\AF}$ is an open neighborhood of $Q$ in $\mathbf\Omega_{S,\AF}$.

        To see that $\mathbf\Omega_{S,\LF}$ is dense in $\LF$, let $Q\in\LF$ and let $V\subseteq\NN$ be finite. We want to show that $W_{Q,V}$ contains some $Q'\in\mathbf\Omega_{S,\LF}$. Since $Q$ is locally finite, its overfill induced by $V$ is finite as well. Hence, we may assume without loss of generality that $Q$ is equal to its overfill induced by $V$. Extending $V$ if needed, we may also assume $\text{supp}(Q)\subseteq V$. Let $N=\max V$. Since $S\neq\varnothing$, we may pick some $m\in S$. Extend $Q$ to a new (locally) finite quiver $Q'$ by declaring $Q'(N+1,N+2)=m$. It then holds that the overfill of $Q'$ on $V$ agrees with that of $Q$, so $Q'\in W_{Q,V}$. It also holds that $Q'$ has a weight in $S$, so $Q'\in\mathbf\Omega_{S,\LF}$ as well. This shows that $\mathbf\Omega_{S,\LF}$ is dense in $\LF$.
    \end{proof}

As a quick corollary, we have that ``tameness'' is a closed property with empty interior:

\begin{Cor}\label{tameness-cor}
    Let $\mathbf T_\LF\subseteq\LF$ (resp. $\mathbf T_\AF\subseteq\AF$) be the set of all quivers in $\LF$ (resp. $\AF$) with no weights greater than $2$ and which are not mutation-equivalent to any quiver with weights greater than $2$. Then $\mathbf T_\LF$ (resp. $\mathbf T_\AF$) is closed (i.e. $\mathbf\Pi^0_1$) with empty interior in $\LF$ (resp. $\AF$).
\end{Cor}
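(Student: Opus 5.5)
The plan is to realize $\mathbf T_\AF$ as the complement of a union of mutation-translates of a single open set, reading $\mathbf T_\LF=\mathbf T_\AF\cap\LF$ throughout. Take $S:=\{m\in\ZZ_{\geq0}\mid m\geq3\}$, which is nonempty. By Definition~\ref{weights-def}, a quiver has a weight greater than $2$ precisely when it lies in $\mathbf\Omega_{S,\AF}$; since $Q(i,j)=-Q(j,i)$, any weight of absolute value at least $3$ appears as a nonnegative value for some ordered pair. A quiver $Q$ fails to be in $\mathbf T_\AF$ exactly when some finite mutation sequence $\mu_\mathbf i$, possibly empty, sends $Q$ into $\mathbf\Omega_{S,\AF}$. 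Hence
$$\AF\setminus\mathbf T_\AF=\bigcup_{\mathbf i\in\NN^{<\NN}}\mu_\mathbf i^{-1}(\mathbf\Omega_{S,\AF}),$$
and the analogous identity holds in $\LF$ with $\mathbf\Omega_{S,\LF}$.

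For closedness, Proposition~\ref{weights-prop} shows that $\mathbf\Omega_{S,\AF}$ is open in $\AF$. Each $\mu_\mathbf i$ is a homeomorphism of $\AF$ by Proposition~\ref{mut-cont-prop}, so every set in the union is open, the union is open, and $\mathbf T_\AF$ is closed. Lemma~\ref{LF-AF-complexity-comparison-lemma}, or the same argument run directly in $\LF$ (where mutations are also homeomorphisms of $\LF^\NN_{str}$), then gives that $\mathbf T_\LF$ is closed in $\LF$.

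For empty interior, I would show that the complement is dense. It already contains $\mathbf\Omega_{S,\LF}$ (the empty sequence term), which is dense in $\LF$ by Proposition~\ref{weights-prop}, so $\LF\setminus\mathbf T_\LF$ is dense and $\mathbf T_\LF$ has empty interior in $\LF$. In the same way, $\mathbf\Omega_{S,\AF}$ is dense in $\AF$, so $\mathbf T_\AF$ has empty interior in $\AF$.

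The only step that needs care is the set identity: we must be sure that "weight greater than $2$" corresponds exactly to membership in $\mathbf\Omega_{S}$ with this choice of $S$, given the sign conventions. Skew-symmetry settles this, since a weight of absolute value at least $3$ on the pair $\{i,j\}$ appears as a positive value of either $Q(i,j)$ or $Q(j,i)$. I expect no real obstacle beyond this bookkeeping.
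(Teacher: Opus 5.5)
Your proof is correct and is essentially the paper's argument: the paper writes $\mathbf T_\LF=\bigcap_{\mathbf i\in\NN^{<\NN}}\mu_{\mathbf i}(\LF\setminus\mathbf\Omega_{S,\LF})$ (and similarly for $\AF$) with $S=\{3,4,5,\dots\}$, and concludes from Proposition~\ref{weights-prop} and the fact that mutations are homeomorphisms. Each term of that intersection is closed with empty interior, so the paper's argument is simply the complement of your union of open mutation-translates of the dense open set $\mathbf\Omega_S$.
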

    \begin{proof}
        It is straightforward to confirm the following:
        $$\mathbf T_\LF=\bigcap_{\mathbf i\in\NN^{<\NN}}\mu_{\mathbf i}\left(\LF\setminus\mathbf\Omega_{S,\LF}\right)$$
        and similarly
        $$\mathbf T_\AF=\bigcap_{\mathbf i\in\NN^{<\NN}}\mu_{\mathbf i}\left(\AF\setminus\mathbf\Omega_{S,\AF}\right)$$
        for $S=\{3,4,5,6,\dots\}$. By Proposition~\ref{weights-prop} above, $\LF\setminus\mathbf\Omega_{S,\LF}$ and $\AF\setminus\mathbf\Omega_{S,\AF}$ are both closed with empty interior in $\LF$ and $\AF$ (respectively). Using the fact that $\mu_\mathbf i$ yields homeomorphisms $\LF\to\LF$ and $\AF\to\AF$ for all finite sequences $\mathbf i\in\NN^{<\NN}$, one sees that the sets $\mu_\mathbf i(\LF\setminus\mathbf\Omega_{S,\LF})$ and $\mu_\mathbf i(\AF\setminus\mathbf\Omega_{S,\AF})$ are also closed with empty interior in $\LF$ and $\AF$ (respectively). It immediately follows that the intersections $\mathbf T_\LF$ and $\mathbf T_\AF$ are closed sets with empty interior in $\LF$ and $\AF$ (respectively).
    \end{proof}

\subsection{Hereditary properties}
We close this section with a theorem partially describing the topological attributes of arbitrary hereditary properties of quivers.

\begin{Def}\label{hereditary-def}
    A property $\mathcal P$ of quivers (see Definition~\ref{property-def}) is said to be \emph{hereditary} if for any $Q$ with the property $\mathcal P$, all \emph{finite} full subquivers of $Q$ also have the property $\mathcal P$.
\end{Def}

\begin{Def}\label{F-avoiding-def}
    Let $\mathcal F$ be a (possibly infinite) family of finite unlabeled quivers. The set $\mathcal P_\mathcal F\subseteq\AF$ of \emph{quivers avoiding $\mathcal F$} is defined to be the set of all $Q\in\AF$ not containing any quiver in $\mathcal F$ as a full subquiver on any subset of vertices.
\end{Def}

\begin{T-hered}
    Let $\mathcal P\subseteq\AF$ be a hereditary property of arrow finite quivers such that there exists at least one finite quiver \emph{not} carrying the property $\mathcal P$. Then it holds that $\mathcal P_\LF$ is not dense in $\LF$, nor is $\mathcal P$ is dense in $\AF$. On the other hand, $\LF\setminus\mathcal P_\LF$ is dense in $\LF$, and similarly $\AF\setminus\mathcal P$ is dense in $\AF$. Moreover, $\mathcal P=\mathcal P_\mathcal F$ is the property of avoiding $\mathcal F$ for some (possibly infinite) family $\mathcal F$ of finite unlabeled quivers if and only if $\mathcal P$ is closed in $\AF$.
\end{T-hered}
    \begin{proof}
        By Lemma~\ref{LF-AF-complexity-comparison-lemma}, it suffices for the first part of the claim to show that $\mathcal P_\LF=\mathcal P\cap\LF$ is not dense in $\LF$ and that $\AF\setminus\mathcal P$ is dense in $\AF$.

        Start by taking some $Q\in\LF\setminus\mathcal P$; note that this is only possible to do since there is some (locally) finite quiver not carrying the property $\mathcal P$, and let $V\subseteq\NN$ be finite. As in other arguments given in this section, we may assume without loss of generality since $Q$ is locally finite that $Q$ is finite and $\text{supp}(Q)\subseteq V$. Then $Q$ is a finite quiver not carrying the property $\mathcal P$. Furthermore, any $Q'\in W_{Q,V}\cap\LF$ has $Q$ as an overfull subquiver within the vertices $V$, and therefore also as a full subquiver within the vertices $V$. If such a $Q'$ were to carry the property $\mathcal P$, then so would its finite full subquiver $Q$ by the hereditarity of $\mathcal P$, a contradiction. Thus, we see that $W_{Q,V}\cap\mathcal P=\varnothing$, meaning that $\mathcal P\cap\LF$ is not dense in $\LF$.

        We next show that $\AF\setminus\mathcal P$ is dense in $\AF$. Let $Q\in\AF$ and take $V\subseteq\NN$ finite. We assume once again without loss of generality that $Q$ is finite and that $\text{supp}(Q)\subseteq V$. Suppose that $T$ is a finite quiver not carrying the property $\mathcal P$. Let $N=\max V$. Place a copy of $T$ in $Q$ using only vertices of label $N+1$ and higher to create a new quiver $Q'\in U_{Q,V}$. Then $Q'\not\in\mathcal P$, since otherwise $T$ would be in $\mathcal P$, a contradiction. Thus, we see that $U_{Q,V}\cap\left(\AF\setminus\mathcal P\right)\neq\varnothing$, so the complement of $\mathcal P$ is dense in $\AF$.

        We now proceed to the second claim in the theorem. Assume first that $\mathcal P=\mathcal P_\mathcal F$ for some family $\mathcal F$ of finite unlabeled quivers. We show that the complement of $\mathcal P$ is open in $\AF$. Suppose $Q\in\AF\setminus\mathcal P$. Since $\mathcal P=\mathcal P_\mathcal F$, there exists some finite subset $V\subseteq\NN$ of vertices such that the restriction of $Q$ to $V$ is a (finite) quiver $T$ belonging to $\mathcal F$. Then any $Q'\in U_{Q,V}$ also contains $T$ as a full subquiver, and therefore we have $Q'\in\AF\setminus\mathcal P$ as well. Thus, we have that $\mathcal P$ is closed in $\AF$.
        
        On the other hand, suppose that $\mathcal P$ is closed in $\AF$. We claim that the family
        $$\mathcal F:=\{T\mid T\text{ is not isomorphic to a finite full subquiver of any }Q\in\mathcal P\}$$
        of finite unlabeled quivers is such that $\mathcal P=\mathcal P_\mathcal F$. The forward containment $\mathcal P\subseteq\mathcal P_\mathcal F$ follows by definition: if $Q\in\mathcal P$, then no finite full subquiver of $Q$ belongs to $\mathcal F$, so $Q\in\mathcal P_\mathcal F$. For the backward containment, let $Q\in\mathcal P_\mathcal F$. Unpacking our definition of $\mathcal P_\mathcal F$, this means precisely that every finite full subquiver $T$ of $Q$ appears as a finite full subquiver of some $Q_T\in\mathcal P$. Even more, this means by the hereditarity of $\mathcal P$ that each finite full subquiver $T$ of $Q$ has the property $\mathcal P$. In particular, the finite full subquivers $Q_n$ of $Q$ induced by the vertex sets $\{1,\dots,n\}$ for $n\geq1$ each belong to $\mathcal P$. Since $Q_n\to Q$ in $\AF$ and $\mathcal P$ is closed in $\AF$, one obtains that $Q\in\mathcal P$ as desired.
    \end{proof}

%% Section 5: Infinite mutation sequences %%
\section{Infinite mutation sequences and Theorems \ref{AF-inf-mut-seq-thm} and \ref{LF-inf-mut-seq-thm}}\label{sec-5}

In this section, we focus our attention on infinite sequences of mutations. Since the mutation maps $\mu_x$ are homeomorphisms on both $\LF$ and $\AF$ (see Proposition~\ref{mut-cont-prop}), one has continuous group actions on $\LF$ and $\AF$ by the countable discrete group $G:=\langle g_i\mid g_i^2=\varepsilon,i\in\NN\rangle$ freely generated by countably many generators of order $2$. Of course, this group only consists of \emph{finite} sequences of mutations. However, one might think to ask which continuous maps $\LF\to\LF$ or $\AF\to\AF$ can be recovered as \emph{infinite} mutation sequences. It turns out that there are \emph{no} such maps in the $\AF$ case and that, in the $\LF$ case, only the \emph{finite} mutation sequences can be obtained this way.

We first need to make sense of infinite sequences of mutations.

\begin{Def}\label{inf-mut-seq-def}
    Let $\mathbf i=(i_1,i_2,\dots)$ be an infinite sequence of natural numbers. For any quiver $Q$ in $\LF$ (resp. $\AF$), we define $\mu_\mathbf i(Q)$ to be the limit of the sequence $(\mu_{i_m}\circ\dots\circ\mu_{i_1}(Q))_{m\geq1}$ in $\LF$ (resp. $\AF$) if it exists, and we leave $\mu_\mathbf i(Q)$ undefined otherwise. In the former case, we say that the mutation sequence $\mu_\mathbf i$ \emph{converges on $Q$}, and in the latter we say $\mu_\mathbf i$ \emph{diverges on $Q$.}

    For an infinite mutation sequence $\mu_\mathbf i$, we let $\mathcal C_\LF(\mu_\mathbf i)$ (resp. $\mathcal C_\AF(\mu_\mathbf i)$) denote the set of all $Q\in\LF$ (resp. $Q\in\AF$) on which $\mu_\mathbf i$ converges. We similarly define $\mathcal D_\LF(\mu_\mathbf i)$ (resp. $\mathcal D_\AF(\mu_\mathbf i)$) to be the set of all $Q\in\LF$ (resp. $Q\in\AF$) on which $\mu_\mathbf i$ diverges.
\end{Def}

\begin{Ex}\label{inf-mut-seq-ex1}
    Consider the infinite mutation sequence $\mu_{(1,2,3,\dots)}=\dots\mu_3\circ\mu_2\circ\mu_1$ applied to the type $\mathbb A_\infty$ quiver $Q\in\LF$ pictured below:
    % https://q.uiver.app/#q=WzAsNixbMSwwLCIxIl0sWzIsMCwiMiJdLFszLDAsIjMiXSxbNCwwLCI0Il0sWzUsMCwiXFxkb3RzIl0sWzAsMCwiUToiXSxbMCwxXSxbMSwyXSxbMiwzXSxbMyw0XV0=
    \[\begin{tikzcd}[cramped]
    	{Q:} & 1 & 2 & 3 & 4 & \dots
    	\arrow[from=1-2, to=1-3]
    	\arrow[from=1-3, to=1-4]
    	\arrow[from=1-4, to=1-5]
    	\arrow[from=1-5, to=1-6]
    \end{tikzcd}\]
    
    The first three quivers in the sequence $(\mu_n\circ\dots\circ\mu_1(Q))_{n\geq1}$ are as follows:
    % https://q.uiver.app/#q=WzAsMTgsWzEsMCwiMSJdLFsyLDAsIjIiXSxbMywwLCIzIl0sWzQsMCwiNCJdLFs1LDAsIlxcZG90cyJdLFswLDAsIlxcbXVfMShRKToiXSxbMCwxLCJcXG11XzIoXFxtdV8xKFEpKToiXSxbMSwxLCIxIl0sWzIsMSwiMiJdLFszLDEsIjMiXSxbNCwxLCI0Il0sWzUsMSwiXFxkb3RzIl0sWzAsMiwiXFxtdV8zKFxcbXVfMihcXG11XzEoUSkpKToiXSxbMSwyLCIxIl0sWzIsMiwiMiJdLFszLDIsIjMiXSxbNCwyLCI0Il0sWzUsMiwiXFxkb3RzIl0sWzEsMl0sWzIsM10sWzMsNF0sWzEsMF0sWzcsOF0sWzksOF0sWzksMTBdLFsxMCwxMV0sWzE2LDE3XSxbMTYsMTVdLFsxNCwxNV0sWzEzLDE0XV0=
    \[\begin{tikzcd}[cramped]
    	{\mu_1(Q):} & 1 & 2 & 3 & 4 & \dots \\
    	{\mu_2(\mu_1(Q)):} & 1 & 2 & 3 & 4 & \dots \\
    	{\mu_3(\mu_2(\mu_1(Q))):} & 1 & 2 & 3 & 4 & \dots
    	\arrow[from=1-3, to=1-2]
    	\arrow[from=1-3, to=1-4]
    	\arrow[from=1-4, to=1-5]
    	\arrow[from=1-5, to=1-6]
    	\arrow[from=2-2, to=2-3]
    	\arrow[from=2-4, to=2-3]
    	\arrow[from=2-4, to=2-5]
    	\arrow[from=2-5, to=2-6]
    	\arrow[from=3-2, to=3-3]
    	\arrow[from=3-3, to=3-4]
    	\arrow[from=3-5, to=3-4]
    	\arrow[from=3-5, to=3-6]
    \end{tikzcd}\]
    In general, $\mu_n\circ\dots\circ\mu_1(Q)$ is nearly the same as $Q$, the only difference being that the arrow $n\to n+1$ originally in $Q$ is reversed. Thus, arbitrarily large overfull subquivers induced by finite subsets of vertices eventually stabilize, and moreover the limiting quiver is $Q$ itself. Therefore, we have that $\mu_{(1,2,3,\dots)}(Q)=Q$ in $\LF$ (and by Lemma~\ref{weak-str-comp-lemma}, the same holds in $\AF$).
\end{Ex}

\begin{Ex}\label{inf-mut-seq-ex2}
    Consider the infinite mutation sequence $\mu_{(2,3,4,\dots)}=\dots\mu_4\circ\mu_3\circ\mu_2$ applied to the same type $\mathbb A_\infty$ quiver $Q$ as above. The first few quivers in the sequence $(\mu_{n+1}\circ\dots\circ\mu_2(Q))_{n\geq1}$ are as follows:
    % https://q.uiver.app/#q=WzAsMjEsWzEsMCwiMSJdLFsyLDAsIjIiXSxbMywwLCIzIl0sWzQsMCwiNCJdLFs1LDAsIjUiXSxbMCwwLCJcXG11XzIoUSk6Il0sWzAsMSwiXFxtdV8zKFxcbXVfMihRKSk6Il0sWzEsMSwiMSJdLFsyLDEsIjIiXSxbMywxLCIzIl0sWzQsMSwiNCJdLFs1LDEsIjUiXSxbMCwyLCJcXG11XzQoXFxtdV8zKFxcbXVfMihRKSkpOiJdLFsxLDIsIjEiXSxbMiwyLCIyIl0sWzMsMiwiMyJdLFs0LDIsIjQiXSxbNSwyLCI1Il0sWzYsMCwiXFxkb3RzIl0sWzYsMSwiXFxkb3RzIl0sWzYsMiwiXFxkb3RzIl0sWzIsM10sWzMsNF0sWzEwLDExXSxbMTQsMTVdLFswLDIsIiIsMCx7ImN1cnZlIjotMX1dLFsyLDFdLFsxLDBdLFs3LDEwLCIiLDAseyJjdXJ2ZSI6MX1dLFsxMCw5XSxbOCw5XSxbOSw3LCIiLDIseyJjdXJ2ZSI6MX1dLFs0LDE4XSxbMTEsMTldLFsxNywyMF0sWzE2LDEzLCIiLDIseyJjdXJ2ZSI6LTF9XSxbMTUsMTZdLFsxNywxNl0sWzEzLDE3LCIiLDIseyJjdXJ2ZSI6LTF9XV0=
    \[\begin{tikzcd}[cramped]
    	{\mu_2(Q):} & 1 & 2 & 3 & 4 & 5 & \dots \\
    	{\mu_3(\mu_2(Q)):} & 1 & 2 & 3 & 4 & 5 & \dots \\
    	{\mu_4(\mu_3(\mu_2(Q))):} & 1 & 2 & 3 & 4 & 5 & \dots
    	\arrow[curve={height=-12pt}, from=1-2, to=1-4]
    	\arrow[from=1-3, to=1-2]
    	\arrow[from=1-4, to=1-3]
    	\arrow[from=1-4, to=1-5]
    	\arrow[from=1-5, to=1-6]
    	\arrow[from=1-6, to=1-7]
    	\arrow[curve={height=12pt}, from=2-2, to=2-5]
    	\arrow[from=2-3, to=2-4]
    	\arrow[curve={height=12pt}, from=2-4, to=2-2]
    	\arrow[from=2-5, to=2-4]
    	\arrow[from=2-5, to=2-6]
    	\arrow[from=2-6, to=2-7]
    	\arrow[curve={height=-12pt}, from=3-2, to=3-6]
    	\arrow[from=3-3, to=3-4]
    	\arrow[from=3-4, to=3-5]
    	\arrow[curve={height=-12pt}, from=3-5, to=3-2]
    	\arrow[from=3-6, to=3-5]
    	\arrow[from=3-6, to=3-7]
    \end{tikzcd}\]
    The general term $\mu_{n+1}\circ\dots\circ\mu_2(Q)$ in this sequence is given by the following:
% https://q.uiver.app/#q=WzAsMTEsWzAsMCwiXFxtdV97bisxfVxcY2lyY1xcZG90c1xcY2lyY1xcbXVfMihRKToiXSxbMSwwLCIxIl0sWzIsMCwiMiJdLFszLDAsIjMiXSxbNCwwLCI0Il0sWzUsMCwiXFxkb3RzIl0sWzYsMCwibiJdLFs3LDAsIm4rMSJdLFs4LDAsIm4rMiJdLFs5LDAsIm4rMyJdLFsxMCwwLCJcXGRvdHMiXSxbMiwzXSxbMyw0XSxbNCw1XSxbNSw2XSxbNiw3XSxbOCw3XSxbOSwxMF0sWzgsOV0sWzEsOCwiIiwxLHsiY3VydmUiOi0yfV0sWzcsMSwiIiwxLHsiY3VydmUiOi0yfV1d
\[\begin{tikzcd}[cramped,column sep=small]
	{\mu_{n+1}\circ\dots\circ\mu_2(Q):} & 1 & 2 & 3 & 4 & \dots & n & {n+1} & {n+2} & {n+3} & \dots
	\arrow[curve={height=-24pt}, from=1-2, to=1-9]
	\arrow[from=1-3, to=1-4]
	\arrow[from=1-4, to=1-5]
	\arrow[from=1-5, to=1-6]
	\arrow[from=1-6, to=1-7]
	\arrow[from=1-7, to=1-8]
	\arrow[curve={height=-24pt}, from=1-8, to=1-2]
	\arrow[from=1-9, to=1-8]
	\arrow[from=1-9, to=1-10]
	\arrow[from=1-10, to=1-11]
\end{tikzcd}\]
    On the one hand, the overfull subquiver induced by $\{1\}\subseteq\NN$ does not stabilize in this sequence as $n\to\infty$, so $\mu_{(2,3,4,\dots)}$ diverges on $Q$ in $\LF$. On the other hand, every full subquiver of $\mu_{(2,3,4,\dots,n+1)}(Q)$ stabilizes as $n\to\infty$, meaning $\mu_{(2,3,4,\dots)}(Q)=Q'$ in $\AF$ for the following quiver $Q'$:
    % https://q.uiver.app/#q=WzAsNixbMSwwLCIxIl0sWzIsMCwiMiJdLFszLDAsIjMiXSxbNCwwLCI0Il0sWzUsMCwiXFxkb3RzIl0sWzAsMCwiUSc6Il0sWzEsMl0sWzIsM10sWzMsNF1d
    \[\begin{tikzcd}[cramped,column sep=small]
    	{Q':} & 1 & 2 & 3 & 4 & \dots
    	\arrow[from=1-3, to=1-4]
    	\arrow[from=1-4, to=1-5]
    	\arrow[from=1-5, to=1-6]
    \end{tikzcd}\]
    Note that $Q'$ is itself locally finite. Since $\mu_{(2,3,4,\dots)}$ diverges on $Q$ in $\LF=\LF^\NN_{str}$ while $\mu_{(2,3,4,\dots)}(Q)=Q'$ in $\LF^\NN_{weak}$, one obtains another proof of the fact that $\LF^\NN_{str}$ and $\LF^\NN_{weak}$ are not homeomorphic via the identity map (though Corollary~\ref{LF^N-weak-not-Polish-cor} already establishes that they are not homeomorphic \emph{at all}).
\end{Ex}

Our first result of this section almost completely characterizes the (non-)density of $\mathcal C_\AF(\mu_\mathbf i)$ and $\mathcal D_\AF(\mu_\mathbf i)$ in $\AF$.

\begin{T-AF-inf-seq}
    Let $\mu_{\mathbf i}:=\ldots\mu_{i_3}\mu_{i_2}\mu_{i_1}$ be an infinite mutation sequence on $\AF$. Then the following hold of the sets $\mathcal D_\AF(\mu_\mathbf i)$ and $\mathcal C_\AF(\mu_\mathbf i)$:
        \begin{enumerate}[label=(\roman*)]
            \item $\mathcal D_\AF(\mu_\mathbf i)$ is dense in $\AF$.
            \item If there does not exist $i\in\NN$ appearing infinitely many times in $\mathbf i$, then $\mathcal C_\AF(\mu_\mathbf i)$ is dense in $\AF$.
            \item If there are only finitely many $i\in\NN$ appearing in $\mathbf i$, then $\mathcal C_\AF(\mu_\mathbf i)$ is not dense in $\AF$.
        \end{enumerate}
\end{T-AF-inf-seq}
\begin{proof}
    We prove part (i) of the statement first. Suppose $\mu_\mathbf i=\dots\mu_{i_3}\mu_{i_2}\mu_{i_1}$ is an infinite mutation sequence, and let $Q\in\AF$. We wish to show that for any finite $V\subseteq\NN$, there exists $Q'\in U_{Q,V}$ on which $\mu_\mathbf i$ diverges. Suppose without loss of generality that $Q$ is finite (by the density of $\mathbf{Fin}$ in $\AF$ and Lemma~\ref{U-W-reanchor-lemma}) and $\text{supp}(Q)\subseteq V$. Suppose first that $\mathbf i$ contains some vertex $i\in\NN$ infinitely many times. Consider the case where $i$ is not isolated in $Q$ (in particular, $i\in V$). Since mutations do not change whether a vertex is isolated, one has that $i$ is not isolated in $\mu_{i_n}\circ\dots\circ\mu_{i_1}(Q)$ for any $n\geq1$. Moreover, mutating at $i$ changes the directions of all arrows incident to $i$, of which there exist at least one by hypothesis. Thus, the full subquiver of $\mu_{i_n}\circ\dots\circ\mu_{i_1}(Q)$ induced by $V$ fails to stabilize as $n\to\infty$, so $\mu_\mathbf i$ diverges on $Q$ in $\AF$. In the case where $i$ is isolated in $Q$ (meaning $i$ may or may not be in $V$), then simply extend $Q$ to a quiver $Q'$ containing an arrow $i\to j$ for some $j\in\NN\setminus V$, $j\neq i$. Now a similar argument just given applies for $Q'$ in place of $Q$: the full subquiver of $\mu_{i_n}\circ\dots\circ\mu_{i_1}(Q)$ induced by $V\cup\{i,j\}$ fails to stabilize as $n\to\infty$.
    
    On the other hand, if no vertex $i\in\NN$ appears infinitely many times in $\mathbf i=(i_1,i_2,\dots)$, we extend $Q$ in the following particular manner. Let $N=\max V$ and let $n\geq1$ be such that $i_k>N+2$ for all $k\geq n$. Note that such an $n$ exists by the hypothesis that no vertex appears infinitely many times in $\mathbf i$. Consider the following arrow finite quiver $A$ on the vertex set $\{N+1,N+2,\dots\}$:

    % https://q.uiver.app/#q=WzAsNixbMCwwLCJOKzEiXSxbMCwyLCJOKzIiXSxbMSwxLCJOKzMiXSxbMiwxLCJOKzQiXSxbMywxLCJOKzUiXSxbNCwxLCJcXGRvdHMiXSxbMCwyXSxbMiwxXSxbMCwzXSxbMywxXSxbMCw0XSxbNCwxXSxbMCw1XSxbNSwxXV0=
    \[\begin{tikzcd}[cramped]
    	{N+1} &&&& \\
    	& {N+3} & {N+4} & {N+5} & \dots \\
    	{N+2}
    	\arrow[from=1-1, to=2-2]
    	\arrow[from=1-1, to=2-3]
    	\arrow[from=1-1, to=2-4]
    	\arrow[from=1-1, to=2-5]
    	\arrow[from=2-2, to=3-1]
    	\arrow[from=2-3, to=3-1]
    	\arrow[from=2-4, to=3-1]
    	\arrow[from=2-5, to=3-1]
    \end{tikzcd}\]
    \vspace{0.1cm}

    We will place a mutated copy of $A$ on the isolated vertices $\{N+1,N+2,N+3,\dots\}$ in $Q$ to create a new quiver $Q'$ as follows. Replace the isolated vertices $\{N+1,N+2,\dots\}$ of $Q$ with the quiver $A':=\mu_{i_1}\circ\mu_{i_2}\circ\dots\circ\mu_{i_{n-1}}(A)$ to create a new quiver $Q'\in U_{Q,V}$, where $n$ is as above. Note that we may instead write $\mu_{i_{n-1}}\circ\dots\circ\mu_{i_1}(A')=A$, so we have that the full subquiver of $\mu_{i_{n-1}}\circ\dots\circ\mu_{i_1}(Q')$ on the vertices $\{N+1,N+2,\dots\}$ is simply $A$. We then observe that because $i_k>N+2$ for all $k\geq n$, it holds that the only mutations that occur in the tail sequence $\dots\circ\mu_{i_{n+1}}\circ\mu_{i_n}$ are at vertices with label at least $N+3$. By our choice of $A$ above, these mutations are very controlled: they simply turn the arrows incident to $i_k$ in $A$ around and either add or subtract a single arrow from the count of arrows currently in place from $N+1$ to $N+2$ (or vice versa). Since this happens with \emph{every} mutation from $i_k$ onward, the full subquiver of $\mu_{i_k}\circ\dots\circ\mu_{i_1}(Q')$ on the two-vertex set $\{N+1,N+2\}$ fails to stabilize as $k\to\infty$. Therefore, the mutation sequence $\mu_\mathbf i$ diverges on $Q'\in U_{Q,V}$, as desired.

    We now prove part (ii) of the theorem. Suppose $\mathbf i\in\NN^\NN$ is a sequence such that no $i\in\NN$ appears infinitely many times in $\mathbf i$. Let $U_{Q,V}\subseteq\AF$ be a basic open set for some $Q\in\AF$, $V\subseteq\NN$ finite. As before, assume without loss of generality that $Q$ is finite with $\text{supp}(Q)\subseteq V$ and take $N=\max V$. Let $n\geq1$ be such that $i_k>N$ for all $k>n$. Then for all $k>n$, one has $\mu_{i_k}\circ\dots\circ\mu_{i_1}(Q)=\mu_{i_n}\circ\dots\circ\mu_{i_1}(Q)$, so $\mu_\mathbf i$ converges on $Q\in U_{Q,V}$. Hence, we have $U_{Q,V}\cap\mathcal C_\AF(\mu_\mathbf i)\neq\varnothing$. Since $U_{Q,V}$ was chosen arbitrarily, this shows that $\mathcal C_\AF(\mu_\mathbf i)$ is dense in $\AF$.
    
    For part (iii), let $\mathbf i\in\NN^\NN$ be a sequence of natural numbers containing only finitely many $i\in\NN$. Let $V\subseteq\NN$ be the finite set of vertices appearing in $\mathbf i$. By the infinite pigeonhole principle, there exists some $i\in V$ appearing infinitely many times in $\mathbf i$. By our choice of $V$, no $j\in\NN\setminus V$ is used in $\mathbf i$; fix some $j\in\NN\setminus V$. Let $Q$ be the finite quiver on $V\cup\{j\}$ with a single arrow $i\to j$ and so that all vertices $i'\in V$ with $i'\neq i$ are isolated. Then any $Q'\in U_{Q,V\cup\{j\}}$ has $Q$ as its full subquiver on $V\cup\{j\}$. Since $\mathbf i$ only involves vertices in $V$, we see that the mutation sequence $\mu_\mathbf i$ only mutates within the finite full subquiver of $Q'$ induced by $V\cup\{j\}$, which is the same as $Q$. Using the facts that the operations of mutation within a vertex set and restriction to that vertex set commute, that $\mathbf i$ uses $i$ infinitely many times, and that $i$ is not isolated in $Q$, it holds that the full subquiver of $\mu_{(i_1,\dots,i_n)}(Q')$ on $V\cup\{j\}$ fails to stabilize as $n\to\infty$, showing that $\mathcal C_\AF(\mu_\mathbf i)\cap U_{Q,V\cup\{j\}}=\varnothing$ and thus that $\mathcal C_\AF(\mu_\mathbf i)$ is not dense in $\AF$.
\end{proof}

We are not sure, at present, how to close the gap between parts (ii) and (iii) of Theorem~\ref{AF-inf-mut-seq-thm} to account for infinite sequences $\mathbf i\in\NN^\NN$ in which (1) infinitely many $i\in\NN$ appear and (2) there exists some $i\in\NN$ appearing in $\mathbf i$ infinitely many times. This seems to be a bit more subtle and may be the subject of future work.

One interesting consequence of part (i) of this theorem is that there are \emph{no} maps $\AF\to\AF$ which can be expressed as an infinite mutation sequence. Indeed, for any infinite mutation sequence $\mu_\mathbf i$ on $\AF$, there is already a dense set of points in $\AF$ on which $\mu_\mathbf i$ is not defined! In comparison, we will see that the situation is different in the $\LF$ case. In order to proceed, we first must examine the combinatorics of infinite mutation sequences with more scrutiny.

%% Combinatorics of infinite mutation sequences %%
\subsection{Combinatorics of infinite mutation sequences} In this subsection, we define what it means to ``reduce'' an infinite sequence of mutations, then establish some combinatorial properties of sets of vertices which are ``linked'' with respect to an infinite sequence of mutations in a certain sense we will make precise. We choose to examine these aspects of infinite mutation sequences (1) in order to demonstrate that they can be quite complicated and (2) to facilitate a large part of our proof of Theorem~\ref{LF-inf-mut-seq-thm}.

We first consider \emph{reductions} of finite and certain infinite mutation sequences.

\begin{Def}\label{reduction-def}
    Let $\mu_\mathbf i$ be a mutation sequence (either finite or infinite) with no $i\in\NN$ appearing in $\mathbf i$ infinitely many times. We define the \emph{reduction} $\overline\mu_\mathbf i$ of $\mu_\mathbf i$ as the mutation sequence obtained by the following process:
    \begin{enumerate}
        \item Define a \emph{block} $B$ as a nonempty, consecutive substring of $\mathbf i$ consisting of only one $i\in\NN$ which is maximal (with respect to substring inclusion) among consecutive substrings of $\mathbf i$ consisting only of $i$. If $i\in\NN$ is the unique label appearing in a block $B$, we may also call $B$ an \emph{$i$-block.} Let $|B|$ denote the length of the block $B$ as a substring of $\mathbf i$.
        \item Enumerate the (possibly finitely many) blocks of $\mu_\mathbf i$ in order: $B^1_1,B^1_2,B^1_3,\dots$. For each $j\geq1$, let $k^1_j\geq1$ be the least index $k\geq1$ in the block $B^1_j$ (viewed as a substring of $\mathbf i$). In particular, $k^1_1=1$.
        \item Create a new (possibly finite) sequence $R(\mathbf i)$, called the \emph{one-step reduction} of $\mathbf i$, by (i) deleting every even-length block $B^1_j$ from the previous list, (ii) replacing every odd-length $i$-block $B^1_j$ with a single instance of $i$, and (iii) concatenating these modified blocks together in order. We may view $R(\mathbf i)$ as a function on the (possibly finite) domain $K_1:=\{k^1_j\mid B^1_j\text{ is an odd-length block}\}\subseteq\NN$. Let $B^2_1,B^2_2,B^2_3,\dots$ be the (possibly finite) list of the blocks of $R(\mathbf i)$, and for all $j\geq1$ let $k^2_j$ be the least index among the domain $K_1$ of $R(\mathbf i)$ used in the block $B^2_j$.
        \item Iteratively repeat step (3) to create further sequences $R^m(\mathbf i)$ viewed as functions with domains $K_m$, sequences of blocks $B^{m+1}_1,B^{m+1}_2,B^{m+1}_3,\dots$, and corresponding sequences $k^{m+1}_1,k^{m+1}_2,k^{m+1}_3,\dots$ of least indices, all three of which may possibly be finite.
        \item Let $K=\{k_1,k_2,k_3,\dots\}:=\bigcap_{m\geq1}K_m$. We may finally define $\overline\mu_\mathbf i$ as the (possibly finite) mutation sequence $\overline\mu_\mathbf i:=\dots\mu_{i_{k_3}}\circ\mu_{i_{k_2}}\circ\mu_{i_{k_1}}$. For future reference, we also define $R^\omega(\mathbf i):=(i_{k_1},i_{k_2},\dots)\in\NN^{\leq\NN}$ (so that $\overline\mu_\mathbf i=\mu_{R^\omega(\mathbf i)}$).
    \end{enumerate}
    We call a mutation sequence $\mu_\mathbf i$ (with every $i\in\NN$ appearing only finitely many times in $\mathbf i$) \emph{reduced} if $\overline\mu_\mathbf i=\mu_\mathbf i$.
    
    If there is some $m\geq0$ such that $R^m(\mathbf i)=R^{m+1}(\mathbf i)$, we let the \emph{rank} $\text{rk}(\mathbf i)$ of $\mathbf i$ be the least such $m\geq0$. If no such $m$ exists, we set $\text{rk}(\mathbf i)=\omega$. For fixed $i\in\NN$, we also let the \emph{$i$-rank} $\text{rk}_i(\mathbf i)$ of $\mathbf i$ be the least $m\geq0$ such that for all $n\geq m$, the number of instances of $i$ in $R^n(\mathbf i)$ is the same as the number of instances of $i$ in $R^m(\mathbf i)$. Note that such an $m\geq0$ exists since the number of instances of $i$ in the original sequence $\mathbf i$ is finite and the number of instances in $R^m(\mathbf i)$ of $i$ is weakly decreasing as $m$ increases. Furthermore, $\text{rk}(\mathbf i)=\sup\{\text{rk}_i(\mathbf i)\mid i\in\NN\}$.
\end{Def}

\begin{Ex}\label{reduction-ex1}
    Suppose $\mathbf i=(1,1,2,2,3,3,4,4,\dots)$. Then $B^1_1$ is the first (and only) $1$-block and $k^1_1=1$, $B^1_2$ is the only $2$-block and $k^1_2=3$, $B^1_3$ is the only $3$-block and $k^1_3=5$, and so on. Each block is even-length, so $R(\mathbf i)$ is the empty sequence and $K_1=\varnothing$. After iterating, we have that $R^\omega(\mathbf i)$ is still the empty sequence and $K=\varnothing$. Thus, we see that $\overline\mu_{(1,1,2,2,3,3,\dots)}$ is the empty mutation sequence and $\text{rk}(\mathbf i)=1$. Moreover, for all $i\in\NN$, we have $\text{rk}_i(\mathbf i)=1$.
\end{Ex}

\begin{Ex}\label{reduction-ex2}
    Let $\mathbf i=(1,1,2,3,3,2,4,5,6,6,5,4,\dots,\varrho_n^\uparrow,\varrho_n^\downarrow,\dots)$, where for all $n\geq 1$, $\varrho_n^\uparrow$ is the finite sequence given by
    $$\varrho_n^\uparrow=\left(\frac{n(n-1)}2+1,\frac{n(n-1)}2+2,\dots,\frac{(n+1)n}2\right)$$
    and $\varrho_n^\downarrow$ is its reverse. One may readily check that the one-step reduction $R(\mathbf i)$ of $\mathbf i$ is given by $R(\mathbf i)=(2,2,4,5,5,4,7,8,9,9,8,7,\dots)$, which is intuitively seen to be the result of removing all triangular numbers $\frac{(n+1)n}2$ from $\mathbf i$. Taking the two-step reduction $R^2(\mathbf i)$ removes all entries one less than a triangular number from $R(\mathbf i)$. In general for all $m\geq1$, $R^m(\mathbf i)$ is the result of deleting from $\mathbf i$ all labels $i\in\NN$ such that there exists a triangular number in the interval $[i,i+m-1]$. Since consecutive triangular numbers grow arbitrarily far apart, it is never the case that $R^m(\mathbf i)=R^{m+1}(\mathbf i)$ for any $m\geq0$. Furthermore, since every $i\in\NN$ is less than a triangular number, every $i\in\NN$ is eventually removed from $\mathbf i$ at some finite stage $m\geq1$. Therefore, $\overline\mu_\mathbf i$ is the empty mutation sequence and $\text{rk}(\mathbf i)=\omega$. Also, for all $i\in\NN$, we have $\text{rk}_i(\mathbf i)$ is the least $m\geq1$ such that $i+m-1$ is a triangular number.
\end{Ex}

The operation of reduction satisfies a couple of basic properties that are useful to point out with the following lemma.

\begin{Lem}\label{reduction-lemma1}
    Let $\mu_\mathbf i$ be an infinite mutation sequence in which no vertex $i\in\NN$ appears in $\mathbf i$ infinitely many times. Then $\overline\mu_\mathbf i$ is reduced. Additionally, $\mu_\mathbf i$ is reduced itself if and only if $\text{rk}(\mathbf i)=0$.
\end{Lem}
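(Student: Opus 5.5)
Both claims reduce to one observation: reduction leaves untouched any sequence in which every block already has length one. I would first establish this, then deduce the two parts.

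\emph{Key observation.} Suppose $\mathbf j$ is a (finite or infinite) sequence with no two consecutive equal entries, i.e.\ every block of $\mathbf j$ has length $1$. Then step (3) of Definition~\ref{reduction-def} deletes nothing, since there are no even-length blocks. It also replaces each block by itself, so $R(\mathbf j)=\mathbf j$ as sequences and the index domain is unchanged, $K_1=K_0$. Conversely, suppose some block of $\mathbf j$ has length at least $2$. If it has even length, it is deleted and $R(\mathbf j)$ is strictly shorter on indices. If it has odd length at least $3$, it is collapsed and $K_1\subsetneq K_0$. Either way $R(\mathbf j)\neq\mathbf j$ as indexed sequences. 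Hence $R(\mathbf j)=\mathbf j$ if and only if $\mathbf j$ has no repeated consecutive entries. Here sequences are viewed as functions on their index domains $K_m$, as in the definition.

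\emph{Second claim.} By definition, $\text{rk}(\mathbf i)=0$ means $R^0(\mathbf i)=R^1(\mathbf i)$, i.e.\ $R(\mathbf i)=\mathbf i$. If this holds, then $R^m(\mathbf i)=\mathbf i$ and $K_m=\NN$ for all $m$, so $K=\NN$ and $\overline\mu_\mathbf i=\mu_\mathbf i$; thus $\mu_\mathbf i$ is reduced. Conversely, suppose $\mu_\mathbf i$ is reduced and $R(\mathbf i)\neq\mathbf i$. Then some block has length at least $2$, so some index $k$ is removed at stage one: $k\notin K_1\supseteq K$. The sequence $R^\omega(\mathbf i)=(i_{k_1},i_{k_2},\dots)$ is then obtained from $\mathbf i$ by deleting at least that entry. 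I must argue this changes the mutation sequence, not merely the index set. The cleanest route is to show that $R^\omega(\mathbf i)$ has no two consecutive equal entries (this is the first part, below). Then, if $R^\omega(\mathbf i)=\mathbf i$ as sequences of labels, $\mathbf i$ would have no consecutive repeats, so $R(\mathbf i)=\mathbf i$, a contradiction. I therefore interpret ``$\overline\mu_\mathbf i=\mu_\mathbf i$'' as equality of label sequences and prove the first part first.

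\emph{First claim (main obstacle).} I need $R^\omega(\mathbf i)$ to have no consecutive equal entries; then by the key observation it is its own reduction, so $\overline\mu_\mathbf i$ is reduced. Every label occurs only finitely often in $R^\omega(\mathbf i)$, since it is a subsequence of $\mathbf i$. Suppose $k<k'$ are consecutive elements of $K$ with $i_k=i_{k'}=a$. The indices strictly between them in $K_m$ form a finite set, since they lie in the interval $(k,k')$, and this set is nonincreasing in $m$. Its intersection over $m$ is $K\cap(k,k')=\varnothing$, so it is empty for some $m_0$. At stage $m_0$, $k$ and $k'$ are therefore adjacent in $R^{m_0}(\mathbf i)$ with the same label, so they lie in a common block of length at least $2$. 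Applying $R$ then removes at least one of $k,k'$ from $K_{m_0+1}$: the least index survives only for odd blocks, and all other indices are dropped. This contradicts $k,k'\in K$. The delicate point is exactly this stabilization of a finite gap, which uses that $k'-k$ is finite. Note that the rank may be $\omega$ (Example~\ref{reduction-ex2}), so no induction on rank is available, and this local finiteness argument replaces it.
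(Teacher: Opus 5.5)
Your proof is correct. For the first claim you use the paper's argument: two consecutive elements $k<k'$ of $K$ with the same label must, at some finite stage $m$, be adjacent in $R^m(\mathbf i)$. They then lie in a common block, so $k'\notin K_{m+1}$. The paper merely asserts that such a stage exists. Your observation supplies the missing justification: the indices of $K_m$ strictly between $k$ and $k'$ form a nonincreasing family of finite subsets of $(k,k')$, so they stabilize at $K\cap(k,k')=\varnothing$. This justification is genuinely needed, since $\text{rk}(\mathbf i)$ can be $\omega$.

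You genuinely differ from the paper in the direction ``reduced $\Rightarrow \text{rk}(\mathbf i)=0$'', and hence in the order of the two parts. The paper proves the equivalence first, independently, by counting. The number of occurrences of each label in $R^m(\mathbf i)$ is nonincreasing in $m$ and bounded below by its (finite) number of occurrences in $R^\omega(\mathbf i)$. So $\mathbf i=R^\omega(\mathbf i)$ forces all these counts to coincide, giving $R(\mathbf i)=\mathbf i$. Only afterwards does the paper show $R(R^\omega(\mathbf i))=R^\omega(\mathbf i)$. You instead prove the first claim directly and get the hard direction as a corollary: if $\mathbf i=R^\omega(\mathbf i)$, then $\mathbf i$ has no consecutive repeats, so $R(\mathbf i)=\mathbf i$.

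Your route is a bit more economical, since one combinatorial fact carries both parts; with it, your preliminary step involving an index $k\notin K_1$ becomes superfluous. The paper's counting argument has the merit that the equivalence does not depend on the analysis of the limit set $K$.
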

\begin{proof}
    We first show the second part of the claim that $\mu_\mathbf i$ is reduced if and only if $\text{rk}(\mathbf i)=0$. Suppose first that $\mu_\mathbf i$ is reduced. By definition, this means that $\mu_\mathbf i=\overline\mu_\mathbf i$. For each $i\in\NN$, let $|\mu_\mathbf i|_i$ be the total number of instances of $i\in\NN$ appearing in $\mathbf i$. Then we have that $|\mu_\mathbf i|_i=|\overline\mu_\mathbf i|_i$ for all $i\in\NN$. Furthermore, one has the following chain of inequalities for all $i\in\NN$:
    $$|\mu_\mathbf i|_i\geq|\mu_{R(\mathbf i)}|_i\geq|\mu_{R^2(\mathbf i)}|_i\geq\dots\geq|\overline\mu_\mathbf i|_i=|\mu_\mathbf i|_i.$$
    As a consequence, we have equality all the way through. Since one-step reductions do not change the relative order of entries in a sequence, one can then infer that these equalities lift to equalities of sequences. In particular we see that $\mathbf i=R(\mathbf i)$, or equivalently that $\text{rk}(\mathbf i)=0$. On the other hand, if we start with the assumption that $\text{rk}(\mathbf i)=0$, then one immediately has $\mathbf i=R(\mathbf i)=R^2(\mathbf i)=\dots$. It then follows that $\overline\mu_\mathbf i=\mu_\mathbf i$.

    The first part of the claim follows from the second part of the claim if we can show that $\text{rk}(R^\omega(\mathbf i))=0$, which is to say that $R^\omega(\mathbf i)=R(R^\omega(\mathbf i))$. Suppose for the sake of contradiction that this is not true. By definition, there must exist some $p\geq1$ such that $i_{k_p}=i_{k_{p+1}}$. Since $k_p,k_{p+1}\in K$ are consecutive in the natural order on $K=\bigcap_{m\geq1}K_m$, there must exist some $m\geq1$ and some $q\geq1$ (in fact $q\geq p$) such that $k_p=k^m_q$ and $k_{p+1}=k^m_{q+1}$. But since $i_{k^m_q}=i_{k_p}=i_{k_{p+1}}=i_{k^m_{q+1}}$, one has that the indices $k^m_q$ and $k^m_{q+1}$ belong to the same block $B^{m+1}_r$ (for some $r\leq q$) in the next stage. From this, one has that $k_{p+1}=k^m_{q+1}$ is not a member of $K_{m+1}$, contradicting $k_{p+1}\in K$. Hence, it follows that $\text{rk}(R^\omega(\mathbf i))=0$, so $\overline\mu_\mathbf i$ is reduced.
\end{proof}

We will also need the following lemma.

\begin{Lem}\label{reduction-lemma2}
    Let $\mu_\mathbf i$ be an infinite mutation sequence in which no vertex $i\in\NN$ appears in $\mathbf i$ infinitely many times and such that $\overline\mu_\mathbf i$ is finite. Then there exists some $n\geq0$ with the properties that
    \begin{enumerate}
        \item $\overline\mu_\mathbf i=\overline{\mu_{i_n}\circ\dots\circ\mu_{i_1}}$.
        \item $\overline{\dots\circ\mu_{i_{n+2}}\circ\mu_{i_{n+1}}}$ is the empty mutation sequence.
    \end{enumerate}
\end{Lem}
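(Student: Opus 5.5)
The plan is to recast the iterated one-step reductions of Definition~\ref{reduction-def} as a \emph{cancellation matching} on the index set $\NN$, and then cut $\mathbf i$ at a position that no matched pair straddles. Concretely, at each stage $m$, a block $B^{m}_j$ of $R^{m-1}(\mathbf i)$ is processed as follows: if it has even length, its indices are paired off consecutively; if it has odd length, its first index $k^{m}_j$ survives and the remaining indices are paired off consecutively. Every index in $\NN\setminus K$ is deleted at some finite stage, so this defines a perfect matching $M$ on $\NN\setminus K$, with $K=\{k_1<\dots<k_p\}$ finite by hypothesis.

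The first step is to check two properties of $M$ by induction on the stage. First, every pair $(a,b)\in M$ with $a<b$ satisfies $i_a=i_b$. Second, every index strictly between $a$ and $b$ was deleted \emph{before} the pair $(a,b)$ became adjacent, and is matched to another index strictly between $a$ and $b$. Consequently $M$ is non-crossing, and no element of $K$ lies inside an arc of $M$.

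\textbf{Choosing the cut.} Fix a position $c>k_p$ and consider the arcs $(a,b)\in M$ with $a\le c<b$. There are only finitely many, since $a\le c$ and $M$ is a matching. If there are none, take $n=c$. Otherwise, let $(a,b)$ be the outermost such arc, which exists because crossing arcs are nested by non-crossingness. Since $K$ avoids arc interiors, $a>k_p$. I claim that no arc crosses the position $n:=b$. Suppose $(a',b')$ satisfied $a'\le b<b'$. By non-crossingness, either $a'<a$, in which case $(a',b')$ contains $(a,b)$ and hence crosses $c$, contradicting outermostness; or $a'>b$, which is impossible. Thus $M$ restricts to a matching of $\{1,\dots,n\}\setminus K$ and to a matching of $\{n+1,n+2,\dots\}$.

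\textbf{Verifying (1) and (2).} Identify each finite word with an element of the group $G=\langle g_i\mid g_i^2=\varepsilon\rangle$. Each one-step reduction preserves the group element. By Lemma~\ref{reduction-lemma1}, the reduction of a finite word is a reduced word, and reduced words in this free product are unique normal forms. For the prefix, contract the arcs of $M$ inside $\{1,\dots,n\}$ innermost-first. Each contraction deletes an adjacent pair of equal letters, so the prefix word equals $g_{i_{k_1}}\cdots g_{i_{k_p}}$ in $G$. This word is reduced, so $\overline{\mu_{i_n}\circ\dots\circ\mu_{i_1}}=\overline\mu_\mathbf i$, which is (1). For (2), the tail is infinite, so no group element is available. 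Instead I would show directly, by induction on stages, that running the reduction procedure on the tail $(i_{n+1},i_{n+2},\dots)$ deletes, at stage $m$, exactly the tail indices that the procedure on $\mathbf i$ deletes at stage $m$. The key point is that at every stage, the block containing the first surviving tail index does not merge with anything from the prefix side. Either the preceding surviving letter differs, or, if it is equal, the two lie in a common block of the full sequence, and that block's pairing contains no pair straddling $n$. This forces the parity and the position of the surviving first index to be compatible with the pairing in the tail alone. Granting this, every tail index is eventually deleted, so the tail reduces to the empty sequence.

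The main obstacle is this last compatibility argument at the boundary. The subtle case is an odd block that straddles $n$ in the full sequence, where the survivor is its first index, on the prefix side. I must check that the tail portion of such a block has even length. This should follow from the no-crossing choice of $n$: the pairs within the block are consecutive, and none crosses $n$.
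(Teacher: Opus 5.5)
Your argument is correct, but it is organized differently from the paper's proof. The paper takes $n=k_\ell=\max K$ (or $n=0$ if $K=\varnothing$) and tracks blocks directly. Because $k_\ell$ is, at every stage, the least index of an odd-length block, the blocks to its left evolve according to $(i_1,\dots,i_{k_\ell-1})$ alone. The block containing $k_\ell$, and everything after it, coincide with the blocks of the tail $(i_{k_\ell},i_{k_\ell+1},\dots)$. This gives $K'=K$ for the prefix and shows that the tail starting at $k_\ell$ reduces to the single letter $i_{k_\ell}$.

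You instead encode the whole reduction as a non-crossing matching and choose a cut that no arc crosses. You then prove (1) through normal forms in the free product $G$, and (2) by a stage-by-stage comparison.

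Your search for an outermost arc past some $c>k_p$ is unnecessary. You already showed that elements of $K$ are unmatched and lie in no arc's interior, so no arc crosses $k_p$, and $n=k_p$ itself works. That is exactly the paper's choice.

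What your formalism buys is precision at the boundary. Your parity check on a block straddling the cut runs as follows: an odd block keeps its first index on the prefix side, and the consecutive pairing, which never crosses $n$, forces an even tail portion. This is precisely what justifies the paper's final step from ``the tail starting at $k_\ell$ reduces to $(i_{k_\ell})$'' to ``the tail starting at $k_\ell+1$ reduces to $\varnothing$''. The paper asserts that step without detail.

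Conversely, the paper's block-tracking yields the slightly stronger conclusion that the surviving positions of the prefix are exactly $K$. Your group argument only gives equality of reduced words, though that is all (1) requires. You could get the stronger form, and avoid appealing to $G$ and Lemma~\ref{reduction-lemma1}, by running the same stage-by-stage comparison on the prefix. There, a straddling odd block has an odd prefix portion containing the survivor, and a straddling even block has an even prefix portion.
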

\begin{proof}
    If $\overline\mu_\mathbf i$ is the empty mutation sequence, taking $n=0$ above works. Assume then that $\overline\mu_\mathbf i$ is not the empty mutation sequence. Since $\overline\mu_\mathbf i$ is finite by hypothesis, it follows that $K=\{k_1,\dots,k_\ell\}$ is finite. It also follows that $\ell\geq1$ since $\overline\mu_\mathbf i$ is nonempty. We claim that taking $n=k_\ell$ in the statement of the lemma works.
    
    Our first goal is to show that $\overline{\mu_{i_{k_\ell}}\circ\dots\circ\mu_{i_2}\circ\mu_{i_1}}=\overline\mu_\mathbf i$. Observe that it suffices to show the domain $K'$ of $R^\omega(i_1,i_2,\dots,i_{k_\ell})$ agrees with the domain $K$ of $R^\omega(\mathbf i)$. We begin by observing that since $k_\ell\in K$, the blocks $B^m_{j_m}$ of $R^{m-1}(\mathbf i)$ to which the index $k_\ell$ belongs (as $m\geq1$ varies) each satisfy two properties: they are all of odd-length and $k_\ell$ is the least index of each. Indeed, failure of either of these properties at any stage $m$ would disqualify $k_\ell$ from belonging to $K_m$ and therefore to $K$. Since $k_\ell$ is the least index of $B^m_{j_m}$ for each $m\geq1$, we see that the blocks $B^m_1,\dots,B^m_{j_m-1}$ have indices contained entirely within the set $\{1,2,\dots,k_\ell-1\}$ for all $m\geq1$, and therefore that the changes in these blocks as $m$ varies are dictated entirely by the finite initial segment $(i_1,i_2,\dots,i_{k_\ell-1})$ of $\mathbf i$. This shows that the domain $K'\setminus\{k_\ell\}$ of $R^\omega(i_1,i_2,\dots,i_{k_\ell-1})$ is equal to $K\setminus\{k_\ell\}$, and therefore that $K'=K$. Therefore $\overline{\mu_{i_{k_\ell}}\circ\dots\circ\mu_{i_2}\circ\mu_{i_1}}=\overline\mu_\mathbf i$.

    To show the second part of the claim, we note that the blocks $B^m_{j_m}$ of $R^{m-1}(\mathbf i)$ containing $k_\ell$ agree with the blocks $B'^{\,m}_1$ of $R^{m-1}(i_{k_\ell},i_{k_{\ell+1}},\dots)$ (as $m$ varies) for similar reasons. Moreover, all further blocks $B^m_{j_m+t}$ and $B'^{\,m}_{1+t}$ for $t\geq0$ agree as well. Since $k_\ell=\max K$, we see that $K'=\{k_\ell\}$. Hence, $\overline{\dots\mu_{i_{k_\ell+1}}\circ\mu_{i_{k_\ell}}}=\mu_{k_\ell}$, and therefore the original claim that $\overline{\dots\mu_{i_{k_\ell+2}}\circ\mu_{i_{k_\ell+1}}}$ is the empty mutation sequence follows.
\end{proof}

Additionally, the following property is useful to point out.

\begin{Lem}\label{reduction-lemma3}
    Let $\mu_\mathbf i$ be an infinite mutation sequence in which no vertex $i\in\NN$ appears in $\mathbf i$ infinitely many times and such that $\overline\mu_\mathbf i$ is the empty mutation sequence. Then there exist infinitely many $k\geq1$ such that $\overline{\mu_{i_k}\circ\dots\circ\mu_{i_1}}$ is the empty mutation sequence.
\end{Lem}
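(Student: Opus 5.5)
The plan is to extract from the iterated block reductions of Definition~\ref{reduction-def} a \emph{non-crossing matching} on the removed indices. Then I cut $\mathbf i$ into consecutive finite pieces, each of which reduces to the empty sequence on its own.

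\textbf{Step 1: pair up the removed indices.} At stage $m$, consider a block $B^m_j$ of $R^{m-1}(\mathbf i)$, with indices $k^m_j=a_1<a_2<\dots<a_r$ taken from $K_{m-1}$ (with $K_0=\NN$).
\begin{itemize}
\item If $r$ is even, pair $a_1$ with $a_2$, $a_3$ with $a_4$, and so on.
\item If $r$ is odd, keep $a_1$ as the survivor and pair $a_2$ with $a_3$, $a_4$ with $a_5$, and so on.
\end{itemize}
Paired indices are exactly the indices removed at stage $m$, i.e.\ those of $K_{m-1}\setminus K_m$. So every index outside $K=\bigcap_m K_m$ gets exactly one partner $\pi(k)$, with $i_{\pi(k)}=i_k$.

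By hypothesis $\overline\mu_\mathbf i$ is empty, so $K=\varnothing$ and $\pi$ is a fixed-point-free involution of all of $\NN$.

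\textbf{Step 2: the matching is non-crossing and closed on intervals.} I will show two things by induction on the stage $m$.
\begin{itemize}
\item Whenever $k<\pi(k)$, every index strictly between $k$ and $\pi(k)$ was removed at a stage $<m$, where $m$ is the stage at which $k$ is removed.
\item That intermediate index's partner also lies in $[k,\pi(k)]$.
\end{itemize}
The reason is that two indices are adjacent in the same block of $R^{m-1}(\mathbf i)$ exactly when everything between them has already been deleted. The induction hypothesis then keeps those earlier partners inside the interval. This gives non-crossing: the interval $[k,\pi(k)]$ is a union of matched pairs.

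\textbf{Step 3: iterate along $\mathbf i$.} Set $j_0=0$ and $j_1=\pi(1)$. By Step~2, $\{1,\dots,j_1\}$ is a union of matched pairs whose removals happen entirely inside that interval.

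\textbf{Step 4: locality.} The main obstacle is the claim that the reduction of the finite prefix $(i_1,\dots,i_{j_1})$ performs exactly the same removals, stage by stage, as $\mathbf i$ does on these indices. I would argue this by induction on $m$, in the style of the proof of Lemma~\ref{reduction-lemma2}.

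The delicate point is the right boundary. A block of the full sequence containing $j_1$ might extend past $j_1$, with a same-label entry right after it, so truncating could change the block's parity. This is ruled out as follows. At the stage where $j_1$ is removed, its block-mate to the left (namely the index $1$, adjacent to it) forms its pair. The pair-position bookkeeping (survivor first, then consecutive pairs) then forces $1$ and $j_1$ to occupy positions $(a_{2t-1},a_{2t})$ or $(a_{2t},a_{2t+1})$ of their block. Because $1$ is the least index overall, $1$ is the block's least index, and hence $j_1$ is paired as $a_1,a_2$ of that block. The even/odd decision for the part of the block ending at $a_2$ then does not depend on anything to the right, since removing the pair $a_1,a_2$ has the same effect whether or not further entries follow. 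Earlier stages inside $[1,j_1]$ are handled the same way by the inductive hypothesis. So $\overline{\mu_{i_{j_1}}\circ\dots\circ\mu_{i_1}}$ is empty.

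The same bookkeeping shows that the tail $(i_{j_1+1},i_{j_1+2},\dots)$ inherits the restriction of $\pi$, so its reduction is again empty. Applying the argument to the tail produces $j_2>j_1$ with $\overline{\mu_{i_{j_2}}\circ\dots\circ\mu_{i_{j_1+1}}}$ empty.

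\textbf{Step 5: conclude.} The concatenation of two finite pieces, each closed under $\pi$, is again closed under $\pi$, and its reduction is empty by the locality of Step~4. Iterating gives an infinite increasing sequence $j_1<j_2<\cdots$ with $\overline{\mu_{i_{j_t}}\circ\dots\circ\mu_{i_1}}$ empty for every $t$.
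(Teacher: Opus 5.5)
Your proof is correct, but it takes a different route from the paper's.

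\textbf{The paper's route.} It argues by contradiction on lengths. It picks a prefix $(i_1,\dots,i_k)$ whose reduction has the least possible length $\ell\geq1$ and follows the index $h$ of the last surviving letter. It then uses $\overline\mu_\mathbf i=\varnothing$ to find a later prefix in which $h$ has been deleted, and asserts that at the first such prefix the reduced length is $\ell-1$ (this is phrased as an induction on $\ell$). To get ``infinitely many'', it simply asserts that the tail after such a $k$ again reduces to the empty sequence.

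\textbf{Your route.} You extract a global invariant from the infinite reduction: a fixed-point-free, non-crossing involution $\pi$ of $\NN$ pairing equal labels. From it you read off explicit cut points $j_1=\pi(1)$ and $j_{t+1}=\pi(j_t+1)$. This gives an explicit infinite family of good indices $k$. It also makes you prove that the reduction of $\mathbf i$ is compatible with those of its finite truncations and tails, which the paper uses without comment. The paper's length count is shorter, but it leans silently on exactly those facts.

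\textbf{Two places that need one more sentence.}
\begin{enumerate}
\item In Step~4, at stages before $j_1$ is removed, a block containing $j_1$ may also extend past $j_1$. You need to say that $j_1$, being a survivor at that stage, is then the least index of an odd block. Truncation therefore leaves the singleton $\{j_1\}$, which also survives.
\item In Step~5, the right endpoint $j_2$ is paired with $j_1+1$, not with $1$. So ``the locality of Step~4'' must be the version for an arbitrary $\pi$-closed initial segment $[1,j]$. The same bookkeeping works: $j$ is the right member of its pair, so cutting its block right after $j$ preserves both parity and pairs.
\end{enumerate}

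\textbf{A shortcut.} You can bypass Step~4 and the tail argument entirely. A word carrying a non-crossing perfect matching of equal labels represents $\varepsilon$ in the free product $G$ from the proof of Proposition~\ref{linking-prop}: cancel innermost pairs one at a time. Since $R^\omega$ of a finite word is its reduced form, $R^\omega(i_1,\dots,i_j)=\varnothing$ for every $\pi$-closed $[1,j]$.
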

\begin{proof}
    It suffices to prove there exists a single $k\geq1$ with the desired property, since the tail $\dots\mu_{i_{k+2}}\circ\mu_{i_{k+1}}$ will also reduce to the empty sequence, allowing us to repeat the argument indefinitely to produce infinitely many such $k\geq1$. Suppose for the sake of contradiction that $\mu_\mathbf i$ is an infinite mutation sequence with the specified properties but such that there exists no $k\geq1$ with $\overline{\mu_{i_k}\circ\dots\circ\mu_{i_1}}$ equal to the empty mutation sequence. Let $\ell\geq1$ be the minimal length of $R^\omega(i_1,\dots,i_k)$ over all $k\geq1$. We induct on $\ell$.

    For our base case, suppose $\ell=1$. Take $k\geq1$ so that $\overline{\mu_{i_k}\circ\dots\circ\mu_{i_1}}=\mu_r$ for some $r\in\NN$. By the definition of reduction, there exists some $1\leq h\leq k$ so that the surviving instance of $r$ occurs at position $h$, that every block $B^m_{j_m}$ of $R^{m-1}(i_1,\dots,i_k)$ containing the index $h$ is an $r$-block of odd length, and that $B^m_{j_m-1}$ is not an $r$-block for all $m\geq1$. Because $\overline\mu_\mathbf i$ is the empty sequence, however, there exists some $k'>k$ and some least $m\geq1$ so that $h\not\in K'_m$ for $K'_m$ the domain of $R^m(i_1,\dots,i_{k'})$. Taking some minimal such $k'$ in fact forces $R^\omega(i_1,\dots,i_{k'})=\varnothing$. Indeed, by minimality the greatest block in $R^{m-1}(i_1,\dots,i_{k'})$ must be an $r$-block, and furthermore it must merge with the $r$-block of $R^{m-1}(i_1,\dots,i_{k'})$ at the next stage $m$. This implies that there are no $s$-blocks for $s\neq r$ between these two $r$-blocks. Since all blocks below index $h$ must vanish as $m\to\infty$ (by the assumption $\overline{\mu_{i_k}\circ\dots\circ\mu_{i_1}}=\mu_r$ for the instance of $r$ at index $h$), we do in fact have $R^\omega(i_1,\dots,i_{k'})=\varnothing$. This contradicts the minimality of the length $\ell$ of $R^\omega(i_1,\dots,i_k)$.

    For the inductive hypothesis, suppose the claim holds for all $\ell<d$. We show that the claim holds for $\ell=d$. Take $k\geq1$ so that $\overline{\mu_{i_k}\circ\dots\circ\mu_{i_1}}=\mu_{r_d}\circ\dots\circ\mu_{r_1}$ for some $r_1,\dots,r_d\in\NN$. We argue similarly to the base case. Take $1\leq h\leq k$ to be such that the (largest) surviving instance of $r_d$ occurs at position $h$, that every block $B^m_{j_m}$ of $R^{m-1}(i_1,\dots,i_k)$ containing the index $h$ is an $r_d$-block of odd length, and that $B^m_{j_m-1}$ is not an $r_d$-block for all $m\geq1$. Since $\overline\mu_\mathbf i$ is the empty sequence, there exists some $k'>k$ and some least $m\geq1$ so that $h\not\in K'_m$ for the domain $K'_m$ of $R^m(i_1,\dots,i_{k'})$. Proceeding similarly to the base case allows us to conclude that $\overline{\mu_{i_{k'}}\circ\dots\circ\mu_{i_1}}=\mu_{r_{d-1}}\circ\dots\circ\mu_{r_1}$, contradicting the minimality of $\ell=d$. This completes the proof.
\end{proof}

Our next goal is to classify the sets of vertices which ``contribute nontrivially'' to an infinite mutation sequence $\mu_\mathbf i$ for $\mathbf i\in\NN^\NN$. We begin with a definition.

\begin{Def}\label{inf-seq-subsequence-def}
    Let $\mathbf i=(i_1,i_2,\dots)\in\NN^\NN$ and $S\subseteq\NN$. We say that the \emph{subsequence of $\mathbf i$ induced by $S$} is the (possibly finite) sequence $\mathbf i_S=(i'_1,i'_2,i'_3,\dots)\in\NN^{\leq\NN}$ obtained by removing all instances of all $i\in\NN\setminus S$ from $\mathbf i$ and keeping all instances of $i\in S$ in $\mathbf i$.
\end{Def}

Given an infinite sequence $\mathbf i\in\NN^\NN$, some sets $S\subseteq\NN$ induce ``nontrivial'' subsequences $\mathbf i_S$ with respect to the operation of reduction. We make this precise as follows.

\begin{Def}\label{linked-sets-families-def}
    Let $\mathbf i\in\NN^\NN$ be an infinite sequence such that each $i\in\NN$ appears only finitely many times in $\mathbf i$. We say that a set $S\subseteq\NN$ is \emph{linked with respect to $\mathbf i$} if it holds that $R^\omega(\mathbf i_S)\neq\varnothing$ (and otherwise we say $S$ is \emph{unlinked with respect to $\mathbf i$}). We let $\mathcal L_\mathbf i\subseteq\mathcal P(\NN)$ be the family of all $S\subseteq\NN$ which are linked with respect to $\mathbf i$, which we call the \emph{linking family of $\mathbf i$}.
\end{Def}

We now collect a few basic properties of the linking family $\mathcal L_\mathbf i$.

\begin{Lem}\label{linking-lemma}
    Let $\mathbf i\in\NN^\NN$ be such that each $i\in\NN$ appears only finitely many times in $\mathbf i$. Then the following hold of the linking family $\mathcal L_\mathbf i$:
    \begin{enumerate}[label=(\roman*)]
        \item $\varnothing\not\in\mathcal L_\mathbf i$.
        \item $\NN\in\mathcal L_\mathbf i$ if and only if $R^\omega(\mathbf i)\neq\varnothing$.
        \item The family $\mathcal L_\mathbf i\subseteq\mathcal P(\NN)$ is upward closed under inclusion.
    \end{enumerate}
\end{Lem}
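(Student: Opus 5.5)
Parts (i) and (ii) come straight from Definitions~\ref{inf-seq-subsequence-def} and \ref{linked-sets-families-def}; part (iii) is the real content, and I would prove it through the group-theoretic meaning of reduction.

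For (i), the induced subsequence $\mathbf i_\varnothing$ is the empty sequence. Every reduction of it is empty, so $R^\omega(\mathbf i_\varnothing)=\varnothing$ and $\varnothing\notin\mathcal L_\mathbf i$. For (ii), $\mathbf i_\NN=\mathbf i$, so $\NN\in\mathcal L_\mathbf i$ is literally the statement $R^\omega(\mathbf i)\neq\varnothing$.

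For (iii), fix $S\subseteq T$ with $S\in\mathcal L_\mathbf i$. I argue the contrapositive: assume $R^\omega(\mathbf i_T)=\varnothing$ and deduce $R^\omega(\mathbf i_S)=\varnothing$.

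First, for finite words I identify $R^\omega$ with the reduced normal form in the free product $G=\langle g_i\mid g_i^2=\varepsilon\rangle$ of copies of $\ZZ/2$. Each one-step reduction $R$ deletes pairs $g_ig_i$, which does not change the group element. The process stabilizes exactly when no two consecutive letters are equal, since that is the condition $R(\mathbf j)=\mathbf j$ used in Lemma~\ref{reduction-lemma1}. Hence $R^\omega$ of a finite word $\mathbf j$ is the unique reduced word representing the product of $\mathbf j$ in $G$. In particular, a finite word reduces to the empty sequence if and only if its product is $\varepsilon$.

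Next, I use the homomorphism $\pi_S:G_T\to G_S$ that kills $g_i$ for $i\in T\setminus S$. For every prefix, $\pi_S$ sends the product of the prefix of $\mathbf i_T$ to the product of the corresponding prefix of $\mathbf i_S$. So whenever a prefix of $\mathbf i_T$ reduces to empty, the corresponding prefix of $\mathbf i_S$ reduces to empty as well. By Lemma~\ref{reduction-lemma3} applied to $\mathbf i_T$, infinitely many prefixes of $\mathbf i_T$ reduce to empty. If $\mathbf i_T$ is finite, I use the finite identification directly instead. Consequently, infinitely many prefixes of $\mathbf i_S$, of unbounded length, reduce to the empty sequence. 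If $\mathbf i_S$ is finite, this already forces its full reduction to be empty.

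It remains to prove the converse of Lemma~\ref{reduction-lemma3}: if $R^\omega(\mathbf j)\neq\varnothing$, then no sufficiently long prefix of $\mathbf j$ reduces to empty. Take $k=k_1\in K$ for $\mathbf j$. As in the proof of Lemma~\ref{reduction-lemma2}, at every stage $m$ the index $k$ is the least index of an odd-length block, and the blocks preceding it are determined by $(j_1,\dots,j_{k-1})$ and all vanish. I then show that truncating $\mathbf j$ at any $n\geq k$ cannot cause $k$ to be deleted. Deletion of $k$ would require the block containing $k$ to merge, at some stage, into an even block. Within the truncation this merging is governed by exactly the same local configuration as in the full sequence, up to the position where the later blocks would merge. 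I would make this precise by induction on $m$, comparing the blocks of $R^{m-1}(j_1,\dots,j_n)$ containing $k$ with those of $R^{m-1}(\mathbf j)$.

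This comparison of truncations with the infinite reduction is the step I expect to be the main obstacle, since truncation can cut a block in half. If the direct induction gets messy, my first alternative is the following. I would show that $k\in K$ means the letter $j_k$ at position $k$ survives in the reduced normal form of every prefix of length $n\geq k$. I would argue this via the group picture: the prefix product equals $g_{j_k}$ times the product of the letters after position $k$ up to $n$. I would then check that, in the block process, $k$ can only be cancelled by a later equal letter that becomes adjacent, and such a letter would also become adjacent in the infinite sequence.

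Granting this, if $S\in\mathcal L_\mathbf i$ then no long prefix of $\mathbf i_S$ reduces to empty. This contradicts the conclusion above, so $R^\omega(\mathbf i_T)\neq\varnothing$ and $T\in\mathcal L_\mathbf i$, which proves upward closure.
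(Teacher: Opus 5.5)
Your overall architecture is sound and differs from the paper's route. You identify $R^\omega$ on finite words with normal forms in $G$, push the infinitely many empty prefixes of $\mathbf i_T$ (Lemma~\ref{reduction-lemma3}) through $\pi_S$, and finish with a converse of Lemma~\ref{reduction-lemma3} for $\mathbf i_S$. The gap is in that converse: both mechanisms you offer for it are false.

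Take $\mathbf j=(1,2,2,1,1,3,4,5,\dots)$, with the tail consisting of distinct letters. Its stage-one blocks are $(1),(2,2),(1,1),(3),(4),\dots$. So $R(\mathbf j)=(1,3,4,\dots)$ is already reduced, $K=\{1,6,7,\dots\}$, and $k_1=1$. But the prefix $(1,2,2,1)$, of length $4\geq k_1$, has product $g_1g_2g_2g_1=\varepsilon$. Truncating at $n=4$ therefore deletes index $1$, and $j_1$ does not survive in the normal form of that prefix.

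The culprit is exactly the boundary effect you worried about. In the full sequence, the $1$ at position $4$ is consumed by the $1$ at position $5$ in the same stage in which the $2$'s cancel. After truncation, it survives and becomes adjacent to position $1$. So a later equal letter that becomes adjacent in the truncation need not become adjacent in the infinite sequence, and no induction on $m$ can give a statement valid for every $n\geq k$.

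What you actually need is that only finitely many prefixes of $\mathbf j$ have trivial product. This is true, but the cut point must be chosen carefully, and you must use that $a:=j_{k_1}$ occurs only finitely often.

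\begin{itemize}
\item Truncating at an index $q\in K$ is harmless. Such a $q$ is the least index of an odd block at every stage, so the argument in the proof of Lemma~\ref{reduction-lemma2} gives $R^\omega(j_1,\dots,j_q)=(j_{q'})_{q'\in K,\,q'\leq q}$. Hence $h_q:=g_{j_1}\cdots g_{j_q}$ has normal form beginning with $a$.
\item Right multiplication by a generator only appends or deletes the last letter of a normal form. So a product whose normal form begins with $a$ keeps that first letter, and never becomes $\varepsilon$, under right multiplication by any $g_b$ with $b\neq a$.
\item It therefore suffices to exhibit one $n$ beyond the last occurrence $p$ of $a$ such that the normal form of $h_n$ begins with $a$. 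Any $q\in K$ with $q>p$ works.
\item If no such $q$ exists, then $K$ is finite. Lemma~\ref{reduction-lemma2}, followed by Lemma~\ref{reduction-lemma3} applied to the tail, gives infinitely many $n$ with $h_n$ equal to the product of the reduced word $R^\omega(\mathbf j)$. Some of these $n$ exceed $p$.
\end{itemize}

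With this replacement your argument closes.

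As an aside, the paper argues instead through a stage-by-stage inclusion $K_{S,m}\subseteq K_{T,m}$. That inclusion fails as stated. For $\mathbf i=(1,1,2,1,3,4,5,\dots)$, $S=\{1\}$ and $T=\NN$, one has $K_S=\{1\}$ but $K_T=\{3,4,5,\dots\}$, because the block $(1,1)$ at positions $1,2$ of $\mathbf i_T$ is even. So the repaired prefix-product route is the more robust of the two.
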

\begin{proof}
    Part (i) follows immediately by Definition~\ref{linked-sets-families-def}, and part (ii) follows by Definition~\ref{linked-sets-families-def} and the observation that $\mathbf i_\NN=\mathbf i$. For part (iii), we will instead show that $\mathcal P(\NN)\setminus\mathcal L_\mathbf i$ is downward closed under inclusion. Suppose that $S\subseteq T\subseteq\NN$ and that $T$ is unlinked with respect to $\mathbf i$, meaning $R^\omega(\mathbf i_T)=\varnothing$. We wish to show that $R^\omega(\mathbf i_S)=\varnothing$ as well. Compute $R^\omega(\mathbf i_S)$ and $R^\omega(\mathbf i_T)$ according to Definition~\ref{reduction-def} and, while doing so, keep track of the respective blocks $B^{S,m}_1,B^{S,m}_2,\dots$ and $B^{T,m}_1,B^{T,m}_2,\dots$ and their respective sets of least indices $K_{S,m}:=\{k^{S,m}_1,k^{S,m}_2,\dots\}$ and $K_{T,m}:=\{k^{T,m}_1,k^{T,m}_2,\dots\}$ at each stage $m\geq1$. We claim that for all $m\geq1$, it holds that $K_{S,m}\subseteq K_{T,m}$. From this claim, it will follow that $K_S=\bigcap_{m\geq1} K_{S,m}\subseteq\bigcap_{m\geq1}K_{T,m}=K_T=\varnothing$ (and therefore also that $R^\omega(\mathbf i_S)=\varnothing$), where $K_S$ and $K_T$ are as in the final step of Definition~\ref{reduction-def}.
    
    To see that the claim is true, we observe that for all $m\geq1$ the blocks $B^{S,m}_1,B^{S,m}_2,\dots$ can \emph{almost} be obtained from the blocks $B^{T,m}_1,B^{T,m}_2,\dots$ by removing all $j$-blocks for $j\in T\setminus S$; the only catch is that we may end up with consecutive $i$-blocks for $i\in S$ by doing so (which cannot occur by the first step of Definition~\ref{reduction-def}), and we may also end up with blocks not corresponding to any block $B^{S,m}_\ell$. However, the claim that $K_{S,m}\subseteq K_{T,m}$ is recovered from this deletion by the following observation: the $i$-blocks $B^{S,m}_\ell$ for $i\in S$ may be naturally partitioned into (some of) the $i$-blocks $B^{T,m}_p$. This is readily proved by induction on $m\geq1$; we leave it as an exercise with Definition~\ref{reduction-def} for the reader to confirm (note that there is mild casework involved when the $i$-blocks $B^{T,m}_p$ partitioning some $B^{S,m}_\ell$ are odd-length while the $i$-block $B^{S,m}_\ell$ itself is not). In particular, the least index of $B^{S,m}_\ell$ for any $\ell\geq1$ is the least index of $B^{T,m}_p$ for some $p\geq1$, showing $K_{S,m}\subseteq K_{T,m}$ and finishing the proof.
\end{proof}

In fact, the linking family $\mathcal L_\mathbf i$ satisfies several more intricate properties, which we also establish now.

\begin{Prop}\label{linking-prop}
    Let $\mathbf i\in\NN^\NN$ be such that each $i\in\NN$ appears only finitely many times in $\mathbf i$ and so that for all $k\geq1$, it holds that $i_k\neq i_{k+1}$ (equivalently, $\mathbf i=R(\mathbf i)=R^\omega(\mathbf i)$ is reduced). Then the following four properties hold of $\mathcal L_\mathbf i$:
    \begin{enumerate}[label=(\roman*)]
        \item For any nonempty finite set $\varnothing\neq S\subseteq\NN$, there are only finitely many finite sets $S'\supseteq S$ such that $S'\in\mathcal L_\mathbf i$ \emph{and} $S'$ is minimal (with respect to inclusion) in $\mathcal L_\mathbf i$.
        \item There exist infinitely many finite sets $S\in\mathcal L_\mathbf i$ which are minimal in $\mathcal L_\mathbf i$.
        \item For every $S\in\mathcal L_\mathbf i$, there is some finite $S'\subseteq S$ belonging to $\mathcal L_\mathbf i$ which is minimal in $\mathcal L_\mathbf i$.
        \item All minimal $S\in\mathcal L_\mathbf i$ are finite.
    \end{enumerate}
\end{Prop}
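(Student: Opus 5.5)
The basic tool throughout is the following observation. For a \emph{finite} $S\subseteq\NN$, the subsequence $\mathbf i_S$ is a finite word, since each label occurs only finitely often. The block reduction of Definition~\ref{reduction-def} is then ordinary free cancellation in the group $G=\langle g_i\mid g_i^2=\varepsilon\rangle$: deleting even blocks and collapsing odd blocks is cancellation of $g_ig_i$, and iterating to $R^\omega$ yields the unique reduced word. So a finite $S$ is linked if and only if the product $g_{\mathbf i_S}$ is nontrivial in $G$.

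For infinite $S$, I will compare $\mathbf i_S$ with its finite truncations $\mathbf i_{S\cap\{1,\dots,n\}}$ using the block-partition bookkeeping from the proof of Lemma~\ref{linking-lemma}(iii). That argument gives $K_{S,m}\subseteq K_{T,m}$ for $S\subseteq T$. I will order the four parts as (iii), then (iv), then (ii), then (i).

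\textbf{Parts (iii) and (iv).} For (iii), let $S\in\mathcal L_\mathbf i$ and take the least surviving index $k_1\in K_S$, with label $r$. As in the proof of Lemma~\ref{reduction-lemma2}, the blocks containing $k_1$ at every stage have odd length and have $k_1$ as their least index. Everything before them is governed by the finite initial segment of $\mathbf i_S$ up to $k_1$. Let $F\subseteq S$ be the finite set of labels occurring in that segment, together with $r$. I will show that $F$ is linked: in $\mathbf i_F$ the index $k_1$ still survives. The difficulty is that later occurrences of labels of $F$ are no longer separated by labels from $S\setminus F$, so new cancellations could propagate back to $k_1$. If $F$ itself fails, I will enlarge it to $F_n=S\cap\{1,\dots,n\}$ and argue by contradiction. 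Suppose every $F_n$ is unlinked, so each $g_{\mathbf i_{F_n}}$ is trivial. Then the cancellation patterns of the $\mathbf i_{F_n}$ are compatible on longer and longer prefixes, and passing to the limit would cancel $k_1$ in $\mathbf i_S$ as well. This limiting step is the one I expect to be the main obstacle, and it will require careful tracking of the blocks $B^{m}_j$.

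Once some finite linked subset exists, a minimal linked subset below it exists trivially, since a finite set has only finitely many subsets; this proves (iii). For (iv), let $S$ be minimal in $\mathcal L_\mathbf i$. By (iii), $S$ contains a finite linked $S'$, and minimality forces $S=S'$, so $S$ is finite.

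\textbf{Part (ii).} Fix $N$ and choose $k$ with $i_j>N$ for all $j\ge k$; such a $k$ exists because each label appears only finitely often. Let $T$ be the set of labels that occur at or after position $k$ but never before it. Then $\mathbf i_T$ is a tail of $\mathbf i$ with some letters deleted. I need $T$ to be linked, and I will argue this by choosing $k$ so that the tail $(i_k,i_{k+1},\dots)$ consists exactly of $T$-labels beyond some further point. That tail has no consecutive repeats, so it survives reduction: its first letter is never cancelled. Hence $T\in\mathcal L_\mathbf i$, and (iii) gives a finite minimal linked set inside $T$, with all of its labels greater than $N$. Choosing $N$ beyond the maximum of the previously constructed set and iterating yields infinitely many disjoint, hence distinct, finite minimal linked sets.

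\textbf{Part (i).} Let $S\neq\varnothing$ be finite and let $k_0$ be the last index of an occurrence of a label of $S$. Let $S'\supseteq S$ be minimal linked. I will show that every $x\in S'$ occurs at some position $\le k_0'$, where $k_0'$ depends only on $S$. My approach is to write $g_{\mathbf i_{S'}}=u\cdot w$, where $w$ is the part after the last position involving $S$ or labels introduced earlier. Suppose some $x\in S'$ occurs only inside the tail region. The subsequence of $\mathbf i$ on those late labels is itself a reduced-type word, as in (ii), and it would already be linked. That yields a linked proper subset of $S'$ avoiding the early labels, contradicting the minimality of $S'$, provided $S'\neq S$. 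Consequently every minimal $S'\supseteq S$ lies inside a finite set of labels determined by $S$, so there are only finitely many such $S'$. Making this tail-splitting rigorous uses the same interleaving issue as in (iii), and I expect to reuse that argument here.
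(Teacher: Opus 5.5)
There are genuine gaps. Deriving (iv) from (iii), and taking a minimal linked set below a finite linked one, are fine. The two substantive steps, however, are not established.

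\textbf{Part (i).} Your argument rests on a false step. The labels of $S'$ that occur only ``late'' form a finite set $L$. The word $\mathbf i_L$ is finite, need not be reduced once the other labels are deleted, and can cancel completely: a single late label $x$ occurring exactly twice gives $\mathbf i_{\{x\}}=(x,x)$, which is unlinked. The appeal to (ii) does not transfer, because there the surviving tail was infinite. The ``tail region'' is also defined circularly.

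What is missing is the use of homomorphisms of $G$. The deletion endomorphisms $\varphi_s$ ($g_s\mapsto\varepsilon$, other generators fixed) satisfy $g_{\mathbf i_{T\setminus\{s\}}}=\varphi_s(g_{\mathbf i_T})$ for finite $T$. Hence a finite $S'$ is minimal linked if and only if $\gamma:=g_{\mathbf i_{S'}}\neq\varepsilon$ and $\varphi_s(\gamma)=\varepsilon$ for every $s\in S'$.

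Now fix $i\in S$ and suppose some $j\in S'$ never occurs between the first and last occurrence of $i$ in $\mathbf i$. Split $\gamma=\tau'\sigma\tau$ at the first and last $i$. Then $\sigma$ is $j$-free, while $\tau$ and $\tau'$ are $i$-free. So $\varphi_j(\gamma)=\varepsilon$ gives $\sigma=\varphi_j(\tau')^{-1}\varphi_j(\tau)^{-1}$, which is $i$-free. Hence $\gamma=\varphi_i(\gamma)=\varepsilon$, a contradiction. Thus every minimal linked $S'\supseteq S$ lies in the finite set $\text{conv}_{\mathbf i}(i)$ of labels occurring between the first and last $i$. This is how the paper proves (i).

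\textbf{Part (iii) for infinite $S$.} This is the second gap, and (ii) and (iv) inherit it, since your (ii) applies (iii) to the infinite set $T$. You leave the limiting step open, and the proposed mechanism is doubtful. The word $\mathbf i_{F_{n+1}}$ arises from $\mathbf i_{F_n}$ by inserting letters throughout, not by extending a prefix. Moreover, triviality of $g_{\mathbf i_{F_n}}$ comes with no canonical cancellation pattern that could be passed to a limit.

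The paper needs no limit. If $R^\omega(\mathbf i_S)$ is finite, its labels already give a finite linked subset of $S$. If it is infinite, it is an infinite reduced word in which every label occurs finitely often. The convex hull $C$ of any of its labels $i$ is then finite and linked. Indeed, the corresponding product is $\tau'\sigma\tau$, with $\sigma$ a nonempty reduced segment containing $i$ and $\tau,\tau'$ $i$-free. Triviality would force $\sigma=(\tau\tau')^{-1}$ to be $i$-free, which is impossible.

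The same hulls give (ii) using only finite sets. Take them in $\mathbf i$ itself, for labels chosen so that the hulls are pairwise disjoint.

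Your justification in (ii) is also wrong as stated. Suppose $i_{k-1},i_k,i_{k+1},i_{k+2},\dots$ read $a,b,a,b,c,d,\dots$ with $b,c,d,\dots$ distinct and new. Then $\mathbf i_T=(b,b,c,d,\dots)$, and the first letter of the reduced tail does cancel. $T$ is still linked, since a finite prefix can cancel only finitely many letters of an infinite reduced word. That fact, however, has to be argued through the block reduction.
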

\begin{proof}
    We work toward (i) first. We claim that it suffices to show that this condition holds for $S=\{i\}\subseteq\NN$ a singleton; suppose under the assumption that (i) holds for singletons $S$ that there is some finite $T\subseteq\NN$ contained in infinitely many minimal finite sets $S'\in\mathcal L_\mathbf i$. Then for any $i\in T$, the singleton $\{i\}$ is contained in infinitely many such $S'$ as well, a contradiction. Therefore, it suffices to show that (i) holds for singletons.

    To proceed, we define the \emph{convex hull of $i\in\NN$ with respect to $\mathbf i$} to be the set $\text{conv}_\mathbf i(i)\subseteq\NN$ given by
    $$\text{conv}_\mathbf i(i)=\left\{j\in\NN\mid\exists k_1\leq k\leq k_2:i=i_{k_1}=i_{k_2}\text{ and }j=i_k\right\}.$$
    Note that $\text{conv}_\mathbf i(i)=\varnothing$ if $i$ does not appear in $\mathbf i$ and $i\in\text{conv}_\mathbf i(i)$ otherwise. Observe further that $\text{conv}_\mathbf i(i)$ is finite by the assumption that no $i\in\NN$ appears in $\mathbf i$ infinitely many times. We claim two important things: that $\text{conv}_\mathbf i(i)\in\mathcal L_\mathbf i$ for all $i$ appearing in $\mathbf i$ and, moreover, that all finite minimal sets $S'\in\mathcal L_\mathbf i$ containing $i$ are subsets of $\text{conv}_\mathbf i(i)$. If we can prove these, then we have shown part (i) of the proposition: there are only finitely many $S'\subseteq\text{conv}_\mathbf i(i)$, and therefore only finitely many finite \emph{minimal} $S'\in\mathcal L_\mathbf i$ containing $i$.

    To argue toward either of these two claims, we will appeal to the group $G=\langle g_i\mid g_i^2=\varepsilon,i\in\NN\rangle$ and certain endomorphisms of $G$. Elements of $G$ can be naturally identified with finite sequences $(j_1,j_2,\dots,j_\ell)\in\NN^{<\NN}$ with $\ell\geq0$ such that $j_k\neq j_{k+1}$ for all $1\leq k<\ell$: simply take such a $\mathbf j=(j_1,j_2,\dots,j_\ell)$ to the group element $g_\mathbf j=g_{j_1}g_{j_2}\dots g_{j_\ell}\in G$. Note that $g_\mathbf j=g_{R^\omega(\mathbf j)}$; the group $G$ ``automatically'' performs reduction of sequences. For each $i\in\NN$, define an endomorphism $\varphi_i:G\to G$ on the generators of $G$ by
    $$\varphi_i(g_j)=\begin{cases}
        \varepsilon & \text{if }j=i\\
        g_j & \text{if }j\neq i.
    \end{cases}$$
    We note the following useful property: for any $g\in G$ and $i\in\NN$, it holds that $\varphi_i(g)=g$ if and only if $g$ is in the image of $\varphi_i$. Certainly the ``only if'' direction holds, so suppose that $g$ is in the image of $\varphi_i$, say $g=\varphi_i(h)$ for $h\in G$. Since $\varphi_i$ is idempotent, we have that $\varphi_i(g)=\varphi_i^2(h)=\varphi_i(h)=g$.
    
    Another useful property is that for $\mathbf j=(j_1,\dots,j_\ell)\in\NN^{<\NN}$ with $j_k\neq j_{k+1}$ for all $1\leq k<\ell$ as above and for any $i\in\NN$, it holds that $i$ appears in $\mathbf j$ if and only if $g_\mathbf j$ is not in the image of $\varphi_i$. We argue this using the \emph{length function} $L:G\to\ZZ_{\geq0}$ given by $L(g_{(j_1,\dots,j_\ell)}):=\ell$. Note that two facts hold of the length function $L$: that for all $g,h\in G$, we have $L(gh)\leq L(g)+L(h)$ and that for all $g\in G$, $i\in\NN$, we have $L(\varphi_i(g))\leq L(g)$. Now suppose first that $\mathbf j$ contains an instance of $i$, and let $\mathbf j':=\mathbf j_{\NN\setminus\{i\}}$ be the subsequence of $\mathbf j$ obtained by removing all instances of $i$. This yields
    $$L(\varphi_i(g_\mathbf j))=L(g_{\mathbf j'})=|R^\omega(\mathbf j')|<|\mathbf j|=L(g_\mathbf j),$$
    so $\varphi_i(g_\mathbf j)\neq g_\mathbf j$. By our previous useful property, this implies that $g_\mathbf j$ is not in the image of $\varphi_i$. For the converse, suppose that $\mathbf j$ does not contain any instances of $i$. Then, by the definition of $\varphi_i$, we see that $\varphi_i(g_\mathbf j)=g_\mathbf j$, so $g_\mathbf j$ is in the image of $\varphi_i$. Altogether, this gives us a group-theoretic criterion for detecting when an element $i\in\NN$ appears in a reduced finite sequence $\mathbf j$.

    We now establish that $\text{conv}_\mathbf i(i)$ is linked with respect to $\mathbf i$ for $i$ appearing in $\mathbf i$. Observe that we may write $\gamma:=g_{\mathbf i_{\text{conv}_\mathbf i(i)}}=\tau'\sigma\tau$ for $\tau',\sigma,\tau\in G$ such that:
    \begin{enumerate}
        \item $\sigma\neq\varepsilon$, i.e. $\sigma$ comes from a substring of $\mathbf i_{\text{conv}_\mathbf i(i)}$ with nonempty reduction.
        \item $\varphi_i(\sigma)\neq\sigma$, i.e. $g_i$ appears in the reduced expression for $\sigma$.
        \item $\varphi_i(\tau)=\tau$, $\varphi_i(\tau')=\tau'$, i.e., $g_i$ does not appear in the reduced expressions for $\tau$ or $\tau'$.
    \end{enumerate}
    Indeed, we may take $\sigma=g_\mathbf j$ for the consecutive substring $\mathbf j=(i_{e_1},i_{e_1+1},\dots,i_{e_2-1},i_{e_2})$ of $\mathbf i$, where $i_{e_1}$ is the first appearance of $i$ in $\mathbf i$ and $i_{e_2}$ is the last. We then take $\tau'$ to capture all instances of any $j\in\text{conv}_\mathbf i(i)$ appearing at indices less than $e_1$ and we take $\tau$ to capture all instances of any $j\in\text{conv}_\mathbf i(i)$ appearing at indices after $e_2$.
    
    We claim that $\gamma\neq\varepsilon$, or equivalently that $R^\omega(\mathbf i_{\text{conv}_\mathbf i(i)})\neq\varnothing$. Suppose for the sake of contradiction that $\gamma=\varepsilon$. Rewriting $\gamma=\tau'\sigma\tau$, we then see that $\sigma=(\tau\tau')^{-1}$. Applying $\varphi_i$ to both sides yields
    $$\varphi_i(\sigma)=\varphi_i((\tau\tau')^{-1})=(\tau\tau')^{-1}=\sigma,$$
    a contradiction with $\varphi_i(\sigma)\neq\sigma$.

    Toward the latter of our two important claims, we conduct a similar argument to show that all finite minimal sets $S'\in\mathcal L_\mathbf i$ are subsets of $\text{conv}_\mathbf i(i)$. Suppose for the sake of contradiction that $S'\in\mathcal L_\mathbf i$ is a finite minimal set containing $i$ which is not contained in $\text{conv}_\mathbf i(i)$; take some $j\in S'\setminus \text{conv}_\mathbf i(i)$. We again may write $\gamma:=g_{\mathbf i_{S'}}=\tau'\sigma\tau$ for $\tau',\sigma,\tau\in G$ such that:
    \begin{enumerate}
        \item $\sigma\neq\varepsilon$, i.e. $\sigma$ comes from a substring of $\mathbf i_{\text{conv}_\mathbf i(i)}$ with nonempty reduction.
        \item $\varphi_i(\sigma)\neq\sigma$, i.e. $g_i$ appears in the reduced expression for $\sigma$.
        \item $\varphi_i(\tau)=\tau$, $\varphi_i(\tau')=\tau'$, i.e., $g_i$ does not appear in the reduced expressions for $\tau$ or $\tau'$.
        \item $\varphi_j(\sigma)=\sigma$, i.e., $g_j$ does not appear in the reduced expression for $\sigma$.
        \item Either $\varphi_j(\tau')\neq\tau'$ or $\varphi_j(\tau)\neq\tau$ (or both). That is, there is an instance of $g_j$ in at least one of the reduced expressions for $\tau$ and $\tau'$. We may assume without loss of generality that $\varphi_j(\tau')\neq\tau'$.
        \item $\gamma\neq\varepsilon$, i.e., $R^\omega(\mathbf i_{S'})\neq\varnothing$.
    \end{enumerate}
    Indeed, condition (4) here is capturing the fact that $j\not\in\text{conv}_\mathbf i(i)$, while condition (5) is requiring our use of at least one instance of $j$ in $\mathbf i_{S'}$. We also may reformulate the minimality condition on $S'\in\mathcal L_\mathbf i$ in the following group-theoretic fashion: it is equivalent to say that $\gamma\in\bigcap_{s\in S'}\ker\varphi_s$. Indeed, taking away all instances of $s\in S'$ in $\mathbf i_{S'}$ will yield a sequence $\mathbf i_{S'\setminus\{s\}}$ which reduces to $\varnothing$ by the minimality of $S'$, so $\varphi_s(\gamma)=\varepsilon$ for all $s\in S'$.

    We now obtain a contradiction in a similar manner to before. Take $\varphi_j$ on both sides of $\gamma=\tau'\sigma\tau$ to see that
    \begin{align*}
        \varepsilon&=\varphi_j(\tau'\sigma\tau)\\
        &=\varphi_j(\tau')\varphi_j(\sigma)\varphi_j(\tau)\\
        &=\varphi_j(\tau')\sigma\varphi_j(\tau),\\
    \end{align*}
    yielding $\sigma=\varphi_j(\tau\tau')^{-1}$. Then, applying $\varphi_i$ to both sides of this equation (and noting that $\varphi_i\circ\varphi_j=\varphi_j\circ\varphi_i$) yields
    \begin{align*}
        \varphi_i(\sigma)&=\varphi_i(\varphi_j(\tau\tau'))^{-1}\\
        &=\varphi_j(\varphi_i(\tau\tau'))^{-1}\\
        &=\varphi_j(\tau\tau')^{-1}\\
        &=\sigma,
    \end{align*}
    a contradiction to condition (2) above. Thus, we cannot have a finite minimal $S'\in\mathcal L_\mathbf i$ containing $i$ which is not contained in $\text{conv}_\mathbf i(i)$. This concludes the proof of part (i).

    Part (ii) follows from part (i) and the fact that there are infinitely many $i\in\NN$ appearing in $\mathbf i$. Indeed, for any $i\in\NN$ appearing in $\mathbf i$, by the argument for part (i) provided above, there exists at least one finite minimal set $S'\in\mathcal L_\mathbf i$ containing $i$, and there exist only finitely many such $S'$. Let $m\in\NN$ be the maximum element (in the natural order on $\NN$) of all such $S'$. Repeat this argument for the least $i'>m$ appearing in $\mathbf i$ to obtain more finite minimal sets; repeat this argument ad infinitum to obtain infinitely many.

    Finally, for parts (iii) and (iv), we first observe that part (iv) is an immediate consequence of part (iii), so this is all that is left to show. Let $S\in\mathcal L_\mathbf i$. We want to show there is some finite minimal $S'\subseteq S$ which is itself linked with respect to $\mathbf i$. Note that it suffices to show that there is some finite $S'\subseteq S$ which is linked. Assume without loss of generality that $S$ is infinite. If $R^\omega(\mathbf i_S)\neq\varnothing$ is finite, simply let $S'$ be the finite set of $j\in\NN$ appearing in $R^\omega(\mathbf i_S)$. Then $S'$ is linked in $\mathbf i$ precisely because $R^\omega(\mathbf i_{S'})=R^\omega(\mathbf i_S)$. Hence, we may suppose that $R^\omega(\mathbf i_S)$ is infinite; replace $\mathbf i_S$ with $R^\omega(\mathbf i_S)$ without loss of generality. Now $\mathbf i_S$ is an infinite reduced sequence with no $i\in\NN$ appearing infinitely many times. In particular, we may apply our argument for part (i) to $\mathbf i_S$ to obtain for any $i\in S$ a finite set $\text{conv}_{\mathbf i_S}(i)\subseteq S$ containing $i$ which is linked with respect to $\mathbf i_S$. Then it also follows that $\text{conv}_{\mathbf i_S}(i)$ is linked with respect to the original sequence $\mathbf i$, so it holds that $S':=\text{conv}_{\mathbf i_S}(i)\in\mathcal L_\mathbf i$ meets our conditions. This completes the proof.
\end{proof}

We now provide an explicit construction to demonstrate a kind of converse to Lemma~\ref{linking-lemma} and Proposition~\ref{linking-prop}: if $\mathcal L\subseteq\mathcal P(\NN)$ is a nonempty, proper, upward closed subset satisfying the conclusions of Proposition~\ref{linking-prop}, then there is some $\mathbf i\in\NN^\NN$ giving rise to $\mathcal L$.

\begin{Prop}\label{linking-constr-prop}
    Let $\mathcal L\subseteq\mathcal P(\NN)$ be a nonempty, proper, upward closed subset of $\mathcal P(\NN)$ satisfying the conclusions (i)-(iv) of Proposition~\ref{linking-prop} above. Then there exists some $\mathbf i\in\NN^\NN$ which is reduced and is such that no $i\in\NN$ appears infinitely many times in $\mathbf i$ with the property that $\mathcal L=\mathcal L_\mathbf i$.
\end{Prop}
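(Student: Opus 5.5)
The plan is to realize $\mathcal L$ through its minimal elements and then concatenate one ``witness word'' for each of them. First I would record the structural consequences of the hypotheses. Since $\mathcal L$ is proper and upward closed, $\varnothing\notin\mathcal L$. By (iii), every member of $\mathcal L$ contains a finite minimal member, so $\mathcal L=\{T\subseteq\NN\mid T\supseteq S\text{ for some minimal }S\in\mathcal L\}$. By (iv) the minimal members are finite, so there are countably many of them, and by (ii) there are infinitely many; enumerate them as $S_1,S_2,S_3,\dots$. By (i) applied to singletons, each $i\in\NN$ lies in only finitely many $S_n$. It therefore suffices to build a reduced $\mathbf i$, in which every letter appears finitely often, whose linked sets are exactly the supersets of some $S_n$.

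\textbf{Witness words.} For each finite nonempty $S=\{s_1<\dots<s_r\}$ I would build a word $w_S$ in the letters of $S$, with no two equal adjacent letters, viewed in the group $G=\langle g_i\mid g_i^2=\varepsilon\rangle$ from the proof of Proposition~\ref{linking-prop}. The required properties are $g_{w_S}\neq\varepsilon$ and $\varphi_s(g_{w_S})=\varepsilon$ for every $s\in S$. For $r=1$ take $w_S=(s_1)$. For $r\geq 2$ use iterated commutators: $c_1=g_{s_1}$ and $c_{k+1}=[g_{s_{k+1}},c_k]=g_{s_{k+1}}c_kg_{s_{k+1}}c_k^{-1}$. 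Each $c_k$ is killed by every $\varphi_{s_j}$ with $j\leq k$, since a commutator with a trivial entry is trivial. It is nontrivial because the centralizer of $g_a$ in this free product of copies of $\ZZ/2$ is $\{\varepsilon,g_a\}$, and $c_k$ is not in that set. Let $w_S$ be the reduced word of $c_r$, or of its inverse. This choice of orientation lets me control the first and last letters, which are distinct letters of $S$ when $r\geq2$.

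\textbf{Concatenation.} Set $\mathbf i=w_{S_1}w_{S_2}w_{S_3}\cdots$, choosing at each junction the orientation of $w_{S_{n+1}}$ (or, for singletons, inserting the next word first) so that no two equal letters become adjacent. Then $\mathbf i$ is reduced by Lemma~\ref{reduction-lemma1}, since its rank is $0$. Each letter appears only finitely often because it lies in finitely many $S_n$.

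\textbf{Checking $\mathcal L_\mathbf i=\mathcal L$.} For a set $T$, the sequence $\mathbf i_T$ is the concatenation of the words $(w_{S_n})_T$. Each such word represents $\varphi$-images of $g_{w_{S_n}}$, namely $\varepsilon$ if $S_n\not\subseteq T$ and a nontrivial element if $S_n\subseteq T$. If $T$ contains no $S_n$, every factor reduces to $\varnothing$. I then need a lemma that concatenating (possibly infinitely many) finite words, each reducing to the empty sequence, gives $R^\omega=\varnothing$. The plan is to show by induction on stages that every index is eventually deleted, using that the reduction of a finite block is confined to that block, in the style of Lemmas~\ref{reduction-lemma2} and~\ref{reduction-lemma3}. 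Conversely, if $T\supseteq S_n$ I must show that $R^\omega(\mathbf i_T)\neq\varnothing$. This is the main obstacle, because nontrivial surviving factors could in principle cancel against neighbouring ones.

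To handle that obstacle, I would first try to strengthen the choice of $w_S$ so that the reduced form of every nontrivial $\varphi$-image $(w_{S_n})_T$ both begins and ends with a letter whose occurrences are ``protected''. Following the $\text{conv}_\mathbf i$ argument of Proposition~\ref{linking-prop}, I would mimic its decomposition $\gamma=\tau'\sigma\tau$ with $\varphi_i(\tau)=\tau$ and $\varphi_i(\tau')=\tau'$. The idea is to pick a letter $i\in S_n$ and show that no letter outside $S_n$ can lie between the first and last occurrence of $i$ in $\mathbf i_T$, so that a finite portion of $\mathbf i_T$ already carries a nontrivial $\varphi_i$-detectable element. If that fails, the fallback is to reorder the enumeration so that the supports of distinct witness words overlap only in controlled ways.
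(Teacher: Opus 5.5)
The gap is in the inclusion $\mathcal L\subseteq\mathcal L_\mathbf i$. You set out to show directly that every $T\supseteq S_n$ is linked, identify possible cancellation between nontrivial factors as the main obstacle, and leave it unresolved: protected letters and a $\text{conv}_\mathbf i$-style argument are floated, with reordering as a fallback. None of this is needed.

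Since $\mathcal L_\mathbf i$ is upward closed by Lemma~\ref{linking-lemma}(iii), and every member of $\mathcal L$ contains some $S_n$, it suffices to show that each minimal $S_n$ is linked. The word $\mathbf i_{S_n}$ is finite, because it uses finitely many letters and each occurs finitely often. It is the concatenation of the finitely many nonempty words $(w_{S_m})_{S_n}$. Distinct minimal sets form an antichain, so for $m\neq n$ the set $S_m\setminus S_n$ is nonempty. Hence $(w_{S_m})_{S_n}$ represents the image of $g_{w_{S_m}}$ under the composite of the $\varphi_j$ with $j\in S_m\setminus S_n$, which is $\varepsilon$. Therefore $g_{\mathbf i_{S_n}}=g_{w_{S_n}}\neq\varepsilon$, i.e.\ $R^\omega(\mathbf i_{S_n})\neq\varnothing$. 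There is exactly one nontrivial factor, so there is nothing to cancel; this is the paper's argument. Even for finite $T\supseteq S_n$ cancellation cannot happen: the composite of the $\varphi_j$ with $j\in T\setminus S_n$ sends $g_{\mathbf i_T}$ to $g_{\mathbf i_{S_n}}\neq\varepsilon$, and an element with a nontrivial homomorphic image is nontrivial.

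For the reverse inclusion $\mathcal L_\mathbf i\subseteq\mathcal L$ you also do not need a new lemma on infinite concatenations of trivial words. The finite case is the same finite group computation. If an infinite $T$ were linked, Proposition~\ref{linking-prop}(iii) applied to the reduced sequence $\mathbf i$ would produce a finite linked $T'\subseteq T$, which by the finite case contains some $S_n$.

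The rest of your construction is sound and close to the paper's. Your iterated commutators are, up to orientation, the paper's irreducible words built by $\mathbf j\mapsto(\mathbf j,i,\mathbf j^{\mathrm{rev}},i)$, which represent $[g_{\mathbf j},g_i]$. Choosing orientations at the junctions is a valid simplification of the paper's greedy reordering. Singletons never cause a clash there: if $\{a\}$ is minimal, then $a$ lies in no other minimal set.
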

\begin{proof}
    The rough idea is as follows. We work directly with the minimal sets in $\mathcal L$. Since $\mathcal L$ is assumed to satisfy condition (iv) above, all such minimal sets are finite, so there are at most countably many of them. Because $\mathcal L$ satisfies condition (ii) above, there are infinitely many of them, so there are precisely countably many. Condition (iii) combined with upward closure demonstrate that $\mathcal L$ is characterized by its finite minimal sets. We will find an ``irreducible'' finite word corresponding to each of these finite minimal sets and string them together in sequence to form $\mathbf i$. Condition (i) is used to ensure no ``collisions'' occur when stringing together these irreducible words.

    Let $\mathbf j=(j_1,\dots,j_\ell)\in\NN^{<\NN}$. We say $\mathbf j$ is \emph{irreducible} if (1) $\mathbf j$ is reduced, (2) $\mathbf j\neq\varnothing$, and (3) for all $j\in\NN$ appearing in $\mathbf j$, the sequence $\mathbf j'=\mathbf j_{\NN\setminus\{j\}}$ reduces to the empty sequence, i.e. $R^\omega(\mathbf j')=\varnothing$. Note that this entails that the set of $j\in\NN$ appearing in $\mathbf j$ is the unique member of $\mathcal L_\mathbf j$. Let $\mathbf j^\text{rev}=(j_\ell,\dots,j_1)$ be the reverse of $\mathbf j$.

    Let $\mathbf j$ be irreducible. We claim that for any $i\in\NN$ not appearing in $\mathbf j$, the concatenation $\hat{\mathbf j}:=(\mathbf j,i,\mathbf j^\text{rev},i)$ is irreducible. Indeed, removing $i$ yields $(\mathbf j,\mathbf j^\text{rev})$, which clearly reduces to the empty sequence. Removing $j\neq i$ appearing in $\mathbf j$ yields $\hat{\mathbf j'}=(\mathbf j',i,(\mathbf j')^\text{rev},i)$ for $\mathbf j'=\mathbf j_{\NN\setminus\{j\}}$. Reducing this yields
    \begin{align*}
        R^\omega(\hat{\mathbf j'})&=R^\omega(\mathbf j',i,(\mathbf j')^\text{rev},i)\\
        &=R^\omega(R^\omega(\mathbf j'),i,R^\omega((\mathbf j')^\text{rev}),i)\\
        &=R^\omega(i,i)\\
        &=\varnothing,
    \end{align*}
    where $R^\omega(\mathbf j')=\varnothing$ by the irreducibility of $\mathbf j$. Altogether, this gives us a method for constructing irreducible strings using strictly larger (finite) numbers of vertices $j\in\NN$.

    As a quick illustration of this, we recursively build an irreducible string $\mathbf j$ using the set $\{1,2,5,8\}\subseteq\NN$. We pick a starting irreducible word of length one, say $\mathbf j_1:=(5)$. We use the above construction with $i=2$ to obtain $\mathbf j_2:=(5,2,5,2)$. Proceed again with $i=8$ to reach $\mathbf j_3:=(5,2,5,2,8,2,5,2,5,8)$. Lastly, we use $i=1$ to land upon
    $$\mathbf j=\mathbf j_4:=(5,2,5,2,8,2,5,2,5,8,1,8,5,2,5,2,8,2,5,2,5).$$
    Of course, the order we chose to include the elements of $\{1,2,5,8\}$ was arbitrary; many different irreducible words may be created using this set of natural numbers. The reader is encouraged to check that this string is indeed irreducible.

    We now proceed with our construction of $\mathbf i$ with linking family $\mathcal L$. Since there are only countably many minimal sets $S\in\mathcal L$ by our observation above, we enumerate them $S_1,S_2,S_3,\dots$. Na\"ively, we would simply construct irreducible words $\mathbf j(S_n)$ for all $n\geq1$ with supports $S_n$ as above and concatenate them end-to-end to obtain the sequence $(\mathbf j(S_1),\mathbf j(S_2),\dots)$. Since $\mathcal L$ satisfies conclusion (i) of Proposition~\ref{linking-prop} for singletons, there are only finitely many $S_n$ containing any given $i\in\NN$, and therefore only finitely many $\mathbf j(S_n)$ containing any given $i\in\NN$. This yields that no $i\in\NN$ appears infinitely many times in the constructed sequence. However, we \emph{may} accidentally create an unreduced sequence this way; it is possible the string $\mathbf j(S_n)$ ends with the same $i\in\NN$ that $j(S_{n+1})$ begins with. Thus, we need to be slightly more careful.

    We string together irreducible words $\mathbf j(S_n)$ constructed recursively according to the above-described procedure as follows. At stage $s=1$, let $\mathbf i_1=\mathbf j(S_1)$. Remove $S_1$ from the above list $S_1,S_2,\dots$. Now at stage $s+1$ for some $s\geq1$, assume $\mathbf i_s$ is constructed and that the remaining list of minimal sets is $S_{k_1},S_{k_2},\dots$ (with $k_1<k_2<k_3<\dots$ in $\NN$). Declare $\mathbf i_{s+1}:=(\mathbf i_s,\mathbf j(S_{k_t}))$ for $t\geq1$ the least such that $\mathbf j(S_{k_t})$ begins with a different number than the one $\mathbf i_s$ ends with. Note that such a $t\geq1$ exists by the fact that $\mathcal L$ satisfies conclusion (i) of Proposition~\ref{linking-prop} for singletons $S\subseteq\NN$. Take $S_{k_t}$ out of the list above and proceed to the next stage. We let $\mathbf i:=\bigcup_{s\geq1}\mathbf i_s$ be the union of the strings $\mathbf i_s$. By construction, $\mathbf i$ is reduced.

    We claim that every minimal set $S_k$ is eventually used to create part of $\mathbf i_s$ for some $s\geq1$. Indeed, if $S_k$ is unused at all stages $s\geq1$, this implies that there exist infinitely many $S_{k'}$ such that $\mathbf j(S_{k'})$ ends with the same number that $\mathbf j(S_k)$ begins with. This cannot occur by the fact that there are only finitely many $S_{k'}$ with $S_{k'}\cap S_k\neq\varnothing$ once again due to the fact that $\mathcal L$ satisfies conclusion (i) of Proposition~\ref{linking-prop}.

    It remains to show that $\mathcal L_\mathbf i=\mathcal L$. To show the reverse inclusion, suppose $S\in\mathcal L$ is minimal in $\mathcal L$. We claim that $S$ is linked with respect to $\mathbf i$. Since the minimal sets of $\mathcal L$ form an antichain in the poset $(\mathcal P(\NN),\subseteq)$, we know that for any other minimal $S'\in\mathcal L$ it holds that $S\cap S'\subsetneq S'$. In particular, $R^\omega(\mathbf j(S')_S)=R^\omega(\mathbf j(S')_{S\cap S'})=\varnothing$ by the irreducibility of $\mathbf j(S')$. Examining $\mathbf i=(\mathbf j(S_{\pi_1}),\mathbf j(S_{\pi_2}),\dots)$ (for some permutation $\pi$ of $\NN$), we make the following computation:
    \begin{align*}
    R^\omega(\mathbf i_S)&=R^\omega(\mathbf j(S_{\pi_1})_S,\mathbf j(S_{\pi_2})_S,\dots)\\
    &=R^\omega(R^\omega(\mathbf j(S_{\pi_1})_S),R^\omega(\mathbf j(S_{\pi_2})_S),\dots)\\
    &=R^\omega(R^\omega(\mathbf j(S)_S))\\
    &=R^\omega(\mathbf j(S))\\
    &=\mathbf j(S)\\
    &\neq\varnothing.
    \end{align*}
    Thus, we have that $S$ is linked with respect to $\mathbf i$.

    To show the forward inclusion, suppose that $S\not\in\mathcal L$ is finite. Then for all $S'\in\mathcal L$ minimal, we have that $S\cap S'\subsetneq S'$. A similar computation to the above shows that $R^\omega(\mathbf i_S)=\varnothing$, so $S$ is unlinked with respect to $\mathbf i$. This completes our verification that the construction works as intended.
\end{proof}

We lastly show that every $\mathbf i\in\NN^\NN$ which is reduced and is such that no $i\in\NN$ appears infinitely often in $\mathbf i$ has a certain kind of ``normal form'' obtained as $R^\omega(\mathbf i_S)$ for some infinite $S\subseteq\NN$, which will be helpful in the proof of Theorem~\ref{LF-inf-mut-seq-thm}.

\begin{Lem}\label{inf-seq-normal-form-lemma}
    Let $\mathbf i\in\NN^\NN$ be reduced and such that no $i\in\NN$ appears infinitely often in $\mathbf i$. Then there exists an infinite subset $S\subseteq\NN$ so that $R^\omega(\mathbf i_S)=(\mathbf j_1,\mathbf j_2,\dots)$ for some finite reduced sequences $\mathbf j_k\in\NN^{<\NN}$ with:
    \begin{enumerate}[label=(\roman*)]
        \item $\mathbf j_k\neq\varnothing$, that is, $\mathbf j_k$ is nontrivial.
        \item For any two $k_1\neq k_2$, the set of $i\in\NN$ appearing in $\mathbf j_{k_1}$ is disjoint from the set of $i\in\NN$ appearing in $\mathbf j_{k_2}$.
    \end{enumerate}
\end{Lem}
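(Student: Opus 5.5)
The plan is to build $S$ greedily as a disjoint union $S=S_1\sqcup S_2\sqcup\cdots$ of finite sets that are linked with respect to $\mathbf i$. The sets are chosen so that all occurrences in $\mathbf i$ of letters of $S_{n+1}$ come strictly after all occurrences of letters of $S_1\cup\dots\cup S_n$. Then $\mathbf i_S$ is literally the concatenation $(\mathbf i_{S_1},\mathbf i_{S_2},\dots)$. We will set $\mathbf j_n:=R^\omega(\mathbf i_{S_n})$ and show that $R^\omega(\mathbf i_S)=(\mathbf j_1,\mathbf j_2,\dots)$. Condition (i) then holds because each $S_n\in\mathcal L_\mathbf i$, and condition (ii) holds because the $S_n$ are pairwise disjoint.

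\textbf{Construction.} Suppose $S_1,\dots,S_n$ have been chosen (for $n=0$ nothing has been chosen).
\begin{enumerate}
\item Let $M_n$ be the last index in $\mathbf i$ at which a letter of $S_1\cup\dots\cup S_n$ occurs, with $M_0=0$. Let $F_n$ be the finite set of letters occurring in $(i_1,\dots,i_{M_n})$.
\item Each letter occurs only finitely often, so there is an index $M_n'\geq M_n$ after which no letter of $F_n$ occurs.
\item Infinitely many letters appear in $\mathbf i$, and only finitely many of them occur before $M_n'$. Hence we can choose a letter $i$ whose first occurrence is after $M_n'$.
\item Consider $\text{conv}_\mathbf i(i)$. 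Every letter in it occurs at some index between the first and last occurrence of $i$, hence after $M_n'$. Therefore $\text{conv}_\mathbf i(i)\cap F_n=\varnothing$. Moreover, any letter of $\text{conv}_\mathbf i(i)$ occurs only after $M_n'\geq M_n$.
\item By the argument in the proof of Proposition~\ref{linking-prop}(i), $\text{conv}_\mathbf i(i)\in\mathcal L_\mathbf i$; that argument uses only that $\mathbf i$ is reduced. By Proposition~\ref{linking-prop}(iii) we can then pick a minimal linked $S_{n+1}\subseteq\text{conv}_\mathbf i(i)$.
\end{enumerate}
It remains to check the ordering property, namely that every occurrence of a letter of $S_{n+1}$ lies after every occurrence of a letter of $S_1\cup\dots\cup S_n$. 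All occurrences of letters of $S_1\cup\dots\cup S_n$ are at indices $\leq M_n$, by definition of $M_n$. A letter of $S_{n+1}$ is not in $F_n$, so it cannot occur at or before $M_n$; it therefore occurs only after $M_n$. So the property holds, and $S$ is infinite because the $S_n$ are nonempty and disjoint.

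\textbf{Main obstacle: reduction commutes with the concatenation.} We must show
$$R^\omega(\mathbf i_{S_1},\mathbf i_{S_2},\dots)=(R^\omega(\mathbf i_{S_1}),R^\omega(\mathbf i_{S_2}),\dots).$$
Blocks in every stage $R^m$ are maximal runs of a single letter. Consecutive pieces use disjoint alphabets, so a block can never straddle the boundary between two pieces. Consequently each stage $R^m$ of the whole sequence is the concatenation of the stages $R^m(\mathbf i_{S_n})$ of the pieces. This is proved by induction on $m$, tracking the index sets $K_m$ as in Definition~\ref{reduction-def}, in the same spirit as the block-partition argument of Lemma~\ref{linking-lemma}(iii).

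Intersecting over $m$ then gives that $K$ for $\mathbf i_S$ is the union of the corresponding sets $K$ for the pieces. This yields the displayed identity, with each $\mathbf j_n$ nonempty (since $S_n$ is linked) and reduced (by Lemma~\ref{reduction-lemma1}, or the finite analogue). One can cross-check this with the group $G$, where $g_{\mathbf i_S}$ restricted to a finite prefix of pieces is the product of the $g_{\mathbf i_{S_n}}$. No cancellation occurs between consecutive factors because their supports are disjoint, so the infinite concatenation is already reduced.
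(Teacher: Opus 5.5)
Your proposal is correct and is essentially the paper's argument. Both choose pairwise disjoint finite linked sets $S_n$ whose occurrences fill successive stretches of $\mathbf i$, set $S=\bigcup_n S_n$, and use the fact that blocks can never straddle two pieces, so the reduction of $\mathbf i_S$ runs piece by piece. The only difference is that you get each $S_{n+1}$ directly from the linkedness of $\text{conv}_\mathbf i(i)$ for a letter $i$ first occurring after $M_n'$, whereas the paper uses Proposition~\ref{linking-prop}(i)--(ii) to pick a minimal linked set avoiding the earlier letters. Your claim that letters of $\text{conv}_\mathbf i(i)$ occur only after $M_n'$ is not justified (only ``after $M_n$'' follows), but you only ever use the weaker statement, so nothing breaks.
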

    \begin{proof}
        Let $\mathbf i\in\NN^\NN$ be reduced with no $i\in\NN$ appearing infinitely often in $\mathbf i$. By part (ii) of Proposition~\ref{linking-prop}, there exist infinitely many finite minimal linked sets in $\mathcal L_\mathbf i$. Furthermore, for any finite $i\in\NN$, there are only finitely many minimal members of $\mathcal L_\mathbf i$ containing $\{i\}$ by part (i) of Proposition~\ref{linking-prop}. We may use these observations to construct an infinite family $\{S_n\}_{n\geq1}$ of pairwise disjoint, finite linked sets in $\mathcal L_\mathbf i$. Indeed, fix $S_1\in\mathcal L_\mathbf i$ to be any finite minimal linked set. By the infinitude of minimal linked sets in $\mathcal L_\mathbf i$ and the fact that each $i\in\NN$ appears in only finitely many minimal linked sets, we may pick another finite minimal linked set $S_2\in\mathcal L_\mathbf i$ so that the first entry of $\mathbf i$ containing a member of $S_2$ occurs after the last entry of $\mathbf i$ containing a member of $S_1$. Repeat this process to generate a family $\{S_n\}_{n\geq1}$ of pairwise disjoint, finite, minimal linked sets in $\mathcal L_\mathbf i$, and let $S:=\bigcup_{n\geq1}S_n$.

        We claim that the set $S\subseteq\NN$ we have just constructed satisfies the desired conditions. We will see this by showing that $R^\omega(\mathbf i_S)=(R^\omega(\mathbf i_{S_1}),R^\omega(\mathbf i_{S_2}),\dots)$, since the two requisite properties of the segments $\mathbf j_k=R^\omega(\mathbf i_{S_k})$ follow immediately by the fact that the sets $S_k$ are linked with respect to $\mathbf i$ and pairwise disjoint. Since the sets $S_k$ are pairwise disjoint, the blocks created during every step of the reduction algorithm in Definition~\ref{reduction-def} of $\mathbf i_S$ never ``cross between the $S_k$,'' i.e., there is never a stage $s$ at which there exists an $i$-block $B$ of $R^s(\mathbf i_S)$ simultaneously containing some $i_{p_1}\in S_{k_1}$ and some $i_{p_2}\in S_{k_2}$ for $k_1\neq k_2$. Indeed, this would imply $i_{p_1}=i=i_{p_2}$, a contradiction with $S_{k_1}\cap S_{k_2}=\varnothing$. Because of this, the reduction algorithm can be independently run for each $\mathbf i_{S_k}$, yielding $R^\omega(\mathbf i_S)=(R^\omega(\mathbf i_{S_1}),R^\omega(\mathbf i_{S_2}),\dots)$ as hoped.
    \end{proof}

%% c-vector stabilization phenomenon %%
\subsection{A c-vector stabilization phenomenon} Having dealt with the \emph{mutation sequence} combinatorics needed for the proof of Theorem~\ref{LF-inf-mut-seq-thm}, we now turn to the \emph{quiver} side of what is needed. Since our goal is to work with arbitrary mutation sequences, we will need to be able to find quivers with desirable properties after applying these arbitrary mutation sequences.

We begin with definitions of certain kinds of quivers.

\begin{Def}\label{abundant-def}
    Let $Q:X\times X\to\ZZ$ be an arrow finite quiver without loops or oriented 2-cycles. We say that $Q$ is \emph{abundant (or 2-complete)} if for all $x,y\in X$, it holds that $|Q(x,y)|\geq 2$.
\end{Def}

\begin{Def}[\cite{warkentin_exchange_2014}, Definition 2.1]\label{fork-def}
    Let $Q:X\times X\to\ZZ$ be an arrow finite quiver without loops or oriented 2-cycles and let $r\in X$ be a vertex of $Q$. Assume further that $Q$ is abundant that $Q$ is not acyclic (as a consequence of this, we have $|X|\geq3$). Write $Q^-(r):=\{i\in X\mid Q(i,r)>0\}\subseteq X$ and $Q^+(r):=\{j\in X\mid Q(r,j)>0\}\subseteq X$. We say that $Q$ is a \emph{fork with point of return $r$} if $Q$ satisfies the following:
    \begin{enumerate}[label=(F\arabic*)]
        \item For all $i\in Q^-(r)$ and all $j\in Q^+(r)$, it holds that $Q(j,i)>\max\{Q(i,r),Q(r,j)\}$.
        \item The full subquivers of $Q$ induced by $Q^-(r)$ and $Q^+(r)$ are both acyclic. Equivalently, by the previous property and the abundance of $Q$, the full subquiver of $Q$ induced by $X\setminus\{r\}$ is acyclic.
    \end{enumerate}
    We note that the point of return of a fork is necessarily unique.
\end{Def}

Forks were introduced in M. Warkentin's dissertation in 2014 \cite{warkentin_exchange_2014}, and have since been used by a number of authors to establish structural properties about quivers, their mutations, and their mutation exchange graphs. One very critical fact about forks is the following:

\begin{Lem}[\cite{warkentin_exchange_2014}, Lemmas 2.5, 2.8]\label{warkentin-fork-lemma}
    Let $Q:X\times X\to\ZZ$ be a fork or an abundant acyclic quiver and suppose $n:=|X|<\infty$. Suppose $k\in X$ is not the point of return of $Q$ if $Q$ is a fork or that it is not the (unique) source or the (unique) sink of $Q$ if $Q$ is abundant acyclic. Then $\mu_k(Q)$ is a fork with point of return $k$ and the total number of arrows in $\mu_k(Q)$ is strictly greater than the total number of arrows in $Q$.

    Moreover, in the case that $Q$ is a fork with point of return $r\in X$, let $\Gamma$ be the undirected graph whose vertices are labeled quivers mutation-equivalent to $Q$ and whose edges represent single mutations between these quivers (we will refer to $\Gamma$ as the \emph{mutation exchange graph} of $Q$). If $e$ is the edge of $\Gamma$ connecting $Q$ and $\mu_r(Q)$, then the connected component of $\Gamma\setminus e$ containing $Q$ is an infinite complete $(n-1)$-ary rooted tree with root $Q$.
\end{Lem}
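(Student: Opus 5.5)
Since this lemma is cited from Warkentin's dissertation, I would reprove it directly from the mutation formula, in two parts: the local statement about a single mutation, then the tree statement by induction along reduced mutation words.

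\textbf{Single mutation.} First take $Q$ a fork with point of return $r$ and $k\neq r$; by symmetry (reverse all arrows) I may assume $k\in Q^-(r)$. Let $Q'=\mu_k(Q)$. I would show that $Q'$ is a fork with point of return $k$ by identifying $Q'^-(k)=Q^+(k)$ and $Q'^+(k)=Q^-(k)$. For $i\in Q^-(k)$ and $j\in Q^+(k)$ the formula gives $Q'(i,j)=Q(i,j)+Q(i,k)Q(k,j)$.
\begin{itemize}
\item If $Q(i,j)\geq 0$, abundance gives $Q'(i,j)\geq 4$ and $Q'(i,j)>\max\{Q(i,k),Q(k,j)\}$.
\item If $Q(i,j)<0$, then $i\to k\to j\to i$ is a $3$-cycle in $Q$. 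Since $Q\setminus\{r\}$ is acyclic by (F2), this cycle passes through $r$, and (F1) bounds $|Q(i,j)|$ by the larger of the two multiplicities. Then $ab-c>\max\{a,b\}$ whenever $a,b\geq 2$ and $c\leq\max\{a,b\}$ \emph{fails}, so this case needs the more careful check below.
\end{itemize}
The main obstacle is exactly this case analysis according to whether $i,j$ equal $r$ or lie in $Q^\pm(r)$. In that analysis I would use (F1) as the inequality $Q(j,i)>\max$ to show that each weight $Q'(j',i')$ over the new point of return $k$ strictly dominates the two arrows through $k$. I would try first the normalization $k$ in the ``source-most'' position of the acyclic quiver $Q\setminus\{r\}$.

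Acyclicity of $Q'\setminus\{k\}$ then follows by exhibiting an explicit linear order. It is the order of $Q\setminus\{r\}$ with $r$ inserted and $k$ removed, and each arrow sign is checked through the formula. Every nonzero entry not involving $k$ has absolute value growing (or staying at least $2$) while $|Q'(k,\cdot)|=|Q(k,\cdot)|$. At least one pair's count strictly increases, namely the pair across $k$ with a positive product, so the total number of arrows grows.

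For abundant acyclic $Q$ with $k$ neither source nor sink, the same computation applies. In that case $k$ has both in-neighbours and out-neighbours, every path $i\to k\to j$ gives $Q'(i,j)=Q(i,j)+Q(i,k)Q(k,j)$ with $Q(i,j)\geq 0$ by acyclicity, and (F1), (F2) follow more easily.

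\textbf{Tree statement.} I would show by induction on word length that for any reduced word $k_1\cdots k_m$ with $k_1\neq r$ and $k_{t+1}\neq k_t$, the quiver $\mu_{k_m}\cdots\mu_{k_1}(Q)$ is a fork with point of return $k_m$. Along such paths the total arrow count strictly increases, so distinct reduced words starting away from $r$ never return to $Q$ or to each other. Otherwise a cycle in $\Gamma$ would need a step decreasing the arrow count, which can only be the mutation at the current point of return, i.e.\ undoing the previous step. Hence the component of $\Gamma\setminus e$ containing $Q$ consists of these words with no extra identifications.

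Each such vertex has exactly $n-1$ children, one for each mutation away from its point of return, so the component is the infinite complete $(n-1)$-ary rooted tree. Distinctness of the labeled quivers reached is immediate from the strict monotonicity of the arrow count along every branch.
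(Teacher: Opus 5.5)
Your route is the right one: check abundance, (F1), (F2) and the arrow count for $Q'=\mu_k(Q)$ directly from the mutation formula, then induct along reduced words. (The paper gives no proof of its own; it only cites Warkentin.) But your write-up stops at the one step where the fork axioms matter, and misjudges it.

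With $k\in Q^-(r)$, the case $Q(i,j)<0$ forces $j=r$ and $i\in Q^+(r)$. Now (F1), applied to the pair $k\in Q^-(r)$, $i\in Q^+(r)$, gives the \emph{strict} inequalities $a:=Q(i,k)>b:=Q(k,r)$ and $a>c:=Q(r,i)$. So $c<\max\{a,b\}=a$, and
$Q'(i,r)=ab-c>ab-a=a(b-1)\geq a=\max\{a,b\}$.
This is exactly (F1) for $Q'$ in this case, so no further case analysis is needed. The ``source-most'' normalization can be dropped; it is not a legitimate reduction anyway.

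The same chain gives $Q'(i,r)>a>c=|Q(i,r)|$, and this is what your arrow count actually needs: no entry across $k$ may decrease in absolute value. Your ``or staying at least $2$'' does not suffice, since a loss on one pair could cancel a gain on another.

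You should also record that $Q^+(r)\neq\varnothing$ (otherwise $Q$ would be acyclic) and that $Q^+(r)\subseteq Q^-(k)$ by (F1). Hence $k$ is neither a source nor a sink, which gives the strict increase. Since also $r\in Q^+(k)$, both $Q'^{-}(k)$ and $Q'^{+}(k)$ are nonempty, so $Q'$ is not acyclic. For (F2), the only sign change away from $k$ is on the pairs $(i,r)$ with $i\in Q^+(r)$. So $r$ moves to the sink end of the acyclic order, as in Lemma~\ref{ervin-acyc-order-lemma}, and you should say so explicitly.

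In the tree part, ``distinctness is immediate from the strict monotonicity of the arrow count along every branch'' is not correct as stated. Growth along each branch does not stop two different branches from reaching the same labeled quiver. What works is the cancellation argument you allude to just before it.

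Suppose $\mu_{w_1}(Q)=\mu_{w_2}(Q)$ for distinct reduced words $w_1,w_2$ starting away from $r$. Strip their common final letters, then follow what remains of $w_1$ by the reverse of what remains of $w_2$. If $w_1$ was used up, take the remaining $w_2$ instead. Either way you get a nonempty reduced word that begins away from $r$ and returns to $Q$, which contradicts strict growth.

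Applied to $\mu_w(Q)=\mu_r(Q)$ (consider the word $w$ followed by $r$), the same argument shows that $\mu_r(Q)$ is not in the component of $\Gamma\setminus e$ containing $Q$. You need this so that every non-root vertex genuinely has $n-1$ children there.
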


This lemma has the important consequence that if $F$ is a fork with $n\geq3$ vertices with point of return $r$ and $\mathbf i=(i_1,i_2,\dots,i_\ell)$ is any finite reduced mutation sequence on $F$ with $i_1\neq r$, then for any $1\leq k\leq\ell$ the quiver $\mu_{(i_1,\dots,i_k)}(F)$ is a fork with point of return $i_k$. Such a sequence $\mathbf i$ was dubbed \emph{fork-preserving} in \cite{ervin_geometry_2024}.

The following order on the vertices of an abundant acyclic quiver will prove useful for us. We will also use special kinds of mutation sequences on abundant acyclic quivers, which we introduce now as well.

\begin{Def}\label{acyc-order-notation-def}
    Let $Q$ be an abundant acyclic quiver on a vertex set $X$. By the fact that $Q$ is acyclic and the underlying undirected graph of $Q$ is complete, the orientations of the arrows of $Q$ determine a unique linear order $\prec$ on $X$ by setting $x\prec y$ for $x,y\in X$ if and only if $Q(x,y)>0$. We call this the \emph{(unique) acyclic order of $Q$}.

    Suppose $Q$ is an abundant acyclic quiver with acyclic order $v_n\prec v_{n-1}\prec\dots\prec v_1$ and $\mathbf i$ is a reduced sequence on $\{v_1,\dots,v_n\}$. We say that $\mathbf i$ is \emph{triangular} with respect to $Q$ if, for any $1\leq i,j\leq n$ so that the first appearance of $v_i$ in $\mathbf i$ precedes the first appearance of $v_j$ in $\mathbf i$, it holds that $i<j$. We say $\mathbf i$ is \emph{strongly triangular} with respect to $Q$ if each vertex $v_j$ of $Q$ is used at least once in $\mathbf i$. In particular, any mutation sequence on $Q$ consisting only of mutations at sinks is triangular, while no mutation sequence consisting only of mutations at sources is (unless $n=1$).
\end{Def}

An important fact about the acyclic order on the subquiver of a fork not containing its point of return is the following:

\begin{Lem}[\cite{ervin_unrestricted_2024}, Lemma 3.4]\label{ervin-acyc-order-lemma}
    Let $F$ be a fork with point of return $r$ on a finite vertex set $X$ of size $n$. Let $v_1\prec v_2\prec\dots\prec v_{n-1}$ be the unique acyclic order on $F\setminus\{r\}$. Fix some $1\leq j\leq n$ and let $F'=\mu_{v_j}(F)$. Then $F'\setminus\{v_j\}$ has unique acyclic ordering
    $$r\prec v_1\prec\dots\prec v_{j-1}\prec v_{j+1}\prec\dots\prec v_{n-1}$$
    if $F(r,v_j)>0$. In this case, it holds that $(F')^-(v_j)=\{v_{j+1},v_{j+2},\dots,v_{n-1}\}$ and $(F')^+(v_j)=\{r,v_1,v_2,\dots,v_{j-1}\}$.
    
    Otherwise, in the case that $F(v_j,r)>0$, it holds that $F'\setminus\{v_j\}$ has unique acyclic ordering
    $$v_1\prec\dots\prec v_{j-1}\prec v_{j+1}\prec\dots\prec v_{n-1}\prec r.$$
    In this case, $(F')^-(v_j)=\{v_{j+1},v_{j+2},\dots,v_{n-1},r\}$ and $(F')^+(v_j)=\{v_1,v_2,\dots,v_{j-1}\}$.
\end{Lem}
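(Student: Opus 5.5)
We prove the lemma by direct computation with the mutation formula of Definition~\ref{quiv-mut-def}. We only need the sign of $F'(a,b)$ for each pair $a,b\neq v_j$, together with the reversal of the arrows at $v_j$. Since $F'\setminus\{v_j\}$ has underlying complete graph, it suffices to check that every pair is oriented according to the claimed linear order; that order is then automatically the unique acyclic order.

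First we record where $r$ sits relative to the order $v_1\prec\dots\prec v_{n-1}$. Recall that $x\prec y$ means $F(x,y)>0$. By (F1), every $y\in F^+(r)$ has an arrow to every $x\in F^-(r)$, so $y\prec x$. Hence $F^+(r)$ is an initial segment $\{v_1,\dots,v_p\}$ of the acyclic order and $F^-(r)=\{v_{p+1},\dots,v_{n-1}\}$ is the complementary final segment. Arrows inside $F\setminus\{r\}$ always go from smaller to larger index.

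\emph{Case $F(r,v_j)>0$,} i.e.\ $j\le p$, so $v_1,\dots,v_j\in F^+(r)$. The in-neighbours of $v_j$ are $r,v_1,\dots,v_{j-1}$ and its out-neighbours are $v_{j+1},\dots,v_{n-1}$. The new arrows created by mutating at $v_j$ go from $\{r,v_1,\dots,v_{j-1}\}$ to $\{v_{j+1},\dots,v_{n-1}\}$ and are added with the positive sign. We check the pairs one at a time.
\begin{itemize}
\item Pairs $v_i,v_k$ with $i<j<k$: the existing arrow $v_i\to v_k$ only grows.
\item Pairs not of this cross type, together with pairs $\{r,v_i\}$ for $i<j$: there is no length-two path through $v_j$ between them, so they are unchanged. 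In particular $r\to v_i$ persists for $i<j$.
\item Pairs $\{r,v_k\}$ with $k>j$: these are the only delicate ones. If $v_k\in F^+(r)$, the arrow $r\to v_k$ only grows. If $v_k\in F^-(r)$, then
$$F'(r,v_k)=F(r,v_k)+F(r,v_j)F(v_j,v_k)=-F(v_k,r)+F(r,v_j)F(v_j,v_k).$$
By (F1) applied to $v_j\in F^+(r)$ and $v_k\in F^-(r)$ we have $F(v_j,v_k)>F(v_k,r)$, and by abundance $F(r,v_j)\ge2$. Hence $F'(r,v_k)>0$.
\end{itemize}
Thus $r$ becomes the minimum and the rest of the order is inherited, giving
$$r\prec v_1\prec\dots\prec v_{j-1}\prec v_{j+1}\prec\dots\prec v_{n-1}.$$
Reversing the arrows at $v_j$ then gives $(F')^-(v_j)=\{v_{j+1},\dots,v_{n-1}\}$ and $(F')^+(v_j)=\{r,v_1,\dots,v_{j-1}\}$.

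\emph{Case $F(v_j,r)>0$,} i.e.\ $j>p$, so $v_j,\dots,v_{n-1}\in F^-(r)$. This case is symmetric. The in-neighbours of $v_j$ are $v_1,\dots,v_{j-1}$ and its out-neighbours are $v_{j+1},\dots,v_{n-1},r$. The only delicate pairs are $\{v_i,r\}$ with $i<j$ and $v_i\in F^+(r)$. For these,
$$F'(v_i,r)=-F(r,v_i)+F(v_i,v_j)F(v_j,r)>0,$$
using (F1) in the form $F(v_i,v_j)>F(r,v_i)$ together with $F(v_j,r)\ge2$. The pairs $\{v_k,r\}$ with $k>j$ are untouched, so $v_k\to r$ persists. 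Hence $r$ becomes the maximum, and the in- and out-sets at $v_j$ are as stated.

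The main obstacle is the bookkeeping of which pairs acquire new arrows and in which direction, and in particular the single inequality in each case where a pre-existing arrow between $r$ and some $v_k$ must be overturned. That inequality is exactly where (F1) and abundance are used; every other pair either keeps its arrow or has it strengthened.
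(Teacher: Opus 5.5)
The paper does not prove this lemma. It quotes it from \cite{ervin_unrestricted_2024} and uses it as a black box, so there is no in-paper argument to compare yours against. Your direct verification from the mutation formula is correct and complete, and it would make the paper self-contained at this point.

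The key step is your observation that (F1) forces $F^+(r)=\{v_1,\dots,v_p\}$ to be an initial segment of the acyclic order on $F\setminus\{r\}$, with $F^-(r)$ the complementary final segment. Once that is known, the in- and out-neighbourhoods of $v_j$ are explicit. Every pair is then either untouched or strengthened, except for one kind of pair in each case:
\begin{itemize}
\item $\{r,v_k\}$ with $k>j$ and $v_k\in F^-(r)$ in the first case;
\item $\{v_i,r\}$ with $i<j$ and $v_i\in F^+(r)$ in the second case.
\end{itemize}
For these, the sign flip follows from the strict inequality in (F1), exactly as you compute.

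Two small remarks.
\begin{itemize}
\item In that inequality you only need $F(r,v_j)\ge 1$ (resp.\ $F(v_j,r)\ge 1$), not abundance. Abundance is really used to guarantee that the untouched pairs still carry arrows. That is what gives $F'\setminus\{v_j\}$ a complete underlying graph, so the linear order you exhibit is the unique acyclic order.
\item The range in the statement should read $1\le j\le n-1$, which your argument implicitly assumes.
\end{itemize}
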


We now introduce a natural way to extend a quiver $Q$ to a larger quiver $\hat{Q}$ with ``frozen'' vertices. This construction and its well-studied properties will allow us to prove a key technical lemma for a construction in the proof of Theorem~\ref{LF-inf-mut-seq-thm}.

\begin{Def}\label{framing-def}
    Let $Q:X\times X\to\ZZ$ be a quiver with vertex set $X$. Let $X'$ denote a copy of the set $X$ disjoint from $X$ itself which consists of an element $x'\in X'$ for every element $x\in X$. The \emph{framing} of $Q$ is the quiver $\hat Q:(X\sqcup X')\times(X\sqcup X')\to\ZZ$ on $X\sqcup X'$ extending $Q:X\times X\to\ZZ$ by adding one arrow from $x$ to $x'$ for all $x\in X$. We refer to the vertices of $\hat Q$ in $X'$ as \emph{frozen} vertices and forbid ourselves from ever mutating at any of them. The vertices in the original vertex set $X$ are said to be \emph{mutable}.

    For any finite mutation sequence $\mu_\mathbf i$ on $Q$ and any $x\in X$, let $c^{\mathbf i}_{x}\in\ZZ^X$ be the integer vector with entries indexed by $X$ whose $y$-th entry is equal to $\mu_\mathbf i(\hat{Q})(x,y')$. We refer to this vector as the \emph{c-vector of $x$ in $\mu_\mathbf i(Q)$}. A mutable vertex $x$ of $\mu_\mathbf i(\hat{Q})$ is said to be \emph{green} if all of the entries of its c-vector are non-negative, which we denote by $c^{\mathbf i}_{x}\geq0$, and it is said to be \emph{red} if all of the entries of its c-vector are non-positive, written $c^{\mathbf i}_{x}\leq0$.
\end{Def}

We make note of the following very famous, very useful result in the theory of cluster algebras about the colors of vertices due to Gross, Hacking, Keel, and Kontsevich \cite{gross_canonical_2014}:

\begin{Thm}[\cite{gross_canonical_2014}, Corollary 5.5]\label{sign-coh-thm}
    Let $Q$ be a finite quiver. Then any mutable vertex of any quiver mutation-equivalent to $\hat Q$ is either green or red (and never both).
\end{Thm}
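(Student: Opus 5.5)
This result is cited from \cite{gross_canonical_2014} rather than proved in the paper. If I had to supply a proof, I would not use scattering diagrams (the route of Gross, Hacking, Keel and Kontsevich). Since $Q$ is a finite skew-symmetric quiver, I would use the representation-theoretic route of quivers with potential \cite{derksen_quivers_2008, derksen_quivers_2010}, which suffices in this case. The plan has two halves. First, reduce sign-coherence of c-vectors to sign-coherence of g-vectors via tropical duality. Second, prove g-vector sign-coherence by realizing g-vectors as (negated) indices of decorated representations.

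\emph{Reduction.} Let $Q$ have mutable vertex set $X$ with $|X|=n$. Record a finite mutation sequence $\mu_\mathbf i$ applied to $\hat Q$ by its C-matrix $C^{\mathbf i}$, whose columns are the c-vectors $c^{\mathbf i}_x$ of Definition~\ref{framing-def}, together with the corresponding G-matrix $G^{\mathbf i}$. The key identity is the Nakanishi--Zelevinsky duality
$$\left(G^{\mathbf i}\right)^{T}C^{\mathbf i}=I,$$
which holds for skew-symmetric exchange matrices once column sign-coherence of $G$ is known. A row/column bookkeeping argument using the mutation recursion for $C$ then transfers sign-coherence from the rows of $G$ to the columns of $C$: one inducts on the length of $\mathbf i$, and the sign $\varepsilon$ in the tropical mutation rule is well defined exactly because of the coherence already established at the previous step. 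The ``never both'' clause is immediate: $C^{\mathbf i}$ is invertible, so no column is zero, and a nonzero vector cannot be both $\ge 0$ and $\le 0$.

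\emph{Sign-coherence of g-vectors.} Choose a nondegenerate potential $W$ on $Q$. Such a potential exists over an uncountable field by the genericity argument of \cite{derksen_quivers_2008}, and nondegeneracy guarantees that mutations never create $2$-cycles. Each cluster variable corresponds to a decorated representation $\mathcal M$ obtained by iterated QP-mutation of a negative simple. Its g-vector is expressed in terms of the dimensions of the kernels and cokernels of the maps $\alpha_k\colon M_{\mathrm{in}(k)}\to M_k$ and $\beta_k\colon M_k\to M_{\mathrm{out}(k)}$. The main obstacle is the step showing that this vector is sign-coherent across a cluster. For that I would follow \cite{derksen_quivers_2010}: introduce the $E$-invariant $E(\mathcal M)$, prove that it is mutation invariant (a delicate computation with the splitting theorem and right-equivalences), and deduce from $E=0$ for cluster monomials that no vertex $k$ can have $\ker\beta_k$ and $\operatorname{coker}\alpha_k$ contributing opposite signs. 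Combining this with the reduction completes the argument.
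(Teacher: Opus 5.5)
\textbf{The paper does not prove this.} It cites it from \cite{gross_canonical_2014}, whose scattering-diagram argument covers all skew-symmetrizable matrices. Since the paper only needs quivers, a quiver-with-potential proof is a legitimate alternative. However, your reduction from g-vectors to c-vectors fails at its last step.

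Two slips first. The duality $(G^{\mathbf i})^TC^{\mathbf i}=I$ is derived from sign-coherence of the c-vector at which one mutates, not from any property of $G$: only then do the $G$- and $C$-recursions use the same $\varepsilon$. Also, it is the rows of $G$, not its columns, that are sign-coherent; individual g-vectors such as $(-1,1)$ in type $A_2$ have mixed signs.

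Your induction does propagate the duality. The problem is what comes after. At the new seed, the duality together with row sign-coherence of $G'$ does not force the columns of $C'=(G'^T)^{-1}$ to be sign-coherent, even if you also use the tropical recursion for $C$. Here is a concrete failure of the inductive step:
\begin{itemize}
\item Take $B=\begin{pmatrix}0&2\\-2&0\end{pmatrix}$, let $C$ have columns $c_1=(0,1)^T$ and $c_2=(-1,-1)^T$, and set $G=(C^T)^{-1}$. The rows of $G$ are $(-1,-1)$ and $(1,0)$, both sign-coherent.
\item Mutate at $1$ (so $\varepsilon=+$) by the rules $c_1'=-c_1$, $c_2'=c_2+[b_{12}]_+c_1$, $g_1'=-g_1+[-b_{21}]_+g_2$, $g_2'=g_2$.
\item The result $G'$ has rows $(-1,-1)$ and $(-1,0)$, and $G'^TC'=I$ still holds.
\item Yet $c_2'=(-1,1)^T$ has mixed signs.
\end{itemize}
This configuration is not reachable from $C=G=I$. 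But that unreachability is exactly what the theorem asserts, and a seed-by-seed induction has no access to it.

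\textbf{How to repair the reduction.} The missing ingredient is the second Nakanishi--Zelevinsky duality. In Fomin--Zelevinsky conventions it reads $G_t^{B;t_0}=\bigl(C_{t_0}^{B_t^T;t}\bigr)^T$: the rows of a G-matrix are the c-vectors of the reversed sequence, started at the terminal seed with the opposite quiver. It is proved by induction on the \emph{initial} seed, using the initial-seed mutation rule for g-vectors conjectured in \cite{fomin_cluster_2007} and proved for skew-symmetric $B$ in \cite{derksen_quivers_2010}. With it, row sign-coherence of all G-matrices is literally column sign-coherence of all C-matrices.

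\textbf{A simpler route.} You can drop g-vectors and the $E$-invariant entirely.
\begin{itemize}
\item The main theorem of \cite{derksen_quivers_2010} identifies each cluster F-polynomial with $F_{\mathcal M}(y)=\sum_e\chi(\mathrm{Gr}_e(\mathcal M))y^e$, whose constant term is $\chi(\mathrm{Gr}_0(\mathcal M))=1$.
\item In the recursion $F_{k;t'}F_{k;t}=y^{[c_k]_+}\prod_iF_{i;t}^{[b_{ik}]_+}+y^{[-c_k]_+}\prod_iF_{i;t}^{[-b_{ik}]_+}$ of \cite{fomin_cluster_2007}, a c-vector with entries of both signs would give the right side constant term $0$, and $c_k=0$ would give $2$.
\end{itemize}
This yields both ``green or red'' and ``never both'' directly, and it is how sign-coherence for quivers is usually extracted from quivers with potential.
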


Ultimately, we will work to understand the colors of vertices in a quiver obtained from an abundant acyclic quiver by (strongly) triangular mutation sequences. The following is a result of Ervin \cite{ervin_unrestricted_2024} specialized for our purposes.

\begin{Lem}[\cite{ervin_unrestricted_2024}, Proposition 5.2, Lemma 5.4]\label{abundant-acyc-colors-lemma}
    Let $Q$ be an abundant acyclic quiver with sink $v$. If $\mathbf i$ is a reduced mutation sequence on $Q$ beginning with $v$ and ending with some vertex $r$ (possibly $v$ itself), then the following hold:
    \begin{enumerate}[label=(\roman*)]
        \item If there are arrows in $\mu_\mathbf i(Q)$ from $r$ to a green vertex $j\neq r$, then either $r$ is green in $\mu_\mathbf i(Q)$ or $c^\mathbf i_{j,k}\geq-c^\mathbf i_{r,k}$ for all vertices $k$ of $Q$. Dually, if there are arrows in $\mu_\mathbf i(Q)$ from a red vertex $j\neq r$ to $r$, then either $r$ is red in $\mu_\mathbf i(Q)$ or $-c^\mathbf i_{j,k}\geq c^\mathbf i_{r,k}$ for all vertices $k$ of $Q$.
        \item If there are arrows in $\mu_\mathbf i(Q)$ from $r$ to a red vertex $j\neq r$, then $r$ is green in $\mu_\mathbf i(Q)$. Dually, if there are arrows in $\mu_\mathbf i(Q)$ from a green vertex $j\neq r$ to $r$, then $r$ is red in $\mu_\mathbf i(Q)$.
        \item Suppose $i\neq j$ are two vertices in $Q$ different from $r$ and that there are arrows from $i$ to $j$ in $\mu_\mathbf i(Q)$. Equivalently, $i\prec j$ in the unique acyclic order on $\mu_\mathbf i(Q)\setminus\{r\}$. If $j$ is green in $\mu_\mathbf i(Q)$, then so is $i$. Dually, if $i$ is red in $\mu_\mathbf i(Q)$, then so is $j$. Alternatively, it holds that the set of green vertices in $\mu_\mathbf i(Q)\setminus\{r\}$ is downward closed under $\prec$. We say that the $\prec$-greatest green vertex in $\mu_\mathbf i(Q)\setminus\{r\}$ is, if it exists, the \emph{last green vertex} of $\mu_\mathbf i(Q)$.
    \end{enumerate}
\end{Lem}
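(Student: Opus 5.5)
The plan is to prove all three parts simultaneously by induction on the length $\ell$ of the reduced sequence $\mathbf i=(i_1,\dots,i_\ell)$ with $i_1=v$, working throughout in the framing $\hat Q$. The basic tool is the c-vector mutation rule: mutating at $k$ gives $c'_k=-c_k$ and, for $j\neq k$, $c'_j=c_j+[\varepsilon\,B(j,k)]_+\,c_k$. Here $B$ is the current exchange matrix and $\varepsilon\in\{\pm1\}$ is the sign of $c_k$, which is well defined by Theorem~\ref{sign-coh-thm}. I will also use the structural fact that every $\mu_{(i_1,\dots,i_m)}(Q)$ with $m\geq 2$ is a fork with point of return $i_m$. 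This holds because after $\mu_v$ the quiver is still abundant acyclic with $v$ now its unique source. Since $\mathbf i$ is reduced, $i_2\neq v$, and $i_2$ is not the new sink either; so Lemma~\ref{warkentin-fork-lemma} applies to show $\mu_{i_2}\mu_v(Q)$ is a fork, and the same lemma propagates fork-ness along the remaining fork-preserving steps. (The degenerate case where $i_2$ is the new sink is the trivial one: $\mu_{i_2}\mu_v(Q)$ is again abundant acyclic. I would treat it separately by simply iterating the sink-mutation argument.)

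For the base case $\mathbf i=(v)$, the c-vectors are $c_v=-e_v$ and $c_j=e_j$ for $j\neq v$, because $v$ is a sink, so there are no arrows $j\to v\to y'$ and no other c-vector changes. Thus $v$ is red and every other vertex is green, and $v$ is now a source. Part (iii) holds since all vertices other than $r=v$ are green. In (i) and (ii), arrows from $r$ go only to green vertices, giving $c_j=e_j\geq -c_{r}=e_v$ only if $j=v$; so I expect to have to read (i) as vacuous or trivial here and check it directly. Part (ii) is vacuous, since there are no red $j\neq r$ and no arrows into $r$.

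For the inductive step, pass from $\mathbf i$ (ending at $r$) to $\mathbf i'=(\mathbf i,k)$ with $k\neq r$. Lemma~\ref{ervin-acyc-order-lemma} describes the new acyclic order on $\mu_{\mathbf i'}(Q)\setminus\{k\}$. Depending on the direction of the arrow between $r$ and $k$, the old point of return $r$ becomes either the $\prec$-minimum or the $\prec$-maximum, and $k$ is removed from the order. The color of $k$ just flips, and each $j\neq k$ changes its c-vector by adding $[\varepsilon B(j,k)]_+c_k$. Downward closure (iii) for the old order, together with (ii), locates $k$ relative to the last green vertex. I would then check that adding a nonnegative multiple of $c_k$ cannot turn a green vertex below $k$ red, or a red vertex above $k$ green. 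This uses sign-coherence together with the inequality in (i) as the inductive hypothesis to control mixed-sign sums. It should give (iii) for $\mathbf i'$, and it should also give the color of the old point of return $r$, which is what (ii) predicts for the new point of return $k$.

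The main obstacle is re-establishing the inequality in (i), $c_{j}\geq -c_{r}$ entrywise, after the step. This is where the fork inequality (F1) has to enter. The weights $F(j,i)>\max\{F(i,r),F(r,j)\}$ ensure that when we add $B(j,k)\,c_k$, the new c-vector of a green out-neighbour of the new point of return $k$ dominates $-c'_k=c_k$. My plan is to write $c'_j=c_j+b\,c_k$ with $b=B(j,k)\geq 2$ by abundance, and to use the old inequality between $c_k$ and $c_r$ (or sign-coherence when $r$ and $k$ have matching colors) to conclude $c'_j\geq c_k$. If this direct bookkeeping fails in some case, I would fall back on Ervin's stronger invariant: track, along the induction, that the c-vector of the point of return is dominated entrywise by those of its neighbours of the appropriate color.
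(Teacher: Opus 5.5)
The paper gives no proof of this lemma; it quotes it from Ervin \cite{ervin_unrestricted_2024}. A self-contained induction is a reasonable route, but your base case is wrong, and the error is substantive. You claim that mutating at the sink $v$ leaves $c_j=e_j$ for $j\neq v$ because there are no paths $j\to v\to y'$. But $v$ is a sink of $Q$, not of $\hat Q$: the framing arrow $v\to v'$ leaves $v$. So each $j\neq v$ has $Q(j,v)\geq 2$ paths $j\to v\to v'$, and your own rule with $\varepsilon=+1$ gives $c_j=e_j+Q(j,v)\,e_v$. The values you wrote are the ones for a source.

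With the correct values, (i) in the base case is neither vacuous nor trivial. Here $r=v$ is red and is now a source with arrows to every other vertex, all of them green. The required inequality $c_j\geq -c_v=e_v$ holds exactly because $Q(j,v)\geq 1$. Declaring (i) vacuous would break the induction, because (i) is the hypothesis that makes the old point of return change color correctly at the next step. With your values, mutating $\mu_v(Q)$ at any $k\neq v$ gives $c'_v=-e_v+Q(k,v)\,e_k$, which is not sign-coherent and contradicts Theorem~\ref{sign-coh-thm}.

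The inductive step is only a plan, with a fallback to ``Ervin's stronger invariant'', which is the cited result itself. The observation that actually closes it is the following. Mutate $F=\mu_{\mathbf i}(Q)$ at $k\neq r$ and suppose $k$ is green; the red case is dual.
\begin{itemize}
\item Only vertices $j$ with arrows $j\to k$ change, via $c'_j=c_j+F(j,k)c_k$. By (iii) for $F$, every such $j\neq r$ is already green, since a red $j\prec k$ would force $k$ red.
\item So the only possible sign conflict is $j=r$, with $r$ red and $r\to k$. There the old (i) gives $c_k\geq -c_r$, and abundance $F(r,k)\geq 2$ gives $c'_r=c_r+F(r,k)c_k\geq c_k\geq 0$.
\item This one inequality yields the new (i): every out-neighbour $j$ of the new, red point of return $k$ satisfies $c'_j\geq c_k=-c'_k$. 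It also yields the new (ii), since no such $j$ is red.
\item Together with Lemma~\ref{ervin-acyc-order-lemma}, it yields the new (iii). If $r\to k$, then $r$ moves to the bottom of the order and is green. If $k\to r$, then $r$ moves to the top with its c-vector unchanged, and it is red by the old (ii). All other vertices keep their colors and relative order.
\end{itemize}
No direct use of (F1) is needed beyond abundance and the order lemma.

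Finally, your claim that every $\mu_{(i_1,\dots,i_m)}(Q)$ with $m\geq 2$ is a fork is false, and it contradicts your own parenthetical. Sink sequences keep the quiver abundant acyclic, so the induction must also cover that phase; the same computation works there with $r$ the current source.
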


We are now ready to show the following ``c-vector stabilization'' phenomenon for abundant acyclic quivers, which we believe may be of independent interest. We suspect that several of the assumptions made here can be lifted with some care, but we will be able to make do with the assumptions we use here.

\begin{Prop}\label{c-vector-stab-prop}
    Let $Q$ be an abundant acyclic quiver with vertex set $[n+1]$ and acyclic ordering $n+1\prec n\prec\dots\prec 1$. Let $\mathbf i$ be a triangular mutation sequence on $Q$ such that
    \begin{enumerate}[label=(\roman*)]
        \item Every $i\in[n]$ appears at least once in $\mathbf i$.
        \item $n+1$ does \emph{not} appear in $\mathbf i$.
    \end{enumerate}
    In other words, $\mathbf i$ is a strongly triangular mutation sequence on $Q\setminus\{n+1\}$. Then the c-vector $c^\mathbf i_{n+1}$ of $n+1$ in $\mu_\mathbf i(F)$ does not depend on $\mathbf i$ in the following sense: for any other $\mathbf i'$ also satisfying the above conditions, it holds that $c^\mathbf i_{n+1}=c^{\mathbf i'}_{n+1}$.
\end{Prop}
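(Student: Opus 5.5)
The plan is to track how $c_{n+1}$ evolves along $\mathbf i$. I would show that it changes only at a few canonical moments, namely the first appearances of the vertices $1,2,\dots,n$, and that at each such moment the change is dictated by data independent of the rest of $\mathbf i$. The starting point is the $c$-vector mutation rule for the framed quiver $\hat Q$. By Theorem~\ref{sign-coh-thm}, every mutable vertex is green or red, so when we mutate at $k\neq n+1$ the rule takes the sign-coherent form
$$c'_{n+1}=c_{n+1}+[\mu(\hat Q)(n+1,k)]_+\,[c_k]_+-[\mu(\hat Q)(k,n+1)]_+\,[-c_k]_+,$$
where $[c_k]_+$ is taken entrywise and $\mu(\hat Q)$ denotes the current quiver. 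Since $n+1$ is never mutated, $c_{n+1}$ changes at a step $\mu_k$ exactly when either there are arrows $n+1\to k$ and $k$ is green, or there are arrows $k\to n+1$ and $k$ is red. In all other cases the step leaves $c_{n+1}$ fixed.

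Next I would use the structure of triangular sequences. In the acyclic order $n+1\prec n\prec\dots\prec1$, vertex $1$ is the sink and $n+1$ the source, and triangularity says the first appearances occur in the order $1,2,\dots,n$. I would prove by induction on $m$ a statement of the following form. Let $t_m$ be the position of the first appearance of $m$ in $\mathbf i$. Then the full subquiver of $\mu_{(i_1,\dots,i_{t_m-1})}(\hat Q)$ on $\{m,m+1,\dots,n+1\}$, together with its arrows to the frozen vertices, depends only on $m$ and not on $\mathbf i$. Part of the claim is that the unused vertices $m,\dots,n+1$ keep their original $c$-vectors $e_m,\dots,e_{n+1}$ and their original mutual arrows; this should follow from Lemma~\ref{mut-restr-commute-lemma}-style locality together with the abundance and acyclicity of $Q$. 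In particular $c_{n+1}$ just before $t_m$ would equal a vector $v_m$ depending only on $m$ (with $v_1=e_{n+1}$), and the step $t_m$ itself produces a canonical $v_{m+1}$.

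The heart of the argument, and the step I expect to be the main obstacle, is showing that every mutation strictly between $t_m$ and $t_{m+1}$ (and every mutation after $t_n$) leaves $c_{n+1}$ unchanged. For this I would combine two earlier results. Lemma~\ref{warkentin-fork-lemma} and Lemma~\ref{ervin-acyc-order-lemma} describe the intermediate quivers: they are forks, and the lemmas identify the acyclic order of the complement of the point of return, where $n+1$ should remain $\prec$-minimal or at a controlled position. Lemma~\ref{abundant-acyc-colors-lemma}(ii),(iii) then controls the colors. My first attempt would be to show that whenever the current mutation vertex $k$ has arrows from $n+1$, the vertex $k$ is red, and whenever it has arrows into $n+1$, it is green. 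I would derive this from downward closure of the green set under $\prec$ and from the position of $n+1$ relative to the last green vertex. Given that, the rule above yields zero change.

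If that color bookkeeping proves too delicate, my fallback is to reduce to the canonical sequence $(1,2,\dots,n)$ of sink mutations. There $c_{n+1}$ can be computed explicitly by reflections, giving $v_{n+1}$. I would then show that two strongly triangular sequences on $[n]$ are connected by moves that insert or delete segments which only mutate already-used vertices, and that such segments do not alter $c_{n+1}$. This would again rest on the same local color claim, but it would only need to be checked at one step at a time.
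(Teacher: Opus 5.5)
\textbf{There is a genuine gap: the key colour claim is never proved.} Your update rule for $c_{n+1}$ is correct, and your architecture is sound. If no step other than a first appearance ever has ($n+1\to k$ and $k$ green) or ($k\to n+1$ and $k$ red), then $c_{n+1}$ moves only at $t_1,\dots,t_n$, and at $t_m$ it changes by $Q(n+1,m)\,c_m$.

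Your colour claim is in fact the invariant the paper's proof is built around. Apply it to every used vertex other than the last mutated one. It then says exactly that $n+1$ is the last green vertex, in the sense of Lemma~\ref{abundant-acyc-colors-lemma}(iii), of the full subquiver on the used vertices together with $n+1$.

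The justification you sketch is circular. Downward closure only gives that everything $\prec n+1$ is green. The real content is that every used vertex $\succ n+1$, other than the last mutated one, is red. That is precisely the ``position of $n+1$ relative to the last green vertex'' you invoke, and it must be re-established after every mutation. Two steps need actual arguments:
\begin{itemize}
\item \emph{At a first appearance $t_{m+1}$.} You must show $n+1$ is still the last green vertex after $\mu_{m+1}$. The paper splits into cases: whether $m+1$ is the sink of the order on the non-last-mutated vertices, and if so whether the current quiver is acyclic or a fork. In the fork case it uses Lemma~\ref{abundant-acyc-colors-lemma}(ii) to see that the point of return is red.
\item \emph{At a repeated mutation at $k$.} The previous point of return $k'$ re-enters the order at one end by Lemma~\ref{ervin-acyc-order-lemma}. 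In the critical case the order ends $\cdots\prec n+1\prec k$ and $k\to k'$, so $k'$ lands directly above $n+1$ and must be shown red. This needs Lemma~\ref{abundant-acyc-colors-lemma}(i): either $k'$ is already red, or $-c_k\geq c_{k'}$. In the latter case, with $a\geq 2$ the number of arrows $k\to k'$, one gets $c'_{k'}=c_{k'}+a\,c_k\leq(a-1)c_k\leq 0$.
\end{itemize}
The paper packages this as an induction on $n$ with ``$n+1$ is the last green vertex'' added to the hypothesis, plus an inner induction on the tail after the first mutation at the top used vertex. Your fallback (insert/delete moves reducing to $(1,\dots,n)$) rests on the same unproved claim, so it does not close the gap.

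\textbf{A sub-claim is false.} You say the unused vertices $m,\dots,n+1$ keep their original $c$-vectors $e_m,\dots,e_{n+1}$. This contradicts your own $v_m$. By triangularity $i_1=1$, and by abundance every unused $u$ has arrows $u\to1$ into the green sink $1$. So already after $\mu_1$ one has $c_u=e_u+Q(u,1)e_1$.

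What you actually need is that these $c$-vectors are independent of $\mathbf i$. That is itself an instance of the colour claim applied to each unused $u$. Equivalently, it is the proposition for the smaller abundant acyclic quiver on $[m-1]\cup\{u\}$, since the prefix before $t_m$ is strongly triangular on $[m-1]$; this is exactly how the paper's induction on $n$ supplies it.

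Likewise, preservation of arrow counts among unused vertices is not a locality statement, because every unused vertex is adjacent to every used one. It holds because the unused vertices stay a contiguous block in the acyclic order (Lemma~\ref{ervin-acyc-order-lemma}). Hence each used vertex is oriented uniformly toward all of them, and no $2$-path through a used vertex joins two unused ones.
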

    \begin{proof}
        We induct on $n\geq 1$. For the base case, suppose $Q$ is an abundant acyclic quiver with vertex set $\{1,2\}$ and with acyclic order $2\prec 1$. Suppose there are exactly $m\geq 2$ arrows from $2$ to $1$ in $Q$. There is only one strongly triangular mutation sequence on the set $\{1\}$, which is the sequence $\mathbf i=(1)$, so the claim automatically follows.

        To proceed, we in fact need to provide a stronger inductive hypothesis than just independence of the $(n+1)^\text{st}$ c-vector of $\mu_\mathbf i(Q)$ on $\mathbf i$; we will add to our inductive hypothesis that for any strongly triangular $\mathbf i$ on $Q\setminus\{n+1\}$, it holds that $n+1$ is the last green vertex of $\mu_\mathbf i(Q)$. The base case of $n=1$ does indeed meet this requirement, since the vertex $1$ is red in $\mu_1(Q)$ while $2$ is green.

        Suppose this strong inductive hypothesis holds for some $n\geq1$. Let $Q$ be an abundant acyclic quiver on $[n+2]$ with acyclic order $n+2\prec n+1\prec\dots\prec 1$ and let $\mathbf i$ be a strongly triangular mutation sequence on the vertices $[n+1]$. Write $\mathbf i=(\mathbf i_1,\mathbf i_2)$ for the unique $\mathbf i_1$ and $\mathbf i_2$ so that $\mathbf i_1$ contains no instances of $n+1$ and $\mathbf i_2$ begins with a mutation at $n+1$. Consider the full subquivers $Q':=Q\setminus\{n+2\}$ and $Q'':=Q\setminus\{n+1\}$ of $Q$. Their unique acyclic orderings are $n+1\prec n\prec\dots\prec 1$ and $n+2\prec n\prec\dots\prec 1$ respectively. Note that, by the triangularity of $\mathbf i$ and the fact that $\mathbf i_1$ contains no instances of $n+1$, we have that $\mathbf i_1$ is strongly triangular on $Q\setminus\{n+1,n+2\}$. By induction, we know two things:
        \begin{enumerate}
            \item The last green vertices of $\mu_{\mathbf i_1}(Q')$ and $\mu_{\mathbf i_1}(Q'')$ are $n+1$ and $n+2$ respectively.
            \item The c-vectors $c^{\mathbf i_1}_{n+1}$ and $c^{\mathbf i_1}_{n+2}$ of $n+1$ and $n+2$ in $\mu_{\mathbf i_1}(Q)$ are independent of the choice of $\mathbf i_1$. To be very precise, the c-vector of $n+1$ in $\mu_{\mathbf i_1}(Q')$ and the c-vector of $n+2$ in $\mu_{\mathbf i_1}(Q'')$ are independent of $\mathbf i_1$, but these vectors are essentially already the c-vectors of $n+1$ and $n+2$ in $Q$; extending the c-vectors of $n+1$ in $\mu_{\mathbf i_1}(Q')$ and of $n+2$ in $\mu_{\mathbf i_1}(Q'')$ to also have $c^{\mathbf i_1}_{n+1,n+2}=c^{\mathbf i_1}_{n+2,n+1}=0$ gives the actual c-vectors of $n+1$ and $n+2$ in $\mu_{\mathbf i_1}(Q)$.
        \end{enumerate}
        We use this to claim that $n+1$ is the last green vertex of $\mu_{\mathbf i_1}(Q)$. Indeed, we know it cannot be $n+2$, since $n+2\prec n+1$ in $Q$ (and consequently in $\mu_{\mathbf i_1}(Q)$ by the fact that sink and source mutations of an abundant acyclic quiver simply shift its acyclic order, combined with Lemma~\ref{ervin-acyc-order-lemma} if $\mu_{\mathbf i_1}(Q)$ ends up being a fork). That $n+1$ is green in the first place follows from the fact that $\mathbf i_1$ does not mutate at $n+1$. Further, letting $r$ be the last mutation in $\mathbf i_1$, if there is some green $j\neq r$ in $\mu_{\mathbf i_1(Q)}$ with $n+1\prec j$, then the same is true in $\mu_{\mathbf i_1}(Q')$, a contradiction to the fact that $n+1$ is the last green vertex in $\mu_{\mathbf i_1}(Q')$. A similar argument shows that there does not exist a $j$ so that $n+2\prec j\prec n+1$ in $\mu_{\mathbf i_1}(Q)\setminus\{r\}$.

        From this, we now wish to show that $n+2$ is the last green vertex in $\mu_{n+1}(\mu_{\mathbf i_1}(Q))$, after which we will then establish that $n+2$ is the last green vertex in $\mu_\mathbf i(Q)$. If $n+1$ is not the sink in $\mu_{\mathbf i_1}(Q)\setminus\{r\}$, then there exists some red $j\succ n+1$ in $\mu_{\mathbf i_1}(Q)\setminus\{r\}$. Since $n+1$ is green and there are arrows from $n+1$ to $j$ in $\mu_{\mathbf i_1}(Q)$, we know that mutation at $n+1$ does not change the c-vector of $j$. That is, $j$ remains red in $\mu_{n+1}(\mu_{\mathbf i_1}(Q))$. Combining Lemma~\ref{ervin-acyc-order-lemma} with Lemma \ref{abundant-acyc-colors-lemma}(iii) shows that $n+2$ is the last green vertex of $\mu_{n+1}(\mu_{\mathbf i_1}(Q))$ in this case.
        
        On the other hand, if $n+1$ is the sink in $\mu_{\mathbf i_1}(Q)\setminus\{r\}$, then we must be careful. In the case that $\mu_{\mathbf i_1}(Q)$ is acyclic, we have that $n+1$ is its sink and $r=n$ is its source; the only way for $\mu_{\mathbf i_1}(Q)$ to be acyclic is with $\mathbf i_1=(1,2,\dots,n)$, since anything different yields a fork by Lemma~\ref{warkentin-fork-lemma}. Then mutation at $n+1$ shows that $\mu_{n+1}(\mu_{\mathbf i_1}(Q))$ is abundant acyclic with unique acyclic ordering $n+1\prec n\dots\prec1\prec n+2$. In particular, $n+2$ is green, so it is the last green vertex of $\mu_{n+1}(\mu_{\mathbf i_1}(Q))$. In the other case, where $\mu_{\mathbf i_1}(Q)$ is a fork, we see that there must be arrows from $n+1$ to $r$ since $n+1$ is the sink in $\mu_{\mathbf i_1}(Q)\setminus\{r\}$ (otherwise $r$ has no predecessors at all, contradicting the non-acyclicity of forks). By Lemma~\ref{ervin-acyc-order-lemma}, we then know that the acyclic order on $\mu_{n+1}(\mu_{\mathbf i_1}(Q))\setminus\{n+1\}$ will end with $\dots\prec n+2\prec r$ since the acyclic order on $\mu_{\mathbf i_1}(Q)\setminus\{r\}$ ends with $\dots\prec n+2\prec n+1$. Therefore, we want to show that $r$ is red in $\mu_{n+1}(\mu_{\mathbf i_1}(Q))$. Since $n+1$ has arrows into $r$ in $\mu_{\mathbf i_1}(Q)$ and $n+1$ is green in $\mu_{\mathbf i_1}(Q)$, it is equivalent to show that $r$ is red in $\mu_{\mathbf i_1}(Q)$. This follows by Lemma~\ref{abundant-acyc-colors-lemma}(ii) applied with the green vertex $n+1\in(\mu_{\mathbf i_1}(Q))^-(r)$. So $r$ is red in $\mu_{n+1}(\mu_{\mathbf i_1}(Q))$, making $n+2$ the last green vertex of $\mu_{n+1}(\mu_{\mathbf i_1}(Q))$.

        Now we may establish that $n+2$ is the last green vertex of $\mu_{\mathbf i}(Q)$. We induct on the length of $\mathbf i_2$; the base case with $|\mathbf i_2|=1$ is precisely what we just did. Suppose the claim holds for all reduced sequences on $[n+1]$ of length $\ell\geq 1$ beginning with $n+1$, and suppose $\mathbf i_2=(\mathbf i_2',k)$ has length $\ell+1$ and ends with some vertex $k\in [n+1]$. Note that since $\mathbf i_2$ has length at least $2$ and $\mathbf i_1$ has length at least $n$, the sequence $\mathbf i$ has length at least $n+2$. By the pigeonhole principle and the fact that we do not mutate at the vertex $n+2$ of $Q$, the mutation sequence $\mathbf i$ is not a sink sequence. As a consequence, $\mu_\mathbf i(Q)$ must be a fork in this case. By induction, $n+2$ is the last green vertex of $\mu_{\mathbf i_2'}(\mu_{\mathbf i_1}(Q))$. If $k\prec n+2$ in the acyclic order on $\mu_{\mathbf i_2'}(\mu_{\mathbf i_1}(Q))\setminus\{k'\}$, where $k'$ is the last mutation of $\mathbf i_2'$, then Lemma~\ref{ervin-acyc-order-lemma} shows that $n+2$ remains the last green vertex of $\mu_{\mathbf i_2}(\mu_{\mathbf i_1}(Q))$, no matter whether $k$ is a predecessor or successor of $k'$ in $\mu_{\mathbf i_2'}(\mu_{\mathbf i_1}(Q))$. On the other hand, if $n+2\prec k$, this same argument applies \emph{except if the $\prec$-order on $\mu_{\mathbf i_2'}(\mu_{\mathbf i_1}(Q))\setminus\{k'\}$ ends with $\dots\prec n+2\prec k$}. If this happens, the vertex $k$ is red by the inductive hypothesis that $n+2$ is the last green vertex of $\mu_{\mathbf i_2'}(\mu_{\mathbf i_1}(Q))$. Lemma~\ref{abundant-acyc-colors-lemma}(i) and the fact that $k$ is a predecessor of $k'$ in $\mu_{\mathbf i_2'}(\mu_{\mathbf i_1}(Q))$, either $k'$ is itself red in $\mu_{\mathbf i_2'}(\mu_{\mathbf i_1}(Q))$ (in which case mutation at $k$ will not change this) or $-c^{(\mathbf i_1,\mathbf i_2')}_{k,j}\geq c^{(\mathbf i_1,\mathbf i_2')}_{k',j}$ for all $j$. In the latter case, letting $m\geq 2$ be the number of arrows from $k$ to $k'$ in $\mu_{\mathbf i_2'}(\mu_{\mathbf i_1}(Q))$ and unpacking the algebraic form of the definition of mutation at $k$ (Definition~\ref{quiv-mut-def}) show that the new c-vector for $k'$ in $\mu_\mathbf i(Q)=\mu_{\mathbf i_2}(\mu_{\mathbf i_1}(Q))$ satisfies
        $$c^\mathbf i_{k',j}=c^{(\mathbf i_1,\mathbf i_2')}_{k',j}+mc^{(\mathbf i_1,\mathbf i_2')}_{k,j}\leq(m-1)c^{(\mathbf i_1,\mathbf i_2')}_{k,j}\leq 0$$
        for all vertices $j$ (where the last equality follows by the fact that $k$ is red in $\mu_{\mathbf i_2'}(\mu_{\mathbf i_1}(Q))$). Either way, $k'$ is red in $\mu_\mathbf i(Q)$, showing that $n+2$ is the last green vertex of $\mu_\mathbf i(Q)$ as desired.

        Knowing now that $n+2$ is the last green vertex of $\mu_\mathbf i(Q)$, we will see that the independence of $c^\mathbf i_{n+2}$ from $\mathbf i$ is rather immediate. Indeed, we know by induction that $c^{\mathbf i_1}_{n+2}$ is independent of the strongly triangular sequence $\mathbf i_1$ on $[n]$; as long as the number of arrows between $n+1$ and $n+2$ does not change as we apply the mutation sequence $\mathbf i_1$, then $c^{(\mathbf i_1,n+1)}_{n+2}$ is also independent of $\mathbf i_1$. This number of arrows cannot change, since every $i\in[n]$ either has arrows into both of $n+1$ and $n+2$ or out of both of $n+1$ and $n+2$, as this is true in $Q$, in any acyclic quivers we may see along the sequence $\mathbf i_1$, and also in any forks we see along the way by Lemma~\ref{ervin-acyc-order-lemma}. So $c^{(\mathbf i_1,n+1)}_{n+2}$ is independent of $\mathbf i_1$ as well. Finally, by the fact that $n+2$ is the last green vertex in $\mu_\mathbf i(Q)$, we see that no mutable vertex in $\mu_\mathbf i(\hat Q)$ will be in the middle of a path of length two from a frozen vertex to $n+2$ or vice versa (except for possibly the last vertex $k$ appearing in $\mathbf i$, but the reducedness condition prevents us from mutating there twice in a row). Indeed, vertices in $[n+1]\setminus\{k\}$ with arrows to frozen vertices are green, and so also carry arrows to $n+2$ by the fact that $n+2$ is the last green vertex of $\mu_\mathbf i(Q)$. Similarly, vertices in $[n+1]\setminus\{k\}$ with arrows from frozen vertices are red, and so also carry arrows from $n+2$ (note that we only need $n+2$ to be green for this part, not necessarily the \emph{last} green vertex).

        This completes the proof of the claim.
    \end{proof}

To demonstrate that this phenomenon is not trivially true (since, a priori, it may happen that the c-vector of $n+1$ cannot change \emph{at all}), we provide the following example.

\begin{Ex}\label{c-vec-stability-ex}
    For each $n\geq1$, let $Q_n$ be the abundant acyclic quiver on the vertices $[n+1]$ with exactly $2$ arrows from $i$ to $j$ for all $i>j$ in $[n+1]$, and let $\mathbf i$ be a strongly triangular mutation sequence for $Q_n\setminus\{n+1\}$ (which is equal to the quiver $Q_{n-1}$ if $n>1$). Then the c-vector of $n+1$ in $\mu_\mathbf i(Q_n)$ has the following entries:
    $$c^\mathbf i_{n+1,i}=\begin{cases}
        2\cdot3^{n-i} & \text{if }i\leq n\\
        1 & \text{if }i=n+1.
    \end{cases}$$
    By Proposition~\ref{c-vector-stab-prop} above, it suffices to show this for the sequence $\mathbf i=(1,2,\dots,n)$. We induct on $n$ to do so. In the base case of $n=1$, we see that $Q_1$ is the two-vertex quiver with two arrows from $2$ to $1$. Mutating $\hat{Q}_1$ at $1$ shows that $c^{(1)}_{2,1}=2=2\cdot3^{1-1}$ and $c^{(1)}_{2,2}=1$ as claimed. Now suppose we have shown the above to be true for some $n\geq1$, and consider the quiver $Q_{n+1}$. By induction and the fact that $Q_{n+1}\setminus\{n+2\}=Q_n$, we know that the c-vector $c^{(1,2,\dots,n)}_{n+1}$ takes the following entries:
    $$c^{(1,2,\dots,n)}_{n+1,i}=\begin{cases}
        2\cdot3^{n-i} & \text{if }i\leq n\\
        1 & \text{if }i=n+1\\
        0 & \text{if }i=n+2.
    \end{cases}$$
    Similarly, we may use induction and the fact that $Q_{n+1}\setminus\{n+1\}\cong Q_n$ to see that the c-vector $c^{(1,2,\dots,n)}_{n+2}$ takes the following entries:
    $$c^{(1,2,\dots,n)}_{n+2,i}=\begin{cases}
        2\cdot3^{n-i} & \text{if }i\leq n\\
        0 & \text{if }i=n+1\\
        1 & \text{if }i=n+2.
    \end{cases}$$
    Mutating $\hat{Q}_{n+1}$ at $n+1$ once (and using the fact that there are still exactly two arrows from $n+2$ to $n+1$ in $\mu_{(1,\dots,n)}(\hat{Q}_{n+1})$) shows us that:
    \begin{align*}
    c^{(1,2,\dots,n,n+1)}_{n+2,i}&=\begin{cases}
        2\cdot3^{n-i}+2\cdot2\cdot3^{n-i} & \text{if }i\leq n\\
        0+2\cdot1 & \text{if }i=n+1\\
        1+2\cdot0 & \text{if }i=n+2
    \end{cases}\\
    &=\begin{cases}
        2\cdot3^{n+1-i} & \text{if }i\leq n\\
        2 & \text{if }i=n+1\\
        1 & \text{if }i=n+2
    \end{cases}\\
    &=\begin{cases}
        2\cdot3^{n+1-i} & \text{if }i\leq n+1\\
        1 & \text{if }i=n+2,
    \end{cases}
    \end{align*}
    completing the induction.
\end{Ex}

As an immediate corollary of Proposition~\ref{c-vector-stab-prop} and Example~\ref{c-vec-stability-ex} above, we have the following immediate corollary that will appear in the proof of Theorem~\ref{LF-inf-mut-seq-thm}:

\begin{Cor}\label{c-vec-off-diag-cor}
    Let $\mathbf i$ be a nontrivial, reduced mutation sequence on $[n]$ for $n\geq1$. Then there exists a quiver $Q$ on $[n+1]$ and some $i\in[n]$ so that the c-vector of $n+1$ in $\mu_\mathbf i(Q)$ satisfies $c^\mathbf i_{n+1,i}>0$.
\end{Cor}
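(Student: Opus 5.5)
The plan is to build $Q$ explicitly as an abundant acyclic quiver on the vertices actually used by $\mathbf i$, together with $n+1$, and then read off the c-vector from Proposition~\ref{c-vector-stab-prop} and Example~\ref{c-vec-stability-ex}.

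\textbf{Step 1 (ordering the used vertices).} Let $S\subseteq[n]$ be the set of vertices appearing in $\mathbf i$. Since $\mathbf i$ is nontrivial, $s:=|S|\geq1$. List $S$ as $u_1,u_2,\dots,u_s$ in order of first appearance in $\mathbf i$.

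\textbf{Step 2 (defining $Q$).} Set $u_{s+1}:=n+1$. Let $Q$ be the quiver on $[n+1]$ with exactly $2$ arrows from $u_a$ to $u_b$ whenever $a>b$, for $a,b\in[s+1]$. Every vertex of $[n]\setminus S$ is isolated in $Q$.

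\textbf{Step 3 (reducing to the Example).} Let $Q'$ be the full subquiver of $Q$ on $S\cup\{n+1\}$.
\begin{enumerate}[label=(\alph*)]
    \item Under the relabeling $u_j\mapsto j$, the quiver $Q'$ is exactly the quiver $Q_s$ of Example~\ref{c-vec-stability-ex}. Its acyclic order is $u_{s+1}\prec u_s\prec\dots\prec u_1$.
    \item The sequence $\mathbf i$ is reduced by hypothesis.
    \item It is triangular with respect to $Q'$: the first appearance of $u_a$ precedes that of $u_b$ exactly when $a<b$, which is what Definition~\ref{acyc-order-notation-def} requires.
    \item Every $u_j$ with $j\leq s$ appears in $\mathbf i$, and $n+1$ does not.
\end{enumerate}
Hence $\mathbf i$ is strongly triangular on $Q'\setminus\{n+1\}$. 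Proposition~\ref{c-vector-stab-prop} then says that $c^{\mathbf i}_{n+1}$ computed in $\mu_\mathbf i(Q')$ is independent of the choice of such $\mathbf i$. Example~\ref{c-vec-stability-ex} computes it as $c^{\mathbf i}_{n+1,u_j}=2\cdot3^{s-j}>0$ for every $j\leq s$. In particular, taking $i=u_s$ gives the positive entry $c^{\mathbf i}_{n+1,u_s}=2$.

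\textbf{Step 4 (transferring back to $Q$).} It remains to check that the c-vector of $n+1$ computed in $\mu_\mathbf i(Q)$ agrees with the one computed in $\mu_\mathbf i(Q')$. Consider the framed quiver $\hat Q$ and the vertex set $V:=S\cup\{n+1\}\cup\{x'\mid x\in S\cup\{n+1\}\}$.
\begin{enumerate}[label=(\alph*)]
    \item $\rho_V(\hat Q)=\widehat{Q'}$, with the frozen copies of vertices outside $V$ left isolated.
    \item All mutations in $\mathbf i$ occur at vertices of $V$, so repeated use of Lemma~\ref{mut-restr-commute-lemma} gives $\rho_V(\mu_\mathbf i(\hat Q))=\mu_\mathbf i(\widehat{Q'})$. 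Therefore the entries $\mu_\mathbf i(\hat Q)(n+1,y')$ for $y\in S\cup\{n+1\}$ agree with those computed in $\widehat{Q'}$.
    \item For $y\in[n]\setminus S$, the vertex $y$ is never mutated. Its frozen partner $y'$ is adjacent only to $y$, and $y$ is isolated in $Q$. So no arrow between $n+1$ and $y'$ is ever created, and these entries stay $0$.
\end{enumerate}
Thus $c^\mathbf i_{n+1,u_s}=2>0$ in $\mu_\mathbf i(Q)$, which proves the corollary.

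The only real obstacle is the bookkeeping in Step 4, namely that the isolated vertices and their frozen copies do not interfere. This follows from the observation that mutation never changes isolatedness.
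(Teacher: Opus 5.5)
Your proof is correct and follows the paper's route: take the quiver $Q_n$ of Example~\ref{c-vec-stability-ex}, relabel it so that $\mathbf i$ is triangular with $n+1$ as the source, and read off a positive entry using Proposition~\ref{c-vector-stab-prop}. Your extra care in Steps 1 and 4 improves on the paper. There you build the abundant acyclic quiver only on the vertices that $\mathbf i$ actually uses, leave the remaining vertices isolated, and check that they contribute nothing. The paper's one-line proof silently assumes that $\mathbf i$ uses every vertex of $[n]$: strong triangularity is a hypothesis of Proposition~\ref{c-vector-stab-prop}, and the formula $2\cdot 3^{n-\sigma(i)}$ for all $i$ fails otherwise. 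That assumption holds in the later application in Theorem~\ref{LF-inf-mut-seq-thm}, but not for the corollary as literally stated.
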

    \begin{proof}
        Take $Q$ to be $Q_n$ as in Example~\ref{c-vec-stability-ex}, relabeling the vertices of $Q_n$ as needed to ensure that $\mathbf i$ is triangular (while keeping $n+1$ labeled $n+1$). Then $c^\mathbf i_{n+1,i}=2\cdot3^{n-\sigma(i)}>0$ for some permutation $\sigma$ of $[n]$ and all $i\neq n+1$.
    \end{proof}

%% LF infinite sequence density theorem %%
\subsection{Proof of Theorem~\ref{LF-inf-mut-seq-thm}} We are finally ready to state and prove the density theorem for infinite mutation sequences in $\LF$.

\begin{T-LF-inf-seq}
    Let $\mu_{\mathbf i}:=\ldots\mu_{i_3}\mu_{i_2}\mu_{i_1}$ be an infinite mutation sequence on $\LF$. Exactly one of the following possibilities occurs:
        \begin{enumerate}[label=(\roman*)]
            \item $\mathcal C_\LF(\mu_\mathbf i)=\LF$ (and therefore $\mathcal D_\LF(\mu_\mathbf i)=\varnothing$). This occurs precisely when every vertex appearing in $\mathbf i$ appears only finitely many times and the reduction $\overline\mu_\mathbf i$ is finite.
            \item $\mathcal C_\LF(\mu_\mathbf i)$ and $\mathcal D_\LF(\mu_\mathbf i)$ are both dense in $\LF$. This occurs precisely when every vertex appearing in $\mathbf i$ appears only finitely many times and the reduction $\overline\mu_\mathbf i$ is infinite.
            \item $\mathcal C_\LF(\mu_\mathbf i)$ is not dense. This happens precisely when the set $S_\infty\subseteq\NN$ of vertices appearing infinitely many times in $\mathbf i$ is nonempty. In this case, $\mathcal D_\LF(\mu_\mathbf i)$ is dense in $\LF$ if and only if one (or both) of the following occurs:
            \begin{enumerate}
                \item $S_\infty\subseteq\NN$ is infinite.
                \item $\overline\mu_{\mathbf i_{\NN\setminus S_\infty}}$ is infinite, where $\mathbf i_{\NN\setminus S_\infty}$ is the subsequence of $\mathbf i$ consisting only of those $i\in\NN$ that do not appear infinitely many times in $\mathbf i$.
            \end{enumerate}
        \end{enumerate}
\end{T-LF-inf-seq}

\begin{proof}
    %% case (i) %%
    Let $\mu_\mathbf i$ be an infinite mutation sequence with $\mathbf i=(i_1,i_2,\dots)\in\NN^\NN$. Suppose first that each vertex in $\mathbf i$ appears only finitely many times. We consider the two cases in which $\overline\mu_\mathbf i$ is either finite or infinite.

    Assume that $\overline\mu_\mathbf i$ is finite. We claim in part (i) that, under the present assumptions, $\mathcal C_\LF(\mu_\mathbf i)=\LF$. More specifically, we will see that for any $Q\in\LF$, it holds that $\mu_\mathbf i(Q)=\overline\mu_\mathbf i(Q)$. To prove this claim, we use a trick about the strong topology that we have not yet used: we argue that the overfull subquivers induced by \emph{singletons} all stabilize in the sequence $(\mu_{i_n}\circ\dots\circ\mu_{i_1}(Q))_{n\geq1}$ and then reason that this is sufficient for convergence in $\LF$.
    
    Toward the first part of this trick, we see by Lemma~\ref{reduction-lemma2} that there exists some $n\geq0$ so that $\overline{\mu_{i_n}\circ\dots\circ\mu_{i_1}}=\overline\mu_\mathbf i$ and $\overline{\dots\mu_{i_{n+2}}\circ\mu_{i_{n+1}}}$ is the empty mutation sequence. Letting $n\geq0$ be such an $n$, our claim reduces to showing that $\mu_\mathbf i$ converges on $Q$ in $\LF$ to $\mu_{i_n}\circ\dots\circ\mu_{i_1}(Q)$. By precomposing $\mu_\mathbf i$ with $\mu_{i_1}\circ\dots\circ\mu_{i_n}$ and using the fact that $\overline{\dots\mu_{i_{n+2}}\circ\mu_{i_{n+1}}}$ is the empty mutation sequence, we may assume without loss of generality that $\overline\mu_\mathbf i$ is the empty sequence.
    
    With this assumption, suppose $v\in\NN$ is some vertex of $Q$. Since $Q$ is locally finite, it follows that the overfill of $Q$ induced by the singleton $\{v\}$ is finite as well; let $V\subseteq\NN$ be the support of this overfill. We claim that there is some $n\geq1$ so that for all $k\geq n$, we have $\mu_{i_k}\circ\dots\circ\mu_{i_1}(Q)\in W_{Q,\{v\}}$. By Lemma~\ref{reduction-lemma3}, there exist infinitely many $p\geq1$ so that $\overline{\mu_{i_p}\circ\dots\circ\mu_{i_1}}$ is the empty mutation sequence. Take $n$ to be some such $p$ with the additional property that $i_k>\max V$ for all $k\geq n$. We show that this $n$ is such that $\mu_{i_k}\circ\dots\circ\mu_{i_1}(Q)\in W_{Q,\{v\}}$ for all $k\geq n$. We proceed by induction on $k\geq n$. If $k=n$, then $\mu_{i_n}\circ\dots\circ\mu_{i_1}(Q)=Q\in W_{Q,\{v\}}$ by the fact that $\overline{\mu_{i_n}\circ\dots\circ\mu_{i_1}}$ is the empty mutation sequence. Suppose now that $k>n$ and that the claim holds for $k-1$. Knowing that $\mu_{i_{k-1}}\circ\dots\circ\mu_{i_1}(Q)\in W_{Q,\{v\}}$, assume for the sake of contradiction that $\mu_{i_k}\circ\dots\circ\mu_{i_1}(Q)\not\in W_{Q,\{v\}}$. Then it must be true that $i_k\in V$. Indeed, in order for mutation at $i_k$ to affect the arrows in the overfill of $\mu_{i_{k-1}}\circ\dots\circ\mu_{i_1}(Q)$ induced by $\{v\}$ (which is the same as the overfill of $Q$ induced by $\{v\}$ by hypothesis), either some arrows in this overfill must have changed direction by step (2) of the mutation process in Definition~\ref{quiv-mut-def} or some arrows must have changed multiplicity by step (1). Neither is possible without $i_k\in V$. Of course, this is a contradiction by our choice of $n$ so that $i_k>\max V$ for all $k>n$. Thus, we must have that $\mu_{i_k}\circ\dots\circ\mu_{i_1}(Q)\in W_{Q,\{v\}}$ for all $k>n$. This gives us the first part of our trick.

    For the second part of the trick, note that if the overfull subquivers induced by singletons of the quivers in a sequence $(Q_n)_{n\geq1}$ in $\LF$ stabilize, then we have stabilization of all overfull subquivers induced by finite vertex sets. Indeed, let $V\subseteq\NN$ be finite, and suppose $V=\{v_1,\dots,v_p\}$. Letting $n\geq1$ be the least $n$ such that for all $k>n$ the overfull subquivers of $Q_k$ induced by the singletons $\{v_s\}$ for $s=1,\dots,p$ stabilize gives us the stabilization of the overfull subquivers of $Q_k$ on $V$ as well.

    Altogether, this completes the argument that $\mathcal C_\LF(\mu_\mathbf i)=\LF$.
    \vspace{0.25cm}

    %% case (ii) %%
    Suppose now that $\mu_\mathbf i$ is an infinite mutation sequence such that every $i\in\NN$ appears in $\mathbf i$ only finitely many times and such that the reduction $\overline\mu_\mathbf i$ is infinite as in part (ii). We wish to show that both $\mathcal C_\LF(\mu_\mathbf i)$ and $\mathcal D_\LF(\mu_\mathbf i)$ are dense in $\LF$.
    
    The density of $\mathcal C_\LF(\mu_\mathbf i)$ is straightforward: the dense subspace $\mathbf{Fin}\subseteq\LF$ is contained in the domain $\mathcal C_\LF(\mu_\mathbf i)$ of convergence. Indeed, every $i\in\NN$ appears only finitely many times in $\mathbf i$, and there are only finitely many vertices in the support of any $Q\in\mathbf{Fin}$, rendering cofinitely many entries in $\mathbf i$ inert for any fixed finite $Q$.
    
    The density of $\mathcal D_\LF(\mu_\mathbf i)$ in $\LF$ is more interesting. We first observe that it suffices to show that $\mathcal D_\LF(\overline\mu_\mathbf i)$ is dense. Indeed, if $\overline\mu_{\mathbf i}$ diverges on $Q\in\LF$, so does $\mu_\mathbf i$. This is because $(\mu_{i_{k_\ell}}\circ\dots\circ\mu_{i_{k_1}}(Q))_{\ell\geq1}$ is a subsequence of $(\mu_{i_n}\circ\dots\circ\mu_{i_1}(Q))_{n\geq1}$, where $K=\{k_1,k_2,\dots\}$ is the (infinite) domain of $R^\omega(\mathbf i)$ as in Definition~\ref{reduction-def}. With this in mind, suppose without loss of generality that $\mu_\mathbf i=\overline\mu_\mathbf i$.

    Fix a basic open set $W_{Q,V}\cap\LF\subseteq\LF$ in $\LF$ for some $Q\in\LF$ and $V\subseteq\NN$ finite. Assume as usual that $Q$ is finite with $\text{supp}(Q)\subseteq V$. Let $N=\max V$. By the assumption that no $i\in\NN$ appears infinitely many times in $\mathbf i$, we see that there exists some $n\geq1$ so that for all $k\geq n$, it holds that $i_k>N+1$. We will build a locally finite quiver $A$ on the vertex set $\{N+1,N+2,\dots\}$ on which the infinite (reduced) sequence $\mu_{(i_n,i_{n+1},i_{n+2},\dots)}$ diverges. If we can do this, then placing a copy of $A':=\mu_{i_1}\circ\dots\circ\mu_{i_{n-1}}(A)$ on the isolated vertices $\{N+1,N+2,\dots\}$ of our finite quiver $Q$ yields a quiver $Q'\in W_{Q,V}\cap\LF$ on which $\mu_\mathbf i$ diverges.

    Let $\mathbf i'=(i_n,i_{n+1},i_{n+2},\dots)$. By the assumption that $i_k>N+1$ for all $k\geq n$ and the plan to use just the tail $\mu_{\mathbf i'}$ when building $A$, we have the liberty to use $N+1$ as a frozen vertex. Since $\mathbf i'$ is infinite, reduced, and contains each $i\in\NN$ only finitely many times, we may apply Lemma~\ref{inf-seq-normal-form-lemma} to find some subset $S\subseteq\{N+1,N+2,\dots\}$ so that $R^\omega(\mathbf i'_S)=(\mathbf j_1,\mathbf j_2,\dots)$ for finite, nontrivial, reduced sequences $\mathbf j_k\in\NN^{<\NN}$ pairwise not using any common vertices. We may assume without loss of generality that $N+1\in S$, since $N+1$ does not appear in $\mathbf i'$ at all and therefore does not change the outcome of the lemma. We will in fact use this set $S\subseteq\{N+1,N+2,\dots\}$ as our domain for $A$ and keep every vertex in $\{N+1,N+2,\dots\}\setminus S$ isolated. With this requirement on $A$, it holds that $\mu_{\mathbf i'}$ diverges on $A$ if and only if $\mu_{\mathbf i'_S}$ diverges on $A$. To show the latter, it suffices by an argument we have previously made to show that $\overline\mu_{\mathbf i'_S}=\mu_{R^\omega(\mathbf i'_S)}$ diverges on $A$.

    We are now ready to build $A$. For each sequence $\mathbf j_k$ appearing in the normal form for $R^\omega(\mathbf i'_S)$ above, let $S_k\subseteq S$ be the finite set of vertices appearing in $\mathbf j_k$ and let $n_k:=|S_k|$. Also for each $\mathbf j_k$, use Corollary~\ref{c-vec-off-diag-cor} to pick a quiver $A_k$ with vertices $S_k\sqcup\{\ast_k\}$ (for some temporary formal symbol $\ast_k$ not used in $\mathbf j_k$) so that the c-vector $c^{\mathbf j_k}_{\ast_k}$ of $\ast_k$ in $\mu_{\mathbf j_k}(A_k)$ has a positive entry $c^{\mathbf j_k}_{\ast_k,p_k}>0$ for some $p_k\in S_k$. We glue these quivers $A_k$ together into our quiver $A$ on $S$ by first creating an arrow from $p_1$ to $N+1$ and then identifying $\ast_k$ with $p_{k+1}$ for all $k\geq1$. Importantly, $A$ is a locally finite quiver; this whole construction would be rendered moot if it were not.

    It remains to check that $\mu_{R^\omega(\mathbf i'_S)}$ diverges on $A$. To see this, we will show that the overfull subquiver of $\mu_{(\mathbf j_1,\dots,\mathbf j_k)}(A)$ induced by $\{N+1\}$ fails to stabilize as $k\to\infty$. Then this sequence would be a divergent subsequence of the relevant sequence of finite partial mutations for $\mu_{R^\omega(\mathbf i'_S)}$ applied to $A$, so it would follow that $\mu_{R^\omega(\mathbf i'_S)}$ diverges on $A$. We proceed by induction on $k\geq1$. One way to view the arrow from the vertex $p_1$ of the subquiver $A_1$ to the vertex $N+1$ is by viewing $N+1$ to be the same as the frozen vertex $p_1'$ in the framed quiver $\hat A_1$. Then applying $\mu_{\mathbf j_1}$ to $A_1$ yields that $c^{\mathbf j_1}_{p_2,N+1}>0$ in $\mu_{\mathbf j_1}(A)$, or equivalently, there exist arrows from $p_2$ to $N+1$ in $\mu_{\mathbf j_1}(A)$. Furthermore, since $\mathbf j_1$ does not mutate at any vertices outside of $S_1$ and since any two of the $A_k$ overlap in at most one vertex (which is not itself in $S_1$), none of the other full subquivers $A_k$ for $k\neq 1$ are changed by the mutation sequence $\mu_{\mathbf j_1}$. Additionally, there are also no arrows in $\mu_{\mathbf j_1}(A)$ from any $s\in S\setminus(S_1\cup\{p_2\})$ to $N+1$ for the same reason.

    For the inductive step, suppose we have mutated through $(\mathbf j_1,\dots,\mathbf j_k)$ so far for $k\geq1$, that we have some positive number $m$ of arrows from $p_{k+1}$ to $N+1$, and that there are no arrows in $\mu_{(\mathbf j_1,\dots,\mathbf j_k)}(A)$ from any $s\in S\setminus\left(\{p_{k+1}\}\cup\bigcup_{t=1}^k S_t\right)$ to $N+1$. Then the full subquiver of $A$ induced by $A_{k+1}$ and $N+1$ can \emph{almost} be thought about as the full subquiver of $\hat A_{k+1}$ including all mutable vertices and the frozen vertex $p_{k+1}'$, where $N+1$ plays the role of $p_{k+1}'$, but with some number $m\geq1$ arrows from $p_{k+1}$ to $p_{k+1}'$ instead of necessarily just one arrow. However, since $N+1$ is not present in $\mathbf j_{k+1}$, it follows from how mutation works that the mutation sequence $\mu_{\mathbf j_{k+1}}$ applied to $A_{k+1}\cup\{N+1\}$ yields the same result as if $\mu_{\mathbf j_{k+1}}$ were applied to the full subquiver of $\hat A_{k+1}$ induced by its mutable vertices and the frozen vertex $p_{k+1}'$ and then all counts of arrows incident to $p_{k+2}'$ were scaled by a factor of $m$. Thus, there are exactly $m\cdot c^{(\mathbf j_1,\dots,\mathbf j_k,\mathbf j_{k+1})}_{p_{k+2},p_{k+1}}>0$ arrows from $p_{k+2}$ to $N+1$ in $\mu_{(\mathbf j_1,\dots,\mathbf j_k,\mathbf j_{k+1})}(A)$. For the same reasons given in the base case, there are no arrows in $\mu_{(\mathbf j_1,\dots,\mathbf j_k,\mathbf j_{k+1})}(A)$ from any vertex in $s\in S\setminus\left(\{p_{k+2}\}\cup\bigcup_{t=1}^{k+1} S_t\right)$ to $N+1$, finishing the inductive step.

    Then we see that the overfull subquiver of $\mu_{(\mathbf j_1,\dots,\mathbf j_k)}(A)$ induced by $\{N+1\}$ fails to stabilize as $k\to\infty$. This in turn shows that $\mu_{R^\omega(\mathbf i_S')}$ diverges on $A$. Unpacking further, this shows that $\mu_\mathbf i$ diverges on the quiver $Q'\in W_{Q,V}$ described above, proving part (ii) of the claim.

    For part (iii), suppose there is some vertex $i\in\NN$ appearing infinitely many times in $\mathbf i$. Consider the quiver $Q\in\LF$ with a single arrow $i\to i+1$. Then any $Q'\in W_{Q,\{i,i+1\}}\cap\LF$ has $i$ as a non-isolated vertex. Mutating $Q'$ finitely many times in any direction preserves the fact that $i$ is non-isolated. Combining this with the fact that $i$ appears infinitely often in $\mathbf i$, we see that the overfull subquiver of $\mu_{(i_1,\dots,i_n)}(Q')$ induced by $\{i\}$ fails to stabilize as $n\to\infty$. Thus, we have that $\mu_\mathbf i$ diverges on all $Q'\in W_{Q,\{i,i+1\}}\cap\LF$, so $\mathcal C_\LF(\mu_\mathbf i)\cap W_{Q,\{i,i+1\}}=\varnothing$. Since $W_{Q,\{i,i+1\}}\cap\LF$ is open in $\LF$, this shows that $\mathcal C_\LF(\mu_\mathbf i)$ is not dense in $\LF$.

    Let $\varnothing\neq S_\infty\subseteq\NN$ be the set of vertices appearing infinitely many times in $\mathbf i$. Consider the subspace $\LF^{\NN\setminus S_\infty}\subseteq\LF$ of quivers on $\NN\setminus S_\infty$, thought about as quivers on $\NN$ with only isolated vertices in $S_\infty$. Note that the strong topology on $\LF^{\NN\setminus S_\infty}_{str}$ agrees with the subspace topology from $\LF$ under this identification. If $S_\infty$ is finite, then this subspace is the clopen subspace given by $W_{Q,S_\infty}\cap\LF$, where $Q$ is taken to be the quiver with no arrows. In particular, $\LF\setminus\LF^{\NN\setminus S_\infty}$ is not dense (since it fails to intersect its open complement). On the other hand, if $S_\infty$ is infinite, then we claim that $\LF^{\NN\setminus S_\infty}$ has empty interior, or equivalently that its complement is dense. To see this, let $W_{Q,V}\cap\LF$ be a basic open set in $\LF$ for some $Q\in\LF$, $V\subseteq\NN$ finite. Assume $Q$ is finite and that $\text{supp}(Q)\subseteq V$ as usual. Since $V$ is finite and $S_\infty$ is infinite, there is some $v\in S_\infty\setminus V$ with $v\in S_\infty\setminus V$ as well. Then we may extend $Q$ to a quiver $Q'\in W_{Q,V}\cap\LF$ with an arrow $v\to v+1$. Since membership in $\LF^{\NN\setminus S_\infty}\subseteq\LF$ requires that all vertices in $S_\infty$ are isolated and $v\in S_\infty$ is not isolated in $Q'$, we see that $Q'\not\in\LF^{\NN\setminus S_\infty}$. Thus, the complement of $\LF^{\NN\setminus S_\infty}$ is dense in $\LF$ if $S_\infty$ is infinite.
    
    By the argument given two paragraphs ago, we can see that $\mathcal C_\LF(\mu_\mathbf i)\subseteq\LF^{\NN\setminus S_\infty}$. Thus, if $S_\infty$ is infinite, then the density of $\mathcal D_\LF(\mu_\mathbf i)$ in $\LF$ follows right away by the above argument. Otherwise, if $S_\infty$ is finite, the density of $\mathcal D_\LF(\mu_\mathbf i)$ in $\LF$ is equivalent to its density in the clopen subspace $\LF^{\NN\setminus S_\infty}$. Note that, since all quivers in this subspace have isolated vertices in $S_\infty$, the intersections of $\mathcal D_\LF(\mu_\mathbf i)$ and $\mathcal D_\LF(\mu_{\mathbf i_{\NN\setminus S_\infty}})$ with this subspace $\LF^{\NN\setminus S_\infty}$ agree. Thus, it suffices to show the density of $\mathcal D_\LF(\mu_{\mathbf i_{\NN\setminus S_\infty}})$ in this subspace. Note that $\mathbf i_{\NN\setminus S_\infty}$ does not contain any $i\in\NN$ infinitely often by the definition of $S_\infty\subseteq\NN$. Thus, we may refer to the cases (i) and (ii) that we have already proved to check the density of $\mathcal D_\LF(\mu_{\mathbf i_{\NN\setminus S_\infty}})$ in $\LF^{\NN\setminus S_\infty}_{str}$. By (i) and (ii), it holds that $\mathcal D_\LF(\mu_{\mathbf i_{\NN\setminus S_\infty}})$ is dense in $\LF^{\NN\setminus S_\infty}_{str}$ if and only if $\overline\mu_{\mathbf i_{\NN\setminus S_\infty}}$ is infinite. Therefore, in the case that $S_\infty$ is finite, we see that $\mathcal D_\LF(\mu_\mathbf i)$ is dense in $\LF$ if and only if the same holds.

    This completes the proof.
\end{proof}

%% The Fra\"iss\'e quiver and its mutation classes %%
\section{The Fra\"iss\'e quiver and its mutation classes}\label{sec-6}

This section is devoted to one very specific countably infinite, arrow finite quiver, which we call the \emph{Fra\"iss\'e quiver}. To construct this quiver, we provide a brief overview of the theory of Fra\"iss\'e limits. We provide specialized definitions and state the main structural result in the context of quivers only, but the set-up here extends to the much broader model-theoretic context of structures in first-order languages.

\begin{Def}\label{homog-def}
    A quiver $Q$ is called \emph{homogeneous} if every isomorphism between finite full subquivers of $Q$ extends to an automorphism of $Q$.
\end{Def}

\begin{Rmk}\label{weak-homog-rmk}
    One important consequence of homogeneity is the following ``extension'' property. Given a homogeneous quiver $Q$, two finite quivers $A,B$, and three embeddings of full subquivers $f:A\hookrightarrow Q$, $g:B\hookrightarrow Q$, and $h:A\hookrightarrow B$, it holds that there exists an embedding $h':B\hookrightarrow Q$ such that $h'\circ h=f$. Indeed, one has that there is an isomorphism $\varphi:g(h(A))\to f(A)$ of finite full subquivers of $Q$. By the homogeneity of $Q$, there exists an automorphism $\Phi:Q\to Q$ extending $\varphi$. Then $\Phi$ restricts to precisely the embedding $h':B\hookrightarrow Q$ that we are looking for.
\end{Rmk}

The following definition will allow us to build countable homogeneous quivers out of special classes of finite quivers.

\begin{Def}\label{fraisse-class-def}
    Let $\mathcal K$ be a class of finite quivers. We call $\mathcal K$ a \emph{Fra\"iss\'e class} if it satisfies the following properties:
    \begin{itemize}
        \item $\mathcal K$ is \emph{closed under isomorphism.}
        \item $\mathcal K$ has the \emph{hereditary property (HP)}: if $A\in\mathcal K$ and $B$ is a full subquiver of $A$, then $B$ is in $\mathcal K$ as well.
        \item $\mathcal K$ has the \emph{joint embedding property (JEP):} if $A,B\in\mathcal K$, then there is some $C\in\mathcal K$ such that $A$ and $B$ are isomorphic to full subquivers of $C$.
        \item $\mathcal K$ has the \emph{amalgamation property (AP):} given quivers $A,B,C\in\mathcal K$ and embeddings $f:A\hookrightarrow B$, $g:A\hookrightarrow C$, there exists $D\in\mathcal K$ and embeddings $f':B\hookrightarrow D$, $g':C\hookrightarrow D$ with the property that $f'\circ f=g'\circ g$.
    \end{itemize}
\end{Def}

It turns out that Fra\"iss\'e classes provide a complete characterization of countably infinite homogeneous quivers. We state the following result without proof, as it is a specialization of a well-known theorem in model theory.

\begin{Prop}[\cite{schlicht_lecture_nodate}, Theorem 2.2.4, Fra\"iss\'e's Theorem for quivers]\label{fraisse-thm}
    For any quiver $Q$, let $\text{Age}(Q)$ be the class of finite quivers which embed into $Q$. If $K$ is a Fra\"iss\'e class of quivers, then there exists a unique (up to isomorphism) countable homogeneous quiver $Q_\mathcal K$ with $\text{Age}(Q_\mathcal K)=\mathcal K$. The quiver $Q_\mathcal K$ is called the \emph{Fra\"iss\'e limit} of $\mathcal K$.
    
    Conversely, if $Q$ is infinite and homogeneous, then $\text{Age}(Q)$ is a Fra\"iss\'e class.
\end{Prop}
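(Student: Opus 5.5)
The plan is the standard model-theoretic proof, specialized to quivers: full subquivers play the role of substructures, and embeddings of full subquivers play the role of embeddings. The central notion is the \emph{extension property} (EP) for a countable quiver $Q$ with $\text{Age}(Q)=\mathcal K$: whenever $A\subseteq B$ are in $\mathcal K$ and $f:A\hookrightarrow Q$ is an embedding, there is an embedding $f':B\hookrightarrow Q$ extending $f$. I would argue in three steps. (a) A countable quiver is homogeneous if and only if it has EP. (b) Two countable quivers with EP and the same age are isomorphic. (c) Every Fra\"iss\'e class has a countable quiver with EP and age $\mathcal K$.

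For (a), the forward direction is exactly Remark~\ref{weak-homog-rmk}: given $f:A\hookrightarrow Q$ and $A\subseteq B\in\text{Age}(Q)$, choose some embedding $g:B\hookrightarrow Q$ and apply the remark. For the converse, and also for (b), I would run a back-and-forth argument. Enumerate the vertex sets $\{x_0,x_1,\dots\}$ and $\{y_0,y_1,\dots\}$. Build an increasing chain of isomorphisms between finite full subquivers, starting from the given partial isomorphism in the homogeneity case and from the empty map in case (b). At even steps, force $x_n$ into the domain: the full subquiver on the current domain plus $x_n$ lies in the common age, so EP on the other side extends the map. At odd steps, do the symmetric thing for $y_n$. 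The union is then an isomorphism, or an automorphism when both sides are $Q$. Since quivers are just skew-symmetric functions $X\times X\to\ZZ$, an isomorphism of full subquivers is simply a bijection preserving $Q(x,y)$, so the model-theoretic proof transfers verbatim.

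For (c), I would note that $\mathcal K$ has only countably many isomorphism types, since a finite quiver is a skew-symmetric integer matrix. Then I would build finite quivers $D_0\subseteq D_1\subseteq\cdots$ in $\mathcal K$. Fix a bookkeeping enumeration of all tasks. A task is either ``embed a representative $A$ of each isomorphism type'' or a triple $(A\subseteq B,\,f:A\hookrightarrow D_m)$, where $A\subseteq B$ ranges over representative pairs and $f$ over the countably many (indeed finitely many for each $m$) embeddings into $D_m$. Arrange the enumeration so that every task is handled at some stage $n\ge m$. Embedding-type tasks are handled by JEP applied to $D_n$ and $A$. Extension tasks are handled by AP applied to $f:A\hookrightarrow D_n$ and $A\hookrightarrow B$, relabelling so that $D_n$ sits as a full subquiver of $D_{n+1}$. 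The union $Q_\mathcal K=\bigcup_n D_n$ is countable. Its age is contained in $\mathcal K$ by HP, since any finite full subquiver lies inside some $D_n$, and contains $\mathcal K$ by the embedding tasks. It has EP by the extension tasks, since any finite $f(A)$ lies in some $D_m$. Combined with (a) and (b), this gives existence and uniqueness. I expect the only real obstacle to be this bookkeeping: indexing tasks by pairs $(m,k)$, treated at stage $\langle m,k\rangle\ge m$, guarantees that every embedding into a later $D_m$ is eventually addressed. If $\mathcal K$ contains quivers of only boundedly many vertices, the chain stabilizes and the limit is finite, which is still consistent with ``countable''.

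For the converse, let $Q$ be infinite and homogeneous. Isomorphism closure and HP of $\text{Age}(Q)$ are immediate, because a full subquiver of a full subquiver is full. For JEP, take images of $A$ and $B$ in $Q$; the full subquiver on the union of their vertex sets is finite and contains both. For AP, suppose $f:A\hookrightarrow B$ and $g:A\hookrightarrow C$ with $B,C\in\text{Age}(Q)$, and fix embeddings $\beta:B\hookrightarrow Q$ and $\gamma:C\hookrightarrow Q$. The map $\beta f(a)\mapsto\gamma g(a)$ is an isomorphism of finite full subquivers of $Q$, so it extends to an automorphism $\Phi$ of $Q$. Let $D$ be the full subquiver of $Q$ on $\Phi\beta(B)\cup\gamma(C)$, and take $f'=\Phi\beta$ and $g'=\gamma$; then $f'f=g'g$ as required.
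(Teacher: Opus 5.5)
Your proposal is correct. The paper does not prove this statement; it states it without proof and cites Schlicht's notes. Your argument is exactly the standard model-theoretic proof specialized to quivers: EP $\Leftrightarrow$ homogeneity and uniqueness via back-and-forth, existence via a JEP/AP chain with bookkeeping over the countably many isomorphism types, and AP for $\text{Age}(Q)$ via an automorphism extending $\beta f(a)\mapsto\gamma g(a)$. The only tacit assumption, which the statement itself also needs, is $\mathcal K\neq\varnothing$, so that $D_0$ can be chosen.
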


\begin{Def}\label{fraisse-quiver-def}
    Let $\mathcal Q$ be the Fra\"iss\'e class of all finite quivers (the reader should verify using Definition~\ref{fraisse-class-def} that $\mathcal Q$ is indeed a Fra\"iss\'e class). If $F$ is a Fra\"iss\'e limit of $\mathcal Q$, we call $F$ a \emph{Fra\"iss\'e quiver.} By Fra\"iss\'e's Theorem above, the isomorphism type of $F$ is uniquely determined.
\end{Def}

We are now ready to state and prove a general result about the mutation classes of Fra\"iss\'e limits of certain Fra\"iss\'e classes.

\begin{T-Fraisse-singleton}
    Let $\mathcal K$ be a Fra\"iss\'e class of quivers which is additionally closed under mutation. Then, up to identifying isomorphic quivers, the (finite-)mutation-equivalence class of its Fra\"iss\'e limit $Q_\mathcal K$ is a singleton.
\end{T-Fraisse-singleton}
    \begin{proof}
        We proceed by a general back-and-forth argument. Let $K\in\AF$ be isomorphic to the Fra\"iss\'e limit $Q_\mathcal K$ of the Fra\"iss\'e class $\mathcal K$, and let $i\in\NN$. We wish to show that $K$ is isomorphic to $\mu_i(K)$, so we must exhibit a bijection $f:\NN\to\NN$ such that $\mu_i(K)\circ(f\times f)=K$ (viewing $K,\mu_i(K):\NN\times\NN\to\ZZ$ as functions in this equality). To do so, we provide finite approximations to such an $f$ in the form $f_n:A_n\to B_n$ for finite subsets $A_n,B_n\subseteq\NN$ of size $n$ for all $n\geq1$ such that:
        \begin{enumerate}
            \item for all $n\geq1$, $f_n$ is an isomorphism of finite full subquivers of $K$ and $\mu_i(K)$,
            \item $A_1\subseteq A_2\subseteq\dots\subseteq\bigcup_{n\geq1}A_n=\NN$,
            \item $B_1\subseteq B_2\subseteq\dots\subseteq\bigcup_{n\geq1}B_n=\NN$, and
            \item for all $n\geq 1$, $f_{n+1}$ agrees with $f_n$ on $A_n$.
        \end{enumerate}
        If we successfully carry out this construction in a way that satisfies the above requirements, then we immediately have that $f:=\bigcup_{n\geq1}f_n$ is an isomorphism $K\to\mu_i(K)$.
        
        We begin our construction by setting $A_1=B_1=\{i\}$ and $f_1:i\mapsto i$. Clearly this satisfies requirement (1) above. To continue our recursion, we alternate between ``back'' and ``forth.'' More specifically, we break into cases for even and odd $n$ and build $f_n$ out of $f_{n-1}$ differently (though similarly) in either case.

        Suppose first that $n$ is even, that $A_{n-1}$, $B_{n-1}$, and $f_{n-1}$ are defined, and that we have satisfied all of the requirements (1)--(3) so far (where to satisfy (2) and (3) ``so far,'' we mean that $A_i\subseteq A_j$ and $B_i\subseteq B_j$ for all $1\leq i\leq j\leq n-1$). We will deal with the ``back''  half of our construction now. Let $b\in\NN$ be the least element of $\NN\setminus B_{n-1}$, and define $B_n:=B_{n-1}\cup\{b\}$. Since $i\in B_1\subseteq B_n$, we have that the full subquiver of $\mu_i(K)$ on the vertices $B_n$ agrees with the mutation $\mu_i(Q)$ at $i$ of the full subquiver $Q$ of $K$ on the vertices $B_n$. Because $K$ is the Fra\"iss\'e limit of the class $\mathcal K$, we have that $Q\in\mathcal K$. Moreover, since $\mathcal K$ is closed under mutation, we have that $\mu_i(Q)\in\mathcal K$ as well. Therefore, there exists some embedding $g:\mu_i(Q)\hookrightarrow K$. Letting $Q'$ be the restriction of $K$ to $A_{n-1}$, we have by Remark~\ref{weak-homog-rmk} (applied to the three embeddings $\text{id}_{A_{n-1}}:Q'\hookrightarrow K$, $f_{n-1}:Q'\hookrightarrow\mu_i(Q)$, and $g:\mu_i(Q)\hookrightarrow K$) that there exists a different embedding $h:\mu_i(Q)\hookrightarrow K$ with $h\circ f_{n-1}=\text{id}_{A_{n-1}}$. We therefore may define $A_n$ to be the vertex set of $h(\mu_i(Q))\subseteq K$, and note right away that it has the property that $A_{n-1}\subseteq A_n$ and $|A_n\setminus A_{n-1}|=1$. Let $f_n$ take the unique element of $A_n\setminus A_{n-1}$ to the unique $b\in B_n\setminus B_{n-1}$ and define $f_n(a)=f_{n-1}(a)$ for all $a\in A_{n-1}$. We note then that $f_n$ is an isomorphism onto $\mu_i(Q)$ by construction.

        For the ``forth'' part, suppose $n$ is odd, that $A_{n-1}$, $B_{n-1}$, and $f_{n-1}$ are defined, and that we have satisfied all of the requirements (1)--(3) so far. Let $a\in\NN$ be the least element of $\NN\setminus A_{n-1}$, and define $A_n:=A_{n-1}\cup\{a\}$. Let $Q$ be the subquiver of $K$ induced by the vertices $B_{n-1}$. Since $i\in B_{n-1}$, we have that the subquiver of $\mu_i(K)$ induced by the vertices $B_{n-1}$ is precisely $\mu_i(Q)$. Using the isomorphism $f_{n-1}$ from the subquiver $Q'$ of $K$ induced by the vertices $A_{n-1}$ to the subquiver $\mu_i(Q)$ of $\mu_i(K)$ induced by the vertices $B_{n-1}$ which fixes $i$ (recall that $f_{n-1}(i)=f_1(i)=i$), we have that $\mu_i(Q')$ is isomorphic to $Q$. Now let $Q''$ be the subquiver of $K$ induced by the vertices $A_n$ and notice that $Q'$ embeds into $Q''$ by the identity on $A_{n-1}$. Similarly, we have that $\mu_i(Q')$ embeds into $\mu_i(Q'')$ by the identity on $A_{n-1}$. Since $Q''\in\mathcal K$ and $\mathcal K$ is closed under mutation, we have that $\mu_i(Q'')\in\mathcal K$ as well. Now we see that $Q\cong\mu_i(Q')\subseteq\mu_i(Q'')$, so by Remark~\ref{weak-homog-rmk}, there exists an embedding $g$ of $\mu_i(Q'')$ into $K$ so that $g$ restricts to the subquiver embedding of $Q$ into $K$. Let $B_n$ be the vertex set of $g(\mu_i(Q''))$, and note that both $B_{n-1}\subseteq B_n$ and $|B_n\setminus B_{n-1}|=1$. Then one naturally has that the full subquiver of $\mu_i(K)$ induced by $B_n$ is isomorphic to $Q''$, and moreover that defining $f_n(a)$ to be the unique element in $B_n\setminus B_{n-1}$ (and otherwise extending $f_{n-1}$) witnesses such an isomorphism.

        After completing the above construction, it is straightforward to check that all four requirements (1)--(4) are satisfied, completing the proof.
    \end{proof}

This theorem demonstrates that there is an extreme amount of rigidity imposed when we consider \emph{finite} mutation sequences on \emph{unlabeled} Fra\"iss\'e limits of Fra\"iss\'e classes of quivers: these unlabeled quivers do not change after applying finite sequences of mutations! Should other interesting mutation-closed Fra\"iss\'e classes of quivers exist (see Section~\ref{sec-8} for more discussion on this), this theorem may prove useful in their study. On the other hand, the following theorem shows that this rigidity is entirely abandoned when considering \emph{infinite} mutation sequences on \emph{labeled} copies of the Fra\"iss\'e quiver, i.e., copies of the Fra\"iss\'e quiver $Q_\mathcal Q$ in $\AF$:

\begin{T-Fraisse-inf-seq}
    Let $F\in\AF$ be a Fra\"iss\'e quiver (Definition~\ref{fraisse-quiver-def}) and let $Q\in\AF$ be arbitrary. Then there exists an infinite mutation sequence $\mu_\mathbf i$ which converges on $F$ to $Q$, i.e., $\mu_\mathbf i(F)=Q$.
\end{T-Fraisse-inf-seq}
    \begin{proof}
        Let $F\in\AF$ be a Fra\"iss\'e quiver, and let $Q\in\AF$ be arbitrary. To construct an infinite mutation sequence $\mu_\mathbf i$ such that $\mu_\mathbf i(F)=Q$ in $\AF$, it is equivalent to build $\mu_\mathbf i$ so that for every $n\geq 1$, there exists some $\ell\geq 1$ with the property that for all $k\geq\ell$ it holds that $\mu_{i_k}\circ\dots\circ\mu_{i_1}(F)\in U_{Q,\{1,\dots,n\}}$. We will build finite initial segments $\mathbf i_n=(i_1,\dots,i_{k_n})$ of $\mathbf i$ inductively which satisfy the following requirements:
        \begin{enumerate}
            \item For all $n\geq1$, $\mathbf i_n=(i_1,\dots,i_{k_n})$ is a strict initial segment of $\mathbf i_{n+1}=(i_1,\dots,i_{k_{n+1}})$, i.e., $k_n<k_{n+1}$. This guarantees that $\mathbf i:=\bigcup_{n\geq1}\mathbf i_n$ exists and is an infinite sequence.
            \item For all $n\geq1$ and all $k_n<j\leq k_{n+1}$, it holds that $\mu_{i_j}\circ\dots\circ\mu_{i_1}(F)\in U_{Q,\{1,\dots,n\}}$.
        \end{enumerate}
        Note that the second condition in fact implies the following stronger statement: for all $n\geq1$ and all $j>k_n$, it holds that $\mu_{i_j}\circ\dots\circ\mu_{i_1}(F)\in U_{Q,\{1,\dots,n\}}$. Hence, the above conditions are enough to guarantee convergence of $\mu_\mathbf i(F)$ to $Q$ in $\AF$.

        We begin with the base case $n=1$. Set $\mathbf i_1=\varnothing$ to be the empty sequence. As long as $\mathbf i_2\neq\varnothing$, we have satisfied condition (1) above. Moreover, condition (2) above is vacuously satisfied, as it holds that $U_{Q,\{1\}}=\AF$ for \emph{any} quiver $Q\in\AF$.

        Now suppose we have defined $\mathbf i_n$ in such a way that conditions (1) and (2) above are satisfied for all $\mathbf i_m$ with $m<n$. Our goal is to define $\mathbf i_{n+1}$ as a strict extension of $\mathbf i_n$ in order to satisfy these conditions for $\mathbf i_n$. We will consider two cases.
        
        In the first case, suppose $\mu_{\mathbf i_n}(F)$ is in $U_{Q,\{1,\dots,n\}}$ (note that this is not guaranteed by the $\mathbf i_{n-1}$ case; all we know so far for sure is that $\mu_{\mathbf i_n}(F)\in U_{Q,\{1,\dots,n-1\}}$). To produce $\mathbf i_{n+1}$, we will extend $\mathbf i_n$ by mutating at a single vertex in $F$ which has no arrows to or from the vertices $\{1,\dots,n\}$ in $\mu_{\mathbf i_n}(F)$. By Theorem~\ref{Fraisse-singleton-thm} above, we know that $\mu_{\mathbf i_n}(F)$ is itself isomorphic to the Fra\"iss\'e quiver. Since the quiver $P$ obtained by adjoining an isolated vertex to the full subquiver of $\mu_{\mathbf i_n}(F)$ on $\{1,\dots,n\}$ is finite (and therefore belongs to the Fra\"iss\'e class of finite quivers), we may use Remark~\ref{weak-homog-rmk} to find a vertex $v\in\NN$ such that $\mu_{\mathbf i_n}(F)$ restricted to $\{1,\dots,n\}\cup\{v\}$ is isomorphic to $P$. Then we may extend $\mathbf i_n=(i_1,\dots,i_{k_n})$ to $\mathbf i_{n+1}$ by setting $\mathbf i_{n+1}=(i_1,\dots,i_{k_n},v)$. This lets us meet condition (1). Since $v$ is isolated in $\mu_{\mathbf i_n}(F)$ restricted to $\{1,\dots,n\}\cup\{v\}$, we have that the restriction of $\mu_{\mathbf i_{n+1}}(F)$ to $\{1,\dots,n\}$ agrees with the restriction of $\mu_{\mathbf i_n}(F)$ to $\{1,\dots,n\}$. Furthermore, this implies that $\mu_{\mathbf i_{n+1}}(F)\in U_{Q,\{1,\dots,n\}}$, as we know by hypothesis that $\mu_{\mathbf i_n}(F)\in U_{Q,\{1,\dots,n\}}$. Thus, we have that for $k_n<j\leq k_{n+1}=k_n+1$, $\mu_{i_j}\circ\dots\circ\mu_{i_1}(F)\in U_{Q,\{1,\dots,n\}}$, fulfilling condition (2).

        On the other hand, suppose that $\mu_{\mathbf i_n}(F)\in U_{Q,\{1,\dots,n-1\}}\setminus U_{Q,\{1,\dots,n\}}$. This means that there exist vertices $v_1,\dots,v_d\in\{1,\dots,n-1\}$ such that $\mu_{\mathbf i_n}(F)(v_r,n)\neq Q(v_r,n)$ for $r=1,\dots,d$, $d\geq1$. To correct this, we again use the fact that $\mu_{\mathbf i_n}(F)$ is the Fra\"iss\'e quiver by Theorem~\ref{Fraisse-singleton-thm} and that it therefore carries the extension property of Remark~\ref{weak-homog-rmk}. For each $r=1,\dots,d$, let $\varepsilon_r\in\{+1,-1\}$ be the sign of $Q(v_r,n)-\mu_{\mathbf i_n}(F)(v_r,n)\in\ZZ$. We find by the extension property a full subquiver $P$ of $\mu_{\mathbf i_n}(F)$ on vertices $\{1,\dots,n\}\cup\{w_1,\dots,w_d\}$ for some distinct $w_1,\dots,w_d>n$ with the following arrow counts:
        
        \begin{itemize}
            \item For all $r=1,\dots,d$, one has $\mu_{\mathbf i_n}(F)(v_r,w_r)=\varepsilon_r$ and $\mu_{\mathbf i_n}(F)(w_r,n)=Q(v_r,n)-\mu_{\mathbf i_n}(F)(v_r,n)$.
            \item For all $r=1,\dots,d$ and $v\in\{1,\dots,n-1\}$, $v\neq v_r$ implies $\mu_{\mathbf i_n}(F)(v,w_r)=0$.
            \item For all $r,r'\in\{1,\dots,d\}$, one has $\mu_{\mathbf i_n}(F)(w_r,w_{r'})=0$.
        \end{itemize}
        
        This full subquiver $P$ of $\mu_{\mathbf i_n}(F)$ has the property that mutating at each $w_r$ once (any order will do, as the $w_r$ are not connected to one another directly by arrows) fixes the incorrect arrow counts:
        \begin{align*}
        \mu_{w_d}\circ\dots\circ\mu_{w_1}\circ\mu_{\mathbf i_n}(F)(v_r,n)&=\mu_{w_r}\circ\mu_{\mathbf i_n}(F)(v_r,n)\\
        &=\mu_{\mathbf i_n}(F)(v_r,n)+\mu_{\mathbf i_n}(F)(v_r,w_r)[\mu_{\mathbf i_n}(F)(w_r,n)]_+\\
        &\;\;\;\;+[-\mu_{\mathbf i_n}(F)(v_r,w_r)]_+\mu_{\mathbf i_n}(F)(w_r,n)\\
        &=\mu_{\mathbf i_n}(F)(v_r,n)+Q(v_r,n)-\mu_{\mathbf i_n}(F)(v_r,n)\\
        &=Q(v_r,n).
        \end{align*}
        Moreover, the other arrow counts (those within $\{1,\dots,n-1\}$ and those between $\{1,\dots,n-1\}$ and $n$ which did not need changing) remain unchanged by the construction of the full subquiver $P$. Thus, defining $\mathbf i_{n+1}$ from $\mathbf i_n=(i_1,\dots,i_{k_n})$ by $\mathbf i_{n+1}:=(i_1,\dots,i_{k_n},w_1,\dots,w_d)$ immediately satisfies condition (1) since $d\geq1$. By our above argument, $\mathbf i_{n+1}$ also satisfies condition (2).

        In either case, we have strictly extended $\mathbf i_n$ to $\mathbf i_{n+1}$ in such a way so as to preserve the restriction of $\mu_{\mathbf i_n}(F)$ to $\{1,\dots,n\}$ with every subsequent mutation. Since this stable restriction agrees with the restriction of $Q$ to $\{1,\dots,n\}$, this gives us an infinite mutation sequence $\mathbf i$ with the property that $\mu_\mathbf i(F)=Q$, completing the proof.
    \end{proof}

%% Section 7: The mutation class topology and Theorem 1.4 %%
\section{The mutation class topology and Theorem~\ref{mut-class-top-thm}}\label{sec-7}

Our final main result concerns a topological space, the \emph{mutation class space $\mathcal M$}, considered by Ervin and Jackson in their preprint \cite{ervin_topology_2024}. We begin this section by reviewing their construction of the mutation class space $\mathcal M$ and listing some of its basic properties.

\begin{Def}\label{mut-class-set-def}
    Let $Q:[n]\times[n]\to\ZZ$ be a quiver on the set $[n]=\{1,\dots,n\}$. The set of all quivers $Q'$ on $[n]$ mutation-equivalent to an isomorphic copy of $Q$ is called the \emph{mutation class} of $Q$, denoted $[Q]$. If $P$ is (isomorphic to) a full subquiver of $Q$, we write $[P]\preceq[Q]$ and say that the class $[P]$ \emph{embeds} into the class $[Q]$. The set of all mutation classes of finite quivers is denoted $\mathcal M$.
\end{Def}

Using Lemma~\ref{mut-restr-commute-lemma}, the set $\mathcal M$ of mutation classes is naturally seen to be a partially-ordered set under the embedding relation $\preceq$ on mutation classes. Ervin and Jackson use this fact to define their topology on $\mathcal M$ as follows:

\begin{Def}\label{mut-class-space-def}
    The \emph{mutation class topology} on $\mathcal M$ is the topology whose open sets are the upward-closed subsets of the poset $(\mathcal M,\preceq)$.
\end{Def}

One may readily check that this is indeed a topology on $\mathcal M$ using basic properties of posets. Indeed, every poset induces a topology on its underlying set in this fashion (called its \emph{Alexandrov topology}).

We now collect some basic properties established in \cite{ervin_topology_2024} about the mutation class space $\mathcal M$.

\begin{Prop}\label{mut-class-space-properties-prop}
    The following statements hold about the mutation class space $\mathcal M$:
    \begin{enumerate}[label=(\roman*)]
        \item $\mathcal M$ is connected.
        \item $\mathcal M$ is compact.
        \item Every nonempty open subset of $\mathcal M$ is dense.
        \item Every dense subset of $\mathcal M$ is infinite.
    \end{enumerate}
\end{Prop}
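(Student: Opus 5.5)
The four properties follow from general facts about Alexandrov topologies, plus two structural features of $(\mathcal M,\preceq)$. I will first record these two features. First, $\mathcal M$ has a least element, namely the class $[\bullet]$ of the one-vertex quiver, since every nonempty finite quiver contains a single vertex as a full subquiver. (If the empty quiver is counted, it plays the same role; the argument is unchanged.) Second, any two classes have a common upper bound. Given $[P]$ and $[Q]$, the disjoint union $P\sqcup Q$ contains both as full subquivers, so $[P],[Q]\preceq[P\sqcup Q]$. This is well defined on classes: mutation-equivalent or isomorphic representatives give mutation-equivalent disjoint unions, because mutations in one component do not affect the other (compare Lemma~\ref{mut-restr-commute-lemma}).

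Properties (i) and (ii) come from the least element. Every nonempty open set is upward closed, so it contains some $[P]$, and it contains $[\bullet]$ only if it contains everything above $[\bullet]$. Hence the only open set containing $[\bullet]$ is $\mathcal M$ itself. Compactness follows at once: in any open cover, one member contains $[\bullet]$, so that member is all of $\mathcal M$. For connectedness, suppose $\mathcal M=U\sqcup V$ with $U,V$ open and disjoint. Whichever of them contains $[\bullet]$ must be all of $\mathcal M$, so the other is empty.

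Property (iii) comes from the upper bounds. Let $U,V$ be nonempty open sets, containing $[P]$ and $[Q]$ respectively. By upward closure, $[P\sqcup Q]$ lies in both, so $U\cap V\neq\varnothing$. Since any two nonempty open sets meet, every nonempty open set meets every other nonempty open set, which is exactly density.

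For (iv), suppose a dense set $D$ is finite. I claim there is a class strictly above every element of $D$, or at least not below any of them. Take $n$ larger than the number of vertices of every representative of a class in $D$, and let $P$ be an $n$-vertex quiver. Any class $[R]$ with $[P]\preceq[R]$ has at least $n$ vertices, because the vertex count is a mutation and isomorphism invariant and embeddings do not decrease it. So the nonempty open set ${\uparrow}[P]=\{[R]:[P]\preceq[R]\}$ misses $D$, contradicting density.

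The only step that needs real care is the well-definedness of $\preceq$ and of the disjoint-union bound on mutation classes. Specifically, if $P'$ is mutation-equivalent to $P$ and $P\le Q$, then some representative of $[Q]$ contains $P'$. This holds by applying the same mutations to $Q$ at the vertices of $P$ and using Lemma~\ref{mut-restr-commute-lemma}, and it is the point the paper relies on when calling $\preceq$ a partial order.
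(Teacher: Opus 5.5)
Your proof is correct and complete. There is no in-paper argument to compare it with: the paper does not prove this proposition but quotes it as background from Ervin and Jackson \cite{ervin_topology_2024}.

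Your route uses only three order-theoretic features of $(\mathcal M,\preceq)$:
\begin{enumerate}
\item a least element $[\bullet]$, which gives (ii) and (i);
\item directedness via $P\sqcup Q$, which gives (iii);
\item the unbounded, monotone invariant ``number of vertices'', which gives (iv).
\end{enumerate}
Beyond these, you need only the general fact about Alexandrov topologies that closures are down-closures. This makes the argument self-contained relative to the poset structure the paper sets up through Lemma~\ref{mut-restr-commute-lemma}.

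Two small remarks. First, for (iii) you do not need the disjoint union to be well defined on classes. It is enough that a chosen representative $P\sqcup Q$ contains $P$ and $Q$ as full subquivers, which is immediate from the definition of $\preceq$. Second, (iii) already implies (i): a space in which any two nonempty open sets meet cannot be split into two disjoint nonempty open sets. So the least element is really only needed for compactness.
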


To state and prove Theorem~\ref{mut-class-top-thm}, we define the subspace $\mathcal M'$ of $\mathcal M$ to consist of mutation classes of quivers that either have no isolated vertices or are simply the one-vertex quiver. Note that, since $\mathcal M'$ is a subposet of $\mathcal M$, the subspace topology on $\mathcal M'$ agrees with the Alexandrov topology on $(\mathcal M',\preceq)$.

\begin{T-mut-class-top}
    Let $\mathbf{Fin}\subseteq\AF$ be the subspace of $\AF$ consisting of finite quivers, and let $\sim$ denote the equivalence relation on $\mathbf{Fin}$ induced by (finite-)mutation-equivalence and isomorphism. Then $\mathbf{Fin}/\!\sim$ is homeomorphic to $\mathcal M'$ by the map sending $[Q]_\sim\in\mathbf{Fin}/\!\sim$ to the mutation-equivalence class of $Q'$, the quiver obtained from $Q\in\mathbf{Fin}$ by removing all of its isolated vertices \emph{if $Q$ is not the quiver with no arrows} or to the mutation-equivalence class of the one-vertex quiver if it is.
\end{T-mut-class-top}
    \begin{proof}
        Let $f:\mathbf{Fin}/\!\sim\;\to\mathcal M'$ denote the map of interest. Note first that $f$ is well-defined: up to mutation-equivalence, neither mutation nor relabeling vertices changes the end result of removing isolated vertices from a quiver. One also sees that $f$ is bijective, essentially by definition. The one part about this that one must be careful about is the quiver $Q\in\mathbf{Fin}$ with no arrows. Since the one-vertex quiver is not the image of any nontrivial $Q\in\mathbf{Fin}$, one retains injectivity (and achieves surjectivity) when declaring the image of the quiver with no arrows in $\mathbf{Fin}$ to be the one-vertex quiver. Hence, it only remains to show that $f$ is a homeomorphism.
        
        It will help immensely to have a basis for topology on the quotient space $\mathbf{Fin}/\!\sim$. Note that mutation maps and vertex-relabeling maps are homeomorphisms on $\AF$ (the former by Proposition~\ref{mut-cont-prop} and the latter by the quick fact that $\sigma(U_{Q,v})=U_{\sigma(Q),\sigma(V)}$ for any $\sigma\in\text{Sym}(\NN)$). Using this, we can see that the quotient map $\pi:\mathbf{Fin}\to\mathbf{Fin}/\!\sim$ is open. Indeed, let $U\subseteq\mathbf{Fin}$ be open. Using the fact that $\pi$ is a quotient map, we have $\pi(U)$ is open if and only if $\pi^{-1}(\pi(U))$ is open. One may then observe that
        $$\pi^{-1}(\pi(U))=\bigcup_{g\in G}g(U)$$
        is open, where $G\subseteq\text{Aut}(\AF)$ is the subgroup of homeomorphisms of $\AF$ generated by mutations and vertex-relabelings. Thus, $\pi(U)$ is open, showing that $\pi$ is an open map. Therefore, the images $U'_{Q,V}:=\pi(U_{Q,V}\cap\mathbf{Fin})$ for $Q\in\mathbf{Fin}$ and $V\subseteq\NN$ finite form a basis of open sets of $\mathbf{Fin}/\!\sim$.

        We now show that $f$ is continuous. Let $S\subseteq\mathcal M'$ be an upward-closed set in $(\mathcal M',\preceq)$. Fix $Q\in\mathbf{Fin}$ so that $[Q]_\sim\in f^{-1}(S)$, i.e., we have $f([Q]_\sim)\in S$. We wish to find some finite $V\subseteq\NN$ so that $f(U'_{Q,V})\subseteq S$. We claim $V=\text{supp}(Q)\subseteq\NN$ works (note that this is finite as $Q$ is finite). That is, if $[P]_\sim\in U'_{Q,\text{supp}(Q)}$, we claim that $f([P]_\sim)\in S$. It suffices to show that $f([Q]_\sim)\preceq f([P]_\sim)$. Since $[P]_\sim\in U'_{Q,\text{supp}(Q)}=\pi(U_{Q,\text{supp}(Q)}\cap\mathbf{Fin})$, we have that there exists $R\in\mathbf{Fin}$ so that $R\in U_{Q,\text{supp}(Q)}$ and $P\sim R$. Because we only care about $[P]_\sim$ and not $P$ itself, we can assume without loss of generality that $P=R$. Then $Q\leq P$, since $P\in U_{Q,\text{supp}(Q)}$. Removing isolated vertices from $Q$ and $P$ (or replacing one or more of $Q$ or $P$ with the one-vertex quiver if $Q$ or $P$ is trivial) to produce finite quivers $Q',P'$ preserves this embedding relationship, i.e., $Q'\leq P'$. Thus, we have $[Q']\preceq[P']$ in $\mathcal M'$. Since $f([Q]_\sim)=[Q']$ and $f([P]_\sim)=[P']$, we have the desired embedding relationship $f([Q]_\sim)\preceq f([P]_\sim)$. As stated above, this suffices to show that $f(U'_{Q,\text{supp}(Q)})\subseteq S$, so $f$ is continuous.

        Lastly, we must show that $f$ is open. Let $U'_{Q,V}=\pi(U_{Q,V}\cap\mathbf{Fin})$ be a basic open set in $\mathbf{Fin}/\!\sim$. We wish to show that if $[P]\in f(U'_{Q,V})\subseteq\mathcal M'$, there exists an upward-closed set $S\subseteq\mathcal M'$ containing $[P]$ which is a subset of $f(U'_{Q,V})$. It is sufficient to show that the principal upward-closed set $S_{[P]}:=\{[R]\in\mathcal M'\mid[P]\preceq[R]\}$ is a subset of $f(U'_{Q,V})$. Equivalently, for all $[R]\succeq[P]$ in $\mathcal M'$, we ought to be able to find $[T]_\sim\in U'_{Q,V}$ so that $f([T]_\sim)=[R]$.
        
        Let $[W]_\sim\in U'_{Q,V}$ be such that $f([W]_\sim)=[P]$. Without loss of generality (mutating $W$ as needed), we may assume that $W'=P$, i.e., $P$ is the result of removing all isolated vertices in $W$. Moreover, we may also assume without loss of generality (by mutating and relabeling $W$ as necessary) that $W\in U_{Q,V}\cap\mathbf{Fin}$. Then it is clear that $U_{W,\text{supp}(W)}\cap\mathbf{Fin}\subseteq U_{Q,V}\cap\mathbf{Fin}$. From this, it follows that $f(U'_{W,\text{supp}(W)})\subseteq f(U'_{Q,V})\subseteq\mathcal M'$.
        
        We will show that $f(U'_{W,\text{supp}(W)})=S_{[P]}$. The forward inclusion is immediate: any quiver with $W$ as a restriction has $W'=P$ as a restriction as well, so $f([T]_\sim)\in S_{[P]}$ for all $[T]_\sim\in U'_{W,\text{supp}(W)}$. The reverse inclusion proceeds as follows. Let $[R]\in S_{[P]}$, i.e., $[P]\preceq[R]$. Without loss of generality (mutating $R$ as needed), we may assume $P\leq R$. To construct a preimage of $[R]$ under $f$ that lies in $U'_{W,\text{supp}(W)}$, we extend $W$ as follows. Let $m=\max(\text{supp}(W))\in\NN$. Now place arrows between vertices in $\text{supp}(W)\subseteq\NN$ and $\{m+i\mid1\leq i\leq |R|-|P|\}\subseteq\NN$ so as to create an isomorphic copy $T$ of $R$ with $T\in U_{W,\text{supp}(W)}$. Then it follows that $[T]_\sim\in U'_{W,\text{supp}(W)}\cap\mathbf{Fin}\subseteq U'_{Q,V}\cap\mathbf{Fin}$ is such that $f([T]_\sim)=[R]\in S_{[P]}$, proving the claim.
        \end{proof}

%% Section 8: Future work %%
\section{Future work}\label{sec-8}

We finally wish to highlight several viable areas for further work with infinite quivers through our topological perspective.

First, there are multiple possibilities for expanding and tightening some of our results in Section~\ref{sec-4}. Namely, one ought to be able to provide similar results for other quiver properties--what are their Borel complexities (if they are even Borel)? Are they dense, and do they have empty interior? Further, one should be able to tighten our bounds on complexity and provide hardness/completeness results for the properties discussed in Section~\ref{sec-4}.

In this vein, one may benefit from paying special attention to \emph{surface type} quivers. These quivers arise from triangulations of orientable surfaces with sets of marked points. Several works have made progress on the case of surfaces with infinitely many marked points in recent years, see \cite{holm_cluster_2012, august_cluster_2023, liu_cluster_2017, baur_transfinite_2018, canakci_infinite_2019, canakci_cluster_2025} for a few examples. A reasonably interesting task would be to relate existing notions of convergence of infinite mutation sequences for triangulations of surfaces to our notions in $\LF$ and $\AF$. It is not terribly difficult to cook up an example of an infinite mutation sequence for a triangulation of a surface which converges in the sense of \cite{canakci_infinite_2019} but which diverges on the underlying locally finite quiver in $\LF$ (after choosing a labeling of the arcs of the surface with natural numbers). However, it seems much more likely that convergent infinite mutation sequences of triangulations should yield convergent infinite mutation sequences on their corresponding quivers when considered in $\AF$. Future work should make this connection precise and explore other connections to infinite surface cluster combinatorics.

Next, one could ask about the complexities of various sets related to infinite mutation sequences. For example, one could ask about the complexity of the sets $\mathbf{ME}^\infty_\LF\subseteq\LF^2$ and $\mathbf{ME}^\infty_\AF\subseteq\AF^2$ of pairs of quivers $(Q,Q')$ such that there exists an infinite mutation sequence $\mu_\mathbf i$ converging on $Q$ with $\mu_\mathbf i(Q)=Q'$ (with convergence understood relative to the choice of space). At the time of preparation of this article, the best upper bound the author is able to establish is that these sets are $\mathbf\Sigma^1_1$. To show these sets to be $\mathbf\Pi^1_1$ as well is to prove that they are Borel (cf. Remark~\ref{proj-hierarchy-rmk}), though this would still leave open the question of where they fall in the Borel hierarchy. Showing that these sets are $\mathbf\Pi^1_1$ necessarily would have to take advantage of quiver mutation combinatorics in an interesting, non-trivial way. It is also entirely possible, on the other hand, that these sets are $\mathbf\Sigma^1_1$-complete, providing a convincing, quantitative piece of evidence that infinite quiver mutations can be quite complicated. Daringly, we also remark that a third phenomenon is possible: that there exists a model of ZFC in which these sets are $\mathbf\Sigma^1_1\setminus\mathbf\Pi^1_1$ but not $\mathbf\Sigma^1_1$-complete. (The existence of these kinds of sets is independent of ZFC, so necessarily this could only hold in some models and not all of them; this seems unlikely.)

We also offer some questions motivated by our preliminary results about the Fra\"iss\'e quiver in Section~\ref{sec-6}. Perhaps the question with the most immediate implications for work on \emph{finite} quivers is the following: which other classes of quivers are Fra\"iss\'e classes (cf. Definition~\ref{fraisse-class-def})? Definitionally, these classes represent hereditary properties of quivers. Many known hereditary properties of interest already happen to carry the joint embedding property merely by way of taking disjoint unions of quivers. With this, our question reduces to the following: which hereditary properties of quivers have the amalgamation property? We suggest that this in fact poses several independently difficult problems: can we amalgamate mutation-acyclic quivers to obtain new mutation-acyclic quivers? Can we amalgamate quivers with reddening sequences (or maximal green sequences) to obtain new quivers with reddening sequences (or maximal green sequences)? How about quivers with a finite (pre-)forkless part? Another question to ask is about the representation-theoretic implications of the existence of infinite quivers like the Fra\"iss\'e quiver. In a sense we do not attempt to make precise here, the category of finite-dimensional representations of the Fra\"iss\'e quiver should embed the representation categories of all finite loop- and 2-cycle-free quivers (and in many different ways at that). Moreover, it should do so rather ``symmetrically'' due to the homogeneity property of the Fra\"iss\'e quiver, in a way which directly mimics the automorphism-based definition of homogeneity (Definition~\ref{homog-def}) on the functorial level. That the Fra\"iss\'e quiver is isomorphic to all of its finite mutations (cf. Theorem~\ref{Fraisse-singleton-thm}) may prove interesting in this context as well.

The link established in Section~\ref{sec-7} to the mutation class space of Ervin and Jackson also invites further inquiry. For example, one may ask about potential generalizations of our spaces to include \emph{skew-symmetrizable quivers} that maintain the same relationship to their \emph{skew-symmetrizable mutation class space} as ours do to their original space (cf. Theorem~\ref{mut-class-top-thm}). We may also lift their questions about Banff and Louise quivers to our context as well: how complicated are these sets? What do their closures look like? Are they different sets (when defined appropriately in the infinite setting)?

Lastly, we remark that all work in this paper was carried out in the setting of \emph{non-effective} descriptive set theory. Future work should aim to effectivize as many results we have stated as possible in order to provide information about the computational complexity of infinite quiver properties. That said, it is unfortunately the case that effective versions of the above results likely will not translate cleanly to the finitary setting, and studying the computational complexities of finitary quiver problems almost certainly must use a different set of tools entirely. Nevertheless, we hope that the results provided here remain a useful heuristic when discussing the complexity of quiver mutation problems as a whole.

\bibliography{references}
\bibliographystyle{plain}

\end{document}